\title
[Free homotopy decompositions of critical Sobolev maps]
{Estimates by gap potentials of free homotopy decompositions of critical Sobolev maps}
\author{Jean Van Schaftingen}
\address{Universit\'e catholique de Louvain (UCLouvain)\\ 
Institut de Recherche en Math\'ematique et Physique (IRMP)\\
Chemin du Cyclotron 2 bte L7.01.01\\
1348 Louvain-la-Neuve\\
Belgium}
\email{Jean.VanSchaftingen@uclouvain.be}
\thanks{J. Van Schaftingen was supported by the Mandat d'Impulsion Scientifique F.4523.17, ``Topological singularities of Sobolev maps'' of the Fonds de la Recherche Scientifique--FNRS}
\subjclass[2010]{
46E35
(%
55M25
,
55P99%
,
55Q25%
,
58A12%
)
}
\keywords{%
Free homotopy%
; 
Hurewicz homomorphism%
; 
vanishing mean oscillation (VMO)%
;
fractional Sobolev spaces%
}
\newcommand{\eofs}{\,}
\newtheorem{proposition}{Proposition}[section]
\newtheorem{theorem}[proposition]{Theorem}
\newtheorem{lemma}[proposition]{Lemma}
\theoremstyle{definition}
\newtheorem{openproblem}{Open problem}
\theoremstyle{definition}
\newtheorem{definition}[proposition]{Definition}
\theoremstyle{remark}
\newtheorem{remark}[proposition]{Remark}
\numberwithin{equation}{section}
\newcommand{\abs}[1]{{\lvert #1 \rvert}}
\newcommand{\bigabs}[1]{{\bigl\lvert #1 \bigr\rvert}}
\newcommand{\biggabs}[1]{{\biggl\lvert #1 \biggr\rvert}}
\newcommand{\norm}[1]{{\lVert #1 \rVert}}
\newcommand{\dualprod}[2]{\langle #1, #2 \rangle}
\newcommand{\st}{\;\vert\;}
\newcommand{\Rset}{\mathbb{R}}
\newcommand{\Nset}{\mathbb{N}}
\newcommand{\Sset}{\mathbb{S}}
\newcommand{\Bset}{\mathbb{B}}
\newcommand{\Hset}{\mathbb{H}}
\newcommand{\Zset}{\mathbb{Z}}
\newcommand{\BMO}{\mathrm{BMO}}
\newcommand{\Hopf}{\deg_H}
\newcommand{\diff}{\,\mathrm{d}}
\newcommand{\compose}{\,\circ\,}
\newcommand{\manifold}[1]{\mathcal{#1}}
\newcommand{\mapsset}[1]{\mathscr{#1}}
\newcommand{\familyname}[1]{\textsc{#1}}
\newcommand{\defeq}{\triangleq}
\DeclareMathOperator{\dist}{dist}
\DeclareMathOperator*{\osc}{osc}
\DeclareMathOperator{\diam}{diam}
\DeclareMathOperator{\sech}{sech}
\begin{document}

\begin{abstract}
A free homotopy decomposition of any continuous map from a compact Riemmanian manifold $\mathcal{M}$ to a compact Riemannian manifold $\mathcal{N}$ 
into a finite number maps belonging to a finite set is constructed, in such a way that the number of maps in this free homotopy decomposition and the number of elements of the set to which they belong can be estimated a priori by the critical Sobolev energy of the map in $W^{s,p} (\mathcal{M}, \mathcal{N})$, with $sp = m = \dim \mathcal{M}$.
In particular, when the fundamental group $\pi_1 (\mathcal{N})$ acts trivially 
on the homotopy group $\pi_m (\mathcal{N})$, the number of homotopy classes to which a map can belong can be estimated by its Sobolev energy.
The estimates are particular cases of estimates under a boundedness assumption on gap potentials of the form 
\begin{equation*}
  \iint\limits_{\substack{(x, y) \in \mathcal{M} \times \mathcal{M} \\ d_\mathcal{N} (f (x), f (y)) \ge \varepsilon}}
    \frac{1}{d_\mathcal{M} (y, x)^{2 m}} \, \mathrm{d} y \, \mathrm{d}x.
\end{equation*}
When $m \ge 2$, the estimates scale optimally as $\varepsilon \to 0$. 
Linear estimates on the Hurewicz homorphism and the induced cohomology homomorphism are also obtained. 
\end{abstract}

\maketitle

\tableofcontents

\section{Introduction}

\subsection{Estimates on the topological degree}
Brouwer's \emph{topological degree} classifies the homotopy classes of continuous maps from a sphere \(\Sset^m\) to itself. 
If the map \(f \in \mapsset{C}^1 (\Sset^m, \Sset^m)\) is smooth, 
then its degree \(\deg (f)\) can be computed by the classical \emph{Kronecker formula}
\begin{equation}
\label{eq_formula_Kronecker}
 \deg (f) = \int_{\Sset^m} f_\sharp \omega_{\Sset^m}\eofs ,
\end{equation}
where \(\omega_{\Sset^m}\) is the \emph{volume form} on the sphere \(\Sset^m\) normalized so that \(\int_{\Sset^m} \omega_{\Sset^m} = 1\) and \(f_\sharp \omega_{\Sset^m}\) is the \emph{pullback} of the form \(\omega_{\Sset^m}\) by the map \(f\).
In view of the classical inequality between the geometric and quadratic means, we have
\begin{equation*}
    \abs{f_\sharp \omega_{\Sset^m}} 
  = 
    \abs{\det Df}
    \, 
    \omega_{\Sset^n} 
  \le 
    \frac{\abs{Df}^m}{m^{m/2}} \,\omega_{\Sset^m}
\end{equation*}
everywhere on the sphere \(\Sset^m\); 
this implies then the integral estimate on the degree (see \cite{BourgainBrezisMironescu2005CPAM}*{Remark 0.7}) 
\begin{equation}
\label{ineq_degree_W_1_n}
  \abs{\deg (f)} 
 \le 
  \frac{1}{m^{m/2}} 
  \fint_{\Sset^m} 
    \abs{Df}^m
 =
  \frac{1}{m^{m/2} \abs{\Sset^m}}
  \mathcal{E}^{1, m} (f)
    \eofs ,
\end{equation}
where we have defined the \emph{Sobolev energy} \(\mathcal{E}^{1, p}\) for \(p \in [1, + \infty)\) by
\[
  \mathcal{E}^{1, p} (f)
  \defeq
 \int_{\Sset^m} 
    \abs{Df}^p
    \eofs .
\]
This estimate \eqref{ineq_degree_W_1_n} remains valid under the weaker assumption that the map \(f\) lies in the \emph{Sobolev space} \(W^{1, m} (\Sset^m, \Sset^m)\) of weakly differentiable maps whose weak derivative satisfies the integrability condition that \(\int_{\Sset^m} \abs{Df}^n < +\infty\); the degree of \(f\) is then understood in the sense of maps of \emph{vanishing mean oscillation} (VMO) \cite{BrezisNirenberg1995}.
By the classical H\"older inequality, the estimate \eqref{ineq_degree_W_1_n} implies that the degree of \(f\) can also be controlled by the \(L^p\) norms of its derivative \(Df\) for \(p \in (m, + \infty]\).

Although the integral formula \eqref{eq_formula_Kronecker} does not have a clear sense when the map \(f\) does not have some kind of derivatives,
the natural counterpart of the integral estimate \eqref{ineq_degree_W_1_n} still holds for \emph{fractional Sobolev maps}: 
for every \(p \in (m, +\infty) \), there exists a constant \(C_{m, p}\) such that 
for every map \(f \in W^{m/p, p} (\Sset^m, \Sset^m)\), one has \cite{BourgainBrezisMironescu2005CPAM}*{theorem 0.6} (see also \citelist{\cite{Boutet_Georgescu_Purice}*{theorem A.3}\cite{Mironescu}*{theorem 2.3}})
\begin{equation}
\label{ineq_degree_W_s_p}
    \abs{\deg (f)} 
  \le 
    C_{m, p}
    \,
    \mathcal{E}^{m/p, p} (f)
  \eofs ,
\end{equation}
where the \emph{fractional Sobolev energy} of the map \(f : \Sset^m \to \Sset^m\) is defined for any \(s \in (0, 1)\) and \(p \in [1, +\infty)\) as 
\begin{equation}
\label{def_SobolevEnergy}
  \mathcal{E}^{s, p} (f)
  \defeq 
  \iint \limits_{\Sset^m \times \Sset^m}
    \frac{\abs{f (y) - f (x)}^p}{\abs{y - x}^{m + sp}}
    \diff x \diff y
\end{equation}
and the finiteness of the quantity \(\mathcal{E}^{s, p} (f) \) is equivalent to have the map \(f\) belonging to the \emph{fractional Sobolev space} \(W^{s, p} (\Sset^m, \Sset^m)\) with \(s = m/p\).

Although the degree is naturally defined for continuous maps or for maps of \emph{vanishing mean oscillation} (VMO), the associated norms in \(L^\infty\) or in \(\BMO\) remain bounded over the entire class of maps on the sphere and cannot control in any way the topology, 

Jean \familyname{Bourgain}, Haïm \familyname{Brezis}, Petru \familyname{Mironescu} and \familyname{Nguyên} Hoài-Minh have obtained gap potential estimates of the degree 
that imply integral estimates of the form \eqref{ineq_degree_W_1_n} and \eqref{ineq_degree_W_s_p} \citelist{\cite{BourgainBrezisNguyen2005CRAS}*{theorem 1.1}\cite{BourgainBrezisMironescu2005CPAM}*{open problem 2}\cite{Nguyen_2007}} (see also \cite{Nguyen_2014}): they have proved that for every \(\varepsilon \in (0, \sqrt{2(1 + \frac{1}{m +1})})\), there exists a constant \(C_{\varepsilon, m}\) such that for every map \(f \in \mapsset{C} (\Sset^m, \Sset^m)\), one has
\begin{equation}
\label{ineq_degree_estimate_nguyen}
  \abs{\deg (f)}
 \le \, 
  C_{\varepsilon, m}\hspace{-1.7em}
  \iint
    \limits_{
      \substack{
        (x, y) \in \Sset^m \times \Sset^m \\ 
        \abs{f (y) - f (x)} > \varepsilon}} 
    \hspace{-.3em}
    \frac{1}{\abs{y - x}^{2 m}} 
    \diff x 
    \diff y \eofs .
\end{equation}
In view of the definition \eqref{def_SobolevEnergy}, the gap potential estimate \eqref{ineq_degree_estimate_nguyen} implies the fractional Sobolev estimate \eqref{ineq_degree_W_s_p}.
If \(m \ge 2\), the constant can be taken to satisfy \(C_{\varepsilon, m} \le C_m \varepsilon^m\) \cite{Nguyen_2017}.

The \emph{gap potentials} on the right-hand side of \eqref{ineq_degree_estimate_nguyen} also appeared in estimates on lifting of maps into the circle \cite{Nguyen_2008_CRAS}*{theorem 2} and were showed to characterize as \(\varepsilon \to 0\) Sobolev spaces \citelist{\cite{Nguyen_2006}\cite{Nguyen_2007_CRAS}\cite{Nguyen_2008}\cite{Nguyen_Pinamonti_Squassina_Vecchi_2018}} and provide a property stronger than VMO \cite{Brezis_Nguyen_2011}.

\subsection{Estimates on the Hopf invariant}

Continuous maps from \(\Sset^3\) into \(\Sset^2\) are classified by the \emph{Hopf invariant} \(\Hopf (f)\). Tristan \familyname{Rivière} has proved that 
\cite{Riviere_1998}*{lemma III.1} (see also \citelist{\cite{Gromov_1999}\cite{Gromov_1999_book}*{Lemma 7.12}})
\begin{equation}
\label{ineq_Hopf_Riviere}
    \abs{\Hopf{f}} 
  \le 
    C \,\biggl(\int_{\Sset^3} \abs{Df}^3\biggr)^{1 + \frac{1}{3}}.
\end{equation}
Compared to the corresponding estimate of the topological degree \eqref{ineq_degree_W_1_n}, a power \(1 + \frac{1}{3}\) applied to the integral appears, related to the Whitehead formula for the Hopf invariant \cite{Whitehead_1947}. 
No fractional counterpart to \eqref{ineq_Hopf_Riviere} seems to be know (see \cref{problem_hopf_Sobolev} below).

Rivi\`ere's bound \eqref{ineq_Hopf_Riviere} extends straightforwardly to a its higher-dimensional counterpart which is a homotopy invariant for maps from the sphere \(\Sset^{2 n - 1}\) into \(\Sset^n\) \cite{Bott_Tu_1982}*{proposition 17.22}, resulting in an estimate
\begin{equation}
\label{ineq_Riviere_n}
 \abs{\Hopf{f}} 
 \le C \,\biggl(\int_{\Sset^{2n - 1}} \hspace{-1em} \abs{Df}^{2n - 1}\biggr)^{1 + \frac{1}{2n - 1}}.
\end{equation}
The Hopf invariant takes nontrivial values when \(n\) is even \cite{Bott_Tu_1982}*{proposition 17.22} but is not necessarily injective (when \(n \in \{1, \dotsc, 20\}\), it is injective if and only if \(n \in \{2, 6\}\) \cite{Toda_1962}).
In all cases, only finitely many homotopy classes share the same value of the Hopf invariant and thus any set of maps which is bounded in the Sobolev space \(W^{1, 2m - 1} (\Sset^m, \Sset^{2m - 1})\) is contained in finitely homotopy classes of maps. 

By a theorem of Jean-Pierre \familyname{Serre} \cite{Serre_1951}, all the other classes of continuous maps between spheres of different dimension consist only of finitely many homotopy classes; 
thus in general a bounded set of maps in \(W^{1, 3} (\Sset^m, \Sset^n)\) is contained in finitely many homotopy classes.

\subsection{Estimates on free homotopy decompositions}
The results outlined above for maps between spheres raise the question whether sets which are bounded Sobolev norms are contained in finitely many homotopy classes of maps.

When \(s \in (0, 1)\), \(p \in (1, + \infty)\) and \(s p > m\), the classical Morrey--Sobolev embedding (see for example \citelist{\cite{Brezis_2011}*{theorem 9.12}\cite{Willem_2013}*{lemma 6.4.3}}) ensures that sets which are bounded in energy in \(W^{s, p} (\Sset^m, \manifold{N})\) are also bounded in the space of H\"older-continuous functions \(\mapsset{C}^{0, s - m/p} (\Sset^m, \manifold{N})\), and thus by the Ascoli compactness criterion and the local invariance of homotopy classes, they are contained in finitely homotopy classes.

A slighly more subtle case is \(W^{1, 1} (\Sset^1, \manifold{N})\): although there is no compact embedding in the set of continuous maps, each map is homotopic to a map whose Lipschitz constant is controlled; hence bounded sets are contained in finitely many homotopy classe.

In the general case of \(W^{s, p} (\Sset^m, \manifold{N})\) with \(sp=m\) and \(p > 1\) with an arbitrary target manifold \(\manifold{N}\), such a control turns out to be impossible. 
In order to construct infinitely many non-homotopic maps whose Sobolev energies remain bounded, we rely on the following definition:

\begin{definition}%
[Free homotopy decomposition]%
\label{definition_homotopy_power}%
A map \(f \in \mapsset{C} (\Sset^m, \manifold{N})\) has a \emph{free homotopy decomposition into} the maps \(f_1, \dotsc, f_k \in \mapsset{C} (\Sset^m, \Sset^m)\) whenever there exists a map \(g \in \mapsset{C} (\Sset^m, \manifold{N})\) homotopic to \(f\) on \(\Sset^m\) and nontrivial geodesic balls \(B_{\rho_1} (a_1), \dotsc, B_{\rho_\ell} (a_k) \subset \Sset^m\) 
such that \(g\) is constant on \(\Sset^m \setminus \textstyle \bigcup_{i = 1}^k B_{\rho_i} (a_i)\) and for every \(i \in \{1, \dotsc, k\}\), its restriction \(g\vert_{\Bar{B}_{\rho_i}(a_i)}\) is homotopic to some \(f_i \in \mapsset{F}\) on \(\Sset_m \simeq \Bar{B}_{\rho_i}(a_i)/\partial B_{\rho_i} (a_i)\).
\end{definition}

The map \(g\) is well defined on the quotient \(\Bar{B}_{\rho_i}(a_i)/\partial B_{\rho_i} (a_i) \simeq \Sset^m\) because it is constant on \(\partial B_{\rho_i} (a_i)\).

The free homotopy decomposition appears in the construction of harmonic and polyharmonic maps that are known in many instances to generate through free homotopy decomposition all the homotopy classes \citelist{
\cite{SacksUhlenbeck1981}*{theorem 5.5}
\cite{GastelNerf2013}*{theorem 14}
}.

The free homotopy decomposition is an invariant under homotopies of the maps, but is not in general a faithful invariant: for example if \(\manifold{N} = (\Sset^1 \times \Sset^{2m} \cup \Sset^m \times \Sset^{m + 1})/\Sset^{2 m}\), then there are two maps into which \emph{infinitely many} homotopy classes decompose freely (see \cref{proposition_unbounded_glue} below).

The next result shows that maps that have the same free homotopy decomposition satisfy up to homotopy the same fractional Sobolev bound.

\begin{theorem}
[Bound on the Sobolev energy by free homotopy decomposition]
\label{theorem_bubbles_are_bounded}
Let \(m \in \Nset_*\) and \(\manifold{N}\) be a connected Riemannian manifold.
If \(f \in \mapsset{C} (\Sset^m, \manifold{N})\) has a free homotopy decomposition into \(f_1, \dotsc, f_k \in \mapsset{C} (\Sset^m, \manifold{N})\), then for every \(s \in (0, 1]\) and \(p \in [m, +\infty)\) such that \(p = m/s > 1\), 
\begin{equation*}
    \inf 
      \,
      \bigl\{
        \mathcal{E}^{s, p} (g)
      \st 
        g \in (\mapsset{C} \cap W^{s, p}) (\Sset^m, \manifold{N}) \text{ is homotopic to } f 
      \bigr\}
  \le 
    \sum_{i = 1}^k
      \mathcal{E}^{s, p} (f_i)
      \eofs .
\end{equation*}
\end{theorem}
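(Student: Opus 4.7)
The plan is to construct, for every $\varepsilon > 0$, a continuous competitor $h_\varepsilon \in (\mapsset{C} \cap W^{s, p})(\Sset^m, \manifold{N})$ freely homotopic to $f$ satisfying $\mathcal{E}^{s, p}(h_\varepsilon) \le \sum_{i = 1}^k \mathcal{E}^{s, p}(f_i) + \varepsilon$, and then to let $\varepsilon \to 0$. Assume every $\mathcal{E}^{s, p}(f_i) < +\infty$ (otherwise the bound is vacuous), and let $g$ denote the continuous representative of the free homotopy decomposition of $f$, constant equal to some $c \in \manifold{N}$ on $\Sset^m \setminus \bigcup_i B_{\rho_i}(a_i)$. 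I first freely homotope each $f_i$ to a continuous map $\tilde f_i$ equal to $c$ on an open neighborhood $U_i$ of a fixed ``south pole'' $p_0 \in \Sset^m$, with $\mathcal{E}^{s, p}(\tilde f_i) \le \mathcal{E}^{s, p}(f_i) + \varepsilon/(2 k)$. This is achieved by moving $f_i (p_0)$ to $c$ along a continuous path in $\manifold{N}$ and then contracting $f_i$ to $c$ on a small enough neighborhood of $p_0$ via a radial geodesic homotopy inside a convex ball of $\manifold{N}$; the added energy is controlled by the absolute continuity of the Gagliardo double integral with respect to the product measure on $\Sset^m \times \Sset^m$.

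Next, for each $i$ and $\lambda \in (0, 1]$, let $\Psi_{i, \lambda} : \Sset^m \to \Sset^m$ be the M\"obius transformation which, in stereographic coordinates $\sigma$ centered at $p_0$, acts as the Euclidean affine map $x \mapsto (x - \sigma (a_i))/\lambda$. As $\lambda \to 0$, $\Psi_{i, \lambda}$ sends the complement of a ball $B_{r (\lambda)}(a_i)$ into $U_i$, with $r (\lambda) \to 0$. For $\lambda$ small enough that the balls $B_{r (\lambda)}(a_i)$ are pairwise disjoint and contained in $B_{\rho_i}(a_i)$, define
\begin{equation*}
 h_\lambda (x) \defeq \begin{cases} \tilde f_i \bigl( \Psi_{i, \lambda}(x) \bigr) & \text{if } x \in B_{r (\lambda)}(a_i), \\ c & \text{otherwise,} \end{cases}
\end{equation*}
which is continuous since $\tilde f_i \circ \Psi_{i, \lambda} \equiv c$ outside $B_{r (\lambda)}(a_i)$. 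The identity component of the conformal group of $\Sset^m$ being connected, $\Psi_{i, \lambda}$ is isotopic to the identity; hence the restriction of $h_\lambda$ to $\Bar B_{\rho_i}(a_i) / \partial B_{\rho_i}(a_i) \simeq \Sset^m$ is freely homotopic to $\tilde f_i$, to $f_i$, and to $g\vert_{\Bar B_{\rho_i}(a_i) / \partial B_{\rho_i}(a_i)}$; gluing these local homotopies yields a free homotopy between $h_\lambda$ and $g$, thus between $h_\lambda$ and $f$.

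For the energy estimate, the key observation is the exact conformal invariance of $\mathcal{E}^{s, p}$ at the critical exponent $s p = m$: the identity $|\sigma^{-1} (u) - \sigma^{-1} (v)|^2 = 4 |u - v|^2 / ((1 + |u|^2)(1 + |v|^2))$ combined with the corresponding conformal factor of the volume form yields $\mathcal{E}^{s, p}_{\Sset^m} (f) = \mathcal{E}^{s, p}_{\Rset^m} (f \circ \sigma^{-1})$, and the right-hand side is M\"obius invariant on $\Rset^m$. Consequently, setting $H_i \defeq \tilde f_i \circ \Psi_{i, \lambda}$ (equal to $c$ outside $K_i \defeq B_{r (\lambda)}(a_i)$), we obtain $\mathcal{E}^{s, p}(H_i) = \mathcal{E}^{s, p}(\tilde f_i)$. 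Splitting the Gagliardo integral of $h_\lambda$ according to the partition $\Sset^m = K_0 \cup \bigcup_i K_i$ where $K_0 \defeq \Sset^m \setminus \bigcup_i K_i$ and rearranging gives
\begin{equation*}
 \mathcal{E}^{s, p}(h_\lambda) = \sum_{i = 1}^k \mathcal{E}^{s, p}(H_i) + \sum_{1 \le i \ne j \le k} \bigl( C_{ij} - 2 B_{ij} \bigr),
\end{equation*}
where $C_{ij}$ and $B_{ij}$ are integrals over $K_i \times K_j$ with integrands $|H_i (x) - H_j (y)|^p / |y - x|^{2 m}$ and $|H_i (x) - c|^p / |y - x|^{2 m}$ respectively. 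Because the sets $K_i, K_j$ for $i \ne j$ remain at mutual distance at least $\tfrac{1}{2} |a_i - a_j| > 0$, these cross terms are $O (r (\lambda)^{2 m}) \to 0$, so that $\mathcal{E}^{s, p} (h_\lambda) \le \sum_i \mathcal{E}^{s, p}(\tilde f_i) + \varepsilon / 2 \le \sum_i \mathcal{E}^{s, p}(f_i) + \varepsilon$ for $\lambda$ small.

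The crux of the argument is the exact conformal invariance of $\mathcal{E}^{s, p}$ at the critical exponent $s p = m$: the relation $s p = m$ is precisely what lets each M\"obius-concentrated bubble retain its full self-energy rather than vanishing or blowing up, and without it the construction would fail. The secondary technical point is the normalization of Step~1, which hinges on absolute continuity of the Gagliardo integral and local geodesic convexity of $\manifold{N}$ near the chosen value $c$; the vanishing of the cross terms in Step~3 is easier, being a consequence of the positive mutual separation of the bubble centers.
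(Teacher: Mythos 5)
Your Steps 2--3 are sound: the exact conformal invariance of the critical Gagliardo energy under M\"obius transformations is correct, the bookkeeping identity for $\mathcal{E}^{s,p}(h_\lambda)$ is right, and the cross terms over $K_i \times K_j$ do vanish since the integrand is bounded (the images are compact) while $|K_i \times K_j| \to 0$ at fixed mutual distance. The genuine gap is Step 1, which is where the entire analytic difficulty of the theorem is hidden. You need $\tilde f_i$ to take the \emph{prescribed} value $c$ near $p_0$, and $c$ is in general far from $f_i(p_0)$ in $\manifold{N}$, so the two points need not lie in a common convex ball and the modification must traverse a path $\sigma_i$ of fixed positive length. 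Inserting that transition across a small annulus $B_{2\delta}(p_0)\setminus B_\delta(p_0)$ costs an amount of critical energy that is \emph{bounded below independently of $\delta$}, precisely because $sp=m$ makes the energy scale-invariant: shrinking the neighbourhood buys you nothing, and ``absolute continuity of the Gagliardo integral'' only controls the contribution of the \emph{old} map $f_i$ on small sets, not the new oscillation you have introduced. To make the transition cost $o(1)$ one must spread it logarithmically over an annulus with radius ratio $e^\Lambda \to \infty$ (equivalently, over a conformal cylinder of length $\Lambda$), with cost of order $\Lambda^{1-p}$ --- this is exactly the paper's Mercator-cylinder construction and its estimate of the connecting segment, and it is the only place the hypothesis $p>1$ enters. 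Your argument never uses $p>1$, yet the theorem fails for $s=p=m=1$; that is the clearest symptom that Step 1 as justified cannot work.

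There is a second, topological gap in the same step. Choosing an arbitrary path from $f_i(p_0)$ to $c$ changes the \emph{based} class of the bubble by an element $\beta_i \in \pi_1(\manifold{N},c)$, and gluing maps whose restrictions to the quotient spheres are merely \emph{freely} homotopic to those of $g$ produces a map representing $\beta_1\cdot\gamma_1+\dotsb+\beta_k\cdot\gamma_k$ rather than $\gamma_1+\dotsb+\gamma_k$; by \cref{free_decomposition_pi_m} and \cref{proposition_unbounded_glue} this can lie in a different homotopy class from $f$, so ``gluing these local homotopies'' is not automatic. One must choose each $\sigma_i$ to be the trace of the basepoint under the homotopy from $g\vert_{\Bar B_{\rho_i}(a_i)/\partial B_{\rho_i}(a_i)}$ to $f_i$, which is what the paper's appeal to the homotopy extension property accomplishes, and it is also why the paper glues the bubbles along a \emph{path} $\gamma$ rather than a common constant. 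If you repair Step 1 by (a) tracking basepoints in this way and (b) paying for the path with a logarithmically stretched transition, your M\"obius-concentration framework does yield an alternative to the paper's explicit cylinder computation; but as written the proposal assumes the hard part.
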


In particular, \cref{theorem_bubbles_are_bounded} implies that all the homotopy classes that decompose freely into the maps \(f_1, \dotsc, f_k\) satisfy the same energy bound; if there are infinitely many such homotopy classes then there are infinitely many nonhomotopic map satisfying the same energy bound.

The proof of \cref{theorem_bubbles_are_bounded} is performed by gluing together the maps  \(f_1, \dotsc, f_k\) with an arbitrarily small energetic cost of gluing, performed through conformal transformations by Mercator projections.
\Cref{theorem_bubbles_are_bounded} does not cover the case \(s = p = m = 1\). This is consistent with our observation that a Sobolev energy bound gives a control on the homotopy classes.

\medskip

By taking the phenomenon described in \cref{theorem_bubbles_are_bounded} into account, it has been proved that for every \(\lambda > 0\), there exists a finite set \(\mapsset{F}\) and \(k \in \Nset\) such that every map \(f \in (W^{s, p} \cap \mapsset{C}) (\Sset^m, \manifold{N})\) satisfying 
\( \mathcal{E}^{s, m/s} (g) 
      \le 
        \lambda 
\) 
has a free homotopy decomposition into \(k\) maps of the set \(\mapsset{F}\)
for \(m = 1\), \(s = \frac{1}{2}\) and \(p = 2\) by Ernst \familyname{Kuwert} \cite{Kuwert1998},
when \(m \ge 1\), \(s = 1\) by Frank \familyname{Duzaar} and Ernst \familyname{Kuwert} \cite{DuzaarKuwert1998}*{theorem 4}, when \(m \ge 1\) and \(s = 1-\frac{1}{m + 1}\) by Thomas \familyname{Müller} \cite{Muller2000}*{theorem 5.1} and when \(m = 2\) and \(s = 1\) by Richard \familyname{Schoen} and Jon \familyname{Wolfson} \cite{SchoenWolfson2001}*{lemma 5.2}.

The critical case \(sp = m\) for estimates can be seen as a limiting case between the classical continuous picture of homotopy classes in the supercritical \(sp > m\) and the combination of collapses and appearance of homotopy classes in the subcritical case \(sp < m\) \citelist{\cite{BrezisLi2001JFA}\cite{BrezisLi2000CRAS}\cite{White1986JDG}\cite{Hang_Lin_2001_MRL}\cite{HangLin2003Acta}\cite{Hang_Lin_2003_CPAM}\cite{Hang_Lin_2005}}.

Our main result shows that these estimates are in fact consequences
of a stronger gap potential estimate similar to \eqref{ineq_degree_estimate_nguyen}.

\begin{theorem}
[Free homotopy decompositions controlled by a gap potential]
\label{theorem_bounded_finite_bubbles}
Let \(m \in \Nset_*\) and \(\manifold{N}\) be a compact Riemannian manifold.
If \(\varepsilon > 0\) is small enough, then there is a constant \(C > 0\) such that for every \(\lambda > 0\), there exists a finite set \(\mapsset{F}^\lambda \subset \mapsset{C} (\Sset^m, \manifold{N})\) such that any map \(f \in \mapsset{C} (\Sset^m, \manifold{N})\) satisfying
\begin{equation*}
          \iint
            \limits_{
              \substack{
                (x, y) \in \Sset^m \times \Sset^m \\                 
                d_{\manifold{N}} (f (y), f (x)) > \varepsilon}} 
              \frac{1}{\abs{y - x}^{2 m}} 
            \diff y
            \diff x
        \le 
          \lambda\eofs ,
\end{equation*}
has a free homotopy decomposition into \(f_1, \dotsc, f_k \in \mapsset{F}^\lambda\) with \(k \le C \lambda\).
\end{theorem}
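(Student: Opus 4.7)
The plan is to proceed by a concentration--compactness procedure, treating the gap potential as an energy that quantifies the localized jumps of the map \(f\). The first task is to establish a quantitative local continuity principle: if the localized gap potential
\[
  \iint_{\substack{(x, y) \in B \times B \\ d_{\manifold{N}}(f(y), f(x)) > \varepsilon}}
    \frac{\diff y \diff x}{\abs{y - x}^{2 m}}
\]
on a ball \(B \subset \Sset^m\) is less than a small enough threshold \(\eta > 0\), then the oscillation of \(f\) on a concentric ball of slightly smaller radius is bounded by the injectivity radius of \(\manifold{N}\) (assuming \(\varepsilon\) is smaller than that radius, which is where the smallness assumption on \(\varepsilon\) enters). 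The scale-invariant exponent \(2m\) in the integrand is what makes this estimate dilation-invariant; the argument is a dyadic iteration showing that a single pair \((x, y)\) with a jump larger than \(\varepsilon\) forces a positive mass of nearby pairs to also contribute to the gap potential.

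Using this local continuity principle, I would then perform a Vitali or Besicovitch selection on \(\Sset^m\) to extract a maximal disjoint collection of balls \(B_{\rho_1}(a_1), \ldots, B_{\rho_k}(a_k)\) each carrying at least \(\eta\) units of the gap potential. Since the total gap potential is at most \(\lambda\), this yields the linear bound \(k \le C \lambda / \eta \le C' \lambda\). Outside a slight enlargement of these balls, the local gap potential is below \(\eta\) on every small ball, so the map \(f\) has small oscillation at every scale and can be continuously deformed to a constant on each connected component of the complement via the geodesic projection in \(\manifold{N}\). This produces a representative \(g\) with the structure required by \cref{definition_homotopy_power}, each \(f_i\) being the restriction of \(g\) to \(\bar{B}_{\rho_i}(a_i)\) read on the quotient sphere.

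It remains to build the finite set \(\mapsset{F}^\lambda\). Each extracted bubble \(f_i\), viewed as a map \(\Sset^m \to \manifold{N}\) through the conformal identification \(\bar{B}_{\rho_i}(a_i)/\partial B_{\rho_i}(a_i) \simeq \Sset^m\), inherits a gap potential bounded by a constant multiple of \(\lambda\), again thanks to the scale-invariance of the \(2m\)-homogeneous integrand. Hence the construction of \(\mapsset{F}^\lambda\) reduces to showing that continuous maps \(\Sset^m \to \manifold{N}\) whose gap potential is at most \(C\lambda\) realize only finitely many homotopy classes. I would establish this by induction on the total gap potential, applying the very extraction procedure just described to each bubble: each iteration either produces a map of bounded oscillation (belonging, by the local continuity principle and compactness of \(\manifold{N}\), to one of finitely many homotopy classes of VMO maps in the sense of Brezis--Nirenberg) or splits off a sub-bubble carrying at least \(\eta\) units of gap potential, so that the iteration terminates after at most \(\lambda/\eta\) steps.

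The main obstacle will be closing the induction: the set \(\mapsset{F}^\lambda\) must contain representatives of all possible bubbles, which themselves may further decompose. Linear termination is secured by the quantum \(\eta\) of gap potential that every bubble consumes, but one must ensure that the thresholds \(\eta\) and \(\varepsilon\), and the constants controlling oscillation, can be chosen uniformly across all the rescaled scales that appear through the iteration. A further delicate point is the \emph{conformal} identification of each bubble with \(\Sset^m\): the Riemannian metric on \(\manifold{N}\) is not dilation-invariant, so the gap potential of the rescaled bubble has to be estimated using the conformal invariance of the \(2m\)-homogeneous integrand on the domain together with the unchanged metric on the target, a computation which is exactly where the choice of the exponent \(2m\) pays off.
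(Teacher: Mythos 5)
Your outline diverges fundamentally from the paper's proof, and two of its steps contain genuine gaps rather than omitted details. The first and most serious is the passage from ``the localized gap potential is below \(\eta\) on every small ball of the complement of the concentration balls'' to ``\(f\) can be continuously deformed to a constant on each connected component of the complement.'' Smallness of oscillation at every small scale is a purely local statement and does not control the homotopy class of the restriction of \(f\) to the multiply-punctured sphere: a map can be locally almost constant everywhere on that region and still carry all of its topology there (for \(m=1\), an identity-like map of small Lipschitz constant is locally almost constant on every short arc but is not null-homotopic on any arc joining two prescribed boundary values, and the same phenomenon persists for \(m\ge 2\)). Converting local almost-constancy into the global statement that a homotopic representative \(g\) is constant outside the balls is exactly the hard point, and the paper resolves it not on \(\Sset^m\) but by filling: it extends \(f\) hyperharmonically to \(F:\Hset^{m+1}\to\Rset^\nu\) with \(\abs{DF}_{\Hset^{m+1}}\le m\osc f\), shows that the set where \(F\) cannot be retracted onto \(\manifold{N}\) has hyperbolic volume at most \(C\lambda\), covers that set by at most \(C\lambda\) disjoint hyperbolic balls of total radius \(\le C\lambda\), and reads the free homotopy decomposition off the retracted extension \(\Pi\compose F\) on the complement of those balls in \(\Hset^{m+1}\). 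Your proposal has no substitute for this filling mechanism.

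The second gap is the finiteness of \(\mapsset{F}^\lambda\). In your construction the bubbles are restrictions of the merely continuous map \(f\) (or of \(g\)), so they carry no modulus of continuity and Ascoli's theorem is not available; you replace it by an induction on the gap potential, but that induction does not close. Splitting off a sub-bubble carrying \(\eta\) units does not decrease the gap potential of the remaining pieces by \(\eta\) (the potential is not additive over a decomposition of the domain, and each sub-bubble may itself still carry up to \(C\lambda\)), and the terminal case of the induction --- ``bounded oscillation implies finitely many homotopy classes'' --- is again the unproven local-to-global implication from the previous paragraph. In the paper the finiteness is quantitative and immediate: the bubbles are restrictions of the Lipschitz extension \(\Pi\compose F\) to hyperbolic spheres \(\partial B^{\Hset^{m+1}}_{\rho_i}(a_i)\) with \(\rho_i\le C\lambda\), which by \cref{lemma_HyperbolicSphere} are Euclidean spheres of radius \(\sinh\rho_i\), so their Lipschitz constants are bounded by \(C\sinh(C'\lambda)\) and Ascoli applies. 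Your opening ``local continuity principle'' is, by contrast, a correct and available ingredient (it is essentially Brezis--Nguyen's Proposition~1, cited in the paper), but on its own it cannot carry the argument to the conclusion.
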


In fact it can be observed that under the assumptions of \cref{theorem_bounded_finite_bubbles} any \emph{measurable map} that satisfies the integrability condition with \(\varepsilon\) small enough has a small mean oscillation on small scales \citelist{\cite{Brezis_Nguyen_2011}*{proposition 1}\cite{Nguyen_2011_CVAR}} and therefore can be associated naturally and uniquely to a homotopy class of continuous maps from \(\Sset^m\) to \(\manifold{N}\) (see \cite{BrezisNirenberg1995}*{(8), remark 7 and lemma A.5}).

The appearance of free homotopy decompositions in which the way of gluing the \(k\) maps together is arbitrary and uncontrolled can be thought of as a \emph{topological bubbling phenomenon}, which is a topological version of the geometric bubbling phenomenon in conformally invariant geometric problems \citelist{\cite{Parker_2003}\cite{Druet_Hebey_Robert}\cite{SacksUhlenbeck1981}}.
In many cases however, \cref{theorem_bounded_finite_bubbles} implies that maps satisfying a bound on the gap potential can only belong to finitely many homotopy classes.

\begin{theorem}
[Finitely many homotopy classes under a gap potential bound]
\label{theorem_bounded_homotopy_classes}
Let \(m \in \Nset_*\) and \(\manifold{N}\) be a compact Riemannian manifold.
If \(m = 1\) and every conjugacy class of \(\pi_1 (\manifold{N})\) is finite or \(m \ge 2\) and every orbit of the action of \(\pi_1 (\manifold{N})\) on \(\pi_m (\manifold{N})\) is finite, and if \(\varepsilon > 0\) is small enough, then for every \(\lambda > 0\), there exists a finite set \(\mapsset{G}^\lambda \subset \mapsset{C} (\Sset^m, \manifold{N})\) such that any map \(f \in \mapsset{C} (\Sset^m, \manifold{N})\) satisfying
\begin{equation*}
          \iint
            \limits_{
              \substack{
                (x, y) \in \Sset^m \times \Sset^m \\                 
                d_{\manifold{N}} (f (y), f (x)) > \varepsilon}} 
              \frac{1}{\abs{y - x}^{2 m}} 
            \diff x 
            \diff y 
        \le 
          \lambda\eofs ,
\end{equation*}
is homotopic to some \(g \in \mapsset{G}^\lambda\).
\end{theorem}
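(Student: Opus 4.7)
I would deduce \cref{theorem_bounded_homotopy_classes} from \cref{theorem_bounded_finite_bubbles} by a finite counting argument on the free homotopy classes that can arise from bounded free homotopy decompositions. Applying \cref{theorem_bounded_finite_bubbles} to a map \(f\) satisfying the gap potential bound produces a free homotopy decomposition into maps \(f_1, \dotsc, f_k \in \mapsset{F}^\lambda\) with \(k \le C\lambda\), where \(\mapsset{F}^\lambda\) is finite. It therefore suffices to show that, under the stated hypothesis on \(\pi_1 (\manifold{N})\), only finitely many free homotopy classes of continuous maps \(\Sset^m \to \manifold{N}\) admit a free homotopy decomposition into at most \(k\) maps chosen from the finite set \(\mapsset{F}^\lambda\); picking one representative per such class then defines the finite set \(\mapsset{G}^\lambda\).

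When \(m \ge 2\), the free homotopy classes of continuous maps \(\Sset^m \to \manifold{N}\) are in bijection with the quotient \(\pi_m (\manifold{N})/\pi_1 (\manifold{N})\) of the homotopy group by the action of the fundamental group. Fixing as base point the common value \(p_0 \in \manifold{N}\) of the bubbled map on the complement of the balls \(B_{\rho_i}(a_i)\), I would identify the pointed homotopy class of the glued map with the sum \(\sum_{i = 1}^k [\tilde f_i] \in \pi_m (\manifold{N}, p_0)\), where each \([\tilde f_i]\) is a pointed representative of the free class of \(f_i\), obtained by contracting the complement of the bubbles along paths to \(p_0\). Since each \(\pi_1\)-orbit in \(\pi_m (\manifold{N})\) is finite by hypothesis, each \([\tilde f_i]\) lies in a finite subset once \(f_i\) is fixed; summing at most \(k \le C \lambda\) such elements drawn from the finitely many orbits corresponding to \(\mapsset{F}^\lambda\) yields only finitely many values in \(\pi_m (\manifold{N})\), hence finitely many free homotopy classes.

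When \(m = 1\), the free homotopy classes of maps \(\Sset^1 \to \manifold{N}\) are in bijection with the set of conjugacy classes of \(\pi_1 (\manifold{N})\), and the pointed class of the glued map is the ordered product of pointed representatives of the free classes of the \(f_i\) taken in the cyclic order of the arcs \(B_{\rho_i} (a_i) \subset \Sset^1\). The same counting scheme applies, with \(\pi_1\)-orbits replaced by conjugacy classes in \(\pi_1 (\manifold{N})\): the number of pointed representatives of each \(f_i\) is finite by hypothesis, the number of cyclic orderings is bounded by \(k!\), and \(k \le C \lambda\) is bounded, yielding finitely many possible products and hence finitely many conjugacy classes.

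The main obstacle is the precise identification of the pointed class of a glued map with a sum (respectively ordered product) of pointed representatives of its bubbles. This identification requires choosing and tracking, through the constant region where \(g \equiv p_0\), paths from \(p_0\) to a base point of each bubble, and verifying that different choices of these paths produce exactly the \(\pi_1\)-action on \(\pi_m (\manifold{N}, p_0)\) (respectively, conjugation in \(\pi_1 (\manifold{N})\)). Once this bookkeeping is performed, the argument reduces to an elementary finite counting.
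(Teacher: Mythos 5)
Your proposal is correct and follows essentially the same route as the paper: the paper deduces \cref{theorem_bounded_homotopy_classes} from \cref{theorem_bounded_finite_bubbles} together with \cref{proposition_finite_homotopy_classes}, whose proof rests on exactly the algebraic identifications you describe (the content of \cref{free_decomposition_pi_1} and \cref{free_decomposition_pi_m}, quoted from Hatcher) and the same finite counting over tuples \(f_1, \dotsc, f_k \in \mapsset{F}^\lambda\) with \(k \le C\lambda\). The ``bookkeeping'' you flag as the main obstacle is precisely what those two propositions encapsulate, so your argument is a faithful unfolding of the paper's one-line proof.
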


The assumptions of \cref{theorem_bounded_homotopy_classes} are satisfied in particular when \(\pi_1 (\manifold{N})\) is finite, if \(m = 1\) and \(\pi_1 (\manifold{N})\) is abelian or if \(m \ge 2\) and the action of \(\pi_1 (\manifold{N})\) on \(\pi_m (\manifold{N})\) is trivial.

In particular, under the assumptions of \cref{theorem_bounded_homotopy_classes}, the homotopy group \(\pi_m (\manifold{N})\) endowed with the norm naturally induced by a Sobolev energy  
satisfier a sufficient condition for compactness of the currents with coefficients on an abelian group
\cite{Fleming1966TAMS}*{assumption (H), lemma 7.4 and corollary 7.5} (when \(m = 1\),  this only makes sense when the group \(\pi_1 (\manifold{N})\) is abelian).

When \(m \ge 2\), in analogy with the optimal scaling \(\varepsilon^m\) when \(\varepsilon \to 0\) of estimates \cite{Nguyen_2017}, 
we obtain a similar optimal scaling in \(\varepsilon\) (see \cref{theorem_bounded_finite_bubbles_scaling} below), with a different strategy of proof than \cite{Nguyen_2017}.

\medskip

The proof of \cref{theorem_bounded_finite_bubbles} is performed in a geometric setting where 
the sphere \(\Sset^m\) is considered as the boundary at infinity of the \emph{hyperbolic space} \(\Hset^{m + 1}\) and the manifold \(\manifold{N}\) is \emph{embedded isometrically} into a Euclidean space \(\Rset^\nu\).
The extension of the map \(f\) by averaging at each point \(x \in \Hset^{m + 1}\) over the sphere at infinity --- which is also in fact the \emph{hyperharmonic extension} --- provides a Lipschitz-continuous extension \(F : \Hset^{m + 1} \to \Rset^{\nu}\).
The set on which the values of the map \(F\) cannot be retracted to \(\manifold{N}\) is contained in a number of balls whose diameter and number is controlled allowing to construct the families of maps by a classical Ascoli compactness argument for continuous maps.

\medskip

In view of \cref{theorem_bubbles_are_bounded}, \cref{theorem_bounded_finite_bubbles} describes sharply the homotopy classes that can be encountered under a boundedness assumption on the double integral. However, our proof exhibits a set of maps \(\mapsset{F}^\lambda\) by a compactness argument and gives thus double exponential bound of the form \(\exp (C \sinh (C' \lambda))\) on the cardinal of \(\mapsset{F}^\lambda\). This brings the question whether a better explicit control like the linear estimate \eqref{ineq_degree_estimate_nguyen}.

When the homotopy classes can be controlled by the homology, that is, when the Hurewicz homomorphism from \(\pi_m (\manifold{N})\) to the rational homology group \(H_m (\manifold{N})\) has a finite kernel, we recover a linear control on the number of homotopy classes that satisfy a given bound  (see \cref{proposition_Hurewicz_nonlocal} below).

When the domain \(\Sset^m\) is replaced by a general \(m\)--dimensional manifold \(M\), \cref{theorem_bounded_finite_bubbles} has a natural generalization, in which the corresponding homotopy classes are generated by a finite set of homotopy classes of \(\mapsset{C} (\manifold{M}, \manifold{N})\) glued together with a finite number of maps taken in finitely many homotopy classes of \(\mapsset{C} (\Sset^m, \manifold{N})\) (see \cref{section_domain_manifold} below). 
As before, there can be in general infinitely many homotopy classes generated in this way by finitely many homotopy classes. 
The strategy of the proof is similar.

\medskip

As perspectives of the present work, 
several \emph{open problems} are presented in the last section of the present work (see \cref{section_problems}).

\section{Free homotopy decomposition}

\subsection{Free homotopy decomposition and homotopy groups}
The notion of free homotopy decomposition of \cref{definition_homotopy_power} plays an important role in the present work. 
We describe here free homotopy decomposition in terms of homotopy groups.

We define \(f \in \mapsset{C} (\Sset^m, \manifold{N})\) and \(\gamma \in \pi_m (\manifold{N})\) to be \emph{homotopic} whenever any representative of the relative homotopy class \(\gamma\) is homotopic to the map \(f\). 
Since we have not fixed a base point in the homotopy between the representative in \(\gamma \in \pi_m (\manifold{N})\) and the map \(f\), a given map \(f \in \mapsset{C} (\Sset^m, \manifold{N})\) can be homotopic to \emph{several distinct elements} of \(\pi_m (\manifold{N})\).

\medbreak

When \(m = 1\), the elements of the fundamental group \(\pi_1 (\manifold{N})\) homotopic to a \emph{free homotopy class} of maps from the circle \(\Sset^1\) to \(\manifold{N}\) form a \emph{conjugacy class} of the fundamental group \(\pi_1 (\manifold{N})\) (see for example \cite{Hatcher_2002}*{exercise 1.1.6 and proposition 4A.2}). 

\begin{proposition}%
[Free decompositions and the fundamental group]
\label{free_decomposition_pi_1}
Assume that the maps \(f, f_1, \dotsc, f_k \in \mapsset{C} (\Sset^1, \manifold{N})\) are respectively homotopic to \(\gamma, \gamma_1, \dotsc, \gamma_k \in \pi_1 (\manifold{N})\). Then 
\(f\) has a free homotopy decomposition into \(f_1, \dotsc, f_k\) if and only if there exist \(\beta_1, \dotsc, \beta_k \in \pi_1 (\manifold{N})\) such that \(\gamma = \beta_1 \gamma_1 \beta_1^{-1} \dotsm \beta_k \gamma_k \beta_k^{-1}\). 
\end{proposition}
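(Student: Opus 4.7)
My plan is to translate the topological notion of free homotopy decomposition on the circle into the algebraic operation of concatenating conjugates in the fundamental group, and to verify each implication by elementary loop concatenation.

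For the ``if'' direction, suppose $\gamma=\beta_1\gamma_1\beta_1^{-1}\dotsm\beta_k\gamma_k\beta_k^{-1}$. I would fix a base point $p_0\in\manifold{N}$ and partition $\Sset^1$ into $2k$ consecutive closed arcs, alternating $k$ arcs $\overline{B_{\rho_i}(a_i)}$ with $k$ short separating arcs. On each separating arc I set $g\equiv p_0$, and on each $\overline{B_{\rho_i}(a_i)}$ I parametrize $g$ as a loop at $p_0$ representing the element $\beta_i\gamma_i\beta_i^{-1}\in\pi_1(\manifold{N},p_0)$. Then $g$ is continuous, equal to the constant $p_0$ on $\Sset^1\setminus\bigcup_i B_{\rho_i}(a_i)$, and the quotient of its restriction to $\overline{B_{\rho_i}(a_i)}$ by $\partial B_{\rho_i}(a_i)$ has based class $\beta_i\gamma_i\beta_i^{-1}$, hence free class equal to that of $\gamma_i$, and is therefore freely homotopic to $f_i$. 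Finally $g$ viewed as a based loop at $p_0$ represents the product $\beta_1\gamma_1\beta_1^{-1}\dotsm\beta_k\gamma_k\beta_k^{-1}=\gamma$, whose free class is that of $f$, so $g$ is freely homotopic to $f$.

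For the ``only if'' direction, I would start from a decomposition witnessed by $g$ and arcs $B_{\rho_i}(a_i)$, set $p_0$ equal to the constant value of $g$ on the complement of these arcs, and parametrize $\Sset^1$ starting from a point of the complement. Let $B_{\rho_{i_1}}(a_{i_1}),\dotsc,B_{\rho_{i_k}}(a_{i_k})$ denote the arcs in the cyclic order encountered. Each restriction $g|_{\overline{B_{\rho_{i_j}}(a_{i_j})}}$ sends both endpoints to $p_0$ and thus defines a based loop at $p_0$; since the induced map $\overline{B_{\rho_{i_j}}(a_{i_j})}/\partial B_{\rho_{i_j}}(a_{i_j})\simeq\Sset^1\to\manifold{N}$ is freely homotopic to $f_{i_j}$, its based class must be a conjugate $\beta'_j\gamma_{i_j}(\beta'_j)^{-1}$ of $\gamma_{i_j}$. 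Concatenating around the circle, the based class of $g$ equals $\beta'_1\gamma_{i_1}(\beta'_1)^{-1}\dotsm\beta'_k\gamma_{i_k}(\beta'_k)^{-1}$, and because $g$ is freely homotopic to $f$ whose free class is represented by $\gamma$, this product is conjugate to $\gamma$; conjugating and absorbing the conjugating element into each $\beta'_j$ gives an expression of $\gamma$ as a product of conjugates of $\gamma_{i_1},\dotsc,\gamma_{i_k}$.

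The main obstacle I anticipate is that this argument produces the $\gamma_i$'s in the cyclic order $i_1,\dotsc,i_k$ in which they appear along the circle, not necessarily in the standard order $1,\dotsc,k$ demanded by the statement. I plan to bridge this gap by the elementary identity $h_1 h_2=(h_1 h_2 h_1^{-1})\,h_1$, which shows that any product of an element of the conjugacy class $[\gamma_a]$ by an element of $[\gamma_b]$ can be rewritten as a product of an element of $[\gamma_b]$ by an element of $[\gamma_a]$. Iterating this transposition of adjacent conjugacy classes, the set of elements expressible as $\beta_1\gamma_{\sigma(1)}\beta_1^{-1}\dotsm\beta_k\gamma_{\sigma(k)}\beta_k^{-1}$ for varying $\beta_1,\dotsc,\beta_k$ is independent of the permutation $\sigma$, which reduces to the standard order and completes the argument.
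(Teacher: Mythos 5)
Your argument is correct, and it supplies details that the paper itself omits: \cref{free_decomposition_pi_1} is stated without proof, with the key input (free homotopy classes of loops correspond to conjugacy classes in \(\pi_1(\manifold{N})\)) delegated to the cited reference. Your two directions are exactly the expected translation: in one direction you realize each conjugate \(\beta_i\gamma_i\beta_i^{-1}\) on a disjoint arc and read off the based class of the concatenation; in the other you read the based classes of the restrictions \(g\vert_{\Bar{B}_{\rho_i}(a_i)}\) in the cyclic order in which the arcs occur and use that free homotopy of the quotient maps forces each based class to be conjugate to \(\gamma_i\). The one point you raise that genuinely needs an argument --- that the arcs may appear along \(\Sset^1\) in an order different from \(1,\dotsc,k\) --- is handled correctly by the identity \(h_1h_2=(h_1h_2h_1^{-1})h_1\), which shows the set of products of conjugates is invariant under permuting the factors; this is a worthwhile observation, since the definition of a free homotopy decomposition imposes no ordering on the balls. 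The only implicit assumption you make (shared with the paper's definition) is that the closed arcs \(\Bar{B}_{\rho_i}(a_i)\) are pairwise disjoint and proper, so that \(g\) is constant on each \(\partial B_{\rho_i}(a_i)\) and a base point can be chosen in the complement; this is the intended reading and does not affect the validity of your proof.
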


In particular, when the fundamental group \(\pi_1 (\manifold{N})\) is abelian, the homotopy classes of \(\mapsset{C} (\Sset^1, \manifold{N})\) correspond to elements in \(\pi_1 (\manifold{N})\) and the map  \(f\) has a free homotopy decomposition into \(f_1, \dotsc, f_k\) if and only if \(\gamma =  \gamma_1  \dotsm \gamma_k\).

\medbreak

When \(m \ge 2\), the elements of \(\pi_m (\manifold{N})\) corresponding to a \emph{free homotopy class} of maps from the circle \(\manifold{N}\) correspond to orbits of the action of the fundamental group \(\pi_1 (\manifold{N})\) on the homotopy group \(\pi_{m} (\manifold{N})\) (see for example \cite{Hatcher_2002}*{proposition 4A.2}).

\begin{proposition}%
[Free decompositions and the homotopy groups]%
\label{free_decomposition_pi_m}
Let \(m \ge 2\) and assume that the maps \(f, f_1, \dotsc, f_k \in \mapsset{C} (\Sset^m, \manifold{N})\) are respectively homotopic to \(\gamma, \gamma_1, \dotsc, \gamma_k \in \pi_m (\manifold{N})\). Then 
\(f\) has a free homotopy decomposition into \(f_1, \dotsc, f_k\) if and only if there exists \(\beta_1, \dotsc, \beta_k \in \pi_1 (\manifold{N})\) such that \(\gamma = \beta_1 \cdot \gamma_1 + \dotsb + \beta_k \cdot \gamma_k\). 
\end{proposition}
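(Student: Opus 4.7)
The plan is to exploit the standard correspondence between free homotopy classes in $[\Sset^m, \manifold{N}]$ and orbits of the $\pi_1 (\manifold{N})$-action on $\pi_m (\manifold{N})$, combined with an explicit wedge-of-spheres realization of the addition in $\pi_m$. Throughout I fix a single base point $p \in \manifold{N}$ and relate every free homotopy class to a based class via a chosen path.

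For the ``if'' direction, assume $\gamma = \beta_1 \cdot \gamma_1 + \dotsb + \beta_k \cdot \gamma_k$. I would choose pairwise disjoint closed geodesic balls $\Bar{B}_{\rho_i}(a_i) \subset \Sset^m$ and pointed representatives $h_i \colon (\Sset^m, *) \to (\manifold{N}, p)$ of $\beta_i \cdot \gamma_i \in \pi_m(\manifold{N}, p)$, then define $g$ to be constantly equal to $p$ on the complement of the balls and to equal $h_i$ composed with the quotient $\Bar{B}_{\rho_i}(a_i) \to \Bar{B}_{\rho_i}(a_i)/\partial B_{\rho_i}(a_i) \simeq \Sset^m$ inside each ball. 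On each $\Bar{B}_{\rho_i}(a_i)$, the restriction $g\vert_{\Bar{B}_{\rho_i}(a_i)}$ then represents $\beta_i \cdot \gamma_i$, which lies in the same $\pi_1$-orbit as $\gamma_i$ and is therefore freely homotopic to $f_i$, so the required free homotopy decomposition is visible directly. Collapsing $\Sset^m \setminus \bigcup_i B_{\rho_i}(a_i)$ to a point factors $g$ through a wedge $\bigvee_{i=1}^k \Sset^m$, and the standard description of addition in $\pi_m$ (as the class of the composition of the pinch map with the wedge of representatives) shows that $g$ represents $\sum_i \beta_i \cdot \gamma_i = \gamma$ in $\pi_m(\manifold{N}, p)$, hence $g$ is homotopic to $f$.

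For the ``only if'' direction, I start from a map $g$ homotopic to $f$ realizing the decomposition, and set $p$ to be its constant value on $\Sset^m \setminus \bigcup_i B_{\rho_i}(a_i)$. Each restriction $g\vert_{\Bar{B}_{\rho_i}(a_i)}$ descends to a pointed map representing some $\alpha_i \in \pi_m(\manifold{N}, p)$; since it is freely homotopic to $f_i$ by assumption, $\alpha_i$ lies in the $\pi_1$-orbit of $\gamma_i$, so $\alpha_i = \beta_i' \cdot \gamma_i$ for some $\beta_i' \in \pi_1(\manifold{N})$. The same collapse-to-wedge argument as in the previous paragraph shows that $g$ represents $\alpha_1 + \dotsb + \alpha_k$ in $\pi_m(\manifold{N}, p)$, while $f$ represents a class in the same $\pi_1$-orbit as $\gamma$, which yields $\beta_0 \in \pi_1(\manifold{N})$ with $\gamma = \beta_0 \cdot (\alpha_1 + \dotsb + \alpha_k)$. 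Since $m \ge 2$, the $\pi_1(\manifold{N})$-action on $\pi_m(\manifold{N})$ is by group automorphisms, so I can distribute to get $\gamma = \sum_i (\beta_0 \beta_i') \cdot \gamma_i$, and choosing $\beta_i \defeq \beta_0 \beta_i'$ completes the proof.

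The main obstacle I anticipate is base-point bookkeeping: one must check carefully that changing base points along chosen paths converts the free homotopies (between $f$ and $g$, and between $f_i$ and $g\vert_{\Bar{B}_{\rho_i}(a_i)}$) into the correct elements of the $\pi_1$-orbit. The distributivity step at the end is what forces $m \ge 2$; the analogous argument for $m = 1$ (\cref{free_decomposition_pi_1}) produces a product of conjugates rather than a sum precisely because $\pi_1$ need not be abelian, even though the $\pi_1$-action is still by automorphisms.
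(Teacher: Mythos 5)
Your proof is correct, and it is exactly the argument the paper implicitly relies on: the proposition is stated in the paper without proof, as a consequence of the standard correspondence between free homotopy classes and orbits of the $\pi_1(\manifold{N})$-action on $\pi_m(\manifold{N})$ (Hatcher, Proposition 4A.2), combined with the pinch-map description of addition in $\pi_m$ coming from collapsing the complement of the balls to a wedge of spheres. The base-point bookkeeping you flag is the only delicate point and it works out as you describe, since two based maps are freely homotopic precisely when their classes agree up to the $\pi_1$-action after transport along any path between the base points.
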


Here \(\beta_i \cdot \gamma_i\) denotes the action of \(\beta_i \in \pi_1 (\manifold{N})\) on \(\gamma_i \in \pi_m (\manifold{N})\) (see \cite{Hatcher_2002}*{\S 4.1}). If the action of \(\pi_1 (\manifold{N})\) on \(\pi_m (\manifold{N})\) happens to be trivial, then the map \(f\) has a free homotopy decomposition into \(f_1, \dotsc, f_k\) if and only if \(\gamma =  \gamma_1 + \dotsb + \gamma_k\).

\begin{proposition}
\label{proposition_finite_homotopy_classes}
Assume that \(m = 1\) and every conjugacy class of \(\pi_1 (\manifold{N})\) is abelian, or that \(m \ge 2\) and every orbit of the action of \(\pi_1 (\manifold{N})\) on \(\pi_m (\manifold{N})\) is finite.
If \(k \in \Nset\) and \(f_1, \dotsc, f_k \in \mapsset{C} (\Sset^m, \manifold{N})\),
then there exists a finite set \(\mapsset{G} \subset \mapsset{C} (\Sset^m, \manifold{N})\) such that 
if \(f \in \mapsset{C} (\Sset^m, \manifold{N})\) has a free homotopy decomposition into \(f_1, \dotsc, f_k\), then \(f\) is homotopic to some \(g \in \mapsset{G}\).
\end{proposition}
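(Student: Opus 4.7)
The plan is to translate the free-homotopy-decomposition hypothesis into an algebraic statement in \(\pi_m(\manifold{N})\) using the two preceding propositions, and then extract finiteness from the hypothesis on the action of \(\pi_1(\manifold{N})\).

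First, for each \(i \in \{1, \dotsc, k\}\), pick a base point and an element \(\gamma_i \in \pi_m (\manifold{N})\) to which \(f_i\) is homotopic. Assume now that some \(f \in \mapsset{C} (\Sset^m, \manifold{N})\) has a free homotopy decomposition into \(f_1, \dotsc, f_k\); pick also \(\gamma \in \pi_m (\manifold{N})\) homotopic to \(f\). By \cref{free_decomposition_pi_1} when \(m = 1\) and \cref{free_decomposition_pi_m} when \(m \ge 2\), there exist \(\beta_1, \dotsc, \beta_k \in \pi_1 (\manifold{N})\) such that
\[
  \gamma = \beta_1 \gamma_1 \beta_1^{-1} \dotsm \beta_k \gamma_k \beta_k^{-1} \quad (m = 1)
  \qquad \text{or} \qquad
  \gamma = \beta_1 \cdot \gamma_1 + \dotsb + \beta_k \cdot \gamma_k \quad (m \ge 2).
\]

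Next, I would introduce the finite candidate set
\[
  S \defeq \bigl\{ \delta_1 \gamma_1 \delta_1^{-1} \dotsm \delta_k \gamma_k \delta_k^{-1} \st \delta_i \in \pi_1 (\manifold{N}) \bigr\}
\]
when \(m = 1\) (and the additive analogue \(S = \{\delta_1 \cdot \gamma_1 + \dotsb + \delta_k \cdot \gamma_k : \delta_i \in \pi_1 (\manifold{N})\}\) when \(m \ge 2\)). By the hypothesis that each conjugacy class of \(\pi_1(\manifold{N})\) containing \(\gamma_i\) (respectively each orbit of the action of \(\pi_1(\manifold{N})\) on \(\pi_m(\manifold{N})\) containing \(\gamma_i\)) is finite, each factor \(\delta_i \gamma_i \delta_i^{-1}\) (respectively each summand \(\delta_i \cdot \gamma_i\)) ranges over a finite set. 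Consequently \(S \subset \pi_m(\manifold{N})\) is itself finite, and the computation above shows \(\gamma \in S\).

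Now I would lift this back to free homotopy classes of continuous maps. Since free homotopy classes of maps \(\Sset^m \to \manifold{N}\) correspond bijectively with conjugacy classes of \(\pi_1 (\manifold{N})\) when \(m = 1\) and with orbits of the action of \(\pi_1 (\manifold{N})\) on \(\pi_m (\manifold{N})\) when \(m \ge 2\), each element of the finite set \(S\) determines a single free homotopy class of maps. Hence the number of free homotopy classes that could possibly contain \(f\) is finite. Choosing one representative \(g \in \mapsset{C} (\Sset^m, \manifold{N})\) in each of these finitely many classes yields the desired finite set \(\mapsset{G}\).

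The argument is essentially bookkeeping once the propositions translating free homotopy decomposition into an algebraic identity are in hand; the only non-trivial point is to observe that a \emph{finite} product of elements, each constrained to a finite conjugacy class or orbit, still ranges over a finite set, which is immediate. Consequently I do not anticipate a genuine obstacle, beyond treating the cases \(m = 1\) and \(m \ge 2\) in parallel because the relevant group operation differs (non-abelian multiplication versus abelian addition with a \(\pi_1\)-action).
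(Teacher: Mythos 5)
Your proposal is correct and follows essentially the same route as the paper: translate the decomposition into the algebraic identity via \cref{free_decomposition_pi_1} or \cref{free_decomposition_pi_m}, observe that the finite product (or sum) of elements each ranging over a finite conjugacy class (or orbit) yields a finite subset of \(\pi_m(\manifold{N})\), and pick one continuous representative per resulting class. (You also correctly read the hypothesis for \(m=1\) as finiteness of conjugacy classes, which is what the statement intends despite the typo ``abelian''.)
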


\begin{proof}
When \(m = 1\), let \(\gamma_1, \dotsc, \gamma_k \in \pi_1 (\manifold{N})\) be elements of \(\pi_1 (\manifold{N})\) respectively homotopic to \(f_1, \dotsc, f_k\). 
We consider the set 
\[
 \Gamma = 
 \bigl\{\beta_1 \gamma_1 \beta_1^{-1} \dotsm \beta_k \gamma_k \beta_k^{-1} \st  \beta_1, \dotsc, \beta_k \in \pi_1 (\manifold{N}) \bigr\} \subseteq \pi_1 (\manifold{N}).
\]
By hypothesis, for every \(i \in \{1, \dotsc, k\}\), the set \(\{\beta_i \gamma_i \beta_i^{-1} \st\beta_i \in \pi_1 (\manifold{N})\}\) is finite and thus the set \(\Gamma\) is also finite. 
We choose \(\mapsset{G} \subset \mapsset{C} (\Sset^1, \manifold{N})\) to be a finite set such that each \(\gamma \in \Gamma\) is homotopic to some \(g \in \mapsset{G}\). 
By \cref{free_decomposition_pi_1}, any map \(f\) that has a free homotopy decomposition into \(f_1, \dotsc, f_k\) is homotopic to a map in \(\mapsset{G}\).

When \(m \ge 2\), the proof is similar and follows from the application of \cref{free_decomposition_pi_m}.
\end{proof}

\subsection{Infinitely many homotopy classes sharing the same free homotopy decomposition}
We now show that for some manifolds infinitely many homotopy classes can be decomposed freely into a finite set of maps.
This implies in particular that the left-hand side in \cref{theorem_bubbles_are_bounded} goes through infinitely many homotopy classes.

\begin{proposition}%
[Infinitely many homotopy classes sharing a free homotopy decomposition]%
\label{proposition_unbounded_glue}%
For every \(m \in \Nset_*\), 
there exists a compact manifold \(\manifold{N}\) 
and a map \(f \in \mapsset{C} (\Sset^m, \manifold{N})\) such that for every 
\(k \in \{2, 3, \dotsc\}\), 
there exists infinitely many homotopy classes in \(\mapsset{C} (\Sset^m, \manifold{N})\) having a free homotopy decomposition into \(k\) copies of \(f\).
\end{proposition}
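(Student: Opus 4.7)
The plan is to verify that the manifold \(\mathcal{N}_m \defeq (\mathbb{S}^1 \times \mathbb{S}^{2m}) \mathbin{\#} (\mathbb{S}^m \times \mathbb{S}^{m+1})\) alluded to in the abstract provides the required example, treating the cases \(m=1\) and \(m\ge 2\) separately via \cref{free_decomposition_pi_1} and \cref{free_decomposition_pi_m}. In both cases the strategy is to reduce the statement to an algebraic counting problem on \(\pi_1\)-orbits (for \(m \ge 2\)) or on conjugacy classes (for \(m = 1\)), and then to exhibit infinitely many such orbits coming from a single element.

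For \(m\ge 2\), van Kampen gives \(\pi_1(\mathcal{N}_m)\cong \mathbb{Z}\), generated by a loop \(t\) in the \(\mathbb{S}^1\)-factor. The critical step is to determine \(\pi_m(\mathcal{N}_m)\) as a \(\pi_1\)-module. I would pass to the infinite cyclic cover \(\widetilde{\mathcal{N}_m}\), which can be described as \(\mathbb{R}\times \mathbb{S}^{2m}\) with a copy of \((\mathbb{S}^m \times \mathbb{S}^{m+1})\setminus B\) attached by connected sum at each integer lift of the basepoint. A Mayer--Vietoris computation, using that \(\mathbb{R}\times\mathbb{S}^{2m}\) and the gluing spheres \(\mathbb{S}^{2m}\) contribute nothing in degrees \(i\in\{1,\ldots,m\}\), should yield \(H_i(\widetilde{\mathcal{N}_m})=0\) for \(0<i<m\) and \(H_m(\widetilde{\mathcal{N}_m})\cong \bigoplus_{n\in\mathbb{Z}} \mathbb{Z}\cdot \gamma_n\), with \(\gamma_n\) represented by \(\mathbb{S}^m\times\{*\}\) in the \(n\)-th summand. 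The Hurewicz theorem then identifies \(\pi_m(\mathcal{N}_m)\cong \mathbb{Z}[\pi_1(\mathcal{N}_m)]\cdot \gamma_0\), with \(t\) acting by \(t\cdot \gamma_n=\gamma_{n+1}\).

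Taking \(f:\mathbb{S}^m\to\mathcal{N}_m\) a representative of \(\gamma_0\), \cref{free_decomposition_pi_m} shows that the homotopy classes admitting a free decomposition into \(k\) copies of \(f\) are precisely the \(\pi_1\)-orbits of sums \(\gamma_{n_1}+\cdots+\gamma_{n_k}\), \(n_i\in\mathbb{Z}\). Since the \(\gamma_n\) are \(\mathbb{Z}\)-linearly independent, these orbits are in bijection with unordered \(k\)-tuples of integers modulo simultaneous translation, and for \(k\ge 2\) the tuples \((0,\ldots,0,N)\) with \(N\in\mathbb{N}_*\) give infinitely many pairwise inequivalent representatives.

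For \(m=1\), the analogous construction yields \(\pi_1(\mathcal{N}_1)\cong F_2 = \langle a,b\rangle\). I would take \(f\) representing \(a\), and by \cref{free_decomposition_pi_1} the task reduces to exhibiting infinitely many conjugacy classes in \(F_2\) of words of the form \(\beta_1 a\beta_1^{-1}\cdots \beta_k a\beta_k^{-1}\). Choosing \(\beta_1=\cdots=\beta_{k-1}=1\) and \(\beta_k=b^n\) produces \(a^{k-1} b^n a b^{-n}\), which is cyclically reduced of length \(k+2n\); since conjugate elements of a free group have cyclically reduced forms of equal length, these words are pairwise non-conjugate as \(n\) varies over \(\mathbb{N}_*\). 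The main obstacle of the proof lies in the \(\pi_1\)-equivariant identification of \(\pi_m(\mathcal{N}_m)\) for \(m\ge 2\); once that is in place, the remaining combinatorics of multisets in \(\mathbb{Z}\) (respectively of cyclic words in \(F_2\)) is elementary.
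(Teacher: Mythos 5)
Your proposal is correct, and its overall architecture --- reduce to an algebraic counting problem via \cref{free_decomposition_pi_1} and \cref{free_decomposition_pi_m}, then exhibit infinitely many conjugacy classes, respectively $\pi_1$-orbits, built from a single element --- is the same as in the paper. The implementations differ in two interesting ways. For $m \ge 2$ your manifold is essentially the one in \cref{lemma_counterexample_m_ge_2} (the paper's $\manifold{N}$ is the boundary of a regular neighbourhood of $\Sset^1 \vee \Sset^m$ in $\Rset^{2m+2}$, which it then identifies with the gluing of $\Sset^1 \times \Sset^{2m}$ and $\Sset^m \times \Sset^{m+1}$ along a trivial $\Sset^{2m}$), but you justify the $\pi_1$-equivariant identification $\pi_m(\manifold{N}) \simeq \Zset[\pi_1(\manifold{N})]$ by a Mayer--Vietoris and Hurewicz computation on the infinite cyclic cover, whereas the paper retracts $\manifold{N}$ onto the wedge $\Sset^1 \vee \Sset^m$ by a general-position argument and quotes the known homotopy of the wedge; your route is more self-contained and computational, the paper's is shorter given the cited fact. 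For $m = 1$ you use a genuinely different manifold: the paper takes a genus-$g$ surface ($g \ge 2$) and must first construct a surjection of its fundamental group onto a free group (\cref{lemma_torus_free_group}) before applying a non-conjugacy lemma (\cref{lemmaFree}), while your $3$-manifold $(\Sset^1 \times \Sset^2) \mathbin{\#} (\Sset^1 \times \Sset^2)$ has free fundamental group outright, and your conjugacy invariant (the length of the cyclically reduced form of $a^{k-1} b^n a b^{-n}$, which is $k + 2n$) is even simpler than the paper's cyclic-permutation argument. Both of your variations are sound; the only place where you defer work is the phrase ``should yield'' for the Mayer--Vietoris computation, but that computation is standard and goes through exactly as you describe.
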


In the one-dimensional case \(m = 1\), examples can be provided by tori with at least two holes.
The next lemma shows that a \(g\)--hole torus --- or equivalently, an orientable surface of genus \(g\) --- has a fundamental group which is not less complex than a free group on \(g\) generators.

\begin{lemma}
[Free group in the fundamental group of \(g\)--hole tori]
\label{lemma_torus_free_group}
If \(\manifold{N}\) is a \(g\)-hole torus, 
then there exists a surjective homomorphism \(\tau : \pi_1 (\manifold{N}) \to \langle \alpha_1, \dotsc, \alpha_g\rangle\).
\end{lemma}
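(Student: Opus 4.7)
The plan is to exploit the standard presentation of the fundamental group of a closed orientable surface and to collapse the ``\(b\)--generators'' to the identity, leaving a free group on the remaining \(g\) generators.

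First, I would recall the classical presentation
\begin{equation*}
  \pi_1 (\manifold{N}) \simeq \bigl\langle a_1, b_1, \dotsc, a_g, b_g \st [a_1, b_1] \dotsm [a_g, b_g] = 1 \bigr\rangle,
\end{equation*}
obtained from the standard \(4g\)--gon representation of a genus \(g\) orientable surface (see, for example, Hatcher, \emph{Algebraic Topology}, Example 1.35). Next I would define a map on the free group on the generators \(a_1, b_1, \dotsc, a_g, b_g\) by setting
\begin{equation*}
  a_i \longmapsto \alpha_i, \qquad b_i \longmapsto 1
\end{equation*}
for every \(i \in \{1, \dotsc, g\}\), with values in the free group \(\langle \alpha_1, \dotsc, \alpha_g \rangle\).

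The main step is to verify that this map descends to the quotient, namely that the single relator of the presentation is sent to the identity. This is essentially immediate: for any element \(x\) of a group, \([x, 1] = x\, 1\, x^{-1}\, 1^{-1} = 1\), so the image of the relator \([a_1, b_1] \dotsm [a_g, b_g]\) is the product \([\alpha_1, 1] \dotsm [\alpha_g, 1] = 1\) in \(\langle \alpha_1, \dotsc, \alpha_g\rangle\). By the universal property of the presentation, the assignment induces a well-defined homomorphism \(\tau : \pi_1 (\manifold{N}) \to \langle \alpha_1, \dotsc, \alpha_g\rangle\). Surjectivity is then clear since each generator \(\alpha_i\) lies in the image of \(a_i\).

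I do not expect any genuine obstacle: the only non-tautological ingredient is the classical presentation of \(\pi_1\) of a closed orientable surface, and the verification of the relation is a one-line computation. The argument is purely algebraic once the presentation is cited.
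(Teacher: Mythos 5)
Your proof is correct and follows exactly the paper's argument: cite the standard one-relator presentation of the genus-\(g\) surface group, send \(a_i \mapsto \alpha_i\) and \(b_i \mapsto 1\), check that \([\alpha_i,1]=1\) kills the relator, and conclude surjectivity from \(\tau(a_i)=\alpha_i\). No differences worth noting.
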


Here, \(\langle \alpha_1, \dotsc, \alpha_g\rangle\) is the \emph{free group} on the \(g\) generators \(\alpha_1, \dotsc, \alpha_g\).

\begin{proof}%
[Proof of \cref{lemma_torus_free_group}]
The fundamental group \(\pi_1 (\manifold{N})\) of the \(g\)--hole torus \(\manifold{N}\) can be characterized by the group presentation 
\begin{equation*}
  \pi_1 (\manifold{N}) 
  =
  \langle 
    a_1, b_1, \dotsc, a_g, b_g 
  \st
    [a_1, b_1] \dotsm [a_g, b_g] = 1
   \rangle \eofs ,
\end{equation*}
where \([a_i, b_i] = a_i b_i a_i^{-1} b_i^{-1}\) \cite{Hatcher_2002}*{\S 1.2, p.\thinspace 51}.
We define the group homomorphism \(\hat{\tau} : \langle a_1, b_1, \dotsc, a_g, b_g \rangle \to \langle \alpha_1, \dotsc, \alpha_g\rangle\) by setting for each \(i \in \{1, \dotsc, g\}\),  \(\Hat{\tau} (a_i) \defeq \alpha_i\) and \(\Hat{\tau} (b_i) \defeq 1\), and we observe that for each \(i \in \{1, \dotsc, g\}\), 
\(\Hat{\tau} ([a_i, b_i]) = [\Hat{\tau} (a_i), \Hat{\tau} (b_i)] = [\alpha_i, 1] = 1\). 
Hence, we have \(\Hat{\tau}([a_1, b_1] \dotsm [a_g, b_g]) = \Hat{\tau}([a_1, b_1]) \dotsm \Hat{\tau}([a_g, b_g]) = 1\) and thus \(\Hat{\tau}\) induces a quotient homomorphism \(\tau : \pi_1 (\manifold{N}) \to \langle \alpha_1, \dotsc, \alpha_g \rangle \). Since \(\tau (a_i) = \alpha_i\) for each \(i \in \{1, \dotsc, g\}\), the homomorphism \(\tau\) is surjective.
\end{proof}

The next lemma will allow us to prove in algebraic terms that maps in \(\mapsset{C} (\Sset^1, \manifold{N})\) lie in different homotopy groups.

\begin{lemma}%
[Nonconjugacy along a conjugation orbit in a free group]
\label{lemmaFree}
If \(k \in \{2, 3, \dotsc\}\) and if \(\ell, j \in \Nset\), then there exists \(h \in \langle \alpha_1, \dotsc, \alpha_g\rangle\) such that 
\begin{equation*}
 h^{-1} \alpha_1 \alpha_2^{-\ell} \alpha_1^{k - 1} \alpha_2^\ell h
 =
  \alpha_1 \alpha_2^{-j} \alpha_1^{k - 1} \alpha_2^{j}
\end{equation*}
if and only if \(\ell = j\).
\end{lemma}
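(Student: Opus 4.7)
The plan is to reduce the problem to the classical normal form theorem for conjugacy in free groups: two cyclically reduced words in a free group are conjugate if and only if one is a cyclic permutation of the other, and in particular conjugate cyclically reduced words must have the same length (see, e.g., Magnus--Karrass--Solitar, \emph{Combinatorial Group Theory}). The ``if'' direction of the lemma is trivial by taking \(h = 1\), so the whole argument is about extracting \(\ell = j\) from conjugacy of the two given words.

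First, I would set \(w_\ell \defeq \alpha_1 \alpha_2^{-\ell} \alpha_1^{k - 1} \alpha_2^\ell\) and observe that, since \(k \ge 2\) so that \(k - 1 \ge 1\), the expression for \(w_\ell\) is already a freely reduced word in the letters \(\alpha_1, \alpha_2\) and their inverses: no two adjacent letters cancel, because the blocks alternate between the generator \(\alpha_1\) (positive powers) and the generator \(\alpha_2\) (negative then positive powers). Its length as a reduced word is \(\abs{w_\ell} = 2\ell + k\). (When \(\ell = 0\) the word collapses to \(\alpha_1^k\), of length \(k\), which is consistent.)

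Next, I would verify that \(w_\ell\) is cyclically reduced: when \(\ell \ge 1\) the first letter is \(\alpha_1\) and the last letter is \(\alpha_2\), which are distinct generators, so they do not cancel when the word is written cyclically; when \(\ell = 0\), \(w_0 = \alpha_1^k\) is visibly cyclically reduced. The same statements hold for \(w_j\).

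Now suppose \(h^{-1} w_\ell h = w_j\) for some \(h \in \langle \alpha_1, \dotsc, \alpha_g \rangle\). By the conjugacy normal form recalled above, \(w_\ell\) and \(w_j\) must be cyclic permutations of one another, so in particular \(\abs{w_\ell} = \abs{w_j}\), giving \(2\ell + k = 2j + k\) and hence \(\ell = j\). The only delicate point, and arguably the main thing to check, is the verification that \(w_\ell\) is cyclically reduced for all \(\ell \ge 0\) (so that the cyclic normal form theorem applies directly without first cyclically reducing and losing the length bookkeeping); but the hypothesis \(k \ge 2\) is precisely what ensures the middle block \(\alpha_1^{k-1}\) is nontrivial and prevents any collapse between the \(\alpha_2^{-\ell}\) and \(\alpha_2^{\ell}\) factors.
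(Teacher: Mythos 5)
Your proof is correct and follows essentially the same route as the paper, which also invokes the Magnus--Karrass--Solitar conjugacy theorem for cyclically reduced words in a free group (the paper additionally sketches a direct cancellation argument as an alternative). Your version is if anything slightly more careful, since you explicitly verify that $w_\ell$ is cyclically reduced for all $\ell \ge 0$ (including $\ell = 0$) and spell out the length comparison $2\ell + k = 2j + k$.
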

\begin{proof}
If \(k = \ell\) the statement holds with \(h = 1\).

Conversely, it can be observed that \(\alpha_1 \alpha_2^{-\ell} \alpha_1^{k - 1} \alpha_2^\ell\) and \(\alpha_1 \alpha_2^{-j} \alpha_1^{k - 1} \alpha_2^{j}\) are cyclically reduced words which can be conjugate in a free group if and only the words are cyclic permutation of each other \cite{Magnus_Karrass_Solitar_1966}*{theorem 1.3}.
The statement can also be proved directly. We assume by contradiction that \(\ell > k \ge 0\) and that there exists \(h \in \langle \alpha_1, \dotsc, \alpha_g\rangle\) such that the identity holds. Then both corresponding reduced words should have the same length. Since \(\ell >j \ge 0\), this means that there should be  \((\ell - j) + \operatorname{length} (h)\) cancellations between inverses on the left-hand side, and thus at least one cancellation at the beginning and one cancellation at the end of the word on the left-hand side. Since \(\ell \ne 0\), the cancellation on the left implies that the first letter of \(h\) is \(\alpha_1\) and the cancellation on the right that the first letter of \(h\) is \(\alpha_2\); this is a contradiction.
\end{proof}

\begin{proof}
[Proof of \cref{proposition_unbounded_glue} when \(m = 1\)]
We take \(\manifold{N}\) to be a \(g\)--hole torus, with \(g \ge 2\).
Let \(\tau : \pi_1 (\manifold{N}) \to \langle \alpha_1, \dotsc, \alpha_g\rangle\) 
be the homomorphism of \cref{lemma_torus_free_group} and let \(f \in C (\Sset^1, \manifold{N})\) be homotopic to \(a_1 \in \tau^{-1} (\{\alpha_1\}) \subset \pi_1 (\manifold{N})\). We also fix \(a_2 \in \tau^{-1} (\{\alpha_2\})\).
For every natural number \(\ell \in \Nset\), we choose \(f_\ell \in \mapsset{C} (\Sset^1, \manifold{N})\)
that is homotopic to 
\(a_1 a_2^{-\ell} a_1^{k - 1} a_2^{\ell} \in \pi_1 (\manifold{N})\). 
By \cref{free_decomposition_pi_1}, the map \(f_\ell\) has a free homotopy decomposition into \(k\) copies of the map \(f\).
If for some \(\ell, j \in \Nset\), the maps \(f_\ell\) and \(f_j\) are homotopic, 
then \(a_1 a_2^{-\ell} a_1^{k - 1} a_2^{\ell}\) and \(a_1 a_2^{-j} a_1^{k - 1} a_2^{j}\) are conjugate in \(\pi_1 (\manifold{N})\) and thus, since \(\tau\) is a homomorphism, we deduce that 
\(\alpha_1 \alpha_2^{-\ell} \alpha_1^{k - 1} \alpha_2^{\ell}\) and \(\alpha_1 \alpha_2^{-j} \alpha_1^{k - 1} \alpha_2^{j}\) are conjugate in \(\langle \alpha_1, \dotsc, \alpha_g\rangle\).
By \cref{lemmaFree}, this implies that \(\ell = j\), and thus the maps \(f_\ell\) and \(f_j\) are homotopic if and only if \(\ell = j\).
\end{proof}

For \(m \ge 2\), we rely on the following construction of manifolds:

\begin{lemma}
[Manifold with nontrivial action by the fundamental group]
\label{lemma_counterexample_m_ge_2}
For every \(m \ge 2\), there exists a \((2m +1)\)--dimensional compact Riemannian manifold \(\manifold{N}\) isometrically embedded into \(\Rset^{2m + 2}\) such that 
\(\pi_1 (\manifold{N}) \simeq \Zset\),
\(\pi_m (\manifold{N}) \simeq \Zset^{\Zset}\)
and \(\pi_1 (\manifold{N})\) acts on \(\pi_m (\manifold{N})\) as the translation operator.
\end{lemma}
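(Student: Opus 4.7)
The plan is to realise $\manifold{N}$ as the boundary of a tubular neighbourhood in $\Rset^{2m+2}$ of a one-complex consisting of a circle and an $m$-sphere joined by an arc. Concretely, I would smoothly and disjointly embed $\Sset^1$ and $\Sset^m$ into $\Rset^{2m+2}$ and join them by a smooth arc $\gamma$. A closed tubular neighbourhood of $\Sset^1 \cup \gamma \cup \Sset^m$ is diffeomorphic to the boundary connected sum of $\Sset^1 \times D^{2m+1}$ and $\Sset^m \times D^{m+2}$, and its boundary is an embedded smooth hypersurface of $\Rset^{2m+2}$ diffeomorphic to
\[
  \manifold{N} \defeq (\Sset^1 \times \Sset^{2m}) \# (\Sset^m \times \Sset^{m+1})\eofs .
\]
The metric induced from $\Rset^{2m+2}$ furnishes the required isometric embedding, and the dimension is $1+2m = m+(m+1) = 2m+1$.

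To compute $\pi_1(\manifold{N})$, I would apply van Kampen to the connect-sum decomposition: the gluing sphere $\Sset^{2m}$ is simply connected since $m \ge 2$, and $\pi_1(\Sset^m \times \Sset^{m+1}) = 0$ while $\pi_1(\Sset^1 \times \Sset^{2m}) \simeq \Zset$, whence $\pi_1(\manifold{N}) \simeq \Zset$.

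For the higher homotopy I would pass to the universal cover. Lifting the construction, $\widetilde{\manifold{N}}$ is obtained from $\Rset \times \Sset^{2m}$ by connect-summing one copy of $\Sset^m \times \Sset^{m+1}$ at each integer point of the $\Rset$-factor, and the generator of the deck group $\Zset$ acts by translation by one unit on $\Rset$, shifting these copies. Both $\Rset \times \Sset^{2m}$ and $\Sset^m \times \Sset^{m+1}$ are $(m-1)$-connected (the first because it is homotopy equivalent to $\Sset^{2m}$, the second by the product formula for homotopy groups of simply connected spaces and the fact that $m \ge 2$), and since the gluing $2m$-sphere is $(2m-1)$-connected, an exhaustion by finite connected sums yields that $\widetilde{\manifold{N}}$ is also $(m-1)$-connected. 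The same exhaustion combined with the Mayer--Vietoris sequence for each stage gives
\[
  H_m(\widetilde{\manifold{N}}) \simeq \bigoplus_{n \in \Zset} H_m(\Sset^m \times \Sset^{m+1}) \simeq \Zset^{\Zset}\eofs ,
\]
the summand $\Rset \times \Sset^{2m}$ contributing nothing because $m \ne 2m$. The Hurewicz theorem then gives $\pi_m(\manifold{N}) \simeq \pi_m(\widetilde{\manifold{N}}) \simeq \Zset^{\Zset}$, and the deck action described above is precisely the translation operator.

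The main technical point is the bookkeeping of the infinite connected sum in the non-compact universal cover: one must argue through an exhaustion of $\widetilde{\manifold{N}}$ by compact pieces and commute $\pi_m$ and $H_m$ with the resulting direct limits. Beyond this, the ingredients are only the standard van Kampen, Hurewicz, and Mayer--Vietoris theorems.
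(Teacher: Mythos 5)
Your construction is correct and produces the same manifold as the paper --- the paper builds \(\manifold{N}\) as the boundary of a smooth neighbourhood \(U\) of an embedded copy of the wedge \(\Sset^1 \vee \Sset^m\) in \(\Rset^{2m+2}\), and explicitly remarks that this is the result of gluing \(\Sset^1 \times \Sset^{2m}\) to \(\Sset^m \times \Sset^{m+1}\) along a trivial \(\Sset^{2m}\), i.e.\ your connected sum \((\Sset^1 \times \Sset^{2m}) \mathbin{\#} (\Sset^m \times \Sset^{m+1})\). Where you differ is in how the homotopy groups are computed. The paper never touches the universal cover: it uses the deformation retractions \(U \to \manifold{X} \defeq \Sset^1 \vee \Sset^m\) and \(U \setminus \manifold{X} \to \partial U = \manifold{N}\), plus a general-position argument (a homotopy \(\Sset^k \times [0,1] \to U\) with \(k \le m\) has image of dimension \(k+1 \le m+1 < (2m+2) - m\) and so can be pushed off the \(m\)-dimensional complex \(\manifold{X}\)) to conclude \(\pi_k(\manifold{N}) \simeq \pi_k(\manifold{X})\) for \(k \le m\) together with the \(\pi_1\)-actions; the homotopy groups of the wedge and the translation action are then quoted from Hatcher's Example 4.27. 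Your route instead computes everything on the manifold itself: van Kampen for \(\pi_1\), then the universal cover as an infinite connected sum indexed by \(\Zset\), \((m-1)\)-connectivity, Mayer--Vietoris and Hurewicz for \(\pi_m\), with the translation action read off from the deck transformations. The paper's argument is shorter because it outsources the computation to a standard example and only needs transversality to transfer it to \(\partial U\); yours is more self-contained and makes the translation action geometrically explicit, at the cost of the direct-limit bookkeeping over the exhaustion of the non-compact cover, which you correctly identify and which is harmless since \(\Sset^m\) and \(\Sset^m \times [0,1]\) are compact and singular homology commutes with direct limits. Both arguments are sound.
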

\begin{proof}
If \(\manifold{X} \defeq \Sset^1 \vee \Sset^m\) is the CW complex obtained by the bouquet construction applied between the circle \(\Sset^1\) and the sphere \(\Sset^m\),
then \(\pi_1 (\manifold{X}) \simeq \Zset\), \(\pi_m (\manifold{X}) \simeq \Zset^{\Zset}\) and \(\pi_1 (\manifold{X})\) acts on \(\pi_m (\manifold{X})\) as the translation operator (see for example \cite{Hatcher_2002}*{example 4.27}). 

We embed the CW complex \(\manifold{X}\) in \(\Rset^{2 m + 2}\) and we consider a neighbourhood \(U\) of \(\manifold{X}\) in \(\Rset^{2 m + 2}\) that has a smooth boundary and such that \(\manifold{X}\) is a retraction of \(U\) and \(\partial U\) is a retraction of \(U \setminus \manifold{X}\).
We define \(\manifold{N} \defeq \partial U\).

We then observe that any Lipschitz-continuous homotopy \(h : \Sset^k \times [0, 1]  \to U\) has a \((k + 1)\)--dimensional image. Since the set \(U \subset \Rset^{2 m + 2}\) is open, if \(k \le m\), the homotopy \(h\) can be perturbed in such a way of not intersecting the \(m\)--dimensional set \(\manifold{X}\). 
This implies that \(\pi_1 (\manifold{N}) \simeq \pi_1 (U \setminus \manifold{X}) \simeq \pi_1 (U) \simeq \pi_1 (\manifold{X}) \simeq \Zset\) 
and \(\pi_m (\manifold{N}) \simeq \pi_m (U \setminus \manifold{X}) \simeq \pi_m (U) \simeq \pi_m (\manifold{X}) \simeq \Zset^\Zset\), with isomorphisms between the actions of \(\pi_1 (\manifold{N})\) on \(\pi_m (\manifold{N})\) and of \(\pi_1 (\manifold{X})\) on \(\pi_m (\manifold{X})\).
\end{proof}

The manifold \(\manifold{N}\) constructed in the proof of \cref{lemma_counterexample_m_ge_2} can be described as the result of gluing \(\Sset^1 \times \Sset^{2m}\) to \(\Sset^m \times \Sset^{m + 1}\) along a trivial sphere \(\Sset^{2 m}\).

\begin{remark}
When \(m = 2\), the construction of the proof of \cref{lemma_counterexample_m_ge_2} yields a \(3\)--dimensional compact Riemannian manifold \(\manifold{N}\) embedded into \(\Rset^{4}\) such that 
\(\pi_1 (\manifold{N})\) is a free group on two generators.
\end{remark}

\begin{proof}
[Proof of \cref{proposition_unbounded_glue} when \(m \ge 2\)]
Let \(\manifold{N}\) be the manifold given by \cref{lemma_counterexample_m_ge_2}.
We fix a map \(f \in \mapsset{C} (\Sset^m, \manifold{N})\) that is not homotopic to a constant and we choose \(a_0 \in \pi_m (\manifold{N})\) homotopic to \(f\). For each \(k \in \Zset\), let \(a_k\) be the result of the action of \(k \in \Zset \simeq \pi_1 (\manifold{N})\) on \(a_0 \in \pi_m (\manifold{N})\).
By \cref{proposition_finite_homotopy_classes}, the homotopy classes that have a free homotopy decomposition into \(k\) copies of the map \(f\) correspond to sets of the form
\(
 \{ a_{i_1 + \ell} + \dotsb a_{i_k + \ell} \st \ell \in \Zset\}\eofs ,
\)
with \(i_1, \dotsc, i_k \in \Zset\). If \(k \ge 2\), there are infinitely many such sets. 
\end{proof}

\section{Upper bound on Sobolev energies by free homotopy decomposition}

\Cref{theorem_bubbles_are_bounded} will be obtained by induction from the corresponding result with \(k = 2\):

\begin{proposition}%
[Estimate of Sobolev energy by free homotopy decomposition into two maps]
\label{proposition_bouquet_pair}%
Let \(m \in \Nset_*\),  \(\manifold{N}\) be a connected Riemannian manifold \(s \in (0, 1]\) and \(p \in [m, +\infty)\). 
If \(p = m/s > 1\) and if \(f \in \mapsset{C} (\Sset^m, \manifold{N})\) has a free homotopy decomposition into \(f_+, f_- \in (\mapsset{C} \cap W^{s, p}) (\Sset^m, \manifold{N})\), then 
\begin{equation*}
  \inf 
    \,\bigl\{
        \mathcal{E}^{s, p} (g) 
      \st 
        g \in (\mapsset{C} \cap W^{s, p}) (\Sset^m, \manifold{N}) 
        \text{ is homotopic to \(f\)}
    \bigr\}\\
 \le 
      \mathcal{E}^{s, p} (f_+) 
    + 
      \mathcal{E}^{s, p} (f_-)
\eofs .
\end{equation*}
\end{proposition}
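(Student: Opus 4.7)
The plan is to construct, for every $\eta > 0$, a continuous map $g_\eta \in W^{s,p}(\Sset^m, \manifold{N})$ homotopic to $f$ with $\mathcal{E}^{s,p}(g_\eta) \le \mathcal{E}^{s,p}(f_+) + \mathcal{E}^{s,p}(f_-) + \eta$; the conclusion then follows by passing to the infimum and letting $\eta \to 0$. By \cref{definition_homotopy_power}, I may assume $f$ itself is constant equal to some $v \in \manifold{N}$ outside two disjoint geodesic balls $B_+, B_- \subset \Sset^m$ and that its restrictions on $\overline{B}_\pm / \partial B_\pm \simeq \Sset^m$ are homotopic to $f_\pm$; the task reduces to pasting rescaled versions of $f_\pm$ into $B_\pm$ at small cost.

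The central mechanism is the conformal invariance of $\mathcal{E}^{s,p}$ on $\Sset^m$ at the critical exponent $sp = m$: for any Möbius transformation $\phi : \Sset^m \to \Sset^m$ and any $h$, one has $\mathcal{E}^{s,p}(h \circ \phi) = \mathcal{E}^{s,p}(h)$. For $s = 1$ this is the classical scale invariance of $\int |Dh|^m$; for $0 < s < 1$ it follows from the identities $|\phi(x) - \phi(y)|^2 = \lambda(x)\lambda(y)|x-y|^2$ and $\diff \phi = \lambda^m \diff x$, so that the weight $|x-y|^{-(m+sp)}$ is exactly compensated when $sp = m$. With this in hand, I would first prove a \emph{capping lemma}: for every $\eta > 0$, there exist $\widetilde{f}_\pm \in (\mapsset{C} \cap W^{s,p})(\Sset^m, \manifold{N})$ homotopic to $f_\pm$ with $\mathcal{E}^{s,p}(\widetilde{f}_\pm) \le \mathcal{E}^{s,p}(f_\pm) + \eta/3$ and $\widetilde{f}_\pm \equiv v$ on an open neighborhood of some pole $N \in \Sset^m$. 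The construction pre-composes $f_\pm$ with a Möbius dilation toward the antipode $-N$ (free in energy by conformal invariance), concentrating $f_\pm$'s non-constant behavior into an arbitrarily small cap far from $N$; on a thin annulus around $N$ one then interpolates the resulting nearly-constant map to $v$ via a path in the connected manifold $\manifold{N}$ and a radial cutoff, whose annular energy cost vanishes with the annulus --- by absolute continuity for $s = 1$ and by a direct estimate on the Gagliardo seminorm (split into annulus-annulus and annulus-complement blocks) for $s < 1$.

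With $\widetilde{f}_\pm$ in hand, choose Möbius transformations $\phi_\pm : \Sset^m \to \Sset^m$ sending $B_\pm$ onto the non-constant cap of $\widetilde{f}_\pm$ and the complement of $B_\pm$ into the $v$-level set of $\widetilde{f}_\pm$. Setting $a := \widetilde{f}_+ \circ \phi_+$ and $b := \widetilde{f}_- \circ \phi_-$, the maps $a$ and $b$ automatically equal $v$ outside $B_+$ and $B_-$ respectively, and conformal invariance gives $\mathcal{E}^{s,p}(a) = \mathcal{E}^{s,p}(\widetilde{f}_+)$ and $\mathcal{E}^{s,p}(b) = \mathcal{E}^{s,p}(\widetilde{f}_-)$. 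Define $g_\eta := a$ on $B_+$, $g_\eta := b$ on $B_-$, and $g_\eta := v$ on $E := \Sset^m \setminus (B_+ \cup B_-)$; continuity holds since $a, b$ both equal $v$ on the boundaries $\partial B_\pm$, and $g_\eta$ is homotopic to $f$ by the free homotopy decomposition. Splitting the Gagliardo double integral for $\mathcal{E}^{s,p}(g_\eta)$ along the partition $\Sset^m = B_+ \sqcup B_- \sqcup E$, the diagonal and $B_\pm \times E$ blocks exactly reconstruct $\mathcal{E}^{s,p}(a) + \mathcal{E}^{s,p}(b)$; the only residue is the cross block
\[
2 \iint_{B_+ \times B_-} \frac{|a(x) - b(y)|^p - |a(x) - v|^p - |b(y) - v|^p}{|x-y|^{m+sp}} \diff x \diff y,
\]
which the triangle inequality bounds by $C\, |B_+|\,|B_-|/d(B_+, B_-)^{m+sp}$ (with $C$ depending on the diameters of $f_\pm(\Sset^m)$) and which is $\le \eta/3$ once $B_\pm$ are placed antipodally with sufficiently small radii. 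Combining, $\mathcal{E}^{s,p}(g_\eta) \le \mathcal{E}^{s,p}(f_+) + \mathcal{E}^{s,p}(f_-) + \eta$.

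The principal obstacle will be the capping step in the fractional regime $s < 1$: because the Gagliardo seminorm is nonlocal, one cannot naïvely cut off $f_\pm$ on a small set without possibly increasing the long-range pair contribution. The resolution is precisely the conformal concentration above, which first moves the essential variation of $f_\pm$ onto an arbitrarily small cap, so that the actual capping takes place on a region where the map already has arbitrarily small oscillation. The hypothesis $p = m/s > 1$ is used both to validate the change-of-variables identity underlying conformal invariance and to ensure the density of sufficiently regular representatives in the homotopy class needed to make the annular estimate rigorous.
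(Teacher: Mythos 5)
Your construction is, up to a conformal change of coordinates, the same as the paper's (the two small balls and the constant region between them correspond to the two ends and the long neck of the Mercator cylinder), but it has a genuine gap at the decisive step: the claim that ``\(g_\eta\) is homotopic to \(f\) by the free homotopy decomposition.'' \Cref{definition_homotopy_power} only tells you that \emph{some} map homotopic to \(f\) is constant outside two balls with restrictions freely homotopic to \(f_\pm\); it does not tell you that \emph{every} map obtained by pasting into \(B_\pm\) maps freely homotopic to \(f_\pm\) is homotopic to \(f\). Indeed, if \(u\) and \(u'\) on \(\Bar B_+/\partial B_+\simeq\Sset^m\) are freely homotopic but satisfy \([u']=\beta\cdot[u]\) in \(\pi_m(\manifold{N},v)\) for a nontrivial \(\beta\in\pi_1(\manifold{N},v)\), then swapping \(u\) for \(u'\) inside \(B_+\) changes the (free) homotopy class of the glued map whenever the orbit of \([u]+[u_-]\) differs from that of \(\beta\cdot[u]+[u_-]\) --- this is exactly the phenomenon of \cref{proposition_unbounded_glue}, where infinitely many pairwise non-homotopic maps share the same free decomposition. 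Your \(a\vert_{B_+}\) and \(b\vert_{B_-}\) are only known to be \emph{freely} homotopic to \(f\vert_{B_\pm}\), so \(g_\eta\) is guaranteed to have \emph{a} free decomposition into \(f_+,f_-\), not to lie in the class of \(f\). The paper closes this gap by never leaving the homotopy class of \(f\): it deforms \(f\) itself, uses the homotopy extension property to record the loop \(\gamma\) traced by the boundary values during the deformation, and inserts that loop explicitly in the neck. To repair your argument you must do the same: extend the free homotopies \(f\vert_{B_\pm}\rightsquigarrow a\vert_{B_+},\,b\vert_{B_-}\) to the whole sphere, which forces a prescribed (generally nonconstant) loop of values on an annulus around each ball, and then estimate the energy of that loop insertion.

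This repair exposes the second weak point of the proposal. The cost of interpolating along a path of \emph{definite length} in \(\manifold{N}\) across an annulus does not ``vanish with the annulus'': making the annulus thinner makes this cost blow up. It vanishes only when the annulus is made conformally \emph{long} (modulus \(\lambda\to\infty\)), in which case the self-energy of the path is of order \(\lambda^{1-p}\) (this is where \(p>1\) enters, and why the case \(s=p=m=1\) is excluded), and one must additionally control the long-range interaction between the two sides of the annulus, which decays like \(e^{-cm\lambda}\). These are precisely the neck estimates \eqref{estimate_connection} and \eqref{estimate_long_range} in the paper. Your ``absolute continuity'' argument is correct only for the part of the capping where no path is needed, i.e.\ where the conformally concentrated map already has small oscillation (and there a composition with the nearly isometric collapsing map of \cref{lemma_constant_disk} is the clean way to make it exactly constant). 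The remaining ingredients of your proposal --- exact conformal invariance of \(\mathcal{E}^{s,p}\) at \(sp=m\), the block decomposition of the Gagliardo energy, and the \(O(|B_+||B_-|/d(B_+,B_-)^{2m})\) bound on the cross term --- are correct.
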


Computations will be facilitated by parametrizing the sphere \(\Sset^m\) through its \emph{Mercator projection} on the cylinder \(\Sset^{m - 1} \times \Rset\). When \(m = 2\), this corresponds to the projection used by Mercator on the cylinder to cartography the earth.
The Mercator projection is a conformal transformation, and preserves thus the critical Sobolev energy.

\begin{lemma}%
[Conformal derivative integrals under Mercator cylindrical projection]%
\label{lemma_Mercator_1}
For every \(m \in \Nset_*\) and for every \(f \in \Sset^m \to \manifold{N}\), we have \(f \in W^{1, m} (\Sset^m, \manifold{N})\) if and only if \(f \compose \Upsilon \in W^{1, m} (\Sset^{m - 1} \times \Rset, \manifold{N})\), where the map \(\Upsilon : \Sset^{m - 1} \times \Rset \to \Sset^m\) is defined for each \((z, s) \in \Sset^{m - 1} \times \Rset\)  by \(\Upsilon (z, s) \defeq (z \sech s, \tanh s)\). Moreover, 
\begin{equation*}
 \int_{\Sset^m}
 \abs{D f}^m 
 =\
 \int\limits_{\Rset \times \Sset^{m - 1} } 
    \hspace{-.5em}
   \abs{D (f \compose \Upsilon)}^m
\eofs .
\end{equation*}
\end{lemma}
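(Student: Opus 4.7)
The plan is to establish that $\Upsilon$ is a conformal diffeomorphism from $\Sset^{m - 1} \times \Rset$ onto $\Sset^m \setminus \{(0, \pm 1)\}$ with conformal factor $\sech s$, and then to invoke the conformal invariance of the critical $m$-energy on $m$-dimensional domains. First I would verify that $\abs{z \sech s}^2 + \tanh^2 s = \sech^2 s + \tanh^2 s = 1$, so that $\Upsilon$ takes values in $\Sset^m$; smoothness and invertibility onto the complement of the two poles $(0, \pm 1)$ are then immediate from elementary computations with $\tanh$ and $\sech$.

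Next I would compute the differential of $\Upsilon$ using the splitting $T_{(z, s)}(\Sset^{m - 1} \times \Rset) = T_z \Sset^{m - 1} \oplus \Rset$. For $v \in T_z \Sset^{m - 1}$, one has $D \Upsilon(z, s)[v, 0] = (\sech s \cdot v, 0)$, while $\partial_s \Upsilon(z, s) = (-\sech s \tanh s \cdot z, \sech^2 s)$. Using $v \cdot z = 0$, these two image vectors in $\Rset^{m + 1}$ are mutually orthogonal, and each has Euclidean norm $\sech s$ (invoking $\sech^2 s + \tanh^2 s = 1$). Hence $D \Upsilon(z, s) = \sech s \cdot O(z, s)$ with $O(z, s)$ a linear isometry between the relevant tangent spaces, so $\Upsilon$ is conformal with conformal factor $\sech s$ and Jacobian determinant $(\sech s)^m$.

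The Sobolev identity then follows by change of variables. For weakly differentiable $f$, the chain rule yields $D (f \compose \Upsilon)(z, s) = D f(\Upsilon(z, s)) \compose D \Upsilon(z, s)$ almost everywhere; since right composition with an isometry preserves the norm of a linear map, one gets the pointwise identity $\abs{D (f \compose \Upsilon)(z, s)} = \sech s \cdot \abs{D f(\Upsilon(z, s))}$. Raising to the $m$-th power exactly cancels the Jacobian $(\sech s)^m$, so the change-of-variables formula gives
\begin{equation*}
\int_{\Sset^{m - 1} \times \Rset} \abs{D (f \compose \Upsilon)}^m = \int_{\Sset^{m - 1} \times \Rset} (\sech s)^m \abs{D f \compose \Upsilon}^m = \int_{\Sset^m} \abs{D f}^m,
\end{equation*}
the two omitted poles contributing nothing. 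The equivalence of $W^{1, m}$-membership follows directly from finiteness of these energies. The main delicate point will be justifying the change of variables at the weakly differentiable level and confirming that the two polar points can be safely ignored; both are standard because $\Upsilon$ is bi-Lipschitz on any strip $\Sset^{m - 1} \times [-R, R]$ with Jacobian uniformly bounded on that strip, and the conformal scaling ensures that the monotone truncation argument passes to the limit as $R \to +\infty$.
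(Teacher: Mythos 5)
Your proposal is correct and follows essentially the same route as the paper: both compute the differential of \(\Upsilon\), verify via \(u \cdot z = 0\) and \(\sech^2 s + \tanh^2 s = 1\) that \(\Upsilon\) is conformal with factor \(\sech s\), and then conclude by the exact cancellation of the \(m\)-th power of the conformal factor against the Jacobian. Your additional care about the change of variables for weakly differentiable maps and the removal of the two poles is a sound elaboration of what the paper leaves implicit in its one-line conclusion.
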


\begin{proof}
We compute, if \((z, s) \in \Sset^{m - 1} \times \Rset \), if \((u, r) \in \Rset^{m}\times \Rset \) and if \(u \cdot z = 0\),
\begin{equation}
\label{eq_KeQui7eik3}
\begin{split}
  \bigabs{D \Upsilon (z, s)[(r, u)]}^2
 &
 = 
  \bigabs{z\, r \sech s \tanh s - u \sech s}^2
  + \bigabs{r\, (\sech s)^2}^2\\
 &
 = r^2 (\sech s)^2 \bigl((\sech s)^2 + (\tanh s)^2\bigr) + \abs{u}^2 \sech s^2\\
 &= (\sech s)^2 \bigl(\abs{r}^2+ \abs{u}^2\bigr)\eofs ; 
\end{split}
\end{equation}
since \(\abs{z} = 1\). It thus follows that the mapping \(\Upsilon\) is conformal and the identity holds.
\end{proof}

The fractional counterpart of \cref{lemma_Mercator_1} is an identity between the fractional integral on the sphere and a fractional integral with exponenially decaying potential in the longitudinal direction of the cylinder.

\begin{lemma}%
[Conformal fractional integrals under Mercator cylindrical projection]%
\label{lemma_Mercator}
For every \(m \in \Nset_*\), for every \(p \in (0, +\infty)\) and for every \(f : \Sset^m \to \manifold{N}\), 
\begin{multline*}
 \int_{\Sset^m}\int_{\Sset^m}
 \frac{d_{\manifold{N}} (f (y), f (x))^p}{\abs{y - x}^{2m}}\diff y \diff x\\
 =\int_{\Sset^{m - 1}} \int_{\Rset}
 \int_{\Sset^{m - 1} } 
 \int_{\Rset} 
  \frac
    {d_{\manifold{N}} \bigl(f (w \sech t, \tanh t), f (z \sech s, \tanh s)\bigr)^p}
    {\bigl((2\sinh \tfrac{t - s}{2})^2 + \abs{w - z}^2\bigr)^m} 
    \diff t
    \diff w 
    \diff s 
    \diff z \eofs .
\end{multline*}
\end{lemma}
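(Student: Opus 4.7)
The plan is to apply the change of variables \(y = \Upsilon(w, t)\) and \(x = \Upsilon(z, s)\) in the left-hand side, exploiting the fact that \(\Upsilon\) is a smooth diffeomorphism from \(\Sset^{m - 1} \times \Rset\) onto \(\Sset^m\) minus the two poles \((0, \pm 1)\), which form a negligible set for the \(m\)-dimensional surface measure. \Cref{lemma_Mercator_1} has already established that \(D \Upsilon (z, s)\) acts as a conformal dilation with factor \(\sech s\) on each of the \(m\) tangent directions; hence its Jacobian determinant is \((\sech s)^m\), and the change of variables produces a Jacobian factor \((\sech t)^m (\sech s)^m\) from the product measure \(\diff y \diff x\).

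The core computation is then to re-express the chordal distance \(\abs{y - x}^2\) in the cylindrical coordinates. Expanding
\begin{equation*}
  \abs{y - x}^2
  = \abs{w \sech t - z \sech s}^2 + (\tanh t - \tanh s)^2
\end{equation*}
and using \(\abs{w} = \abs{z} = 1\) together with the identity \((\sech t)^2 + (\tanh t)^2 = 1\), this reduces to
\begin{equation*}
  \abs{y - x}^2
  = 2 \bigl(1 - \sech t \sech s \; w \cdot z - \tanh t \tanh s\bigr).
\end{equation*}
Substituting \(w \cdot z = 1 - \tfrac{1}{2} \abs{w - z}^2\) and invoking the identities \(\cosh(t - s) = \cosh t \cosh s - \sinh t \sinh s\) and \(\cosh(t - s) - 1 = 2 \sinh^2 \tfrac{t - s}{2}\), the purely \((t, s)\)-dependent part simplifies to \(\sech t \sech s \bigl(2 \sinh \tfrac{t - s}{2}\bigr)^2\), yielding the factorization
\begin{equation*}
  \abs{y - x}^2
  = \sech t \sech s \, \Bigl( \bigl(2 \sinh \tfrac{t - s}{2}\bigr)^2 + \abs{w - z}^2 \Bigr).
\end{equation*}

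Raising this to the \(m\)-th power produces a factor \((\sech t \sech s)^m\) in the denominator of the integrand that cancels exactly the Jacobian factor identified in the first paragraph, leaving the claimed formula after relabelling. The only real obstacle is the hyperbolic manipulation that leads to the factorization of \(\abs{y - x}^2\); the rest is a routine application of the change of variables, valid even for \(p \in (0, 1)\) and for arbitrary measurable \(f\) since every quantity involved is nonnegative. Conceptually, this cancellation is the \emph{finite-distance} counterpart of the infinitesimal conformality established in \cref{lemma_Mercator_1}, and it explains why the conformally invariant weight \(\abs{y - x}^{-2m} \diff y \diff x\) transforms so transparently under the Mercator projection.
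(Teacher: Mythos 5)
Your proof is correct and follows essentially the same route as the paper: a change of variables through the Mercator projection \(\Upsilon\), the factorization \(\abs{\Upsilon(w,t)-\Upsilon(z,s)}^2 = \sech t\,\sech s\,\bigl((2\sinh\tfrac{t-s}{2})^2+\abs{w-z}^2\bigr)\) obtained by the same hyperbolic identities, and the cancellation of the Jacobian \((\sech t\,\sech s)^m\) against the conformal factor in the denominator. (Your statement that the Jacobian is \((\sech s)^m\) rather than \(\sech s\) is the correct reading of the conformality computation in \cref{lemma_Mercator_1}.)
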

\begin{proof}
We define the Mercator projection \(\Upsilon : \Sset^{m - 1} \times \Rset \to \Sset^m\) as in the statement of \cref{lemma_Mercator_1} and we observe that \eqref{eq_KeQui7eik3} holds and thus for every \((z, s) \in \Sset^{m - 1} \times \Rset\), we have \(\operatorname{Jac} \Upsilon (z, s) = \sech s\).
Moreover, if \((z, s), (w, t) \in \Sset^{m - 1} \times \Rset\), since \(\abs{z} = \abs{w} = 1\), we have 
\begin{equation*}
\begin{split}
 \bigabs{\Upsilon (t, w) - \Upsilon (s, z)}^2 
 &= \abs{w \sech t - z \sech s}^2 + \abs{\tanh t - \tanh s}^2\\
  &= \sech t \, \sech s  \, \abs{w - z}^2
  + \abs{\sech t - \sech s}^2
   +\abs{\tanh t - \tanh s}^2\\
 & =  \sech t \, \sech s \, \bigl(\abs{w - z}^2
 + 2 (\cosh t \cosh s - \sinh t \sinh s - 1)\bigr)\\
 & =  \sech t \, \sech s\, \bigl(\abs{w - z}^2
 + 2 (\cosh (t - s) - 1)\bigr)\\
 &= \sech t \, \sech s\, \Bigl(\bigl(2\sinh \tfrac{t - s}{2}\bigr)^2 + \abs{w - z}^2\Bigr)\eofs .
\end{split}
\end{equation*}
The identity follows then by a change of variable \(x = \Upsilon (z, s)\) and
\(y = \Upsilon (w, t)\).
\end{proof}

The proof of \cref{proposition_bouquet_pair} also relies on a construction of maps that are constant on some set.

\begin{lemma}
[Approximation of the identity by maps degenerate at a point]
\label{lemma_constant_disk}
For every \(b \in \manifold{N}\) and every \(\varepsilon > 0\), there exists a map \(\Theta \in \mapsset{C}^1 (\manifold{N}, \manifold{N})\) which is homotopic to the identity and such that \(\Theta = b\) in a neighbourhood of \(b\) and for every \(y, z \in \manifold{N}\), \(d (\Theta (z),  \Theta (y)) \le (1 + \varepsilon) \,d (z, y)\).
\end{lemma}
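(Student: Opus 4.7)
The plan is to construct $\Theta$ as a radial squashing inside a small geodesic ball around $b$ that collapses an even smaller concentric ball to the point $b$, extended by the identity of $\manifold{N}$ outside. I fix $R > 0$ smaller than the injectivity radius of $\manifold{N}$ at $b$ and small enough that the exponential chart $\exp_b : B_R (0) \subset T_b \manifold{N} \to B_R (b)$ is bilipschitz with distortion at most $1 + \eta_1$, for some small $\eta_1 > 0$ to be chosen below. I then pick $\delta \in (0, R)$ with $\delta/R$ small enough that $R/(R - \delta) \le 1 + \eta_2$ for another small $\eta_2 > 0$, together with a $\mapsset{C}^1$ nondecreasing function $\psi : [0, R] \to [0, R]$ such that $\psi \equiv 0$ on $[0, \delta]$, $\psi (r) = r$ on a neighbourhood of $R$, and $0 \le \psi' (r) \le 1 + \eta_2$ pointwise. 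Such a $\psi$ exists because on $[\delta, R]$ the average of $\psi'$ equals $R/(R - \delta) \le 1 + \eta_2$, and the pointwise bound is achieved by a standard interpolation that smoothly ramps $\psi'$ up past $\delta$ and back down to $1$ near $R$.

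I then define $\phi (v) \defeq \psi (\abs{v}) \, v/\abs{v}$ for $v \ne 0$ and $\phi (0) \defeq 0$. This is a $\mapsset{C}^1$ map on $B_R (0)$ that vanishes identically on $B_\delta (0)$ and agrees with the identity near $\partial B_R (0)$. At a point $v$ with $\abs{v} > \delta$, the eigenvalues of $D\phi (v)$ are $\psi' (\abs{v})$ in the radial direction and $\psi (\abs{v})/\abs{v}$ in each tangential direction, and integrating $\psi'$ from $\delta$ shows that $\psi (\abs{v}) \le (1 + \eta_2) (\abs{v} - \delta) \le (1 + \eta_2) \abs{v}$, so both stretches are bounded by $1 + \eta_2$. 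Transporting $\phi$ back to $B_R (b)$ via $\exp_b$ and extending by $\operatorname{id}_\manifold{N}$ outside $B_R (b)$ produces a $\mapsset{C}^1$ map $\Theta : \manifold{N} \to \manifold{N}$ that equals $b$ on the neighbourhood $B_\delta (b)$, coincides with the identity outside $B_R (b)$, and satisfies the pointwise bound $\norm{D\Theta}_{\operatorname{op}} \le (1 + \eta_1)^2 (1 + \eta_2)$ for the Riemannian metric, thanks to the bilipschitz control of the exponential chart.

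Choosing $\eta_1, \eta_2$ so that $(1 + \eta_1)^2 (1 + \eta_2) \le 1 + \varepsilon$ gives the desired pointwise estimate. A pointwise operator-norm bound on the differential of a $\mapsset{C}^1$ map between Riemannian manifolds upgrades to the same global Lipschitz bound on the geodesic distance by integrating along arbitrary paths and taking the infimum, so $d_\manifold{N} (\Theta (y), \Theta (z)) \le (1 + \varepsilon) \, d_\manifold{N} (y, z)$ for all $y, z \in \manifold{N}$, without any case analysis between $B_R (b)$ and its complement. For the homotopy to the identity, the linear interpolation $\psi_t (r) \defeq (1 - t) r + t \psi (r)$ stays in $[0, R]$ because both $r$ and $\psi (r)$ do, and $\Theta_t (\exp_b (v)) \defeq \exp_b (\psi_t (\abs{v}) v/\abs{v})$, extended by identity outside $B_R (b)$, is a continuous homotopy from $\operatorname{id}_\manifold{N}$ to $\Theta$. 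The main technical point is the construction of $\psi$: the boundary conditions $\psi (\delta) = 0$ and $\psi (R) = R$ force the average of $\psi'$ on $[\delta, R]$ to equal $R/(R - \delta) > 1$, so a true $1$-Lipschitz bound is unattainable, and one must compensate by taking $\delta/R$ arbitrarily small.
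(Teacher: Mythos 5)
Your construction is correct and matches the paper's proof in all essentials: both build $\Theta$ as a radial squashing $v \mapsto \psi(\abs{v})\, v/\abs{v}$ in normal coordinates at $b$, constant on a small ball and equal to the identity outside a slightly larger one, with radial and tangential stretches bounded by $1+o(1)$ (the paper takes the logarithmic profile $\psi(r) = \eta(\lambda \ln r)\, r$, whereas you build a linear ramp by hand and track the exponential-chart distortion explicitly). The only microscopic point is that your condition $R/(R-\delta) \le 1+\eta_2$ should be strict, so that $\psi'$ has room to vanish at $\delta$ and return to $1$ near $R$ while keeping the correct average; your freedom in shrinking $\delta$ makes this immediate.
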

\begin{proof}%
\resetconstant
We fix a function \(\eta \in \mapsset{C}^\infty (\Rset, [0, +\infty))\) such that \(\eta = 0\) on \((-\infty, -2]\) and \(\eta = 1\) on \([-1, + \infty)\). We first define the function \(\Xi_\lambda : \Rset^m \to \Rset^m\) for \(\lambda \in (0, + \infty)\) and for \(u \in \Rset^m\) by \(
    \Xi_\lambda (u)
  \defeq
    \eta (\lambda \ln \abs{u}) u, 
\)
and we observe that for every \(u, v \in \Rset^{m - 1}\),
\(
 \abs{D \Xi_{\lambda} (u) [v]} \le (1 +  \C \lambda) \abs{v}\).
We
define now for each \(y \in \manifold{N}\), 
\[
 \Theta_\lambda (y) \defeq 
 \begin{cases}
      \exp_b \bigl(\,\Xi_\lambda (\exp_b^{-1} (y))\,\bigr)
      & \text{if \(d (y, b) \le \operatorname{inj}_{\manifold{N}} (b)\)},\\
      y & \text{otherwise}.
   \end{cases}
\]
where \(\exp_b\) is the Riemmanian exponential map on \(\manifold{N}\) at \(b\) and \(\operatorname{inj}_{\manifold{N}} (b)\) is the injectivity radius of the Riemannian manifold \(\manifold{N}\) at the point \(b\).
We obtain the conclusion by taking \(\lambda > 0\) small enough.
\end{proof}

\begin{proof}
[Proof of \cref{proposition_bouquet_pair}]%
\resetconstant%
We choose a coordinate system so that \(a = (0, \dotsc, 0, 1) \in \Sset^m \subset \Rset^{m + 1}\).
By \cref{lemma_constant_disk}, for every \(\varepsilon > 0\), there exists maps \(\Theta^{\pm} : \manifold{N} \to \manifold{N}\) that are constant in a neighborhood of the point \(f_{\pm} (\mp a)\). It follows then that \(g_+ \defeq \Theta^+ \compose f_+\) is constant in a neighborhood of \(-a\) and \(g_- \defeq \Theta^- \compose f_-\) is constant in a neighborhood of \(a\) and
\begin{equation*}
    \mathcal{E}^{s, p} (g_\pm)
  \le
    (1 + \varepsilon)^p
    \,
    \mathcal{E}^{s, p} (f_\pm).
\end{equation*}

Up to a homotopy, we can consider that the map \(f\) is constant in a neighborhood of the equator \(\partial B_{\pi/2} (a) = \partial B_{\pi/2} (-a)\), that \(f \vert_{\Bar{B}_{\pi/2} (a)}\) is homotopic to \(f_+\) on \(\Sset^m \simeq \Bar{B}_{\pi/2} (a)/\partial B_{\pi/2} (a)\) and that \(f \vert_{\Bar{B}_{\pi/2} (-a)}\) is homotopic to \(f_-\) on \(\Sset^m \simeq \Bar{B}_{\pi/2} (-a)/\partial B_{\pi/2} (-a)\).

We consider the cylinder \(K = \partial (\Bset^m \times [-1, 1]) = \Bset^{m} \times \{-1, 1\} \cup \Sset^{m - 1} \times [-1, 1]\)
and a map \(\Phi : K \to \Sset^{m}\) such that  \(\Phi \vert_{\Bset^{m} \times \{\pm 1\}}\) is a homeomorphism with \(B_{\pi/2}(\pm a)\) and \(\Phi (x, s) = x\) for every \((x, s) \in \Sset^{m - 1} \times [-1, 1]\).
If we define \(\Psi : K \to \Sset^m\) by \(\Psi (x, s) \defeq (x, s)/\abs{(x, s)}\), we observe that \(\Psi\) is a homeomorphism and that the maps \(\Psi\) and \(\Phi\) are homotopic.

Since \(f\) is homotopic to \(g_{\pm}\) on \(\Sset^m \simeq \Bar{B}_{\pi/2} (\pm a)/\partial B_{\pi/2} (\pm a)\), there exists a homotopy \(H \in \mapsset{C} ( (\Bset^{m} \times \{-1, 1\}) \times [0, 1], \manifold{N})\) such that \(H (\cdot, 0) = f \compose \Phi\) on \(\Bset^m \times \{-1, 1\}\), \(H (\cdot, \pm 1, 1) = g_{\pm} \compose \Phi\) on \(\Bset^m\) and \(H (\cdot, t)\) is constant for every \(t \in [0, 1]\) on both sets \(\partial \Bset^{m} \times \{-1\}\) and \(\partial \Bset^{m} \times \{1\}\) (with possibly constant values differering on one set from the other).
By the homotopy extension property (see for example \cite{Hatcher_2002}*{Proposition 0.16}), there exists a homotopy \(\Gamma \in \mapsset{C} ([-1, 1] \times [0, 1], \manifold{N})\) such that 
\(\Gamma (s, 0) = f \vert_{\partial B_{\pi/2} (a)}\) and for every \(t \in [0, 1]\),
\(\Gamma (\pm 1, t) = H (\cdot, \pm 1, t)\) on \(\partial \Bset^m\).
We define the map \(\gamma \in \mapsset{C} ([0, 1], \manifold{N})\) for each \(s \in [0, 1]\) by \(\gamma (s) \defeq \Gamma (s, 1)\).
By a regularization argument, we can assume that \(\gamma = \Bar{\gamma} \vert_{[-1, 1]}\) for some \(\Bar{\gamma} \in \mapsset{C}^1 (\Rset, \manifold{N})\) such that \(\Bar{\gamma} = g_- (a)\) on \((-\infty, -1]\) and \(\Bar{\gamma} = g_+ (a)\) on \([1, +\infty)\).
We observe that \(f \compose \Phi\) is homotopic to the map \(h : K \to \manifold{N}\) defined for \((x, s) \in K\) by 
\begin{equation*}
h (x, s)
\defeq
\begin{cases}
  g_- \bigl(\Phi (x,s)\bigr) & \text{if \(s = -1\)},\\
  \gamma (s) & \text{if \(-1 < s < 1\)},\\
  g_+ \bigl(\Phi (x,s)\bigr) & \text{if \(s = 1\)}.
\end{cases}
\end{equation*}
It follows then that \(h \compose \Psi^{-1}\) is homotopic to \(f\) on \(\Sset^m\).

We now consider the maps \(\Tilde{g}_\pm : \Sset^{m - 1} \times \Rset \to \manifold{N}\) defined
for \((z, s) \in \Sset^{m - 1} \times \Rset\) by 
\begin{equation*}
      \Tilde{g}_\pm (z, s) 
    \defeq 
      g_\pm (z \sech s, \tanh s)\eofs .
\end{equation*}
We observe that there exists \(s_+ \in \Rset\) such that if \(s \le -s_+\) and \(z \in \Sset^{m - 1}\), then \(\Tilde{g}_+ (z, s) = g_+ (a_-)\). 
Similarly, there exists \(s_- \in \Rset\) such that if \(s \ge s_-\) and \(z \in \Sset^{m - 1}\), then \(\Tilde{g}_- (z, s) = g_- (a_+)\). 
We construct now for each \(\lambda \in (0, + \infty)\), the map \(\Tilde{g}_\lambda : \Sset^{m - 1} \times \Rset \to \manifold{N}\) by setting for each \((z, s) \in \Sset^{m - 1} \times \Rset\),
\begin{equation*}
\Tilde{g}_\lambda (z, s)
\defeq 
\begin{cases}
    \Tilde{g}_- (z, s + 2 \lambda + s_- )& \text{if \(s \in (-\infty, -2 \lambda]\)},\\
    \gamma (s/\lambda) & \text{if \(s \in [-2\lambda, 2 \lambda]\)},\\
    \Tilde{g}_+ (z, s - 2 \lambda - s_+ )& \text{if \(s \in [2 \lambda, +\infty) \).}
  \end{cases}
\end{equation*}
We define now for every \(\lambda \in (0, + \infty)\) the map \(g_\lambda: \Sset^{m} \to \manifold{N}\) for each \((y, t) \in \Sset^m \subset \Rset^m \times \Rset\),
\begin{equation*}
    g_\lambda (y, t) 
  \defeq 
    \Tilde{g}_\lambda \bigl(y/\abs{y}, \tanh^{-1} (t)\bigr)
    \eofs .
\end{equation*}
By construction, the map \(g_\lambda\) is homotopic to \(h \compose \Psi^{-1}\) on \(\Sset^m\), which in turn is homotopic to the map \(f\) on \(\Sset^m\).
It remains to estimate its Sobolev energy \(\mathcal{E}^{s, p} (g_\lambda)\).

If \(s = 1\), we have by \cref{lemma_Mercator_1},
\begin{equation}
\label{identity_flambda_energy_1}
\begin{split}
    \mathcal{E}^{1, m} (g_\lambda)
    &
    =
    \int_{\Sset^m}
      \abs{D g_\lambda}^m 
  =
    \int\limits_{\Sset^{m - 1} \times \Rset}
        \abs{D \Tilde{g}_\lambda}^m\\
  &=  
    \int\limits_{\Sset^{m - 1} \times (-\infty, s_-]} \hspace{-1em} \abs{D \Tilde{g}_-}^m
    +  \abs{\Sset^{m - 1}} \,\lambda^{m - 1} \int_{-1}^1 \abs{\gamma'}^m 
    + \int\limits_{\Sset^{m - 1} \times [-s_+, +\infty)}  \hspace{-1em} \abs{D \Tilde{g}_+}^m\\
  & \le \mathcal{E}^{1, m} (g_+) + \mathcal{E}^{1, m} (g_-) + \C \lambda^{m - 1} 
        \eofs .
\end{split}
\end{equation} 
The conclusion then follows by letting \(\lambda \to 0\) and \(\varepsilon \to 0\).

If \(0 < s < 1\), we have by \cref{lemma_Mercator}, 
\begin{multline}
\label{identity_gpm_energy}
\int_{\Sset^m}\int_{\Sset^m}
\frac{d_\manifold{N} (g_\pm (y), g_\pm (x))^p}{\abs{y - x}^{2m}}\diff y \diff x\\
  =
    \int_{\Sset^{m - 1}} 
    \int_{\Rset}
    \int_{\Sset^{m - 1} } 
    \int_{\Rset} 
      \frac
        {d_\manifold{N} (\Tilde{g}_\pm (w, t), \Tilde{g}_\pm (z, s))^p}
        {\bigl((2\sinh \tfrac{t - s}{2})^2 + \abs{w - z}^2\bigr)^m} 
      \diff t 
      \diff w 
      \diff s 
      \diff z 
      \eofs .
\end{multline}
and for every \(\lambda > 0\),
\begin{multline}
\label{identity_flambda_energy}
    \int_{\Sset^m}\int_{\Sset^m}
      \frac
        {d_{\manifold{N}} (g_\lambda (y), g_\lambda (x))^p}
        {\abs{y - x}^{2m}}
        \diff y 
        \diff x\\
  =
    \int_{\Sset^{m - 1}} 
      \int_{\Rset}
        \int_{\Sset^{m - 1} } 
            \int_{\Rset} 
              \frac{d_{\manifold{N}} (\Tilde{g}_\lambda (w, t), \Tilde{g}_\lambda (z, s))^p}
              {\bigl((2\sinh \tfrac{t - s}{2})^2 + \abs{w - z}^2\bigr)^m} 
              \diff t
            \diff w 
          \diff s 
        \diff z \eofs .
\end{multline}
We first estimate the tails in \eqref{identity_flambda_energy} 
\begin{multline}
\label{estimate_tail_minus}
\int_{\Sset^{m - 1}} \int_{-\infty}^{-\lambda}
\int_{\Sset^{m - 1} } \int_{-\infty}^{-\lambda} \frac{d_\manifold{N} (\Tilde{g}_\lambda (w, t), \Tilde{g}_\lambda (z, s))^p}{\bigl((2\sinh \tfrac{t - s}{2})^2 + \abs{w - z}^2\bigr)^m} \diff t \diff w \diff s \diff z\\
\le
\int_{\Sset^{m - 1}} \int_{\Rset}
\int_{\Sset^{m - 1} } \int_{\Rset} \frac{d_{\manifold{N}}(\Tilde{g}_- (w, t), \Tilde{g}_- (z, s))^p}{\bigl((2\sinh \tfrac{t - s}{2})^2 + \abs{w - z}^2\bigr)^m} \diff t \diff w \diff s \diff z
\end{multline}
and 
\begin{multline}
\label{estimate_tail_plus}
\int_{\Sset^{m - 1}} \int_{\lambda}^{+\infty}
\int_{\Sset^{m - 1} } \int_{\lambda}^{+\infty} 
    \frac
      {d_{\manifold{N}} (\Tilde{g}_\lambda (w, t), \Tilde{g}_\lambda (z, s))^p}
      {\bigl((2\sinh \tfrac{t - s}{2})^2 + \abs{w - z}^2\bigr)^m} \diff s \diff w \diff t \diff z\\
\le
\int_{\Sset^{m - 1}} \int_{\Rset}
\int_{\Sset^{m - 1} } \int_{\Rset} 
    \frac
      {d_{\manifold{N}} (\Tilde{g}_+ (w, t), \Tilde{g}_+ (z, s))^p}
      {\bigl((2\sinh \tfrac{t - s}{2})^2 + \abs{w - z}^2\bigr)^m} 
      \diff t \diff w \diff s \diff z \eofs .
\end{multline}

Next, if \(m \ge 2\), we apply a change of variable through 
a stereographic projection on \(\Sset^{m - 1}\):
for \(v \in z^\perp \simeq \Rset^{m - 1}\), we set 
\(w = \Sigma_z (v) \defeq (z (1 - \abs{v}^2) + 2 v)/(1 + \abs{v}^2)\), so that, since \(\abs{w} = 1\), we have 
\(
  \abs{w - \Sigma_z (v)}^2
  = 4 \abs{v}^2/(1 + \abs{v}^2)
\),
and, for every \(k \in z^\perp\), 
\(
 \abs{D \Sigma_z (v)[k]}
 = 2 \abs{k} /(1 + \abs{v}^2)
\),
so that \(\operatorname{Jac} \Sigma_z = 2^{m - 1}/(1 + \abs{v}^2)^{m - 1}\) and therefore, for every \(s, t \in [-2\lambda, 2 \lambda]\), 
\begin{equation*}
\begin{split}
  \int_{\Sset^{m - 1} }  
    \int_{\Sset^{m - 1}} 
    &
    \frac
    {d_{\manifold{N}} (\Tilde{g}_\lambda (w, t), \Tilde{g}_\lambda (z, s))^p}{\bigl((\sinh \tfrac{t - s}{2})^2 + \abs{w - z}^2\bigr)^m} \diff w \diff z\\
  &= 
    \C  
    \,
    d_{\manifold{N}} \bigl(\gamma (t/\lambda), \gamma (s/\lambda)\bigr)^p 
    \int_{\Rset^{m - 1}}  
      \frac
        {(1 + \abs{v}^2) }
        {\bigl((\sinh \tfrac{t - s}{2})^2 (1+ \abs{v}^2) + \abs{v}^2\bigr)^m}
      \diff v\\
  &= 
    \Cl{cst_ahPaz4uo5A} 
    \,
    d_{\manifold{N}} \bigl(\gamma (t/\lambda), \gamma (s/\lambda)\bigr)^p 
    \int_{0}^{+ \infty} 
      \frac
        {(1 + r^2)\, r^{m - 2}}
        {\bigl((2\sinh \tfrac{t - s}{2})^2 (1+ r^2) + r^2\bigr)^m} 
      \diff r\\
  &\le 
    \frac
      {\Cr{cst_ahPaz4uo5A} }
      {(2 \sinh \tfrac{t - s}{2})^2}
    \,
    d_{\manifold{N}} \bigl(\gamma (t/\lambda), \gamma (s/\lambda)\bigr)^p 
    \int_{0}^{+ \infty} 
      \frac
        {1}
        {\bigl((2\sinh \tfrac{t - s}{2})^2 (1+ r^2) + r^2\bigr)^{\frac{m}{2}}} 
      \diff r\\  
  &\le 
    \frac
      {\C}
      {(\sinh \tfrac{t - s}{2})^2} 
    \,
      d_{\manifold{N}} \bigl(\gamma (t/\lambda), \gamma (s/\lambda)\bigr)^p 
      \int_{0}^{+\infty} 
        \frac
          {1}
          {(\abs{\sinh \tfrac{t - s}{2}} + r)^{m}} 
        \diff r\\
&= \frac{\Cl{cst_1d_sp} \,d_{\manifold{N}} \bigl(\gamma (t/\lambda), \gamma (s/\lambda)\bigr)^p}{\bigabs{\sinh \tfrac{t - s}{2}}^{m + 1}}\eofs .
\end{split}
\end{equation*}
The same estimate still holds when \(m = 1\).
We have thus
\begin{equation}
\begin{split}
\label{estimate_connection}
    \int_{\Sset^{m - 1}} 
      \int_{-2\lambda}^{2 \lambda}
          \int_{\Sset^{m - 1} }
            \int_{-2\lambda}^{2 \lambda} &
              \frac
                {d_{\manifold{N}} (\Tilde{g}_\lambda (w, t), \Tilde{g}_\lambda (z, s))^p}
                {\bigl((2\sinh \tfrac{t - s}{2})^2 + \abs{w - z}^2\bigr)^m}
              \diff t 
            \diff w 
          \diff s 
        \diff z\\
  &\le 
    \Cr{cst_1d_sp}
    \int_{-2\lambda}^{2\lambda} 
      \int_{-2 \lambda}^{2\lambda} 
        \frac
          {d_{\manifold{N}}(\gamma (t/\lambda), \gamma (s/\lambda))^p}{\abs{\sinh \tfrac{t - s}{2}}^{m + 1}} 
        \diff t
      \diff s\\
&\le \C
  \frac{1}{\lambda^p} \int_{-2 \lambda}^{2\lambda } \int_{-2 \lambda}^{2\lambda} 
    \frac
      {\abs{t - s}^p}
      {\bigabs{\sinh \tfrac{t - s}{2}}^{m + 1}} 
    \diff t 
    \diff s 
  \le \frac{\C}{\lambda^{p - 1}}\eofs .
\end{split}
\end{equation}
Finally, we observe that if \((s, t) \in \Rset^2 \setminus (]-\infty, -\lambda]^2 \cup [-2\lambda, 2 \lambda]^2 \cup [\lambda, +\infty)^2\) and \(s \le t\),
then \(s \le t -\lambda\), \(s \le \lambda\) and \(t \ge -\lambda\).
We have then, under the changes of variables \(\sigma = t - s\) and \(\tau = t + \lambda\),
\begin{equation}
\label{estimate_long_range}
\begin{split}
  \iint
    \limits_{
      \substack{
        (s, t) \in \Rset^2\\ 
        s \le t -\lambda\\
        s \le \lambda \\ 
        t \ge -\lambda }}  
  &\int_{\Sset^{m - 1}} \int_{\Sset^{m - 1}}  
    \frac
      {d_{\manifold{N}} (\Tilde{g}_\lambda (w, t), \Tilde{g}_\lambda (z, s))^p}{\bigl((2\sinh \tfrac{t - s}{2})^2 + \abs{w - z}^2\bigr)^m} \diff w \diff z \diff s \diff t \\[-2em]
  &\le 
      \Cl{cst_Aedai3Aibe} 
      \int_{-\lambda}^{+\infty} 
        \int_{-\infty}^{\min (t - \lambda, \lambda)} 
        \hspace{-1em}
        \frac{1}{\abs{\sinh \tfrac{t - s}{2}}^{2 m}}
  \diff s 
  \diff t
= \Cr{cst_Aedai3Aibe} \int_{0}^{+\infty} \int_{\max (\lambda, \tau - 2 \lambda)}^{+\infty} \frac{1}{\abs{\sinh \tfrac{\sigma}{2}}^{2 m}}\diff \sigma \diff \tau\\
&
\le \C \int_{0}^{+\infty} \int_{\frac{\lambda + \tau}{4}}^{+\infty} \frac{1}{\abs{\sinh \tfrac{\sigma}{2}}^{2 m}}\diff \sigma \diff \tau
  \le 
    \Cl{cst_disxdp} 
    \int_{0}^{+\infty} 
      e^{-\frac{m}{4} (\lambda + \tau)} 
      \diff \tau
  =
    \frac
      {
        4
        \,
        \Cr{cst_disxdp}
        \,
        e^{-\frac{m \lambda}{4}}
      }
      {m}
    ,
\end{split}
\end{equation}
since \(\frac{\lambda + \tau}{4} \le \frac{\tau - 2 \lambda}{4} + \frac{3 \lambda}{4}
\le \max (\tau - 2 \lambda, \lambda)\).
By combining the identities \eqref{identity_flambda_energy} and  \eqref{identity_gpm_energy} together with the estimates \eqref{estimate_tail_minus}, \eqref{estimate_tail_plus}, \eqref{estimate_connection} and \eqref{estimate_long_range}, we obtain
\begin{multline*}
\int_{\Sset^m}\int_{\Sset^m}
\frac{d_{\manifold{N}} (g_\lambda (y), g_\lambda (x))^p}{\abs{y - x}^{2m}}\diff y \diff x\\
\le \int_{\Sset^m}\int_{\Sset^m}
\frac{d_{\manifold{N}} (g_+ (y), g_+ (x))^p}{\abs{y - x}^{2m}}\diff y \diff x
+  \int_{\Sset^m}\int_{\Sset^m}
\frac{d_{\manifold{N}} (g_- (y), g_- (x))^p}{\abs{y - x}^{2m}}\diff y \diff x
+ \frac{\C}{\lambda^{p - 1}},
\end{multline*}
and we reach thus the conclusion, by taking  \(\lambda > 0\) and \(\varepsilon > 0\) arbitrarily small.
\end{proof}

\section{Estimates of free homotopy decomposition on the sphere}

\subsection{Extension}
In order to prove \cref{theorem_bounded_finite_bubbles}, we first extend the map \(f\) on the sphere \(\Sset^m\) to a map \(F\) on the ball \(\Bset^{m + 1}\) taking its value into the ambient space, by relying on the next proposition which provides a suitably controlled extension.
When we endow the ball \(\mathbb{B}^{m + 1}\) with the \emph{Poincar\'e metric} of the hyperbolic space \(\Hset^{m + 1}\), that is, if  we consider the metric defined as quadratic form for \(z \in \Bset^{m + 1}\) and \(v \in \Rset^{m + 1}\) by 
\begin{equation}
\label{def_Poincare_Model}
    g_z (v) 
  \defeq
    \frac{4 \abs{v}^2}{\bigl(1 - \abs{z}^2\bigr)^2},
\end{equation}
we obtain uniform estimates on the measure of the set on which the function \(F\) is far frow the set of values on the boundary \(f (\Sset^{m})\).

\begin{proposition}%
[Extension to the hyperbolic space]%
\label{lemmaExtensionHyperbolic}%
Let \(m \in \Nset_*\). There exists a constant \(C > 0\) such that for every \(\nu \in \Nset_*\) and every function \(f \in \mapsset{C}^\infty (\Sset^m, \Rset^\nu)\),
there exists a function \(F \in \mapsset{C}^\infty (\Bar{\Bset}^{m + 1}, \Rset^\nu) \cap \mapsset{C}^\infty (\Bset^{m + 1}, \Rset^\nu)\) such that 
\begin{enumerate}%
[(i)]
\item \label{itExtensionHyperbolicTrace} 
  \(F \vert_{\partial \Bset^{m + 1}} = f\),
\item 
    \label{itExtensionHyperbolicLipschitz} 
  for every point \(x \in \Hset^{m + 1} \simeq \Bset^{m + 1}\), 
  \begin{equation*}
      \abs{D F (x)}_{\Hset^{m + 1}} 
    \le 
      m \osc_{\Sset^m} f
      \eofs ,
  \end{equation*}
\item 
  \label{itExtensionHyperbolicMeasure} 
  if \(\delta > \varepsilon\), 
  then
  \begin{equation*}
    \mu_{\Hset^{m+1}} 
      \bigl(
        \bigl\{ x \in \Hset^{m + 1} 
               \st \dist \bigl(F (x), f (\Sset^m)\bigr) 
                    \ge \delta 
        \bigr\}
      \bigr)
  \le 
    \frac
      {C}
      {\delta - \varepsilon}      
    \hspace{-.5em}
    \iint 
      \limits_{\Sset^m \times \Sset^m}
        \hspace{-.5em}
      \frac
        {\bigl(\abs{f (y) - f (x)} - \varepsilon)_+}
        {\abs{y - x}^{2 m}}
        \diff y \diff x
      \eofs .
  \end{equation*}
\end{enumerate}
\end{proposition}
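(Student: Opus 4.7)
The plan is to take $F$ to be the hyperharmonic extension of $f$, namely the Poisson integral
\[
F(x) \defeq \int_{\Sset^m} P(x, y)\, f(y) \diff y,
\qquad
P(x, y) \defeq \frac{1}{\abs{\Sset^m}}\,\frac{(1 - \abs{x}^2)^m}{\abs{y - x}^{2m}},
\]
normalised so that $\int_{\Sset^m} P(x, \cdot) \diff y = 1$. Smoothness of $F$ up to the boundary and property (i) follow from the standard regularity of the Poisson transform of a $\mapsset{C}^\infty$ datum.

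For (ii), I exploit that the M\"obius group of $\Bset^{m + 1}$ acts transitively by hyperbolic isometries and sends the Poisson extension of $f$ to the Poisson extension of $f$ precomposed with the boundary map (whose oscillation is unchanged); this reduces matters to estimating $\abs{DF(0)}_{\Hset^{m + 1}}$. At $x = 0$, differentiation under the integral yields $\nabla F(0) = \frac{2 m}{\abs{\Sset^m}} \int_{\Sset^m} y\, (f(y) - c) \diff y$ for any $c \in \Rset^\nu$; choosing $c$ so that $\abs{f - c} \le \osc_{\Sset^m} f$ and combining the elementary spherical integral $\int_{\Sset^m} \abs{y \cdot v} \diff y$ with the factor $\tfrac{1}{2}$ relating Euclidean and hyperbolic norms at the origin delivers the stated bound $m \osc_{\Sset^m} f$.

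The substance of the statement is (iii). For every $y_0 \in \Sset^m$, Jensen's inequality combined with the split $\abs{\tau} \le \varepsilon + (\abs{\tau} - \varepsilon)_+$ gives
\[
\abs{F(x) - f(y_0)}
\le \varepsilon + \int_{\Sset^m} P(x, y)\, \bigl( \abs{f(y) - f(y_0)} - \varepsilon \bigr)_+ \diff y.
\]
When $\dist(F(x), f(\Sset^m)) \ge \delta$ the left-hand side is at least $\delta$ for every $y_0 \in \Sset^m$, in particular for the radial projection $y_x \defeq x/\abs{x}$. Markov's inequality then produces $(\delta - \varepsilon) \chi_{\{\dist(F, f(\Sset^m)) \ge \delta\}}(x) \le \int_{\Sset^m} P(x, y) (\abs{f(y) - f(y_x)} - \varepsilon)_+ \diff y$, and integrating in $x$ against $\diff\mu_{\Hset^{m + 1}}$ with the polar change of variable $x = r\omega$ (so that $y_x = \omega$) reduces (iii) to a pointwise bound of the form
\[
\int_0^1 P(r\omega, y)\, \chi_{\{\dist(F(r\omega), f(\Sset^m)) \ge \delta\}}\, \frac{2^{m + 1} r^m \diff r}{(1 - r^2)^{m + 1}}
\le \frac{C}{\abs{y - \omega}^{2m}},
\]
after which Fubini gives the desired inequality.

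The main obstacle is precisely this last estimate. Dropping the indicator of the bad set makes the radial integral diverge logarithmically at $r = 1$, so the bound has to genuinely use that badness is incompatible with proximity to the boundary. Concretely, for $r$ close to $1$ the kernel $P(r\omega, \cdot)$ concentrates at scale $1 - r$ around $\omega$, hence $\dist(F(r\omega), f(\Sset^m)) \ge \delta$ forces a lower bound on the oscillation of $f$ over the boundary ball $B_{1 - r}(\omega)$; this surplus oscillation is what charges the gap-potential density $\abs{y - \omega}^{-2m} (\abs{f(y) - f(\omega)} - \varepsilon)_+$ and balances the radial weight $(1 - r)^{-m - 1}$. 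Carrying out this trade-off quantitatively --- for instance via a layer cake decomposition in $r$ together with the witness pairs $(y, y_0)$ realising the oscillation --- is the heart of the argument.
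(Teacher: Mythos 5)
Your construction of \(F\), the verification of (i), and the conformal-invariance argument for (ii) all match the paper's proof. The pointwise estimate you derive for (iii),
\(\dist(F(r\omega), f(\Sset^m)) \le \varepsilon + (1-r^2)^m\,4^m\,|\Sset^m|^{-1}\int_{\Sset^m}(|f(y)-f(\omega)|-\varepsilon)_+\,|y-\omega|^{-2m}\diff y\),
is also exactly the paper's inequality \eqref{ineq_dist_f_image_epsilon}. But the final step is a genuine gap, and you say so yourself: you reduce the conclusion, via Chebyshev and Fubini, to a pointwise bound \(\int_0^1 P(r\omega,y)\,\chi_{A_\delta}(r\omega)\,2^{m+1}r^m(1-r^2)^{-m-1}\diff r \le C\,|y-\omega|^{-2m}\) for each fixed pair \((y,\omega)\), and this is not the right reduction. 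Without the indicator the \(r\)-integral behaves like \(\int_0^1 (1-r^2)^{-1}\diff r\) and diverges; with it, the only information available on \(\{r : r\omega \in A_\delta\}\) is that it is contained in \([0,\rho_\delta(\omega)]\) with \((1-\rho_\delta(\omega)^2)^{-m}\) controlled by the gap potential, which yields an extra factor \(\ln\frac{1}{1-\rho_\delta(\omega)}\) and hence a logarithmic loss in the final estimate. The ``layer cake plus witness pairs'' idea you gesture at is not carried out and, as formulated, does not close this loss.

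The missing idea is to \emph{not} interchange the radial and spherical integrations all the way down to a kernel bound. Instead, bound the hyperbolic volume of the star-shaped hull of \(A_\delta\): for each direction \(\omega \in \Sset^m\), set \(\rho_\delta(\omega) \defeq \sup\{r \in [0,1) : r\omega \in A_\delta\}\) and integrate the radial density over the \emph{whole} segment \([0,\rho_\delta(\omega)]\), which gives
\begin{equation*}
  \int_0^{\rho_\delta(\omega)} \frac{2^{m+1}\,r^m}{(1-r^2)^{m+1}}\diff r
  \le \frac{2^m}{m\,\bigl(1-\rho_\delta(\omega)^2\bigr)^m}\eofs.
\end{equation*}
The factor \((1-\rho_\delta(\omega)^2)^{-m}\) is then controlled by applying your pointwise estimate \emph{once}, at the single outermost point \(\rho_\delta(\omega)\omega \in A_\delta\), where it reads \((\delta-\varepsilon)(1-\rho_\delta(\omega)^2)^{-m} \le 4^m|\Sset^m|^{-1}\int_{\Sset^m}(|f(y)-f(\omega)|-\varepsilon)_+|y-\omega|^{-2m}\diff y\). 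Integrating over \(\omega \in \Sset^m\) gives the claimed bound with constant \(8^m/(m\,|\Sset^m|)\). The divergent radial weight is thus absorbed exactly by the \(m\)-th power of the Poisson prefactor \((1-r^2)^m\) evaluated at the worst radius, with nothing left over; this is the point your reduction misses.
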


In this statement, the \emph{oscillation} of the function \(f\) is defined as
\begin{equation*}
    \osc_{\Sset^m} f 
  \defeq 
    \diam f (\Sset^m)
  =
    \sup_{x, y \in \Sset^m} d_{\manifold{N}} \bigl(f (y), f (x)\bigr)
    \eofs ,
\end{equation*}
\emph{In Euclidean terms,} the estimates of \cref{lemmaExtensionHyperbolic} read in view of the definition of the Poincaré metric \eqref{def_Poincare_Model} as follows:
for every \(z \in \Bset^{m + 1}\), 
\begin{equation*}
   \abs{D F (z)} \le \frac{2 m \osc_{\Sset^m} f}{1 - \abs{z}^2} 
\end{equation*}
and for every \(\delta > \varepsilon\),
\begin{equation}
\label{ineq_meas_osc}
  \int\limits_{\substack{x \in \Hset^{m + 1}\\
                   \dist (F (x), f (\Sset^m)) \ge \delta}}
      \frac{2^{m + 1}}{\bigl(1 - \abs{x}^2\bigr)^{m + 1}} \diff x
  \le 
    \frac{C}
          {\varepsilon - \delta} 
    \iint 
      \limits_{\Sset^m \times \Sset^m}
      \frac
        {\bigl(\abs{f (y) - f (x)} - \varepsilon)_+}
        {\abs{y - x}^{2 m}} 
      \diff y 
      \diff x
            \eofs .
\end{equation}
When the function \(f\) is bounded, the latter inequality \eqref{ineq_meas_osc} is a direct consequence of the work of Jean \familyname{Bourgain}, Haïm \familyname{Brezis} and \familyname{Nguyên} Hoài-Minh \cite{BourgainBrezisNguyen2005CRAS}*{lemma 2.1}.

\resetconstant
When \(f \in W^{s, p} (\Sset^m, \Rset^\nu)\) with \(s \in (0, 1)\) and \(s p = m\), 
the assertion \eqref{itExtensionHyperbolicMeasure} in \cref{lemmaExtensionHyperbolic} with \(\varepsilon  =  \frac{\delta}{2}\) implies that 
\begin{equation}
\label{ineq_hypmeas_Wsp} 
    \mu_{\Hset^{m+1}} 
      \bigl(
        \bigl\{ 
          x \in \Hset^{m + 1} 
        \st 
          \dist \bigl(F (x), f (\Sset^m)\bigr) \ge \delta 
        \bigr\}
      \bigr)
  \le 
    \frac
      {\C}
      {\delta^{p}} 
    \iint 
      \limits_{\Sset^m \times \Sset^m}
      \frac
        {\abs{f (y) - f (x)}^{p}}
        {\abs{y - x}^{2 m}} 
      \diff y \diff x
  \eofs .
\end{equation}
This inequality \eqref{ineq_hypmeas_Wsp} can be obtained  when \(s + \frac{1}{p} = s (1 + \frac{1}{m}) < 1\) by combining the classical extension \(F \in W^{s + 1/p, p} (\Bset^{m + 1}, \Rset^{\nu})\) of \(f \in W^{s, p} (\Sset^m, \Rset^{\nu})\) for linear fractional Sobolev spaces together with a fractional Hardy inequality \cite{Dyda2004}*{theorem 1.1} applied to the function \(G = \dist (F, f (\Sset^m)) \in W^{s + 1/p, p}_0 (\Bset^{m + 1}, \Rset^\nu)\):
\begin{equation*}
\begin{split}
    \int_{\Bset^{m + 1}} 
      \frac
        {\abs{G (x)}^{p}}
        {(1-\abs{x})^{m + 1}} 
      \diff x
  &
  \le
    \Cl{cst_AeDief8wah}
    \iint\limits_{\Bset^{m + 1} \times \Bset^{m + 1}} 
      \frac
        {\abs{G (y) - G (x)}^{p}}
        {\abs{y - x}^{2 m + 2}} 
      \diff y 
      \diff x
 \\
  &
  \le
    \Cr{cst_AeDief8wah}
    \iint\limits_{\Bset^{m + 1} \times \Bset^{m + 1}} 
      \frac
        {\abs{F (y) - F (x)}^{p}}
        {\abs{y - x}^{2 m + 2}} 
      \diff y 
      \diff x
 \\ &
 \le 
   \C 
    \iint\limits_{\Sset^{m} \times \Sset^{m}} 
      \frac
        {\abs{f (y) - f (x)}^{p}}
        {\abs{y - x}^{2 m}} 
      \diff y
      \diff x
      \eofs;
  \end{split}
\end{equation*}
the estimate \eqref{ineq_hypmeas_Wsp} follows then from the classical Chebyshev inequality.

The proofs of the counterpart of \cref{theorem_bounded_finite_bubbles} for \(W^{1/2, 2} (\Sset^1, \manifold{N})\) \cite{Kuwert1998}, \(W^{1, m} (\Sset^m, \manifold{N})\) \cite{DuzaarKuwert1998} and  \(W^{1-1/m, m} (\Sset^m, \manifold{N})\) \cite{Muller2000}, rely on a compactness argument on an extension of the map and do not explicitly estimate singular sets as in \cref{lemmaExtensionHyperbolic}. 

\medbreak

The proof of \cref{lemmaExtensionHyperbolic} follows the strategy of Jean \familyname{Bourgain}, Haïm \familyname{Brezis}, Petru \familyname{Mironescu} and \familyname{Nguyên} Hoài-Minh \citelist{\cite{BourgainBrezisMironescu2005CPAM}*{lemma 1.3}\cite{BourgainBrezisNguyen2005CRAS}}.
Since in the sequel we will work with the Poincaré ball model of the hyperbolic space, the proof uses the hyperharmonic extension as in \citelist{\cite{DuzaarKuwert1998}\cite{PetracheVanSchaftingen2017}}; this construction corresponds to the harmonic extension in the two-dimensional case \(m +1 = 2\) \cite{Kuwert1998}*{\S 2} and to the biharmonic extension when \(m + 1 = 4\) \cite{PetracheRiviere2015}.

\begin{proof}%
[Proof of \cref{lemmaExtensionHyperbolic}]%
\resetconstant
We define the function \(F : \Bset^{m + 1} \to \Rset^\nu\) to be the hyperharmonic extension  of the function \(f\), defined for each \(z \in \Bset^{m + 1}\) by \cite{Ahlfors1981}*{\S V}
\begin{equation}
\label{eq_def_hyperharmonic}
    F (z) 
  \defeq
    (1 - \abs{z}^2)^m 
    \fint_{\Sset^{m}} 
      \frac{f (y)}{\abs{z - y}^{2m}} 
      \diff y
  = 
    \frac{(1 - \abs{z}^2)^m}{\abs{\Sset^m}} 
    \int_{\Sset^{m}} 
      \frac{f (y)}{\abs{z - y}^{2m}} 
      \diff y
  \;
  .
\end{equation}
The hyperharmonic extension is equivariant under the action of the conformal transformations of the ball and of the sphere, both corresponding to the group of \((m+1)\)--dimensional Möbius transformation preserving the unit ball:
if \(T : \Bset^{m + 1} \to \Bset^{m + 1}\) is a conformal transformation, then \(F \compose T\) is the hyperharmonic extension of \(f \compose T\).

The assertion \eqref{itExtensionHyperbolicTrace} holds since by conformal invariance for every \(z \in \Bset^{m + 1}\), 
\[
  \frac{(1 - \abs{z}^2)^m}{\abs{\Sset^m}} 
    \int_{\Sset^{m}} 
      \frac{1}{\abs{z - y}^{2m}} 
      \diff y = 1
\eofs.
\]

In order to prove the assertion \eqref{itExtensionHyperbolicLipschitz}, we first note that the Möbius transformations preserving the ball are exactly the isometries of the hyperbolic space in the Poincaré disk model \cite{Ahlfors1981}*{\S II},
and thus, in view of the equivariance of the hyperharmonic extension, it is sufficient to consider the case \(z = 0\). 
We have then for every \(x \in \Sset^m\)
\begin{equation*}
 D F (0)
 = 2m \fint_{\Sset^{m}} f (y) \otimes y \diff y
 = 2m \fint_{\Sset^{m}} \bigl(f (y) - f (x)\bigr)  \otimes y \diff y,
\end{equation*}
since \(\int_{\Sset^{m}} y \diff y = 0\),
and thus 
\begin{equation*}
    \abs{D F (0)}_{\Hset^{m + 1}} 
  = 
    \frac{1}{2} 
    \abs{D F (0)} 
  \le 
    m
    \osc_{\Sset^m} f 
\eofs.
\end{equation*}

For the assertion \eqref{itExtensionHyperbolicMeasure}, we observe that for every \(x \in \Sset^m\) and \(r \in [0, 1)\), we have by \eqref{eq_def_hyperharmonic}
\begin{equation*}
    \dist \bigl(F (r x), f (\Sset^n) \bigr)
  \le 
    \abs{F (r x) - f (x)}
  \le 
    (1 - r^2)^m 
    \fint_{\Sset^m} 
      \frac
        {\abs{f (y) - f (x)}}
        {\abs{y- rx}^{2 m}} 
      \diff y
  \eofs .
\end{equation*}
We deduce therefrom that for every \(\varepsilon > 0\)
\begin{equation*}
    \dist \bigl(F (r x), f (\Sset^n) \bigr)
  \le 
    \varepsilon 
    + 
    (1 - r^2)^m
    \fint_{\Sset^m}
      \frac{\bigl(\abs{f (y) - f (x)} - \varepsilon\bigr)_+}
            {\abs{y - rx}^{2 m}} 
    \diff y
  \eofs .
\end{equation*}
We next observe that, by the triangle inequality, for every \(x, y \in \Sset^m\) and \(r \in [0, 1)\), we have 
\begin{equation*}
    \abs{y - x} 
  \le 
    \abs{y - r x} + \abs{rx - x}
 = 
    \abs{y - r x}  + \abs{y} - \abs{r x}
 \le 
    2 \abs{y - r x}
  \eofs .
\end{equation*}
Therefore, we have 
\begin{equation}
\label{ineq_dist_f_image_epsilon}
    \dist \bigl(F (r x), f (\Sset^n) \bigr)
  \le 
    \varepsilon 
    + 
    \frac{(1 - r^2)^m \, 4^m}
          {\abs{\Sset^m}}
    \int_{\Sset^m}
      \frac
        {\bigl(\abs{f (y) -  f (x)} - \varepsilon\bigr)_+}
        {\abs{y - x}^{2 m}} 
    \diff y
  \eofs .
\end{equation}
We define the set 
\begin{equation*}
    A_\delta 
  \defeq 
    \bigl\{ 
      z \in \Bset^{m + 1} 
    \st 
      \dist \bigl(F (z), f (\Sset^m)\bigr) \ge \delta 
    \bigr\}\eofs .
\end{equation*}
For each \(x \in \Sset^m\), we set 
\begin{equation*}
    \rho_{\delta} (x) 
  \defeq 
    \sup 
      \,\{
        r \in [0, 1) 
      \st 
        r x \in A_\delta
      \}
\eofs, 
\end{equation*}
(with the convention that \(\rho_\delta (x) \defeq 0\) if \(r x \not \in A_\delta\) for every \(r \in [0, 1)\))
and, since \(m \ge 1\), we compute that 
\begin{equation*}
\begin{split}
    \mu_{\Hset^{m + 1}} (A_\delta)
  &\le 
    \int_{\Sset^m} 
      \int_0^{\rho_\delta (x)} 
        \frac{2^{m + 1} \, r^m}
          {\bigl(1 - r^2\bigr)^{m + 1}} 
      \diff r 
    \diff x\\
  &\le
    \int_{\Sset^m} 
      \int_0^{\rho_\delta (x)} 
        \frac{2^{m + 1} \, r}{\bigl(1 - r^2\bigr)^{m + 1}} 
      \diff r 
    \diff x
  \le 
    \int_{\Sset^m} 
      \frac{2^{m}}
            {m\, \bigl(1 - \rho_{\delta} (x)^2\bigr)^m} 
    \diff x \eofs.
\end{split}
\end{equation*}
Since \(\rho_{\delta} (x) x \in A_\delta\), we deduce from \eqref{ineq_dist_f_image_epsilon} that 
\begin{equation}
\label{ineq_rho_delta}
    \frac{1}{\bigl(1 - \rho_{\delta} (x)^2\bigr)^m}
  \le 
    \frac{4^m}
          {\abs{\Sset^m}(\delta - \varepsilon)} 
    \int_{\Sset^m}
      \frac{\bigl(\abs{f (y) - f (x)} - \varepsilon\bigr)_+}
            {\abs{y - x}^{2 m}} 
    \diff y
\end{equation}
and we conclude that 
\begin{equation*}
    \mu_{\Hset^{m + 1}} (A_\delta) 
  \le 
    \frac
      {8^m}
      {m \,\abs{\Sset^m}\,(\delta - \varepsilon)} 
    \iint 
      \limits_{\Sset^m \times \Sset^m}
      \frac
        {\bigl(\abs{f (y) - f (x)} - \varepsilon\bigr)_+}
        {\abs{y - x}^{2 m}} 
    \diff y 
    \diff x 
\eofs .
\qedhere
\end{equation*}
\end{proof}

\begin{remark}
The proof of \cref{lemmaExtensionHyperbolic} controls in fact the hyperbolic volume of the star-shaped hull \(A_\delta^{\star, 0}\) of the set \(A_\delta\) with respect to \(0\) defined as the smallest subset which is starshaped with respect to \(0\) and contains \(A_\delta\). 
By invariance under the Möbius group that models the isometries of the hyperbolic space in the Poincaré ball model, the volume of the starshaped hull  \(A_\delta^{\star,x}\) of the set \(A_\delta\) with respect to any \(x \in \Hset^{m + 1}\) is also controlled. 
\end{remark}

\subsection{Ball merging}

Our second tool for proving \cref{theorem_bounded_finite_bubbles} is a construction that merges balls in a covering.

\begin{lemma}[Merging balls on manifold]
\label{lemmaDisjointCovering}
Let \(\manifold{M}\) be a Riemannian manifold. 
Given a positive integer \(\ell \in \Nset_*\), 
points \(a_1, a_2, \dotsc, a_\ell \in \manifold{M}\) and radii \(r_1, r_2, \dotsc, r_\ell \in (0, + \infty)\), there exists
\(\ell' \in \{1, 2, \dotsc, \ell\}\), points \(a_1', \dotsc, a_\ell' \in \manifold{M}\) and radii \(r_1', \dotsc, r_\ell'\) such that 
\begin{align*}
 &\bigcup_{i = 1}^\ell B^{\manifold{M}}_{r_i} (a_i) \subseteq \bigcup_{i = 1}^{\ell'} B^{\manifold{M}}_{r_i'} (a_i')&
&\text{ and }&
 &\sum_{i = 1}^{\ell'} r_i' \le \sum_{i = 1}^\ell r_i
 \eofs,
\end{align*}
and for every \(i, j \in \{1, 2, \dotsc, \ell\}\) such that \(i \ne j\),
\(
  \Bar{B}^{\manifold{M}}_{r_i'} (a_i') \cap \Bar{B}^{\manifold{M}}_{r_j'} (a_j') = \emptyset.
\)
\end{lemma}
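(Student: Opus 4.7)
My plan is to induct on $\ell$. The base case $\ell = 1$ is immediate. For the inductive step, if the closed balls $\Bar{B}^{\manifold{M}}_{r_i}(a_i)$ are already pairwise disjoint, I take $\ell' = \ell$ with the original family. Otherwise, I identify two indices $i \neq j$ for which $\Bar{B}^{\manifold{M}}_{r_i}(a_i) \cap \Bar{B}^{\manifold{M}}_{r_j}(a_j) \neq \emptyset$, replace these two balls by a single ball $B^{\manifold{M}}_{r^*}(a^*)$ containing their union with $r^* \leq r_i + r_j$, and apply the inductive hypothesis to the resulting family of $\ell - 1$ balls (whose union is unchanged and whose total radius can only decrease).

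The technical core is the \emph{merging step}: from $\Bar{B}^{\manifold{M}}_{r_i}(a_i) \cap \Bar{B}^{\manifold{M}}_{r_j}(a_j) \neq \emptyset$ the triangle inequality gives $d \defeq d_{\manifold{M}}(a_i, a_j) \leq r_i + r_j$, and I seek $(a^*, r^*)$ as above. Assume without loss of generality $r_i \geq r_j$. If $d \leq r_i - r_j$, the triangle inequality yields $B^{\manifold{M}}_{r_j}(a_j) \subseteq B^{\manifold{M}}_{r_i}(a_i)$, and I may drop ball $j$ by setting $(a^*, r^*) \defeq (a_i, r_i)$. Otherwise $|r_i - r_j| \leq d \leq r_i + r_j$; picking a minimizing geodesic $\gamma : [0, d] \to \manifold{M}$ from $a_i$ to $a_j$, I set
\begin{equation*}
  \alpha \defeq (d + r_j - r_i)/2 \in [0, d], \quad a^* \defeq \gamma(\alpha), \quad r^* \defeq (d + r_i + r_j)/2.
\end{equation*}
Then $r^* \leq r_i + r_j$ since $d \leq r_i + r_j$, and the triangle inequality gives, for every $x \in B^{\manifold{M}}_{r_i}(a_i)$,
\begin{equation*}
  d_{\manifold{M}}(x, a^*) \leq d_{\manifold{M}}(x, a_i) + \alpha < r_i + \alpha = r^*,
\end{equation*}
and symmetrically $d_{\manifold{M}}(x, a^*) < r^*$ for $x \in B^{\manifold{M}}_{r_j}(a_j)$, establishing the required inclusion.

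The main obstacle is thus the existence of the geodesic $\gamma$ joining $a_i$ and $a_j$: this is supplied by the Hopf--Rinow theorem whenever $\manifold{M}$ is complete, which covers every setting in which the lemma is invoked in this paper (the sphere $\Sset^m$, the hyperbolic ball $\Hset^{m + 1}$, and the compact target manifold $\manifold{N}$). Iterating the merging step at most $\ell - 1$ times terminates with a family of pairwise disjoint closed balls that still covers the original union and whose total radius is bounded by $\sum_{i = 1}^\ell r_i$, yielding the lemma.
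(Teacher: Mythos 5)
Your proof is correct and follows essentially the same route as the paper: induction on \(\ell\) with a merging step that replaces two intersecting balls by a single ball centred at a point on a minimizing geodesic, with radius \(\tfrac12(d + r_i + r_j)\). Your explicit treatment of the nested case \(d \le r_i - r_j\) (where the paper's formula for the distance from the new centre to one of the old centres would be negative) and your appeal to Hopf--Rinow for the existence of the geodesic are welcome refinements of details the paper leaves implicit.
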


\Cref{lemmaDisjointCovering} merges a covering by balls into a covering by disjoint balls, while keeping the sums of the radii under control.
This lemma is classical in the Euclidean case where it was a key tool lower bounds for Ginzburg--Landau energies \citelist{\cite{Sandier_1998}*{proof of theorem 1}\cite{Jerrard_1999}*{lemma 3.1}\cite{Sandier_Serfaty_2007}*{lemma 4.1}}.

\begin{proof}[Proof of \cref{lemmaDisjointCovering}]
We proceed by induction. The lemma holds trivially when \(\ell = 1\). We assume now that \(\ell > 1\) and that the conclusion holds for \(\ell - 1\).

If for every \(i, j \in \{1, 2, \dotsc, \ell\}\), we have \(\Bar{B}^{\manifold{M}}_{r_i} (a_i) \cap \Bar{B}^{\manifold{M}}_{r_j} (a_j) = \emptyset\), the lemma is proved by taking \(\ell'=\ell\), and, for each \(i \in \{1, 2, \dotsc, \ell\}\), \(a_i' = a_i\) and \(r_i' = r_i\).

Otherwise, we can assume without loss of generality that \(\Bar{B}^{\manifold{M}}_{r_{\ell - 1}} (a_{\ell - 1}) \cap \Bar{B}^{\manifold{M}}_{r_\ell} (a_\ell) \ne \emptyset\). 
By the triangle inequality, this implies that \(d_{\manifold{M}} (a_{\ell - 1}, a_{\ell}) \le r_{\ell - 1} + r_{\ell}\). 
Since the distance \(d_{\manifold{M}}\) is a geodesic distance on the manifold \(\manifold{M}\), there exists a point \(\Tilde{a}_{\ell - 1} \in \manifold{M}\) such that 
\begin{align*}
    d_{\manifold{M}} (\Tilde{a}_{\ell - 1}, a_{\ell - 1}) 
  &= 
    \tfrac{1}{2}\bigl(d_{\manifold{M}} (a_{\ell - 1}, a_\ell) + r_{\ell} - r_{\ell - 1}\bigr)\eofs,&
\intertext{and}
    d_{\manifold{M}} (\Tilde{a}_{\ell - 1}, a_{\ell}) 
  &= 
    \tfrac{1}{2}\bigl(d_{\manifold{M}} (a_{\ell - 1}, a_\ell) + r_{\ell-1} - r_{\ell}\bigr)
\eofs.
\end{align*}
We now set \(\Tilde{r}_{\ell - 1} \defeq \frac{1}{2}\bigl(d_{\manifold{M}} (a_{\ell - 1}, a_\ell) + r_{\ell-1} + r_{\ell}\bigr)\). We observe that
\begin{align*}
\Tilde{r}_{\ell - 1} &\le r_{\ell - 1} + r_\ell &
&\text{and}&
B^{\manifold{M}}_{r_{\ell - 1}} (a_{\ell - 1}) \cup 
B^{\manifold{M}}_{r_{\ell}} (a_{\ell}) & \subseteq B^{\manifold{M}}_{\Tilde{r}_{\ell - 1}} (\Tilde{a}_{\ell - 1})\,.
\end{align*}
We set, for \(i \in \{1, 2, \dotsc, \ell - 2\}\), \(\Tilde{a}_i \defeq a_i\) and \(\Tilde{r}_i \defeq r_i\) and \(\Tilde{\ell} \defeq \ell - 1\). We conclude by applying our induction hypothesis to \(\Tilde{\ell}\), \(\Tilde{a}_1, \Tilde{a}_2, \dotsc, \Tilde{a}_{\Tilde{\ell}}\) and  \(\Tilde{r}_1, \Tilde{r}_2, \dotsc, \Tilde{r}_{\Tilde{\ell}}\).
\end{proof}

\begin{remark}
The proof of \cref{lemmaDisjointCovering} relies on the fact that \(\manifold{M}\) is a geodesic metric space to construct the point \(\Tilde{a}_{\ell - 1}\).
If \(\manifold{M}\) is merely a length metric space, the proof gives weaker forms of \cref{lemmaDisjointCovering} where either the inequality on the sum of radius holds up to an arbitrary error \(\varepsilon > 0\) or the open balls, instead of the closed balls, are disjoint. If \(\manifold{M}\) is simply a metric space, than we can still take \(\Tilde{a}_{\ell - 1} \in \Bar{B}_{r_{\ell - 1}} (a_{\ell - 1}) \cap \Bar{B}_{r_\ell} (a_\ell)\) and \(\Tilde{r}_{\ell - 1} = 2 \max \{r_{\ell - 1}, r_\ell\}\) and obtain the conclusion with an additional unbounded \(2^{\ell - 1}\) factor multiplying the sum of radii on the right-hand side.
\end{remark}

We will also rely on a straightforward characterization of the geometry of hyperbolic spheres \cite{Fenchel_1989}*{\S III.5}.

\begin{lemma}
[Description of spheres in the hyperbolic space]
\label{lemma_HyperbolicSphere}
For every \(\rho > 0\) and every \(m \in \Nset\), the hyperbolic sphere \(\partial B^{\Hset^{m + 1}}_\rho (a)\) is isometric to the Euclidean \(\Sset^m_{\sinh \rho}\).
\end{lemma}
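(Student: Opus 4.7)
The plan is to exploit the homogeneity of the hyperbolic space under its isometry group to reduce to a single convenient center, and then carry out an explicit calculation of the induced metric on the hyperbolic sphere. Since the isometry group of $\Hset^{m+1}$ acts transitively on pairs (point, orthonormal frame), I can freely choose the center $a$ and the model of $\Hset^{m+1}$ that simplifies the computation the most.

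I would work in the Poincaré ball model \eqref{def_Poincare_Model} already used in the paper, with $a = 0 \in \Bset^{m+1}$. By rotational symmetry around $0$, the hyperbolic sphere $\partial B^{\Hset^{m+1}}_\rho(0)$ is a Euclidean sphere centered at $0$ of some Euclidean radius $r \in (0,1)$. Computing the hyperbolic distance from $0$ to a point at Euclidean distance $r$ along a radial ray,
\begin{equation*}
    \rho
  =
    \int_0^r
      \frac{2}{1 - t^2}
    \diff t
  =
    \log \frac{1 + r}{1 - r}
  \eofs,
\end{equation*}
which inverts to $r = \tanh(\rho/2)$. On this Euclidean sphere $\Sset^m_r \subset \Bset^{m+1}$, the induced Poincaré metric is the Euclidean metric on $\Sset^m_r$ multiplied by the conformal factor $4/(1 - r^2)^2$, so the hyperbolic sphere is isometric to the Euclidean sphere of radius
\begin{equation*}
    \frac{2r}{1 - r^2}
  =
    \frac{2 \tanh(\rho/2)}{1 - \tanh^2(\rho/2)}
  =
    2 \sinh(\rho/2) \cosh(\rho/2)
  =
    \sinh \rho
  \eofs.
\end{equation*}
This gives exactly the claim.

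There is essentially no genuine obstacle here: the only thing to take care of is justifying the reduction to $a = 0$, which follows from the well-known fact that Möbius transformations of $\Bset^{m+1}$ are the isometries of the Poincaré ball model (see \cite{Ahlfors1981}*{\S II}), a fact the paper already invokes in the proof of \cref{lemmaExtensionHyperbolic}. Alternatively, one could present the same result in the hyperboloid model $\{x \in \Rset^{m+2} : -x_0^2 + \abs{x'}^2 = -1,\ x_0 > 0\}$, where a hyperbolic sphere of radius $\rho$ around the vertex $(1, 0)$ is $\{(\cosh \rho, \sinh \rho \cdot \omega) : \omega \in \Sset^m\}$ and the restriction of the Minkowski metric to tangent vectors $(0, \sinh \rho \cdot v)$ yields $\sinh^2 \rho\, \abs{v}^2$; this is perhaps the cleanest presentation, but since the rest of the paper works in the Poincaré ball model, I would stick with the conformal computation above for consistency.
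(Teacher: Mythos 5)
Your proof is correct and follows essentially the same route as the paper: reduce to $a = 0$ in the Poincaré ball model, identify the hyperbolic sphere with the Euclidean sphere of radius $\tanh(\rho/2)$, and read off the constant conformal factor to get the radius $2\tanh(\rho/2)/(1-\tanh^2(\rho/2)) = \sinh\rho$. The only differences are cosmetic (you derive $r = \tanh(\rho/2)$ by the explicit radial integral where the paper asserts it, and you mention the hyperboloid model as an alternative).
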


\begin{proof}
We work in the Poincaré ball model and assume without loss of generality that \(a = 0 \in \Bset^{m + 1} \simeq \Hset^{m + 1}\).
The hyperbolic ball \(B_\rho^{\Hset^{m + 1}} (0)\) is modelled by the Euclidean ball \(B_{\tanh \frac{\rho}{2}} (0)\). 
The Poincaré metric at every point \(x \in \partial B_{\tanh \frac{\rho}{2}} (0)\)  on this sphere is 
\(\sqrt{g_x (v)} = 2 \abs{v}/(1 - (\tanh \frac{\rho}{2})^2) = 2 (\cosh \frac{\rho}{2})^2 \abs{v}\) which means that the radius of the isometric Euclidean sphere is \(2 (\cosh \frac{\rho}{2})^2 \tanh \frac{\rho}{2} = \sinh \rho\).
\end{proof}

\subsection{Proof of the theorem}
\Cref{theorem_bounded_finite_bubbles} will follow from the following slightly stronger statement, involving a truncated fractional integral.

\begin{theorem}
[Estimate on free homotopy decomposition by a truncated fractional energy]
\label{theorem_bounded_finite_bubbles_linear}
If \(\varepsilon > 0\) is small enough, there is a constant \(C > 0\) such that for every \(\lambda > 0\), there exists a finite set \(\mapsset{F}^\lambda \subset \mapsset{C} (\Sset^m, \manifold{N})\) such that any map \(f \in \mapsset{C} (\Sset^m, \manifold{N})\) satisfying
\begin{equation*}
    \iint\limits_{\Sset^m \times \Sset^m}
      \frac{\bigl(d_{\manifold{N}} (f (y), f (x)) - \varepsilon\bigr)_+}{\abs{y - x}^{2 m}} 
      \diff x 
      \diff y 
  \le 
    \lambda
\eofs,
\end{equation*}
fas a free homotopy decomposition into \(f_1, \dotsc, f_k \in \mapsset{F}^\lambda\) with \(k \le C \lambda\).
\end{theorem}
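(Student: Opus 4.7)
The plan is to apply \cref{lemmaExtensionHyperbolic} to the map \(f\), isolate the ``bad set'' in the hyperbolic extension where the image is far from \(\manifold{N}\), cover it by a controlled number of hyperbolic balls, and read off the bubble maps from the traces of the extension on the boundary spheres of these balls. I fix an isometric embedding \(\manifold{N} \hookrightarrow \Rset^\nu\) together with the nearest-point retraction \(\Pi : U \to \manifold{N}\) defined on a tubular neighborhood \(U\supset \manifold{N}\), and pick \(\eta > 0\) such that \(\{y \in \Rset^\nu : \dist(y, \manifold{N}) < \eta\}\subset U\). Let \(F : \Bar{\Bset}^{m+1} \to \Rset^\nu\) be the hyperharmonic extension of \(f\) provided by \cref{lemmaExtensionHyperbolic}. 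Requiring \(\varepsilon > 0\) to be small enough that \(2\varepsilon < \eta\), I set \(\delta \defeq 2\varepsilon\). Since \(\manifold{N}\) is isometrically embedded, \(\abs{f(y)-f(x)} \le d_{\manifold{N}}(f(y), f(x))\), so by \cref{lemmaExtensionHyperbolic}\,\eqref{itExtensionHyperbolicMeasure} and the hypothesis, the set
\begin{equation*}
    A
  \defeq
    \bigl\{
      x \in \Hset^{m+1}
    \st
      \dist (F (x), \manifold{N}) \ge \delta
    \bigr\}
\end{equation*}
has hyperbolic volume \(\mu_{\Hset^{m+1}}(A) \le C \lambda / \varepsilon\).

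Next, fix a small constant \(\rho_0 > 0\). A standard Vitali argument produces a cover of \(A\) by \(N \le C' \mu_{\Hset^{m+1}}(A) \le C'' \lambda\) hyperbolic balls of radius \(\rho_0\). Applying \cref{lemmaDisjointCovering} then yields pairwise disjoint closed hyperbolic balls \(\bar{B}^{\Hset^{m+1}}_{\rho_1}(a_1), \dotsc, \bar{B}^{\Hset^{m+1}}_{\rho_k}(a_k)\), still covering \(A\), with \(k \le N \le C \lambda\) and \(\sum_{i=1}^k \rho_i \le C \rho_0 \lambda\). On \(W \defeq \Bset^{m+1} \setminus \bigcup_{i=1}^k \bar{B}_i\) the extension \(F\) takes values in \(U\), so \(\tilde{F} \defeq \Pi \compose F : W \to \manifold{N}\) is a well-defined continuous map with \(\tilde{F}\vert_{\Sset^m} = f\). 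By \cref{lemma_HyperbolicSphere} the hyperbolic sphere \(\partial B^{\Hset^{m+1}}_{\rho_i}(a_i)\) is isometric to \(\Sset^m_{\sinh \rho_i}\), and I define \(f_i \in \mapsset{C}(\Sset^m, \manifold{N})\) by pulling back \(\tilde F\vert_{\partial B_i}\) through the rescaled isometry \(\Sset^m \to \partial B_i\).

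For the topological conclusion, the map \(\tilde F : W \to \manifold{N}\) plays the role of a cobordism between \(f\) on the outer boundary \(\Sset^m\) and \(\bigsqcup_i f_i\) on the inner boundaries \(\partial B_i\). I would choose pairwise disjoint paths in \(W\) connecting each \(a_i\) to a point of \(\Sset^m\), thicken them into disjoint tubes, and use them to deform \(f\) through \(W\) to a map \(g : \Sset^m \to \manifold{N}\) that is constant outside \(k\) disjoint spherical balls on \(\Sset^m\) and whose restriction to each such ball, viewed on the quotient \(\bar B_\rho(a)/\partial B_\rho(a) \simeq \Sset^m\), is homotopic to \(f_i\); this realizes the free homotopy decomposition in the sense of \cref{definition_homotopy_power}. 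Concerning the finiteness of \(\mapsset{F}^\lambda\), the hyperbolic Lipschitz bound \(\abs{DF}_{\Hset^{m+1}} \le m \diam \manifold{N}\) from \cref{lemmaExtensionHyperbolic}\,\eqref{itExtensionHyperbolicLipschitz}, combined with the Lipschitz constant of \(\Pi\), bounds the Lipschitz constant of each \(f_i\) by \(C \sinh \rho_i \le C \sinh(C \rho_0 \lambda)\), a quantity depending on \(\lambda\) but not on \(f\). By Ascoli--Arzel\`a the collection of such candidates is equicontinuous with values in the compact manifold \(\manifold{N}\) and hence lies in a compact subset of \(\mapsset{C}(\Sset^m, \manifold{N})\), which meets only finitely many homotopy classes. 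Choosing a single representative per homotopy class yields the desired finite set \(\mapsset{F}^\lambda\).

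I expect the main technical obstacle to be the rigorous passage from the extension map \(\tilde F\) on \(W\) to the free homotopy decomposition of \(f\) in the precise sense of \cref{definition_homotopy_power}. The geometric picture of connecting \(\partial B_i\) to \(\Sset^m\) by disjoint tubes is clear, but executing it cleanly --- and verifying that the restriction of the deformed map to each cap really coincides, after the radial quotient identification, with the previously defined \(f_i\) --- requires careful tracking of orientations and of the homotopy between the two parametrizations of the hyperbolic sphere and of \(\Sset^m\). A secondary issue is to verify that \(\varepsilon\) can be chosen uniformly; this reduces to requiring \(2\varepsilon\) smaller than the tubular radius of the isometrically embedded \(\manifold{N}\), which depends only on \(\manifold{N}\) and not on \(f\) or \(\lambda\).
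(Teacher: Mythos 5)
Your overall strategy is the paper's: extend hyperharmonically via \cref{lemmaExtensionHyperbolic}, control the hyperbolic volume of the bad set, merge a covering into disjoint balls with \cref{lemmaDisjointCovering}, read off Lipschitz-controlled bubble maps on the boundary spheres via \cref{lemma_HyperbolicSphere}, and conclude by Ascoli. There is, however, one genuine gap: the counting step. You assert that ``a standard Vitali argument produces a cover of \(A\) by \(N \le C' \mu_{\Hset^{m+1}}(A)\) balls of radius \(\rho_0\).'' This inequality is false in general: a covering multiplicity bound controls \(N\) by the measure of a \emph{neighbourhood} of \(A\), not of \(A\) itself (take \(A\) nonempty of measure zero, e.g.\ finitely many well-separated points: then \(\mu(A)=0\) but \(N\ge 1\)). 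The missing ingredient is exactly the Lipschitz estimate \eqref{itExtensionHyperbolicLipschitz}: if \(\dist(F(a),\manifold{N})\ge 2\varepsilon\), then \(\dist(F,\manifold{N})\ge \tfrac{3\varepsilon}{2}\) on the whole ball \(B^{\Hset^{m+1}}_{\rho}(a)\) with \(\rho = \varepsilon/(2m\diam\manifold{N})\). Hence disjoint balls of radius \(\rho\) centred at a maximal \(2\rho\)-separated subset of \(A_{2\varepsilon}\) are all contained in \(A_{3\varepsilon/2}\), each carries a fixed hyperbolic volume, and their number is therefore bounded by \(\mu_{\Hset^{m+1}}(A_{3\varepsilon/2})\le C\lambda/\varepsilon\), which \eqref{itExtensionHyperbolicMeasure} controls since \(\tfrac{3\varepsilon}{2}>\varepsilon\). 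You already invoke \eqref{itExtensionHyperbolicLipschitz} later for the Lipschitz constants of the \(f_i\), so the repair uses nothing you do not have; but as written the step would not go through, and it is the one point where the extension's gradient bound is indispensable rather than decorative.

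The remainder is sound and matches the paper. Your choice \(\delta=2\varepsilon\) (versus the paper's fixed \(\delta_*\) tied to the tubular neighbourhood) only makes the constant \(C\) depend on \(\varepsilon\), which the statement allows. The topological endgame --- connecting the inner spheres to the boundary by disjoint tubes to realize the decomposition of \cref{definition_homotopy_power} --- is treated at essentially the same level of detail as in the paper (which routes the tubes to an auxiliary small sphere on which the map is made constant, rather than to \(\Sset^m\) itself); your honest flagging of this as the delicate point is appropriate, and no orientation issue arises since the definition only requires each restriction to be \emph{homotopic} to some \(f_i\in\mapsset{F}^\lambda\).
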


\begin{proof}[Proof of \cref{theorem_bounded_finite_bubbles_linear}]
\resetconstant
We apply \cref{lemmaExtensionHyperbolic} to \(f\). 
We define for each \(\delta > 0\) the sets 
\(\manifold{N}_\delta \defeq \{ y \in \Rset^\nu \st \dist (y, \manifold{N}) < \delta \}\) 
and 
\begin{equation*}
    A_\delta 
  \defeq 
    F^{-1} (\Rset^\nu \setminus \manifold{N}_\delta) 
  = 
    \bigl\{ 
      x \in \Hset^{m + 1} 
    \st 
      \dist (F (x), \manifold{N}) \ge \delta \bigr\} 
\eofs.
\end{equation*}
Since \(\manifold{N}\) is a smooth submanifold of \(\Rset^\nu\), 
there exists \(\delta_* > 0\) and a Lipschitz-continuous retraction \(\Pi : \manifold{N}_{\delta_*} \to \manifold{N}\), that is, one has for every \(y \in \manifold{N}_{\delta_*}\), \(\Pi (y) \in \manifold{N}\) and for every \(y \in \manifold{N}\), \(\Pi (y) = y\).

By the estimate \eqref{itExtensionHyperbolicLipschitz} in \cref{lemmaExtensionHyperbolic},
we observe that if \(a \in A_{\delta_*}\), then for every \(x \in \Hset^{m + 1}\) we have
\begin{equation*}
    F (x) 
  \ge 
      \delta_* 
    - 
      m 
      \, 
      \diam (\manifold{N})
      \,
      d_{\Hset^{m + 1}} (x, a)
  \eofs,
\end{equation*}
If we take \(\rho \defeq \frac{\delta_*}{2 m \diam (\manifold{N})}\), we have 
\begin{equation}
\label{eqInclusionDelta2}
    \Bar{B}^{\Hset^{m + 1}}_{\rho} (a) 
  \subseteq 
    A_{\delta_*/2}
\eofs.
\end{equation}

We consider now a maximal set of points \(A \subseteq A_{\delta_*}\) such that if \(a, b \in A\) and \(a \ne b\) then \(d_{\Hset^{m + 1}} (a, b) \ge 2 \rho\). By construction, we have 
\begin{equation*}
    A_{\delta_*} 
  \subseteq 
    \bigcup_{a \in A} 
      B^{\Hset^{m + 1}}_{2 \rho} (a)
\eofs.
\end{equation*}
On the other hand the balls \((B^{\Hset^{m + 1}}_\rho (a))_{a \in A}\) are disjoint and thus by \eqref{eqInclusionDelta2}, we have 
\begin{equation*}
    \sum_{a \in A} 
      \mu_{\Hset^{m + 1}} \bigl(B^{\Hset^{m + 1}}_\rho (a)\bigr) 
  \le 
    \mu_{\Hset^{m + 1}} \bigl(A_{\delta_*/2}\bigr)
\eofs.
\end{equation*}
By the invariance of the volume of balls in the hyperbolic space, we deduce that 
\begin{equation*}
    \# A 
  \le 
    \Cl{cst_Phi4Shae9i}
    \iint \limits_{\Sset^m \times \Sset^m}  
      \frac
        {\bigl(\abs{f (y) - f (x)} - \varepsilon\bigr)_+}
        {\abs{y - x}^{2 m}}
    \diff y \diff x
  \le
    \Cr{cst_Phi4Shae9i} \lambda
 \eofs.
\end{equation*}
By \cref{lemmaDisjointCovering}, we obtain the existence of \(k \in \{1, \dotsc, \# A\}\), \(a_1, \dotsc, a_k \in \Hset^{m + 1}\) and \(\rho_1,  \dotsc, \rho_k \in [2\rho, 2 \Cr{cst_Phi4Shae9i} \lambda \rho]\) such that if \(i, j \in \{1,  \dotsc, k\}\) and \(i \ne j\), 
then \(\Bar{B}^{\Hset^{m + 1}}_{\rho_i} (a_i) \cap \Bar{B}^{\Hset^{m + 1}}_{\rho_j} (a_j) = \emptyset\) and 
\begin{equation*}
    A_{\delta_*}  
  \subseteq 
    \bigcup_{i =1}^k \Bar{B}^{\Hset^{m + 1}}_{\rho_i} (a_i)
\eofs.
\end{equation*}

We conclude by defining the set 
\begin{equation*}
    U 
  \defeq 
    \Hset^{m + 1} \setminus \bigcup_{i =1}^k B^{\Hset^{m + 1}}_{\rho_i} (a_i)
\end{equation*}
and the map 
\begin{equation*}
    \Tilde{F} 
  \defeq
    \Pi \compose F\vert_{U} : U \to N
\eofs.
\end{equation*}
We observe that \(\partial U \cap \Bset^{m + 1} = \bigcup_{i =1}^k \partial B^{\Hset^{m + 1}}_{\rho_i} (a_i)\) and that for every \(i \in \{1, \dotsc, k\}\), the set \(\partial B^{\Hset^{m + 1}}_{\rho_i} (a_i)\) is isometric by \cref{lemma_HyperbolicSphere} to a Euclidean \(m\)--dimensional sphere of radius \(\sinh \rho_i\). 
This implies then that for every \(i \in \{1, \dotsc, k\}\), the map \(\Tilde{F} \vert_{\partial B^{\Hset^{m + 1}}_{\rho_i} (a_i)}\) is homotopic on \(\partial B^{\Hset^{m + 1}}_{\rho_i} (a_i) \simeq \Sset^m\) to some map \(g_i : \Sset^m \to \manifold{N}\) whose Lipschitz constant is controlled by \(\Cl{cst_Ooqu5shoh2} \sinh (\Cl{cst_ieSu6Quue7} \lambda)\). 
By the Ascoli compactness theorem, there exists a finite set of maps \(\mapsset{F}^\lambda \subset \mapsset{C} (\Sset^m, \Sset^m)\) such that any map from \(\Sset^m\) to \(\manifold{N}\) whose Lipschitz constant does not exceed \(\Cr{cst_Ooqu5shoh2} \sinh (\Cr{cst_ieSu6Quue7} \lambda)\) is homotopic to some map in \(\mapsset{F}^\lambda\). 
In particular, for every \(i \in \{1, \dotsc, k\}\), there exists a map \(f_i \in \mapsset{F}^\lambda\) which is homotopic to \(g_i\) on \(\Sset^m\) and thus to \(\Tilde{F} \vert_{\partial B^{\Hset^{m + 1}}_{\rho_i} (a_i)}\) on \(\Sset^m \simeq \partial B^{\Hset^{m + 1}}_{\rho_i} (a_i)\).

We consider now a ball \(B_{2\rho}^{\Hset^{m + 1}} (a_*) \subset \Tilde{U} \) and a map \(\Breve{F} \in \mapsset{C} (U, \manifold{N})\) such that \(\Breve{F} = \Tilde{F}\) in \(U \setminus B_{2 \rho}^{\Hset^{m + 1}} (a_*)\) and \(\Breve{F}\) is constant on \(B^{\Hset^{m + 1}}_{\rho} (a_*)\).
We now consider maps \(\Phi_i : \Bar{\Bset}^{m + 1} \to U \setminus B_{\rho} (a_*)\) such that for every \(i \in \{1, \dotsc, k\}\), \(\Phi_i\) is injective, \(B^{\Hset^{m + 1}}_{\rho_i} (a_i) \subset \Phi_i (\Bset^{m + 1})\) and \(\Phi_i (\Bar{\Bset^{m + 1}}) \cap \partial B^{\Hset^{m + 1}}_{\rho} (a_*)\) is a nondegenerate geodesic ball in \(\partial B^{\Hset^{m + 1}}_{\rho} (a_*)\).
We define \(\Breve{U} = U \setminus (B_{\rho} (a_*) \cup \bigcup_{i = 1}^k \Phi_i (\Bset^{m + 1})) \subset \Hset^{m + 1}\), and we observe that \(\partial \Breve{U} \cap \Bset^{m + 1}\) is homeorphic to \(\Sset^{m + 1}\) and that \(\Breve{F} \vert_{\partial \Breve{U} \cap \Bset^{m + 1}}\) has a free homotopy decomposition into \(f_1, \dotsc, f_k\), and hence by homotopy invariance, \(f\) also has a free homotopy decomposition into \(f_1, \dotsc, f_k\).
\end{proof}

We deduce now \cref{theorem_bounded_finite_bubbles}
from \cref{theorem_bounded_finite_bubbles_linear}.

\begin{proof}[Proof of \cref{theorem_bounded_finite_bubbles}]
We note that, since the map \(f : \Sset^m \to \manifold{N}\) is bounded, we have
\begin{equation}
\label{ineq_diam_N}
    \iint\limits_{\Sset^m \times \Sset^m}
      \frac{\bigl(\abs{f (y) - f (x)} - \varepsilon\bigr)_+}{\abs{y - x}^{2 m}} 
    \diff y
    \diff x
  \le  
    \diam (\manifold{N})
    \hspace{-1em}
    \iint
      \limits_{
        \substack{
          (x, y) \in \Sset^m \times \Sset^m \\                 
          \abs{f (y) - f (x)} > \varepsilon}} 
          \hspace{-.7em}
        \frac{1}{\abs{y - x}^{2 m}} 
      \diff y
      \diff x
      \eofs,
\end{equation}
and the conclusion then follows from \cref{theorem_bounded_finite_bubbles_linear}.
\end{proof}

We will observe in the sequel that when \(m \ge 2\), an estimate of the form \eqref{ineq_diam_N} holds without any boundedness assumption on the map \(f\) and with a constant of the order of \(\varepsilon\) (see \cref{proposition_p_q_scaling_Omega} below).

\begin{proof}%
[Proof of \cref{theorem_bounded_homotopy_classes}]
This follows from \cref{theorem_bounded_finite_bubbles} and \cref{proposition_finite_homotopy_classes}.
\end{proof}

\section{Scaling and comparison of truncated fractional energies}

In this section we improve the estimate of \cref{theorem_bounded_finite_bubbles} into an estimate that scales optimally with respect to \(\varepsilon\) as \(\varepsilon \to 0\). 
Our results are the counterpart of \familyname{Nguyên} Hoài-Minh's estimates on the topological degree \cite{Nguyen_2017}, but are obtained with a different strategy.

\subsection{Scaling of truncated fractional energies}

In order to improve the estimate of \cref{theorem_bounded_finite_bubbles},
we first study how truncated fractional integral scale with varying values of the truncation in the next proposition.

\begin{proposition}
[Scaling of truncated fractional energies on a convex set]
\label{lemma_fract_gap_integral_growth}
For every \(p \in [0, +\infty)\) and every \(m \in \Nset\), 
there exists a constant \(C > 0\) such that 
for every convex set \(\Omega \subset \Rset^m\)
and for every map \(f : \Omega \to \manifold{N}\),
if \(\delta < \varepsilon\) one has
\begin{multline*}
    \iint
    \limits_{
      \substack{
        (x, y) \in \Omega \times \Omega\\
        d_{\manifold{N}} (f (y), f (x)) \ge \varepsilon}
      }
      \hspace{-1.5em}
      \frac
        {\bigl(d_{\manifold{N}} (f (y), f (x)) - \varepsilon\bigr)^p}
        {\abs{y - x}^{2 m}}
      \diff y
      \diff x\\[-2em]
  \le 
    C 
    \biggl(
    \frac
      {\delta}
      {\varepsilon}
    \biggr)^{m - 1 - (p - 1)_+}
    \hspace{-2em}
    \iint
    \limits_{
      \substack{
        (x, y) \in \Omega \times \Omega\\
        d_{\manifold{N}} (f (y), f (x)) \ge \delta}
      }
      \hspace{-1.5em}
      \frac
        {\bigl(d_{\manifold{N}} (f (y), f (x)) - \delta\bigr)^p}
        {\abs{y - x}^{2 m}}
      \diff y
      \diff x
\eofs.
\end{multline*}
\end{proposition}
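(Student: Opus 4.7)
The plan is to exploit convexity of $\Omega$ by replacing each pair $(x,y)$ with a chain of $N$ intermediate points along the segment $[x,y]$ and distributing the jump $d_{\manifold{N}}(f(y),f(x))\ge\varepsilon$ among short jumps of size at least $\delta$ along the chain; an affine change of variables sending $(x,y)$ to each pair of consecutive chain points will then produce the factor $(\delta/\varepsilon)^{m-1-(p-1)_+}$. The exponent will arise by combining the Jacobian $N^{-m}$, the length rescaling $\abs{y-x}=N\abs{v-u}$, the $N$ values of the summation index, and the subadditivity of $t\mapsto t^p$ for $p\le 1$ (resp.\ Jensen's inequality for $p\ge 1$).

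If $\delta\ge\varepsilon/2$ the factor $(\delta/\varepsilon)^{m-1-(p-1)_+}$ is bounded away from $0$ and the conclusion reduces to the pointwise inequality $(d-\varepsilon)_+^p\le (d-\delta)_+^p$. Hence I assume $\delta<\varepsilon/2$ and set $N\defeq\lfloor\varepsilon/(2\delta)\rfloor\ge 1$, so that $N\delta\le\varepsilon$. For $x,y\in\Omega$ and $i\in\{0,\dotsc,N\}$ let $z_i\defeq x+(i/N)(y-x)\in\Omega$ (by convexity) and write $d_i\defeq d_{\manifold{N}}(f(z_{i+1}),f(z_i))$, $d\defeq d_{\manifold{N}}(f(y),f(x))$. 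The identity $d_i=\min(d_i,\delta)+(d_i-\delta)_+$ and the triangle inequality give
\[
  d\le\sum_{i=0}^{N-1}d_i\le N\delta+\sum_{i=0}^{N-1}(d_i-\delta)_+,
\]
so that $(d-\varepsilon)_+\le\sum_{i=0}^{N-1}(d_i-\delta)_+$. Subadditivity (for $p\le 1$) or Jensen's inequality (for $p\ge 1$) then yields
\[
  (d-\varepsilon)_+^p\le N^{(p-1)_+}\sum_{i=0}^{N-1}(d_i-\delta)_+^p.
\]

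I would then integrate this pointwise estimate against $\abs{y-x}^{-2m}\diff x\diff y$ on $\Omega\times\Omega$ and, for each $i$, substitute $(u,v)\defeq(z_i,z_{i+1})$. Per coordinate, the linear transformation $(x,y)\mapsto(u,v)$ has matrix $\bigl(\begin{smallmatrix}1-i/N & i/N\\ 1-(i+1)/N & (i+1)/N\end{smallmatrix}\bigr)$ of determinant $1/N$, hence the $2m$-dimensional Jacobian is $N^{-m}$; it satisfies $v-u=(y-x)/N$ and, since $u$ and $v$ are convex combinations of $x$ and $y$, it sends $\Omega\times\Omega$ into itself. Each of the $N$ summands is therefore bounded by $N^{-m}$ times the integral with threshold $\delta$, and collecting the powers of $N$ with $N\asymp\varepsilon/\delta$ yields the announced estimate with constant depending only on $m$ and $p$.

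The only genuine obstacle is ensuring that all intermediate points $z_i$ and the substituted variables $(u,v)$ stay in $\Omega$, so that the change of variables produces an integral on a subset of $\Omega^2$; this is precisely the role of convexity, and without it the argument would pick up error terms from boundary effects. The remainder is careful bookkeeping of the chain of triangle inequalities and of the affine rescaling.
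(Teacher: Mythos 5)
Your argument is correct and rests on the same mechanism as the paper's proof of this proposition: convexity places the subdivision points of the segment \([x,y]\) in \(\Omega\), the triangle inequality distributes the gap among the pieces after subtracting \(N\delta\le\varepsilon\), and the affine substitution \((x,y)\mapsto(z_i,z_{i+1})\) with Jacobian \(N^{-m}\), combined with \(\abs{y-x}=N\abs{v-u}\), the \(N\) summands and the factor \(N^{(p-1)_+}\) from convexity of powers, produces exactly the exponent \(m-1-(p-1)_+\). The only difference is that you perform a single uniform \(N\)-point subdivision with \(N\asymp\varepsilon/\delta\), whereas the paper establishes the case \(N=2\) (a midpoint bisection that halves the threshold) and iterates it \(\lfloor\log_2(\varepsilon/\delta)\rfloor\) times; the two executions are equivalent.
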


When either \(1 \le p < m\), or \(p \le 1\) and \(m > 2\), then \(m - 1 > (p - 1)_+\) and the estimate of \cref{lemma_fract_gap_integral_growth} improves the straightforward monotonicity estimate: if \(\delta \le \varepsilon\), then 
\begin{equation*}
    \iint
      \limits_{
      \substack{
        (x, y) \in \Omega \times \Omega\\
        d_{\manifold{N}} (f (y), f (x)) \ge \varepsilon}
      }
      \hspace{-1.5em}
      \frac
        {\bigl(d_{\manifold{N}} (f (y), f (x)) - \varepsilon\bigr)^p}
        {\abs{y - x}^{2 m}}
      \diff y
      \diff x
  \le 
    \iint\limits_{
      \substack{
        (x, y) \in \Omega \times \Omega\\
        d_{\manifold{N}} (f (y), f (x)) \ge \delta}
      }
      \hspace{-1.5em}
      \frac
        {\bigl(d_{\manifold{N}} (f (y), f (x)) - \delta\bigr)^p}
        {\abs{y - x}^{2 m}}
      \diff y
      \diff x
\eofs.
\end{equation*}

If the set \(\Omega \subset \Rset^m\) is bounded and if the map \(f : \Omega \to \Rset^m\) is the identity, one has
\begin{equation}
\label{eq_scaling_identity}
\begin{split}
    \iint
      \limits_{
      \substack{
        (x, y) \in \Omega \times \Omega\\
        d_{\manifold{N}} (f (y), f (x)) \ge \varepsilon}
      } 
      \hspace{-1.5em}
      \frac
        {\bigl(d_{\manifold{N}} (f (y), f (x)) - \varepsilon\bigr)^p}
        {\abs{y - x}^{2 m}} 
      \diff y
      \diff x
  & = 
    \iint
      \limits_{
      \substack{
        (x, y) \in \Omega \times \Omega\\
        \abs{y - x} \ge \varepsilon}
      } 
      \frac
        {(\abs{y - x} - \varepsilon)^p}
        {\abs{y - x}^{2 m}} 
      \diff y
      \diff x
        \\
 &
  \simeq 
    \int_\varepsilon^1
      \frac{(r - \varepsilon)^p}{r^{m + 1}} 
    \diff r
  = 
    \frac
      {1}
      {\varepsilon^{m - p}}
    \int_0^{\frac{1}{\varepsilon} - 1}
    \hspace{-1em}
      \frac{t^p}{(t + 1)^{m + 1}}
    \diff t
    \\
  &
  \simeq
  \begin{cases}
    \frac{1}{\varepsilon^{m - p}} & \text{if \(p < m\)},\\
    \ln \frac{1}{\varepsilon} & \text{if \(p = m\)},\\
    1 & \text{if \(p > m\)},
  \end{cases}
\end{split}
\end{equation}
as \(\varepsilon \to 0\),
by the change of variables \(r = \varepsilon (t + 1)\).
This computation means that the scaling estimate of \cref{lemma_fract_gap_integral_growth} is optimal when \(1 \le p < m\). 
We do not know whether the estimate can be improved when \(0 \le p < 1\) 
(see \cref{problem_scaling} below). 
The estimate will already appear to be strong enough to obtain some comparison between truncated fractional integrals of different exponents in \cref{proposition_p_q_scaling_Omega} below.

\begin{proof}%
[Proof of \cref{lemma_fract_gap_integral_growth}]
By the triangle inequality, we have
\begin{equation*}
\begin{split}
    \iint
      \limits_{
      \substack{
        (x, y) \in \Omega \times \Omega\\
        d_{\manifold{N}} (f (y), f (x)) \ge \varepsilon}}
      \hspace{-1.5em}
      &
      \frac
        {\bigl(d_{\manifold{N}} (f (y), f (x)) - \varepsilon\bigr)^p}
        {\abs{y - x}^{2 m}}
      \diff y
      \diff x
  \\
  &
  \le
    2^{(p - 1)_+}
      \hspace{-1.5em}
      \iint
        \limits_{
          \substack{
            (x, y) \in \Omega \times \Omega\\
            d_{\manifold{N}} (f (y), f (\frac{x + y}{2})) \ge \frac{\varepsilon}{2}
            }
          }
        \hspace{-1.5em}
        \frac
          {\bigl(d_{\manifold{N}} (f (y), f (\frac{x + y}{2})) - \frac{\varepsilon}{2}\bigr)^p}
          {\abs{y - x}^{2 m}}
        \diff y
        \diff x\\
    &
    \hspace{5em}
    +
     2^{(p - 1)_+}
        \hspace{-1.5em}
      \iint
        \limits_{
          \substack{
            (x, y) \in \Omega \times \Omega\\
            d_{\manifold{N}} (f (\frac{x + y}{2}), f (x)) \ge \frac{\varepsilon}{2}
            }        
          }
    \hspace{-1.5em}
        \frac
          {\bigl(d_{\manifold{N}} (f (\frac{x + y}{2}), f (x)) - \frac{\varepsilon}{2}\bigr)^p}
          {\abs{y - x}^{2 m}}
        \diff y
        \diff x
\eofs ,
\end{split}
\end{equation*}
and thus  by symmetry under exchange of \(x\) and \(y\)
\begin{equation}
\label{ineq_fract_double_gap_tgl}
\begin{split}
    \iint
      \limits_{
      \substack{
        (x, y) \in \Omega \times \Omega\\
        d_{\manifold{N}} (f (y), f (x)) \ge \varepsilon}}
      \hspace{-1.5em}
      &
      \frac
        {\bigl(d_{\manifold{N}} (f (y), f (x)) - \varepsilon\bigr)^p}
        {\abs{y - x}^{2 m}}
      \diff y
      \diff x
  \\[-.5em]
   &
  = 
    2^{1 + (p - 1)_+}      
    \hspace{-2em}
    \iint
      \limits_{
        \substack{
          (x, y) \in \Omega \times \Omega\\
          d_{\manifold{N}} (f (\frac{x + y}{2}), f (x)) \ge \frac{\varepsilon}{2}
          }        
        }
    \hspace{-2em}
      \frac
        {\bigl(d_{\manifold{N}} (f (\frac{x + y}{2}), f (x)) - \frac{\varepsilon}{2}\bigr)^p}
        {\abs{y - x}^{2 m}}
      \diff y
      \diff x
      \eofs .
\end{split}
\end{equation}
We apply now the change of variable \(y = 2 z - x\) and we obtain
\begin{equation}
\label{ineq_fract_double_gap_cov}
\begin{split}
     \iint
      \limits_{
          \substack{
            (x, y) \in \Omega \times \Omega\\
            d_{\manifold{N}} (f (\frac{x + y}{2}), f (x)) \ge \frac{\varepsilon}{2}
            }
          }
    \hspace{-2em}
    \frac
        {
          \bigl(
                d_{\manifold{N}} (f (\frac{x + y}{2}), f (x)) 
              -
                \frac{\varepsilon}{2}
          \bigr)^p
        }
        {\abs{y - x}^{2 m}}
      \diff y
      \diff x
  &
  =
    \frac
      {1}
      {2^m}
    \int_{\Omega}
      \Biggl(
      \int_{\Sigma_x}
        \frac
          {
            \bigl(
                d_{\manifold{N}} (f (z), f (x))
              - 
                \frac{\varepsilon}{2}
            \bigr)^p
          }
          {\abs{z - x}^{2 m}}  
        \diff z
      \Biggr)
      \diff x
  \\
  &
  \le
    \frac
      {1}
      {2^m}
    \hspace{-1.5em}
    \iint
      \limits_{
      \substack{
        (x, y) \in \Omega \times \Omega\\
        d_{\manifold{N}} (f (y), f (x)) \ge \frac{\varepsilon}{2}
        }
      }
    \hspace{-1.5em}
        \frac
          {
            \bigl(
              d_{\manifold{N}} (f (y), f (x)) - \frac{\varepsilon}{2}
            \bigr)^p
          }
          {\abs{y - x}^{2 m}}  
      \diff y      
      \diff x  
  \eofs ,
\end{split}
\end{equation}
where for every \(x \in \Omega\), we have defined the set 
\begin{equation*}
    \Sigma_x 
  \defeq 
    \bigl\{
      z \in \Omega 
    \st 
      2 z - x \in \Omega 
      \text{ and }
      d_{\manifold{N}} (f (z), f (x)) \ge \tfrac{\varepsilon}{2}
    \bigr\}
    \eofs.
\end{equation*}
By combining the inequalities \eqref{ineq_fract_double_gap_tgl} and \eqref{ineq_fract_double_gap_cov}, we deduce that for every \(\varepsilon > 0\),
\begin{multline}
\label{ineq_fract_double_gap}
    \iint
      \limits_{
        \substack{
          (x, y) \in \Omega \times \Omega\\
          d_{\manifold{N}} (f (y), f (x)) \ge \varepsilon
          }        
        }
      \hspace{-1.5em}
      \frac
        {\bigl(d_{\manifold{N}} (f (y), f (x)) - \varepsilon\bigr)^p}
        {\abs{y - x}^{2 m}}
      \diff y
      \diff x
      \\[-2em]
  \le
    2^{(p - 1)_+ - (m - 1)}
    \hspace{-1.5em}
    \iint
      \limits_{
        \substack{
          (x, y) \in \Omega \times \Omega\\
          d_{\manifold{N}} (f (y), f (x)) \ge \frac{\varepsilon}{2}
          }        
        }
      \hspace{-1.5em}
      \frac
        {
          \bigl(
            d_{\manifold{N}} (f (y), f (x)) - \frac{\varepsilon}{2}
          \bigr)^p
        }
        {\abs{y - x}^{2 m}}
      \diff y
      \diff x
\eofs.    
\end{multline}
By iterating the estimate \eqref{ineq_fract_double_gap}, 
we deduce that for every nonnegative integer \(\ell \in \Nset\),
\begin{multline*}
    \iint
      \limits_{
        \substack{
          (x, y) \in \Omega \times \Omega\\
          d_{\manifold{N}} (f (y), f (x)) \ge \varepsilon
          }        
        }
      \hspace{-1.5em}
      \frac
        {\bigl(d_{\manifold{N}} (f (y), f (x)) - \varepsilon\bigr)^p}
        {\abs{y - x}^{2 m}}
      \diff x
      \diff y\\[-2em]
  \le
    2^{\ell ((p- 1)_+ - (m - 1))}
    \hspace{-1.5em}
    \iint
      \limits_{
        \substack{
          (x, y) \in \Omega \times \Omega\\
          d_{\manifold{N}} (f (y), f (x)) \ge \frac{\varepsilon}{2^\ell}
          }        
        }
      \hspace{-1.5em}
      \frac
        {(d_{\manifold{N}} (f (y), f (x)) - \frac{\varepsilon}{2^\ell} )^p}
        {\abs{y - x}^{2 m}}
      \diff y
      \diff x
\eofs.    
\end{multline*}
If \(\delta \in (0, \varepsilon)\), we let \(\ell \in \Nset\) 
be defined by the condition \(2^{-(\ell + 1)} \varepsilon \le \delta < 2^{-\ell} \varepsilon\) and
we conclude that 
\begin{equation*}
 \begin{split}
    &\iint
      \limits_{
        \substack{
          (x, y) \in \Omega \times \Omega\\
          d_{\manifold{N}} (f (y), f (x)) \ge \varepsilon
          }        
        }
      \hspace{-1.5em}
      \frac
        {\bigl(d_{\manifold{N}} (f (y), f (x)) - \varepsilon\bigr)^p}
        {\abs{y - x}^{2 m}}
      \diff y
      \diff x
      \\[-2em]
  &\hspace{10em} \le 
    \biggl(
      \frac
        {2 \delta}
        {\varepsilon}
    \biggr)^{m - 1 - (p - 1)_+}
    \hspace{-1.5em}
    \iint
      \limits_{
        \substack{
          (x, y) \in \Omega \times \Omega\\
          d_{\manifold{N}} (f (y), f (x)) \ge \delta
          }        
        }
      \hspace{-1.5em}
      \frac
        {\bigl(d_{\manifold{N}} (f (y), f (x)) - \delta\bigr)^p}
        {\abs{y - x}^{2 m}}
      \diff y
      \diff x 
  \;
  \eofs.
  \qedhere
\end{split}
\end{equation*}    
\end{proof}

In order to improve the statement of \cref{theorem_bounded_finite_bubbles},
we will derive the counterpart of \cref{lemma_fract_gap_integral_growth} for spheres.

\begin{proposition}
[Scaling of truncated fractional energies on a sphere]%
\label{proposition_fract_gap_integral_growth_sphere}
For every \(p \in [0, +\infty)\) and every \(m \in \Nset\), 
there exists a constant \(C > 0\) such that 
for every map \(f : \Sset^m \to \manifold{N}\),
if \(\delta < \varepsilon\) one has
\begin{multline*}
    \iint
      \limits_{
        \substack{
          (x, y) \in \Sset^m \times \Sset^m\\
          d_{\manifold{N}} (f (y), f (x)) \ge \varepsilon
          }        
        }    
      \hspace{-1.7em}
      \frac
        {\bigl(d_{\manifold{N}} (f (y), f (x)) - \varepsilon\bigr)^p}
        {\abs{y - x}^{2 m}}
      \diff y
      \diff x\\[-2em]
  \le 
    C 
    \biggl(
    \frac
      {\delta}
      {\varepsilon}
    \biggr)^{m - 1 - (p - 1)_+}
    \hspace{-1.7em}
    \iint
      \limits_{
        \substack{
          (x, y) \in \Sset^m \times \Sset^m\\
          d_{\manifold{N}} (f (y), f (x)) \ge \delta
          }        
        }
      \hspace{-1.7em}
      \frac
        {\bigl(d_{\manifold{N}} (f (y), f (x)) - \delta\bigr)^p}
        {\abs{y - x}^{2 m}}
      \diff y
      \diff x
\eofs.
\end{multline*}
\end{proposition}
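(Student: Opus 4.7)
The plan is to reduce the statement on the sphere to the already-established convex case, Proposition~\ref{lemma_fract_gap_integral_growth}, via the conformal invariance of the double integral under stereographic projection. Let \(\sigma : \Sset^m \setminus \{N\} \to \Rset^m\) denote the stereographic projection from the north pole \(N \in \Sset^m\); its inverse satisfies the classical identities
\begin{equation*}
  \bigabs{\sigma^{-1} (v) - \sigma^{-1} (u)}^2
  =
    \frac{4 \abs{v - u}^2}
         {(1 + \abs{u}^2)(1 + \abs{v}^2)}
  \quad \text{and} \quad
  \operatorname{Jac} \sigma^{-1} (u)
  =
    \Bigl(\frac{2}{1 + \abs{u}^2}\Bigr)^m
    \eofs.
\end{equation*}

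Setting \(x = \sigma^{-1} (u)\) and \(y = \sigma^{-1} (v)\), the Jacobian factors cancel with the product of the denominators, and a direct computation yields the pointwise identity
\begin{equation*}
  \frac{\diff x \diff y}{\abs{y - x}^{2 m}}
  =
  \frac{\diff u \diff v}{\abs{v - u}^{2 m}}
  \eofs,
\end{equation*}
which is the well-known conformal invariance of the critical Gagliardo kernel on the sphere. Since \(\{N\}\) has measure zero, this change of variables transforms every truncated fractional integral on \(\Sset^m \times \Sset^m\) into the corresponding one on \(\Rset^m \times \Rset^m\) for the composed map \(g \defeq f \compose \sigma^{-1} : \Rset^m \to \manifold{N}\): for every threshold \(\eta > 0\),
\begin{equation*}
    \iint
      \limits_{
        \substack{
          (x, y) \in \Sset^m \times \Sset^m \\
          d_{\manifold{N}} (f (y), f (x)) \ge \eta}
        }
      \frac
        {\bigl(d_{\manifold{N}} (f (y), f (x)) - \eta\bigr)^p}
        {\abs{y - x}^{2 m}}
      \diff y
      \diff x
  =
    \iint
      \limits_{
        \substack{
          (u, v) \in \Rset^m \times \Rset^m \\
          d_{\manifold{N}} (g (v), g (u)) \ge \eta}
        }
      \frac
        {\bigl(d_{\manifold{N}} (g (v), g (u)) - \eta\bigr)^p}
        {\abs{v - u}^{2 m}}
      \diff u
      \diff v\eofs .
\end{equation*}

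Applying this identity with \(\eta = \varepsilon\) on the left-hand side of the target estimate and with \(\eta = \delta\) on the right-hand side, and invoking Proposition~\ref{lemma_fract_gap_integral_growth} on the convex set \(\Omega = \Rset^m\) for the map \(g\), yields the claimed inequality with the same constant \(C\). No loss is incurred by the domain being unbounded, since the proof of Proposition~\ref{lemma_fract_gap_integral_growth} relies only on convexity through the midpoint change of variables \(y = 2 z - x\). The only step requiring care is the bookkeeping of the Jacobian and chordal-distance factors above; once the conformal invariance of \(\diff x \diff y / \abs{y - x}^{2m}\) is recorded, the sphere-to-Euclidean reduction is automatic and no further estimate is needed.
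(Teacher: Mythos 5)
Your proof is correct, but it takes a genuinely different route from the paper's. The paper does not use stereographic projection: it covers \(\Sset^m \times \Sset^m\) by finitely many products \(A_i \times A_i\) of spherical caps (\cref{lemma_pair_covering}, built from the vertices of a regular simplex), identifies each cap with a convex set of \(\Rset^m\), and applies \cref{lemma_fract_gap_integral_growth} on each of the \(m + 2\) pieces before summing. Your reduction instead exploits the exact conformal invariance of the critical kernel \(\abs{y - x}^{-2m} \diff y \diff x\) under stereographic projection, turning the spherical double integral into the Euclidean one for \(g = f \compose \sigma^{-1}\) with no distortion at all; the identities you record are the standard ones and the bookkeeping is right (the factor \(4^m\) from the two Jacobians cancels against \((4 \abs{v - u}^2)^m\) in the denominator). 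You are also right that \cref{lemma_fract_gap_integral_growth} applies verbatim to \(\Omega = \Rset^m\): its proof uses convexity only through the midpoint \(\frac{x + y}{2}\) and the reflected point \(2z - x\), and the chain of inequalities is valid in \([0, +\infty]\), so neither boundedness of the domain nor finiteness of the integrals is needed. What each approach buys: yours is cleaner and loses no constant (a single application of the convex-set lemma, with exact equality of both sides under the change of variables), whereas the paper's covering argument, cruder on the sphere, is the one that extends to an arbitrary compact domain manifold \(\manifold{M}\) (see \cref{lemma_manifold_diagonal_covering} and \cref{theorem_bounded_finite_bubbles_manifold_scaling}), where no conformal identification with \(\Rset^m\) is available.
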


The proof of \cref{proposition_fract_gap_integral_growth_sphere} will rely on its counterpart on a convex set of the Euclidean space \cref{lemma_fract_gap_integral_growth} and on a suitable covering of the sphere by spherical caps.

\begin{lemma}
[Covering pairs of points of the sphere]
\label{lemma_pair_covering}
Let \(m \in \Nset_*\). 
If \(a_0, \dotsc, a_{m + 1} \in \Sset^{m} \subset \Rset^{m + 1}\) are the vertices of a regular simplex, 
then for every \(x, y \in \Sset^m\) there exists \(i \in \{0, \dotsc, m + 1\}\) such that 
\begin{align*}
    a_i \cdot x 
  &
  \ge 
    -\frac{1}{\sqrt{m + 1}}
&
&
\text{ and }
&
    a_i \cdot y 
  &
  \ge 
    -\frac{1}{\sqrt{m + 1}}
\eofs.
\end{align*}
\end{lemma}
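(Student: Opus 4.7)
The plan relies on two identities that the vertices of a regular simplex inscribed in \(\Sset^m \subset \Rset^{m+1}\) satisfy, namely that \(\sum_{i = 0}^{m + 1} a_i = 0\) (the centroid of a regular simplex on \(\Sset^m\) must lie at the origin) and
\[
  \sum_{i = 0}^{m + 1} (a_i \cdot v)^2 = \frac{m + 2}{m + 1}\,\abs{v}^2 \qquad \text{for every \(v \in \Rset^{m + 1}\).}
\]
The second identity follows from Schur's lemma: the matrix \(M = \sum_{i} a_i \otimes a_i\) commutes with the action on \(\Rset^{m + 1}\) of the symmetric group \(S_{m + 2}\) of the simplex, which acts irreducibly, so \(M\) is a scalar multiple of the identity, with the constant determined by \(\mathrm{tr}(M) = \sum_i \abs{a_i}^2 = m + 2\).

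I then argue by contradiction: assume that for \emph{every} \(i \in \{0, \dotsc, m + 1\}\) one has \(\min (a_i \cdot x, a_i \cdot y) < -1/\sqrt{m + 1}\). Summing over \(i\) gives
\[
  \sum_{i = 0}^{m + 1} \min (a_i \cdot x, a_i \cdot y) < - \frac{m + 2}{\sqrt{m + 1}}\eofs .
\]
On the other hand, applying the elementary identity \(\min (s, t) = \tfrac{s + t}{2} - \tfrac{\abs{s - t}}{2}\) together with \(\sum_i a_i = 0\), the sum on the left equals \(-\tfrac{1}{2} \sum_i \bigabs{a_i \cdot (x - y)}\). By the Cauchy--Schwarz inequality applied to \(\Rset^{m + 2}\) and the second simplex identity with \(v = x - y\),
\[
  \sum_{i = 0}^{m + 1} \bigabs{a_i \cdot (x - y)}
  \le \sqrt{m + 2}\,\biggl(\sum_{i = 0}^{m + 1} (a_i \cdot (x - y))^2\biggr)^{1/2}
  = \frac{m + 2}{\sqrt{m + 1}}\,\abs{x - y}\eofs .
\]
Since \(\abs{x - y} \le 2\), this yields \(\sum_i \min (a_i \cdot x, a_i \cdot y) \ge -(m + 2)/\sqrt{m + 1}\), contradicting the strict inequality above.

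There is no real obstacle; the only delicate point is the verification of the second simplex identity, but this is standard. The crucial observation is the transformation of a sum of minima into a sum of absolute values via the identity \(\min (s, t) = \tfrac{s + t}{2} - \tfrac{\abs{s - t}}{2}\), which converts a combinatorial-looking statement into one amenable to Cauchy--Schwarz and the second moments of the simplex. The factor \(1/\sqrt{m + 1}\) in the statement is precisely the threshold making the inequality sharp, so no slack is wasted.
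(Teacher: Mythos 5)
Your proof is correct, and it takes a genuinely different route from the paper's. The paper argues index by index: it sets \(I = \{i \st a_i \cdot x < -1/\sqrt{m+1}\}\), applies the Cauchy--Schwarz inequality to \(\sum_{i \in I} a_i\) using the Gram relations \(a_i \cdot a_j = \frac{m+2}{m+1}\delta_{ij} - \frac{1}{m+1}\) to conclude \(\# I < \frac{m}{2} + 1\), does the same for \(y\), and observes that the two ``bad'' sets cannot exhaust \(\{0, \dotsc, m+1\}\). You instead run a single global averaging argument: the identity \(\min(s,t) = \tfrac{s+t}{2} - \tfrac{\abs{s-t}}{2}\) together with \(\sum_i a_i = 0\) converts \(\sum_i \min(a_i \cdot x, a_i \cdot y)\) into \(-\tfrac{1}{2}\sum_i \abs{a_i \cdot (x-y)}\), which Cauchy--Schwarz and the second-moment identity \(\sum_i (a_i \cdot v)^2 = \frac{m+2}{m+1}\abs{v}^2\) bound below by \(-(m+2)/\sqrt{m+1}\), exactly matching the strict upper bound forced by the contradiction hypothesis. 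Both arguments ultimately rest on the same data (the moments of the simplex), but the paper's version yields the extra quantitative information that each point of \(\Sset^m\) lies in strictly more than half of the caps \(A_i\), while yours is more economical and makes transparent why \(1/\sqrt{m+1}\) is the exact threshold (equality is approached when \(y = -x\) and the \(\abs{a_i \cdot (x-y)}\) equalize). The two identities you invoke for the regular simplex (vanishing centroid and the tight-frame property) are standard and at the same level of ``assumed background'' as the Gram relation the paper uses without proof, so there is no gap.
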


\begin{proof}
Let 
\begin{equation*}
    I 
  = 
    \bigl\{
      i \in \{0, \dotsc, m + 1\}
    \st
      a_i \cdot x_i < -\tfrac{1}{\sqrt{m + 1}} 
    \bigr\}
\eofs.
\end{equation*}
We claim that \(\# I < \frac{m}{2} + 1\).
We assume without loss of generality that \(\# I > 0\). We then have
\begin{equation}
\label{eq_Eejau2oo3t}
   \sum_{i \in I}
      a_i \cdot x
   <
     - \frac{\# I}{\sqrt{m + 1}} 
\end{equation}
Since for every \(i, j \in \{0, 1, \dotsc, m + 1\}\), \(a_i \cdot a_j = \frac{m + 2}{m + 1} \delta_{ij} - \frac{1}{m + 1}\), where \(\delta_{ij}\) denotes Kronecker's delta, we have
\begin{equation}
\label{eq_aingo6haiK}
  \biggabs{\sum_{i \in I} a_i}^2
  =
    \sum_{i \in I} 
    \sum_{j \in I}
      a_i \cdot a_j
  = 
  \frac
    { \# I (m + 2 - \# I)}
    {m + 1}
\eofs.
\end{equation}
Hence, by the Cauchy--Schwarz inequality we have, since \(\abs{x} = 1\), by \eqref{eq_Eejau2oo3t} and \eqref{eq_aingo6haiK} 
\begin{equation*}
 \frac{(\# I)^2}{m + 1}
 < 
 \biggl(
  \sum_{i \in I}
      a_i \cdot x
  \biggr)^2
 \le
    \biggabs{\sum_{i \in I} a_i}^2
    \,
    \abs{x}^2
 =
 \frac
    { \# I (m + 2 - \# I)}
    {m + 1}
\eofs,
\end{equation*}
so that 
\(
 \# I < m + 2 - \# I
\),
and therefore \(\# I < \frac{m}{2} + 1\).

If we set similarly
\begin{equation*}
    J
  =
    \bigl\{
      i \in \{0, \dotsc, m + 1\}
    \st
      a_i \cdot y < -\tfrac{1}{\sqrt{m + 1}} 
    \bigr\}
\eofs,    
\end{equation*}
we obtain that 
\(\# I + \# J < m + 2\). 
Hence, there exists \(i \in \{0, 1, \dotsc, m + 1\}\setminus (I \cup J)\) and thus 
the conclusion holds by definition of the sets \(I\) and \(J\).
\end{proof}

\begin{proof}[Proof of \cref{proposition_fract_gap_integral_growth_sphere}]
\resetconstant
Let \(a_0, \dotsc, a_{m + 1} \in \Sset^m \subset \Rset^{m + 1}\) be the vertices of an equilateral simplex and define for each \(i \in \{0, 1, \dotsc, m + 1\}\), the spherical cap
\begin{equation*}
    A_i
  \defeq
    \bigl\{ 
      x \in \Sset^m
    \st
        a_i \cdot x 
      \ge 
        -\tfrac{1}{\sqrt{m + 1}}
    \bigr\}
  \;
  .
\end{equation*}
In view of \cref{lemma_pair_covering},
we have 
\(\Sset^m \times \Sset^m = \bigcup_{i = 0}^{m + 1} A_i \times A_i\), and therefore
\begin{equation}
\label{ineq_Sm_Ai}
    \iint
      \limits_{
        \substack{
          (x, y) \in \Sset^m \times \Sset^m\\
          d_{\manifold{N}} (f (y), f (x)) \ge \varepsilon
          }}   
      \hspace{-1.7em}     
      \frac
        {\bigl(d_{\manifold{N}} (f (y), f (x)) - \varepsilon\bigr)^p}
        {\abs{y - x}^{2 m}}
      \diff y
      \diff x
  \le
    \sum_{i = 0}^{m + 1}
    \iint
      \limits_{
        \substack{
          (x, y) \in A_i \times A_i\\
          d_{\manifold{N}} (f (y), f (x)) \ge \varepsilon
          }}
        \hspace{-1.7em}
        \frac
          {\bigl(d_{\manifold{N}} (f (y), f (x)) - \varepsilon\bigr)^p}
          {\abs{y - x}^{2 m}}
        \diff y
        \diff x
  \eofs .
\end{equation}
Since for every \(i \in \{0, \dotsc, m + 1\}\), the spherical cap \(A_i\) is diffeomorphic to a ball of \(\Rset^m\), we have in view of \cref{lemma_fract_gap_integral_growth},
\begin{equation}
\label{ineq_fract_gap_integral_growth_Ai_Sm}
\begin{split}
    \iint
      \limits_{
        \substack{
          (x, y) \in A_i \times A_i\\
          d_{\manifold{N}} (f (y), f (x)) \ge \varepsilon
          }}
      \hspace{-1.7em}
      &\frac
        {\bigl(d_{\manifold{N}} (f (y), f (x)) - \varepsilon\bigr)^p}
        {\abs{y - x}^{2 m}}
      \diff y
      \diff x\\
  &
  \le
    \Cl{cst_xuaxr}
    \biggl(
    \frac
      {\delta}
      {\varepsilon}
    \biggr)^{m - 1 - (p - 1)_+}
      \hspace{-2em}
        \iint
      \limits_{
        \substack{
          (x, y) \in A_i \times A_i\\
          d_{\manifold{N}} (f (y), f (x)) \ge \delta
          }}
      \hspace{-1.7em}
      \frac
        {\bigl(d_{\manifold{N}} (f (y), f (x)) - \delta\bigr)^p}
        {\abs{y - x}^{2 m}}
      \diff y
      \diff x
  \\
  &
  \le 
    \Cr{cst_xuaxr}
        \biggl(
    \frac
      {\delta}
      {\varepsilon}
    \biggr)^{m - 1 - (p - 1)_+} 
      \hspace{-2em}
    \iint
      \limits_{
        \substack{
          (x, y) \in \Sset^m \times \Sset^m\\
          d_{\manifold{N}} (f (y), f (x)) \ge \delta
          }}
      \hspace{-1.7em}
      \frac
        {\bigl(d_{\manifold{N}} (f (y), f (x)) - \delta\bigr)^p}
        {\abs{y - x}^{2 m}}
      \diff y
      \diff x
  \;
  .
\end{split}
\end{equation}
We conclude by combining \eqref{ineq_Sm_Ai} and 
\eqref{ineq_fract_gap_integral_growth_Ai_Sm} with \(C = (m + 2)\,\Cr{cst_xuaxr}\).
\end{proof}

\begin{theorem}%
[Free homotopy decompositions controlled by a scaled truncated Sobolev energy]%
\label{theorem_bounded_finite_bubbles_linear_scaling}%
Let \(m\in \Nset_*\) and \(\manifold{N}\) be a compact Riemannian manifold. There are constants \(\varepsilon_0 > 0\) and \(C > 0\) such that for every \(\lambda > 0\), there exists a finite set \(\mapsset{F}^\lambda \subset \mapsset{C} (\Sset^m, \manifold{N})\) such that if \(0 < \varepsilon < \varepsilon_0 \), any map \(f \in \mapsset{C} (\Sset^m, \manifold{N})\) satisfying
\begin{equation*}   
\varepsilon^{m - 1}
          \iint
            \limits_{\Sset^m \times \Sset^m} 
              \frac{(d_{\manifold{N}} (f (y), f (x)) - \varepsilon)_+}{\abs{y - x}^{2 m}} 
            \diff y
            \diff x 
        \le 
          \lambda
          \eofs,
\end{equation*}
has a free homotopy decomposition into \(f_1, \dotsc, f_k \in \mapsset{F}^\lambda\) with \(k \le C \lambda\).
\end{theorem}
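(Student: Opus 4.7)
The plan is to reduce \cref{theorem_bounded_finite_bubbles_linear_scaling} directly to \cref{theorem_bounded_finite_bubbles_linear} via \cref{proposition_fract_gap_integral_growth_sphere} applied with exponent \(p = 1\). The key observation is that \(m - 1 - (p-1)_+ = m - 1\) when \(p = 1\), so the scaling proposition produces precisely the factor needed to absorb the \(\varepsilon^{m - 1}\) weight appearing in the hypothesis.

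Concretely, I would first fix \(\varepsilon_0 > 0\) small enough for \cref{theorem_bounded_finite_bubbles_linear} to apply at truncation level \(\varepsilon_0\), and take this \(\varepsilon_0\) as the threshold in the present statement (if \(\varepsilon \ge \varepsilon_0\) the conclusion already follows directly from \cref{theorem_bounded_finite_bubbles_linear}, so one may assume \(\varepsilon \in (0, \varepsilon_0)\)). Given \(f \in \mapsset{C}(\Sset^m, \manifold{N})\) satisfying the hypothesis, applying \cref{proposition_fract_gap_integral_growth_sphere} with \(p = 1\), with the proposition's \(\varepsilon\) replaced by \(\varepsilon_0\) and its \(\delta\) replaced by the current \(\varepsilon\), and combining the resulting estimate with the hypothesis yields
\begin{equation*}
  \iint\limits_{\substack{(x, y) \in \Sset^m \times \Sset^m\\ d_{\manifold{N}}(f(y), f(x)) \ge \varepsilon_0}}
    \frac{\bigl(d_{\manifold{N}}(f(y), f(x)) - \varepsilon_0\bigr)}{\abs{y - x}^{2 m}} \diff y \diff x
  \le
    \frac{C}{\varepsilon_0^{m - 1}} \cdot \varepsilon^{m - 1}
    \iint\limits_{\Sset^m \times \Sset^m}
      \frac{\bigl(d_{\manifold{N}}(f(y), f(x)) - \varepsilon\bigr)_+}{\abs{y - x}^{2 m}} \diff y \diff x
  \le
    \frac{C \lambda}{\varepsilon_0^{m - 1}}
    \eofs .
\end{equation*}
Since \(\varepsilon_0\) is now a fixed constant, this places \(f\) inside the hypothesis of \cref{theorem_bounded_finite_bubbles_linear} at truncation level \(\varepsilon_0\) with parameter \(\lambda' \defeq C\lambda/\varepsilon_0^{m - 1}\), which is a constant multiple of \(\lambda\) independent of \(\varepsilon\).

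Finally, applying \cref{theorem_bounded_finite_bubbles_linear} produces a finite set \(\mapsset{F}^{\lambda'} \subset \mapsset{C}(\Sset^m, \manifold{N})\) and a free homotopy decomposition of \(f\) into at most \(k \le C_0 \lambda' = C_0 C \lambda/\varepsilon_0^{m - 1}\) elements of \(\mapsset{F}^{\lambda'}\). Setting \(\mapsset{F}^\lambda \defeq \mapsset{F}^{\lambda'}\) and absorbing the various factors into a single constant completes the proof. No substantial obstacle is expected here: the scaling proposition was engineered precisely to enable this kind of reduction, and beyond its application the argument amounts to a tracking of constants.
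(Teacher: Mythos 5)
Your proposal is correct and is exactly the paper's argument: the paper proves this theorem in one line by combining \cref{theorem_bounded_finite_bubbles_linear} with \cref{proposition_fract_gap_integral_growth_sphere}, and your write-up simply makes explicit the choice $p=1$, the substitution $(\varepsilon,\delta)\mapsto(\varepsilon_0,\varepsilon)$, and the bookkeeping showing that $\lambda'$ and hence $\mapsset{F}^{\lambda'}$ depend only on $\lambda$ and the fixed $\varepsilon_0$, not on $\varepsilon$. No issues.
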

\begin{proof}
This follows from \cref{theorem_bounded_finite_bubbles_linear} and \cref{proposition_fract_gap_integral_growth_sphere}.
\end{proof}

\subsection{Comparison between fractional truncated energies}

In passing form \cref{theorem_bounded_finite_bubbles_linear} to \cref{theorem_bounded_finite_bubbles} we relied on \eqref{ineq_diam_N}, 
which is not optimal when \(\varepsilon\) is small and \(f\) is the identity mapping (see \eqref{eq_scaling_identity}). 
In this section, we derive estimates that compare different gap integrals with optimal scaling on a convex subset \(\Omega\) of the Euclidean space \(\Rset^m\).

\begin{proposition}
[Comparison between truncated fractional energies]
\label{proposition_p_q_scaling_Omega}
For every \(m \ge 2\), \(p \in [0, m)\), \(q \in [0, + \infty)\) and \(\eta \in (0, 1)\), there exists a constant \(C > 0\) 
such that 
for every 
convex set \(\Omega \subset \Rset^m\),
for every map \(f : \Omega \to \manifold{N}\) and
for every \(\varepsilon > 0\), 
one has 
\begin{equation*}
    \iint\limits_{
      \substack{
        (x, y) \in \Omega \times \Omega\\
        d_{\manifold{N}} (f (y), f (x)) \ge \varepsilon}
      }
      \hspace{-1.5em}
      \frac
        {\bigl(
            d_{\manifold{N}} (f (y), f (x))
          -
            \varepsilon
        \bigr)^p}
        {\abs{y - x}^{2 m}}
      \diff y
      \diff x
  \le
    C 
    \,
    \varepsilon^{p - q}
    \hspace{-1.5em}
    \iint\limits_{
      \substack{
        (x, y) \in \Omega \times \Omega\\
        d_{\manifold{N}} (f (y), f (x)) \ge \eta \varepsilon}
      }
      \hspace{-1.5em}
      \frac{
        \bigl(
            d_{\manifold{N}} (f (y), f (x))
          -
            \eta \varepsilon
        \bigr)^q}
        {\abs{y - x}^{2 m}}
      \diff y
      \diff x
  \eofs .
\end{equation*}
\end{proposition}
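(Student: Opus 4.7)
My approach combines the same-exponent scaling estimate of \cref{lemma_fract_gap_integral_growth} with Chebyshev's inequality in order to trade the two exponents $p$ and $q$. The key tool is that on the set $\{(x, y) \st d_{\manifold{N}} (f (y), f (x)) \ge \sigma\}$, the indicator is pointwise bounded by $((d_{\manifold{N}} (f (y), f (x)) - \tau)/(\sigma - \tau))^r$ for any $r \ge 0$ and any $\tau < \sigma$. Combined with the layer-cake identity
\begin{equation*}
  \iint\limits_{d_{\manifold{N}} (f (y), f (x)) \ge \varepsilon}
    \frac{(d_{\manifold{N}} (f (y), f (x)) - \varepsilon)^p}{\abs{y-x}^{2m}}\, \diff y\, \diff x
  = p \int_0^\infty t^{p-1} \Phi(\varepsilon+t)\, \diff t,
\end{equation*}
where $\Phi(\sigma) \defeq \iint_{d_{\manifold{N}} (f (y), f (x)) \ge \sigma}\abs{y-x}^{-2m}\,\diff y\,\diff x$, this provides the mechanism to convert between the $p$-th and $q$-th weighted integrals.

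In the easier case $p < q$, I would apply the Chebyshev bound with $\tau = \eta\varepsilon$ and $r = q$ inside the layer-cake formula, bounding $\Phi(\varepsilon+t)$ by the right-hand side of the proposition divided by $((1-\eta)\varepsilon+t)^q$. This produces the integral $p\int_0^\infty t^{p-1}/((1-\eta)\varepsilon+t)^q\,\diff t$, which converges precisely when $p < q$ and evaluates, via the substitution $t = (1-\eta)\varepsilon u$, to a constant multiple of $((1-\eta)\varepsilon)^{p-q}$ (the remaining factor being a Beta function $B(p, q-p)$). This yields the desired inequality directly.

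In the case $q \le p < m$, the same integral diverges, so I would split the layer-cake integration at a threshold $t_0 \sim \varepsilon$. On the near part $t \le t_0$, the pointwise comparison $(d_{\manifold{N}} (f (y), f (x)) - \eta\varepsilon)^{p-q} \le C \varepsilon^{p-q}$ combined with the elementary lower bound on the right-hand side restricted to $\{d_{\manifold{N}} (f (y), f (x)) \ge \varepsilon\}$ (on which $d_{\manifold{N}} (f (y), f (x)) - \eta\varepsilon \ge (1-\eta)\varepsilon$) handles this contribution. For the tail $t > t_0$, I would apply Chebyshev at an auxiliary exponent $r \in (p, m)$, which exists since $p < m$, and then invoke \cref{lemma_fract_gap_integral_growth} at exponent $r$ to reduce the resulting $r$-th integral back to a $q$-th one; the hypothesis $m \ge 2$ provides the necessary room in the scaling exponents. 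The chief obstacle is precisely this second case: the natural Chebyshev bound at exponent $q$ is divergent, and closing the argument requires carefully coupling Chebyshev at an auxiliary higher exponent with the scaling lemma so that the auxiliary factors collapse back to the target $q$-th integral without loss.
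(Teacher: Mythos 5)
Your first case ($p<q$) and the near-field part of the second case are sound, and your layer-cake-plus-Chebyshev representation is essentially the paper's \cref{lemma_ineq_p_q_scaling} in disguise: both express the $p$-truncated integral as a superposition of truncated integrals at higher levels and then apply the scaling lemma \cref{lemma_fract_gap_integral_growth} level by level. Carried out with the target exponent $q$ itself (Chebyshev at exponent $q$ with $\tau=\sigma/2$, then the scaling lemma at exponent $q$ from level $\sigma/2$ down to $\eta\varepsilon$), this yields $\Phi(\sigma)\le C\,\varepsilon^{m-(1-q)_+-q}\,\sigma^{-(m-(1-q)_+)}I_q(\eta\varepsilon)$, where $I_q(\tau)$ denotes the $q$-truncated integral at level $\tau$, and the resulting tail integral $\int_{t_0}^{\infty} t^{p-1}(\varepsilon+t)^{-(m-(1-q)_+)}\diff t$ converges exactly when $p<m-(1-q)_+$. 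That covers all $p<m$ when $q\ge 1$, but when $q<1$ it misses the range $m-1+q\le p<m$, which contains the case $q=0$, $p=1$, $m=2$ that the paper actually needs.

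The gap is in your treatment of that remaining range. \cref{lemma_fract_gap_integral_growth} compares the \emph{same} power at two truncation levels; it never changes the exponent of the integrand, so it cannot ``reduce the resulting $r$-th integral back to a $q$-th one.'' After Chebyshev at an auxiliary exponent $r\in(p,m)$ the tail is controlled by $\varepsilon^{p-r}I_r(\eta\varepsilon)$, and converting this into $\varepsilon^{p-q}I_q(\eta'\varepsilon)$ is exactly the proposition for the pair $(r,q)$ with $r>p$ --- a strictly harder instance, so the argument is circular; replacing $(d_{\manifold{N}}(f(y),f(x))-\eta\varepsilon)^{r-q}$ by $(\diam\manifold{N})^{r-q}$ instead destroys the $\varepsilon^{p-q}$ scaling. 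The auxiliary exponent must go \emph{below} $1$, not above $p$: choose $q'\in(\max(0,p-m+1),1)$ and compose two instances of the already-established case, first from exponent $p$ at level $\varepsilon$ to exponent $q'$ at level $\eta_1\varepsilon$ (valid because $p<m-1+q'=m-(1-q')_+$), then from $q'$ to $q$ (valid because $q'<1\le m-1\le m-(1-q)_+$). This two-step iteration is how the paper closes the case $m-(1-q)_+\le p<m$, and it is the missing ingredient in your plan.
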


In view of the asymptotics \eqref{eq_scaling_identity} on the integrals when \(f\) is the identity, 
the scaling of the estimate in \cref{proposition_p_q_scaling_Omega} is optimal
and the estimate of  \cref{proposition_p_q_scaling_Omega} fails when \(p \ge m\) and \(p > q\).

When \(p < q\), the estimate follows from the elementary inequality: for \(t \ge \varepsilon\), 
\begin{equation*}
   (t - \varepsilon)^p
   \le 
   \frac
    {(t - \eta \varepsilon)^q}
    {(1 - \eta)^{q - p} \,\varepsilon^{q - p}}
    \eofs;
\end{equation*}
the interest of the estimate lies essentially thus in the case \(q < p < m\).
The proof of \cref{theorem_bounded_finite_bubbles_scaling}, will be relying only on the case \(q = 0\) and \(p = 1\).

The proof of \cref{proposition_p_q_scaling_Omega} relies on \cref{lemma_fract_gap_integral_growth} and the next \cref{lemma_ineq_p_q_scaling}.

\begin{lemma}
[Integral estimate of truncated powers]
\label{lemma_ineq_p_q_scaling}
For every \(p, q \in [0, +\infty)\) and \(\eta \in (0, 1)\), 
there exists a constant \(C > 0\) such that for every \(t, s \in [0, +\infty)\)
such that \(t \ge s\),
\begin{equation*}
  (t - s)^p
 \le 
  C
  \int_{\eta s}^t
    \frac
      {(t - r)^q}
      {r^{1 + q - p}}
    \diff r
  \eofs .
\end{equation*}
\end{lemma}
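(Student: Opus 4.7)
My approach is to exploit the homogeneity of the inequality under scaling of $t$. For $t > 0$, the substitution $r = tu$ transforms the right-hand side into $t^p \int_{\eta s/t}^1 (1-u)^q u^{p-q-1} \diff u$, while the left-hand side equals $t^p (1 - s/t)^p$. Writing $\sigma \defeq s/t \in [0, 1]$, the claim is thus equivalent to the uniform inequality
\begin{equation*}
    (1-\sigma)^p \le C \int_{\eta \sigma}^1 \frac{(1-u)^q}{u^{1+q-p}} \diff u \qquad \text{for every } \sigma \in [0, 1],
\end{equation*}
with a constant $C$ depending only on $p$, $q$ and $\eta$; the degenerate case $t = 0$ forces $s = 0$ and makes both sides vanish for $p > 0$.

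The right-hand side is a nonincreasing function of its lower limit, and $\eta \sigma \le \eta$ for every $\sigma \in [0, 1]$, so it is bounded below by the constant
\begin{equation*}
    I(\eta) \defeq \int_\eta^1 \frac{(1-u)^q}{u^{1+q-p}} \diff u.
\end{equation*}
On the compact interval $[\eta, 1]$ the factor $u^{p-q-1}$ is a positive continuous function, and $(1-u)^q$ is nonnegative with $(1-u)^q > 0$ for $u < 1$; hence $I(\eta) > 0$ and depends only on $p$, $q$, $\eta$. Combined with the trivial bound $(1-\sigma)^p \le 1$ for $\sigma \in [0, 1]$, this yields the desired inequality with the explicit choice $C \defeq 1/I(\eta)$.

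There is no serious obstacle: the argument is essentially a one-line homogeneity reduction followed by a trivial lower bound on a fixed integral. The hypothesis $\eta < 1$ is used precisely to keep $\eta \sigma$ bounded away from $1$ for all admissible $\sigma$, which is what ensures that $I(\eta)$ is strictly positive. I note that this proof gives a rather crude constant; however, the lemma only asserts existence, and the crude bound is in fact all that is required for the application to \cref{proposition_p_q_scaling_Omega} via \cref{lemma_fract_gap_integral_growth}.
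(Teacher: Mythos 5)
Your proof is correct, and while it shares the paper's opening move --- a homogeneity reduction to a one-parameter inequality --- the way you complete it is genuinely different and simpler. You rescale by \(t\), so the reduced inequality \((1-\sigma)^p \le C\int_{\eta\sigma}^1 (1-u)^q u^{p-q-1}\,\diff u\) ranges over the compact interval \(\sigma \in [0,1]\), where the left-hand side is at most \(1\) and the right-hand side is at least the fixed positive number \(I(\eta)=\int_\eta^1(1-u)^q u^{p-q-1}\,\diff u\); the constant \(C=1/I(\eta)\) is then explicit and no further analysis is needed. The paper instead rescales by \(s\), writing \(t=\tau s\), which leaves the noncompact range \(\tau\in[1,+\infty)\) and forces an asymptotic argument: one shows that \(g(\tau)=\int_\eta^\tau(\tau-\rho)^q\rho^{p-q-1}\,\diff\rho\) is continuous and positive and that \(g(\tau)/\tau^p\) tends to a positive (possibly infinite) limit, whence \((\tau-1)^p\le C\,g(\tau)\) on all of \([1,+\infty)\). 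Your normalization also covers \(s=0\) directly (as \(\sigma=0\)), whereas the paper's substitution tacitly assumes \(s>0\). The paper's route buys nothing extra here --- the lemma is only invoked through the existence of some uniform constant in \cref{proposition_p_q_scaling_Omega} --- so your cruder but shorter argument is entirely adequate; the one residual degeneracy (\(t=s=0\) with \(p=0\) under the convention \(0^0=1\)) afflicts both proofs equally and is vacuous in the application, where the lemma is used with \(s=\varepsilon>0\).
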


If \(p = q + 1\), \cref{lemma_ineq_p_q_scaling} has a direct proof with \(\eta = 1\): indeed, for every \(t, s \in \Rset\), one has
\begin{equation*}
    (t - s)^p
  = 
    p
    \int_{s}^t 
      \frac
        {(t - r)^q}
        {r^{1 + q - p}}
      \diff r
  \eofs.
\end{equation*}

\begin{proof}[Proof of \cref{lemma_ineq_p_q_scaling}]
If we set \(t = \tau s\), with \(\tau \ge 1\), we have
\begin{equation}
\label{ineq_p_q_scaling_l}
   (t - s)^p = s^p \,(\tau - 1)^p
\end{equation}
and, under the change of variables \(r = s \rho\),
\begin{equation}
\label{ineq_p_q_scaling_r}
  \int_{\eta s}^t
    \frac
      {(t - r)^q}
      {r^{1 + q - p}}
    \diff r
 =
  s^p
  \int_\eta^\tau
    \frac
      {(\tau - \rho)^q}
      {\rho^{1 + q - p}}
    \diff \rho \eofs.
\end{equation}
We observe that the function \(g : [1, +\infty) \to \Rset\) 
defined for each \(\tau \in [1, + \infty)\) by
\begin{equation*}
 g (\tau) 
 \defeq
 \int_\eta^\tau
    \frac
      {(\tau - \rho)^q}
      {\rho^{1 + q - p}}
    \diff \rho
    \eofs,
\end{equation*}
is continuous and positive and that by the change of variable \(\rho = \tau \sigma\),
\begin{equation*}
 \lim_{\tau \to +\infty}
  \frac{g (\tau)}{\tau^p}
 =
  \lim_{\tau \to +\infty}
   \int_{\eta/\tau}^1
    \frac{(1 - \sigma)^q}{\sigma^{1 + q - p}} \diff \sigma
 =
  \int_0^1
    \frac{(1 - \sigma)^q}{\sigma^{1 + q - p}} \diff \sigma
 > 
  0
 \eofs.
\end{equation*}
Hence we have \( (\tau - 1)^p \le C g (\tau)\) for each \(\tau \in [1, +\infty)\) and the conclusion follows by 
\eqref{ineq_p_q_scaling_l} and \eqref{ineq_p_q_scaling_r}.
\end{proof}

\begin{proof}%
[Proof of \cref{proposition_p_q_scaling_Omega}]
\resetconstant
We first observe that by \cref{lemma_ineq_p_q_scaling} applied at each \(x, y \in \Omega\) with \(t = d_{\manifold{N}} (f (y), f (x))\) and \(s = \varepsilon\), we have
\begin{multline}
\label{ineq_pqsO_1}
  \iint\limits_{
      \substack{
        (x, y) \in \Omega \times \Omega\\
        d_{\manifold{N}} (f (y), f (x)) \ge \varepsilon}
      }
      \hspace{-1.5em}
      \frac
        {\bigl(
            d_{\manifold{N}} (f (y), f (x))
          -
            \varepsilon
        \bigr)^p}
        {\abs{y - x}^{2 m}}
      \diff y
      \diff x
      \\[-2em]
  \le
    \C
    \int_{\eta \varepsilon}^{+\infty}
      \hspace{-1.5em}
      \iint\limits_{
          \substack{
            (x, y) \in \Omega \times \Omega\\
            d_{\manifold{N}} (f (y), f (x)) \ge r}
          }
          \hspace{-1.5em}
          \frac
            {\bigl(
                d_{\manifold{N}} (f (y), f (x))
              -
                r
            \bigr)^q}
            {r^{1 + q - p} \, \abs{y - x}^{2 m}}
          \diff y
          \diff x 
        \diff r
        \eofs.
\end{multline}
Since the set \(\Omega \subseteq \Rset^m\) is convex, by \cref{lemma_fract_gap_integral_growth}, we have for every \(r \in (0, +\infty)\), 
\begin{multline}
\label{ineq_pqsO_2}
  \iint\limits_{
      \substack{
        (x, y) \in \Omega \times \Omega\\
        d_{\manifold{N}} (f (y), f (x)) \ge r}
      }
      \hspace{-1.5em}
      \frac
        {\bigl(
            d_{\manifold{N}} (f (y), f (x))
          -
            r
        \bigr)^q}
        {\abs{y - x}^{2 m}}
      \diff y
      \diff x
      \\[-2em]
  \le
   \C
   \,
   \Bigl(\frac{\varepsilon}{r} \Bigr)^{(m - 1) - (q - 1)_+}
   \hspace{-2em}
  \iint\limits_{
      \substack{
        (x, y) \in \Omega \times \Omega\\
        d_{\manifold{N}} (f (y), f (x)) \ge \eta \varepsilon}
      }
      \hspace{-1.5em}
      \frac
        {\bigl(
            d_{\manifold{N}} (f (y), f (x))
          -
            \eta \varepsilon
        \bigr)^q}
        {\abs{y - x}^{2 m}}
      \diff y
      \diff x 
      \eofs.
\end{multline}
By combining \eqref{ineq_pqsO_1} and \eqref{ineq_pqsO_2}, 
we deduce that 
\begin{multline*}
  \iint\limits_{
      \substack{
        (x, y) \in \Omega \times \Omega\\
        d_{\manifold{N}} (f (y), f (x)) \ge \varepsilon}
      }
      \hspace{-1.5em}
      \frac
        {\bigl(
            d_{\manifold{N}} (f (y), f (x))
          -
            \varepsilon
        \bigr)^p}
        {\abs{y - x}^{2 m}}
      \diff y
      \diff x
      \\[-.5em]
  \le 
    \C 
    \,
    \int_{\eta \varepsilon}^{+\infty}
        \frac
          {\varepsilon^{(m - 1) - (q - 1)_+}}
          {r^{m + 1 - (1 - q)_+  - p}}
        \diff r
     \hspace{-1em}
    \iint\limits_{
      \substack{
        (x, y) \in \Omega \times \Omega\\
        d_{\manifold{N}} (f (y), f (x)) \ge \eta \varepsilon}
      }
      \hspace{-1.5em}
      \frac
        {\bigl(
            d_{\manifold{N}} (f (y), f (x))
          - \eta \varepsilon
        \bigr)^p}
        {\abs{y - x}^{2 m}}
      \diff y
      \diff x \eofs.
\end{multline*}
If \(
    p
  <
    m - (1 - q)_+
\), 
then
\begin{equation*}
\begin{split}
    \int_{\eta \varepsilon}^{+\infty}
        \frac{\varepsilon^{(m - 1) - (q - 1)_+}}{r^{m + 1 - (1 - q)_+  - p}}
        \diff r
&
  = 
    \frac{\varepsilon^{(m - 1) - (q - 1)_+}}{(m - (1 - q)_+ - p) \, (\eta \varepsilon)^{m - (1 - q)_+ - p} }
\\
& =
    \frac{\varepsilon^{p - q}}{(m - (1 - q)_+ - p) \, \eta^{m - (1 - q)_+ - p} }
\end{split}
\end{equation*}
and 
the estimate is satisfied.

In order to cover the case \(
    m - (1 - q)_+ \le p < m
\), we observe that since \(m \ge 2\) and \(q \ge 0\), 
the estimate holds for every \(p \in [0, 1)\). By iterating a second time the estimate, we obtain the estimate for each \(p \in [0, m)\).
\end{proof}

The proof shows that when \(m = 1\), the estimate of \cref{proposition_p_q_scaling_Omega} holds if \(p < \min (1, q)\), in which case the estimate is in fact elementary.

The estimate of \cref{proposition_p_q_scaling_Omega}
also holds when the domain is a sphere \(\Sset^m\).

\begin{proposition}
[Comparison of truncated fractional energies on the sphere]
\label{proposition_p_q_scaling_Sphere}
For every \(m \ge 2\), \(p \in [0, m)\), \(q \in [0, + \infty)\) and \(\eta \in (0, 1)\), there exists a constant \(C > 0\) 
such that 
for every map \(f : \Sset^m \to \manifold{N}\) and
for every \(\varepsilon > 0\), 
one has 
\begin{equation*}
    \iint\limits_{
      \substack{
        (x, y) \in \Sset^m \times \Sset^m\\
        d_{\manifold{N}} (f (y), f (x)) \ge \varepsilon}
      }
      \hspace{-1.5em}
      \frac
        {\bigl(
            d_{\manifold{N}} (f (y), f (x))
          -
            \varepsilon
        \bigr)^p}
        {\abs{y - x}^{2 m}}
      \diff y
      \diff x
  \le
    C 
     \varepsilon^{p - q}
      \hspace{-2em}
    \iint\limits_{
      \substack{
        (x, y) \in \Sset^m \times \Sset^m\\
        d_{\manifold{N}} (f (y), f (x)) \ge \eta \varepsilon}
      }
      \hspace{-1.5em}
      \frac{
        \bigl(
            d_{\manifold{N}} (f (y), f (x))
          -
            \eta \varepsilon
        \bigr)^q}
        {\abs{y - x}^{2 m}}
      \diff y
      \diff x
  \eofs .
\end{equation*}
\end{proposition}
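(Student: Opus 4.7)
The strategy is to reduce to the already established convex Euclidean case \cref{proposition_p_q_scaling_Omega} by means of the covering of pairs \((x,y) \in \Sset^m \times \Sset^m\) by spherical caps provided in \cref{lemma_pair_covering}, exactly in the same spirit as in the passage from \cref{lemma_fract_gap_integral_growth} to \cref{proposition_fract_gap_integral_growth_sphere}.

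First, let \(a_0, \dotsc, a_{m+1} \in \Sset^m\) be the vertices of a regular simplex and define the spherical caps
\begin{equation*}
    A_i
  \defeq
    \bigl\{ x \in \Sset^m \st a_i \cdot x \ge -\tfrac{1}{\sqrt{m+1}} \bigr\}
    \eofs.
\end{equation*}
By \cref{lemma_pair_covering} we have \(\Sset^m \times \Sset^m = \bigcup_{i=0}^{m+1} A_i \times A_i\), so the integral on the left-hand side is bounded by the sum over \(i\) of the analogous integrals over \(A_i \times A_i\).

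Second, for each \(i\) the cap \(A_i\) is bi-Lipschitz diffeomorphic to a bounded convex subset \(\Omega_i \subset \Rset^m\): one can take, for instance, the orthogonal projection onto the hyperplane orthogonal to \(a_i\). Through such a map \(\Phi_i : \Omega_i \to A_i\), the integral over \(A_i \times A_i\) is comparable, up to a multiplicative constant depending only on \(m\), to the corresponding integral over \(\Omega_i \times \Omega_i\) for the pulled back map \(f \compose \Phi_i : \Omega_i \to \manifold{N}\) (since \(\Phi_i\) distorts distances and volumes by bounded factors, and the kernel \(1/\abs{y-x}^{2m}\) is therefore preserved up to a constant).

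Third, I apply \cref{proposition_p_q_scaling_Omega} to each \(\Omega_i\) with the map \(f \compose \Phi_i\), obtaining a bound in terms of \(\varepsilon^{p-q}\) times the analogous truncated integral over \(\Omega_i \times \Omega_i\) at threshold \(\eta \varepsilon\). Transferring back through \(\Phi_i^{-1}\), this gives a bound in terms of the integral over \(A_i \times A_i\), which is in turn dominated by the corresponding integral over the full \(\Sset^m \times \Sset^m\). Summing over \(i \in \{0, \dotsc, m+1\}\) yields the claimed inequality with \(C = (m+2)\,C'\) for a suitable constant \(C'\).

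The only mildly delicate point is the comparison of the kernels under the diffeomorphism \(\Phi_i\): one needs a uniform bi-Lipschitz constant that depends only on the geometry of the regular simplex, not on \(f\) or \(\varepsilon\). Since each \(A_i\) is contained in the open hemisphere centered at \(a_i\), on which the orthogonal projection (or any fixed smooth chart) is bi-Lipschitz with constants depending only on \(m\), this is routine and presents no real obstacle.
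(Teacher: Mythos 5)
Your proposal is exactly the paper's argument: cover \(\Sset^m \times \Sset^m\) by the products \(A_i \times A_i\) of the simplex caps from \cref{lemma_pair_covering}, transfer each cap to a convex set of \(\Rset^m\) by a fixed bi-Lipschitz chart (which leaves the numerator \(\bigl(d_{\manifold{N}}(f(y),f(x))-\varepsilon\bigr)^p\) untouched and distorts the kernel and the measure only by constants), apply \cref{proposition_p_q_scaling_Omega}, and sum. One small correction: the cap \(A_i = \{x \st a_i \cdot x \ge -1/\sqrt{m+1}\}\) strictly \emph{contains} the closed hemisphere centred at \(a_i\) (this excess is what makes \cref{lemma_pair_covering} work), so the orthogonal projection onto \(a_i^\perp\) is two-to-one on the part of \(A_i\) beyond the equator and is not an admissible chart; use instead the stereographic projection from \(-a_i\), which maps \(\Bar{A}_i\) bi-Lipschitzly onto a closed Euclidean ball with constants depending only on \(m\).
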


\begin{proof}
The proof follows the lines of the proof of \cref{proposition_fract_gap_integral_growth_sphere},
relying on the covering given by \cref{lemma_pair_covering} and the estimate on a convex set of \cref{proposition_p_q_scaling_Omega}.
\end{proof}

We conclude this section with a scaled version of \cref{theorem_bounded_finite_bubbles}.

\begin{theorem}%
[Free homotopy decompositions controlled by a scaled gap potential]
\label{theorem_bounded_finite_bubbles_scaling}
Let \(m \in \Nset \setminus \{0, 1\}\) and \(\manifold{N}\) be a compact Riemannian manifold.
There are constants \(\varepsilon_0 > 0\) and \(C > 0\), such that for every \(\lambda > 0\), there exists a finite set \(\mapsset{F}^\lambda \subset \mapsset{C} (\Sset^m, \manifold{N})\) such that if \(0 < \varepsilon < \varepsilon_0\), any map \(f \in \mapsset{C} (\Sset^m, \manifold{N})\) satisfying
\begin{equation*}
          \iint
            \limits_{
              \substack{
                (x, y) \in \Sset^m \times \Sset^m \\                 
                d_{\manifold{N}} (f (y), f (x)) > \varepsilon}} 
              \frac{\varepsilon^m}{\abs{y - x}^{2 m}} 
            \diff y
            \diff x
        \le 
          \lambda
          \eofs,
\end{equation*}
has a free homotopy decomposition into  \(f_1, \dotsc, f_k \in \mapsset{F}^\lambda\) with \(k \le C \lambda\).
\end{theorem}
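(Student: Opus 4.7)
The plan is to deduce \cref{theorem_bounded_finite_bubbles_scaling} as a direct corollary of the stronger-looking truncated version \cref{theorem_bounded_finite_bubbles_linear_scaling} combined with the cross-exponent comparison for truncated fractional energies supplied by \cref{proposition_p_q_scaling_Sphere}. The two statements differ only in which fractional quantity is assumed bounded: here we control the \(q = 0\) gap potential weighted by \(\varepsilon^m\), whereas \cref{theorem_bounded_finite_bubbles_linear_scaling} controls the \(p = 1\) truncated energy weighted by \(\varepsilon^{m-1}\). The comparison proposition applied with the exponent pair \((p,q) = (1,0)\) produces exactly the factor \(\varepsilon^{p - q} = \varepsilon\) required to reconcile the two scalings.

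Concretely, I would fix some \(\eta \in (0, 1)\), set \(\varepsilon' \defeq \varepsilon / \eta\), and apply \cref{proposition_p_q_scaling_Sphere} at threshold \(\varepsilon'\) with \(p = 1\), \(q = 0\); since \(m \ge 2\), the required condition \(p < m\) is met. Observing that \(\eta \varepsilon' = \varepsilon\), the proposition yields
\[
  \iint\limits_{
    \substack{
      (x, y) \in \Sset^m \times \Sset^m \\
      d_{\manifold{N}}(f(y), f(x)) \ge \varepsilon'
    }}
  \frac{\bigl(d_{\manifold{N}}(f(y), f(x)) - \varepsilon'\bigr)_+}{\abs{y - x}^{2m}}
  \diff y \diff x
  \le
  C\,\varepsilon'
  \iint\limits_{
    \substack{
      (x, y) \in \Sset^m \times \Sset^m \\
      d_{\manifold{N}}(f(y), f(x)) > \varepsilon
    }}
  \frac{1}{\abs{y - x}^{2m}}
  \diff y \diff x.
\]
Multiplying both sides by \((\varepsilon')^{m - 1} = (\varepsilon/\eta)^{m-1}\) and invoking the standing hypothesis gives
\[
  (\varepsilon')^{m - 1}
  \hspace{-1em}
  \iint\limits_{d_{\manifold{N}}(f(y), f(x)) \ge \varepsilon'}
    \frac{\bigl(d_{\manifold{N}}(f(y), f(x)) - \varepsilon'\bigr)_+}{\abs{y - x}^{2m}}
    \diff y \diff x
  \le
  \frac{C}{\eta^m}\,\varepsilon^m
  \hspace{-1em}
  \iint\limits_{d_{\manifold{N}}(f(y), f(x)) > \varepsilon}
    \frac{1}{\abs{y - x}^{2m}}
    \diff y \diff x
  \le \frac{C}{\eta^m}\,\lambda.
\]

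This is precisely the hypothesis of \cref{theorem_bounded_finite_bubbles_linear_scaling} at scale \(\varepsilon'\) with parameter \(C\eta^{-m}\lambda\) in place of \(\lambda\). Taking \(\varepsilon_0 \defeq \eta\,\varepsilon_0^{\mathrm{lin}}\), where \(\varepsilon_0^{\mathrm{lin}}\) is the smallness threshold provided by \cref{theorem_bounded_finite_bubbles_linear_scaling}, ensures that \(\varepsilon' < \varepsilon_0^{\mathrm{lin}}\) whenever \(\varepsilon < \varepsilon_0\), so the linear-scaling theorem applies and furnishes a finite set of maps in which \(f\) has a free homotopy decomposition into at most \(C' \cdot C\eta^{-m}\lambda\) maps. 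Renaming this set as \(\mapsset{F}^\lambda\) and absorbing constants into a single constant finishes the argument. There is no substantive obstacle in this reduction: all the real content lies in the two earlier results being combined, and the scaling miraculously matches because the exponent \(\varepsilon^{p - q}\) produced by \cref{proposition_p_q_scaling_Sphere} with \((p,q) = (1,0)\) is exactly the one bridging \(\varepsilon^m\) and \(\varepsilon^{m-1}\).
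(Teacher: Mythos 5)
Your reduction is correct and is exactly the paper's own proof: the author likewise deduces \cref{theorem_bounded_finite_bubbles_scaling} by applying \cref{proposition_p_q_scaling_Sphere} with \((p,q)=(1,0)\) to bound the \(\varepsilon^{m-1}\)-weighted truncated energy by the \(\varepsilon^{m}\)-weighted gap potential, and then invoking \cref{theorem_bounded_finite_bubbles_linear_scaling}. You merely make explicit the rescaling \(\varepsilon' = \varepsilon/\eta\) and the bookkeeping of \(\varepsilon_0\) and of the constant \(\eta^{-m}\), which the paper leaves implicit.
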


We do not know whether \cref{theorem_bounded_finite_bubbles_scaling} holds when \(m = 2\) (see \cref{problem_1d_scaling} below).
\begin{proof}[Proof of \cref{theorem_bounded_finite_bubbles_scaling}]
\resetconstant
Since \(m \ge 2\), by \cref{proposition_p_q_scaling_Sphere}, 
we have for every \(f : \Sset^m \to \manifold{N}\) and every \(\varepsilon > 0\),
\begin{equation*}
    \varepsilon^{m - 1}
    \hspace{-1em}
    \iint
      \limits_{\Sset^m \times \Sset^m} 
      \hspace{-.5em}
        \frac{\bigl(d_{\manifold{N}} (f (y), f (x)) - \varepsilon\bigr)_+}{\abs{y - x}^{2 m}} 
      \diff y
      \diff x
  \le
    \C\,
    \varepsilon^m
    \hspace{-1.5em}
    \iint
      \limits_{
        \substack{
          (x, y) \in \Sset^m \times \Sset^m \\                 
          d_{\manifold{N}} (f (y), f (x)) > \varepsilon}} 
        \frac{1}{\abs{y - x}^{2 m}}
      \diff y 
      \diff x
      \;
      ;
\end{equation*}
the conclusion then follows from \cref{theorem_bounded_finite_bubbles_linear_scaling}.
\end{proof}

\section{Estimates of the Hurewicz homomorphism on the sphere}

The \emph{Hurewicz homomorphism} is a homotopy invariant of maps that describes how a mapping from \(\Sset^m\) to \(\manifold{N}\) acts on the cohomology of \(\manifold{N}\).
For a \emph{smooth map} \(f \in \mapsset{C}^1 (\Sset^m, \manifold{N})\) and for a closed differential form \(\omega \in \mapsset{C}^1 (\bigwedge^m \manifold{N})\), that is, a form such that \(d \omega = 0\), we define
\begin{equation}
\label{eqDefinitionHurewicz}
    \dualprod{\operatorname{Hur}(f)}{\omega}
  \defeq 
    \int_{\Sset^m} f_\sharp \omega
  \eofs,
\end{equation}
where \(f_\sharp \omega \in C (\bigwedge^m \Sset^m)\) denotes the pull-back by \(f\) of \(\omega\). 
We note that \(\operatorname{Hur}\) acts trivially on exact forms: if \(\omega = d \eta\) with \(\eta \in \mapsset{C}^2 (\bigwedge^{m - 1} \manifold{N})\), then by the Stokes--Cartan theorem
\begin{equation*}
    \dualprod{\operatorname{Hur}(f)}{\omega}
  = 
    \int_{\Sset^m} f_\sharp (d\eta)
  = 
    \int_{\Sset^m} d(f_\sharp \eta)
  = 
    0
    \eofs,
\end{equation*}
and thus by quotienting by the space of exact forms \(d (\mapsset{C}^2 (\bigwedge^{m - 1} \manifold{N}))\), \(\operatorname{Hur} (f)\) induces a linear map on the de Rahm cohomology \(H^m_{\mathrm{dR}} (\manifold{N})\), which by de Rahm's theorem, defines then an element in the singular homology group \(\operatorname{Hur} (f) \in H_m (\manifold{N}, \Rset)\simeq H_m (\manifold{N}, \Zset) \otimes \Rset\).
This map \(\operatorname{Hur} (f)\) does not depend on the homotopy of the original map \(f\): if \(H \in \mapsset{C}^2 (\Sset^m \times [0, 1]  ) \to \manifold{N}\) is a homotopy, then 
\begin{equation*}
  \int_{\Sset^m} H (\cdot, 1)_\sharp \omega
  - \int_{\Sset^m} H (\cdot, 0)_\sharp \omega
  = (-1)^m \int_{\Sset^m \times [0, 1]} d (H_\sharp \omega) 
  = 
    0
    \eofs.
\end{equation*}
In particular, \(\operatorname{Hur}\) induces a map from the homotopy group \(\pi_{m} (\manifold{N})\) to the homology group \(H_m (\manifold{N}, \Rset)\).
The Hurewicz homomorphism generalizes the degree of maps into the sphere: if \(\manifold{N} = \Sset^n\), we have \(\dualprod{\operatorname{Hur} (f)}{\omega_{\Sset^n}} = \operatorname{deg} (f)\); it extends more generally the degree of maps when \(\dim \manifold{N} = m\)  \citelist{\cite{GoldsteinHajlasz2012}\cite{HajlaszIwaniecMalyOnninen2008}*{\S 8}}.

Since the Hurewicz homomorphism is invariant under homotopies, it is well-defined for maps of vanishing mean oscillation. Moreover, by standard approximation, the formula \eqref{eqDefinitionHurewicz} is still valid whenever \(f \in W^{1, m} (\Sset^m, \manifold{N})\) (see \cite{BrezisNirenberg1995}*{(19)}).

The estimate \eqref{ineq_degree_W_1_n} can be generalized immediately to the Hurewicz homomorphism: if \(f \in W^{1, m} (\Sset^m, \manifold{N})\), then 
\begin{equation}
\label{proposition_Hurewicz_W_1_n}
 \abs{\operatorname{Hur} (f)}
 \le 
    C 
    \int_{\Sset^m} 
      \abs{D f}^m
      \eofs.
\end{equation}
Indeed, this follows from the definition of \(\operatorname{Hur}\) in \eqref{eqDefinitionHurewicz} and the fact that \(\abs{f_\sharp (\omega)} \le \abs{\omega}\, \abs{Df}^m\) almost everywhere on \(\Sset^m\).
When \(\manifold{N} = \Sset^m\), then  \eqref{proposition_Hurewicz_W_1_n} is equivalent to the degree estimate \eqref{ineq_degree_W_1_n}.

\begin{theorem}%
[Estimate of Hurewicz homomorphism by a truncated fractional energy]
\label{proposition_Hurewicz_nonlocal}%
Let \(m \in \Nset_*\) and \(\manifold{N}\) be a compact Riemannian manifold.
If \(\varepsilon > 0\) is small enough, then there exists a constant \(C > 0\) 
such that if \(f \in \mapsset{C} (\Sset^m, \manifold{N})\), then
\begin{equation*}
    \abs{\operatorname{Hur} (f)}
  \le 
    C 
    \iint 
      \limits_{\Sset^m \times \Sset^m}
      \frac
        {\bigl(d_{\manifold{N}} (f (y), f (x)) - \varepsilon)_+}
        {\abs{y - x}^{2 m}}
      \diff y 
      \diff x 
      \eofs .
\end{equation*}
\end{theorem}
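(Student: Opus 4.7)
The plan is to bypass the combinatorial structure of \cref{theorem_bounded_finite_bubbles_linear} and directly extract a linear Hurewicz bound from the hyperbolic volume estimate of \cref{lemmaExtensionHyperbolic}. The key observation is that pairing \(\operatorname{Hur}(f)\) with a de Rham representative \(\omega\) can be rewritten, via Stokes' theorem applied to an \emph{ambient} extension of \(\omega\), as an integral concentrated on the set where the hyperharmonic extension \(F\) fails to take values in \(\manifold{N}\); this set is exactly what \cref{lemmaExtensionHyperbolic} controls linearly in \(\lambda\).

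By the homotopy invariance of \(\operatorname{Hur}\) and density of smooth maps within each homotopy class, it suffices to treat \(f \in \mapsset{C}^\infty (\Sset^m, \manifold{N})\), and since \(H^m_{\mathrm{dR}} (\manifold{N})\) is finite dimensional, to estimate \(\int_{\Sset^m} f_\sharp \omega\) for an arbitrary closed smooth \(m\)-form \(\omega\) on \(\manifold{N}\). First I would extend \(\omega\) to an ambient form: let \(\delta_* > 0\) and \(\Pi : \manifold{N}_{\delta_*} \to \manifold{N}\) be the Lipschitz retraction from the proof of \cref{theorem_bounded_finite_bubbles_linear}, and pick a cutoff \(\chi \in \mapsset{C}^\infty_c (\manifold{N}_{\delta_*})\) with \(\chi \equiv 1\) on \(\manifold{N}_{\delta_*/2}\). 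Then \(\Omega \defeq \chi \, \Pi_\sharp \omega\) is compactly supported in \(\Rset^\nu\), and closedness of \(\omega\) yields \(d\Omega = d\chi \wedge \Pi_\sharp \omega\), supported in the thin shell \(\manifold{N}_{\delta_*} \setminus \manifold{N}_{\delta_*/2}\). Let \(F\) be the hyperharmonic extension of \(f\) from \cref{lemmaExtensionHyperbolic}; since \(\chi \equiv 1\) on \(\manifold{N}\) and \(F\vert_{\Sset^m} = f\), Stokes' theorem on \(\Bset^{m + 1}\) gives
\begin{equation*}
  \int_{\Sset^m} f_\sharp \omega
  =
  \int_{\Sset^m} F_\sharp \Omega
  =
  \int_{\Bset^{m + 1}} F_\sharp (d\Omega) \eofs .
\end{equation*}

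The integrand on the right is supported where \(F (z) \in \manifold{N}_{\delta_*} \setminus \manifold{N}_{\delta_*/2}\), which lies inside the set \(A_{\delta_*/2}\) from \cref{lemmaExtensionHyperbolic} since \(\dist (F (z), f (\Sset^m)) \ge \dist (F (z), \manifold{N})\). Combining the pointwise bound \(\abs{F_\sharp (d\Omega) (z)} \le C \norm{d\Omega}_\infty \abs{DF (z)}^{m + 1}\) with the Euclidean estimate \(\abs{DF (z)} \le 2 m \diam (\manifold{N})/(1 - \abs{z}^2)\) that follows from \eqref{itExtensionHyperbolicLipschitz}, and applying the volume inequality \eqref{ineq_meas_osc} with \(\delta = \delta_*/2 > \varepsilon\), I obtain
\begin{equation*}
  \biggabs{\int_{\Sset^m} f_\sharp \omega}
  \le
  C \mu_{\Hset^{m + 1}} (A_{\delta_*/2})
  \le
  C' \iint\limits_{\Sset^m \times \Sset^m}
    \frac{(\abs{f (y) - f (x)} - \varepsilon)_+}{\abs{y - x}^{2m}}
    \diff y
    \diff x
    \eofs .
\end{equation*}
The right-hand side is dominated by the theorem's truncated gap potential since \(\abs{f (y) - f (x)} \le d_{\manifold{N}} (f (y), f (x))\) when \(\manifold{N}\) is isometrically embedded.

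Once \cref{lemmaExtensionHyperbolic} is at hand, there is no serious obstacle: the linear dependence on \(\lambda\) falls out of the volume estimate \eqref{ineq_meas_osc}, and the only technicalities are the standard approximation of continuous \(f\) by smooth maps homotopic to it and arranging the cutoff \(\chi\) so that \(d\Omega\) is supported exactly in the region where the volume control applies. Notably, this argument uses neither \cref{theorem_bounded_finite_bubbles} nor its bubble combinatorics --- it only needs the measure estimate from \cref{lemmaExtensionHyperbolic}, which predates the ball merging step.
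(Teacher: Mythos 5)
Your proposal is correct and is essentially the paper's own argument: the paper likewise pulls back $\eta\wedge\Pi_\sharp\omega$ (your $\chi\,\Pi_\sharp\omega$) through the hyperharmonic extension $F$, applies the Stokes--Cartan formula so that only $F_\sharp(d\eta\wedge\Pi_\sharp\omega)$ survives, and then combines the support observation with the gradient bound \eqref{itExtensionHyperbolicLipschitz} and the measure estimate \eqref{itExtensionHyperbolicMeasure} of \cref{lemmaExtensionHyperbolic}. Your remark that the bubble combinatorics of \cref{theorem_bounded_finite_bubbles_linear} is not needed is also consistent with the paper's proof.
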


When \(m \ge 2\) \eqref{proposition_Hurewicz_W_1_n} can be deduced from \cref{proposition_Hurewicz_nonlocal} since the right-hand side in \cref{proposition_Hurewicz_nonlocal} can always be controlled by the right-hand side in \eqref{proposition_Hurewicz_W_1_n} \cite{Nguyen_2006}*{lemma 2}.
When \(m = 1\), there is no such estimate \cite{Nguyen_2006}*{proposition 3},
but it might be possible to refine existing \(\Gamma\)--convergence results \cite{Nguyen_2011}*{theorem 2} in order to deduce \eqref{proposition_Hurewicz_W_1_n} from \cref{proposition_Hurewicz_nonlocal}.

\begin{proof}%
[Proof of \cref{proposition_Hurewicz_nonlocal}]%
\resetconstant%
Since \(\manifold{N}\) is a compact manifold embedded into \(\Rset^\nu\), there exists an open set \(U \subset \Rset^\nu\) such that \(\manifold{N} \subset U\) and a smooth retraction \(\Pi \in \mapsset{C}^\infty (U, \manifold{N})\).
We also consider a smooth map \(\eta \in \mapsset{C}^\infty_c (U, \Rset)\) such that \(\eta (y) = 1\) if \(y \in \manifold{N}\). 
Given \(f \in \mapsset{C} (\Sset^m, \manifold{N})\), we let \(F \in \mapsset{C}^{\infty} (\Bset^{m + 1}, \Rset^\nu)\) be given by \cref{lemmaExtensionHyperbolic} and we compute by the Stokes--Cartan formula 
\begin{equation*}
\begin{split}
    \dualprod{\operatorname{Hur}(f)}{\omega}
  = 
    \int_{\Sset^m} f_\sharp \omega
  &= 
    \int_{\Sset^m} F_\sharp (\eta \wedge \Pi_\sharp \omega)\\
  &=
    \int_{\Bset^{m + 1}} d \bigl(F_\sharp (\eta \wedge \Pi_\sharp \omega)\bigr)
  = 
    \int_{\Bset^{m + 1}} F_\sharp (d \eta \wedge \Pi_\sharp \omega)
 \eofs,
\end{split}
\end{equation*}
since \(d (\Pi_\sharp \omega) = \Pi_{\#} (d\omega) = 0\).
Hence we have, by the estimates  given by \cref{lemmaExtensionHyperbolic}
\begin{equation*}
\begin{split}
  \abs{\dualprod{\operatorname{Hur}(f)}{\omega}}
 &\le 
  \mu_{\Hset^{m + 1}} \bigl(\bigl\{ x \in \Hset^{m + 1} \st \dist (F (x), \manifold{N}) \ge \delta\bigr\}\bigr)
  \norm{D F}^{m + 1}_{L^\infty (\Hset^{m + 1})}
  \norm{\omega}_{L^\infty} \\
 &\le 
  \C
  \norm{\omega}_{L^\infty} 
  \iint 
    \limits_{\Sset^m \times \Sset^m}
      \frac{\bigl(d_{\manifold{N}} (f (y), f (x)) - \varepsilon)_+}{\abs{y - x}^{2 m}}\diff y \diff x \eofs .\qedhere
\end{split}
\end{equation*}
\end{proof}

We also have an estimate of the Hurewicz homomorphism with optimal scaling when \(m \ge 2\).

\begin{theorem}%
[Estimate of the Hurewicz homomorphism by a scaled gap potential]
\label{proposition_Hurewicz_nonlocal_scaling}%
If \(m \ge 2\) and \(\manifold{N}\) is a compact Riemmanian manifold, then there exists constants \(C > 0\) and \(\varepsilon_0 > 0\) such that if \(\varepsilon \in (0, \varepsilon_0)\) and \(f \in \mapsset{C} (\Sset^m, \manifold{N})\), then
\begin{equation*}
    \abs{\operatorname{Hur} (f)}
  \le 
    C
    \iint 
      \limits_{\substack{
        x, y \in \Sset^m \\ 
        d_{\manifold{N}} (f (y), f (x)) \ge \varepsilon}}
      \frac
        {\varepsilon^m}
        {\abs{y - x}^{2 m}} 
      \diff y 
      \diff x
\eofs .
\end{equation*}
\end{theorem}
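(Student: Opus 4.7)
The plan is to combine the linear Hurewicz estimate at a fixed small truncation threshold (\cref{proposition_Hurewicz_nonlocal}) with the two scaling results for truncated fractional energies already proved in the paper: the monotonicity-in-threshold estimate (\cref{proposition_fract_gap_integral_growth_sphere}) and the $p$-to-$q$ comparison (\cref{proposition_p_q_scaling_Sphere}). The proof will be a short chain of inequalities, following exactly the pattern used to deduce \cref{theorem_bounded_finite_bubbles_scaling} from \cref{theorem_bounded_finite_bubbles_linear_scaling}, but with \(\operatorname{Hur}\) in place of the free homotopy decomposition count.

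First I would fix some threshold \(\varepsilon_{\ast} > 0\) small enough that \cref{proposition_Hurewicz_nonlocal} applies and provides a constant \(C_1 > 0\) with
\begin{equation*}
  \abs{\operatorname{Hur}(f)}
  \le
  C_1
  \iint\limits_{\Sset^m \times \Sset^m}
    \frac{\bigl(d_{\manifold{N}}(f(y), f(x)) - \varepsilon_{\ast}\bigr)_+}{\abs{y - x}^{2m}}
  \diff y \diff x
\end{equation*}
for every \(f \in \mapsset{C}(\Sset^m, \manifold{N})\). Then, given \(\varepsilon \in (0, \varepsilon_{\ast})\), I would push the truncation level down from \(\varepsilon_{\ast}\) to \(\varepsilon\) by invoking \cref{proposition_fract_gap_integral_growth_sphere} with \(p = 1\) (so that \(m - 1 - (p - 1)_+ = m - 1\)), taking \(\delta = \varepsilon\) and the role of \(\varepsilon\) in the statement played by \(\varepsilon_{\ast}\):
\begin{equation*}
  \iint\limits_{\Sset^m \times \Sset^m}
    \frac{\bigl(d_{\manifold{N}}(f(y), f(x)) - \varepsilon_{\ast}\bigr)_+}{\abs{y - x}^{2m}}
  \diff y \diff x
  \le
  C_2
  \biggl(\frac{\varepsilon}{\varepsilon_{\ast}}\biggr)^{m - 1}
  \iint\limits_{\Sset^m \times \Sset^m}
    \frac{\bigl(d_{\manifold{N}}(f(y), f(x)) - \varepsilon\bigr)_+}{\abs{y - x}^{2m}}
  \diff y \diff x.
\end{equation*}

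Next I would apply \cref{proposition_p_q_scaling_Sphere} with \(p = 1\), \(q = 0\) (legitimate because \(m \ge 2\) ensures \(p < m\)) and some fixed \(\eta \in (0, 1)\) to turn the \(L^1\)-type gap energy into an indicator integral at the price of one extra factor of \(\varepsilon\):
\begin{equation*}
  \iint\limits_{\substack{(x, y) \in \Sset^m \times \Sset^m \\ d_{\manifold{N}}(f(y), f(x)) \ge \varepsilon}}
    \frac{\bigl(d_{\manifold{N}}(f(y), f(x)) - \varepsilon\bigr)_+}{\abs{y - x}^{2m}}
  \diff y \diff x
  \le
  C_3 \, \varepsilon
  \iint\limits_{\substack{(x, y) \in \Sset^m \times \Sset^m \\ d_{\manifold{N}}(f(y), f(x)) \ge \eta \varepsilon}}
    \frac{1}{\abs{y - x}^{2m}}
  \diff y \diff x.
\end{equation*}
Multiplying the three displays together produces a bound \(\abs{\operatorname{Hur}(f)} \le C \varepsilon^{m} \iint_{d \ge \eta \varepsilon} \abs{y - x}^{-2m} \diff y \diff x\) for all \(\varepsilon \in (0, \varepsilon_{\ast})\), with the factor \(\varepsilon^{m} = \varepsilon^{m - 1} \cdot \varepsilon\) arising from the two scalings.

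The only cosmetic issue is that the truncation on the right is at \(\eta \varepsilon\) rather than \(\varepsilon\); this is absorbed by the substitution \(\varepsilon \mapsto \varepsilon / \eta\) and setting \(\varepsilon_{0} \defeq \eta \, \varepsilon_{\ast}\), which replaces \(C\) by \(C / \eta^{m}\). I do not expect any genuine obstacle, since all three inputs are already available; the only place where the hypothesis \(m \ge 2\) enters essentially is in applying \cref{proposition_p_q_scaling_Sphere} with \(p = 1\) and \(q = 0\), where we need \(p < m\). This matches precisely the failure regime indicated after \cref{theorem_bounded_finite_bubbles_scaling}, where the \(m = 1\) case is left open.
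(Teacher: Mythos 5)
Your proposal is correct and is exactly the paper's argument: the paper's proof consists of citing \cref{proposition_Hurewicz_nonlocal}, \cref{proposition_fract_gap_integral_growth_sphere} and \cref{proposition_p_q_scaling_Sphere}, and you have supplied precisely the chain of inequalities (with $p=1$ in the first scaling step and $p=1$, $q=0$ in the second) that makes that citation work, including the correct identification of where $m\ge 2$ is needed and the final rescaling $\varepsilon \mapsto \varepsilon/\eta$.
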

\begin{proof}
This follows from \cref{proposition_Hurewicz_nonlocal} in view of \cref{proposition_fract_gap_integral_growth_sphere} and \cref{proposition_p_q_scaling_Sphere}.
\end{proof}

When \(\manifold{N} = \Sset^m\) we recover the estimate on the degree of \familyname{Nguyên} Hoài-Minh \cite{Nguyen_2017}; the latter estimate was obtained through the John--Nirenberg estimate and seems different from our  direct approach. When \(m = 1\), the question whether \cref{proposition_Hurewicz_nonlocal_scaling} holds is an open problem (\cref{problem_1d_scaling}).

\section{Homotopy estimates on a compact manifold}
\label{section_domain_manifold}

\subsection{Free homotopy decompositions upon a mapping}
We consider the problem of controlling the homotopy classes of maps from a general compact manifold \(\manifold{M}\) to another compact manifold \(\manifold{N}\). 
The notion of free homotopy decomposition (\cref{definition_homotopy_power})
generalizes into the free homotopy decomposition upon a mapping.
Since the circle \(\Sset^1\) is, up to a conformal transformation, the only connected compact one-dimensional Riemannian manifold, we assume throughout this section that \(\dim (\manifold{M}) = m \ge 2\).

\begin{definition}%
[Free homotopy decomposition upon a mapping]
\label{homotopy_sum_manifold}
Let \(\manifold{M}\) and \(\manifold{N}\) be connected compact Riemannian manifolds and let \(m = \dim \manifold{M}\).
A map \(f \in \mapsset{C} (\manifold{M}, \manifold{N})\) has a \emph{free homotopy decomposition into} the maps \(f_1, \dotsc, f_k \in \mapsset{C} (\Sset^m, \manifold{N})\) \emph{upon} the map \(f_0 \in \mapsset{C} (\manifold{M}, \manifold{N})\)
whenever there exist maps \(g, g_0 \in \mapsset{C} (\manifold{M}, \manifold{N})\) and nondegenerate topologically trivial balls \(B_{\rho_0} (a_0), \dotsc, B_{\rho_\ell} (a_k)\), such that \(g\) is homotopic to \(f\), \(g_0\) is homotopic to \(f_0\), 
\(g = g_0\) on \(\manifold{M} \setminus B_{\rho_0} (a_0)\), \(g_0\) is constant on \(\Bar{B}_{\rho_0} (a_0)\),  and 
for every \(i \in \{1, \dotsc, k\}\), \(\Bar{B}_{\rho_i} (a_i) \subset B_{\rho_0} (a_0)\) and \(g\vert_{\Bar{B}_{\rho_i}(a_i)}\) is homotopic to \(f_i\) on \(\Sset_m \simeq \Bar{B}_{\rho_i}(a_i)/\partial B_{\rho_i} (a_i)\). 
\end{definition}

\begin{remark}
\label{remark_manifold_sphere}
When \(\manifold{M} = \Sset^m\), a map \(f \in \mapsset{C} (\Sset^m, \manifold{N})\) has a free homotopy decomposition into \(f_1, \dotsc, f_k \in \mapsset{C} (\Sset^m, \manifold{N})\) in the sense of \cref{definition_homotopy_power} if and only if \(f\) has a free homotopy decomposition into \(f_1, \dotsc, f_k \in \mapsset{C} (\Sset^m, \manifold{N})\) upon a constant map \(f_0 \in \mapsset{C} (\Sset^m, \manifold{N})\) in the sense of \cref{homotopy_sum_manifold}.
\end{remark}

The next propopsition describes free homotopy decompositions upon a mapping on a general manifold through free homotopy decompositions on the sphere.

\begin{proposition}
\label{proposition_free_decomposition_manifold_sphere}
Let \(f_0 \in \mapsset{C} (\manifold{M}, \manifold{N})\) and \(f_1, \dotsc, f_k \in (\Sset^m, \manifold{N})\). Assume that \(f_0\) is constant on some nondegenerate topologically trivial ball \(B_\rho (a)\subset \manifold{M}\).
The map \(f \in \mapsset{C} (\manifold{M}, \manifold{N})\) has a free homotopy decomposition into \(f_1, \dotsc, f_k\) upon \(f_0\) if and only there exists a map \(h \in \mapsset{C} (\manifold{M}, \manifold{N})\) such that \(h\) is homotopic to \(f\), \(h = f_0\) on \(\manifold{M} \setminus B_\rho (a)\) and \(h \vert_{\Bar{B}_\rho (a)}\) has a free homotopy decomposition into \(f_1, \dotsc, f_k\) under the identification \(\Sset^m \simeq \Bar{B}_\rho (a)/\partial B_\rho (a)\).
\end{proposition}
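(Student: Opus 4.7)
My plan is to prove both implications.

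The \emph{``if''} direction is essentially a relabeling of the hypothesis: given such an $h$, take $g \defeq h$, $g_0 \defeq f_0$, $B_{\rho_0} (a_0) \defeq B_\rho (a)$ in \cref{homotopy_sum_manifold}. The conditions $g \simeq f$, $g_0 = f_0$, $g = g_0$ outside $B_{\rho_0} (a_0)$, and $g_0$ constant on $\Bar{B}_\rho (a)$ are immediate, while the inner balls $\Bar{B}_{\rho_i} (a_i) \subset B_\rho (a)$ together with the required homotopies $g\vert_{\Bar{B}_{\rho_i} (a_i)} \simeq f_i$ come from pulling back through the quotient $\Bar{B}_\rho (a) \to \Sset^m$ the \cref{definition_homotopy_power} decomposition of $h\vert_{\Bar{B}_\rho (a)}$, which descends to $\Sset^m$ because $h \equiv c_0 \defeq f_0 (a)$ on $\partial B_\rho (a)$.

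For the \emph{``only if''} direction, let $g, g_0, B_{\rho_0} (a_0), B_{\rho_1} (a_1), \dotsc, B_{\rho_k} (a_k)$ witness the decomposition of $f$ upon $f_0$, write $c \defeq g_0 (a_0)$ and $c_0 \defeq f_0 (a)$, and carry out two preliminary reductions. Since $\manifold{M}$ is a connected manifold of dimension $m \ge 2$, any two nondegenerate topologically trivial balls in $\manifold{M}$ are related by an ambient isotopy, so I pick $\phi_t$ with $\phi_0 = \mathrm{id}$ and $\phi_1 (\Bar{B}_{\rho_0} (a_0)) \subset B_\rho (a)$; precomposing $g$ and $g_0$ by $\phi_1^{-1}$ and transporting the inner balls by $\phi_1$ preserves the decomposition structure and all the homotopies, so I may assume $\Bar{B}_{\rho_0} (a_0) \subset B_\rho (a)$. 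Next, because $g_0$ and $f_0$ are both constant on $\Bar{B}_{\rho_0} (a_0)$ they factor through the quotient $q : \manifold{M} \to \manifold{M}/\Bar{B}_{\rho_0} (a_0)$, which is a homotopy equivalence since $\Bar{B}_{\rho_0} (a_0)$ is contractible and admits a collar neighbourhood in $\manifold{M}$; the homotopy $g_0 \simeq f_0$ descends to the quotient and lifts back to $\tilde H : \manifold{M} \times [0, 1] \to \manifold{N}$ from $g_0$ to $f_0$ whose restriction to $\Bar{B}_{\rho_0} (a_0)$ at time $t$ is the constant $\gamma (t)$, for some path $\gamma : [0, 1] \to \manifold{N}$ from $c$ to $c_0$.

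After a further preliminary homotopy arranging $g \equiv c$ on $\Bar{B}_{\rho_0} (a_0) \setminus \bigcup_{i = 1}^k B_{\rho_i} (a_i)$, I choose a collar $U \simeq \partial B_{\rho_0} (a_0) \times [0, \varepsilon]$ of $\partial B_{\rho_0} (a_0)$ inside $B_{\rho_0} (a_0)$ disjoint from the inner balls, and define
\begin{equation*}
  h (x) \defeq
  \begin{cases}
    f_0 (x) & \text{if } x \in \manifold{M} \setminus \Bar{B}_{\rho_0} (a_0), \\
    \gamma (1 - s/\varepsilon) & \text{if } x = (y, s) \in U, \\
    g (x) & \text{if } x \in \Bar{B}_{\rho_0} (a_0) \setminus U.
  \end{cases}
\end{equation*}
The matching boundary values ($c_0$ on $\partial B_{\rho_0} (a_0)$, $c$ on the inner boundary of $U$) make $h$ continuous, and the homotopy $h_t$ defined by $\tilde H_t$ outside $\Bar{B}_{\rho_0} (a_0)$, by $\gamma (t (1 - s/\varepsilon))$ on $U$, and by $g$ on the innermost piece interpolates continuously from $g$ at $t = 0$ to $h$ at $t = 1$, so $h \simeq g \simeq f$. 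By construction $h = f_0$ on $\manifold{M} \setminus B_\rho (a)$, and $h\vert_{\Bar{B}_\rho (a)}$, which equals $c_0$ on $\partial B_\rho (a)$, descends to a map $\Sset^m \to \manifold{N}$ that---after contracting the collar path $\gamma$ back to $c$ inside $\manifold{N}$---is constant outside the images of $\Bar{B}_{\rho_i} (a_i)$ and equal to $g$ on each, yielding the \cref{definition_homotopy_power} decomposition into $f_1, \dotsc, f_k$. The principal obstacle is reconciling the value mismatch $c \ne c_0$ between the two nested balls without breaking the continuity of $h$ or disturbing the decomposition of $g$ on the inner balls; the collar $U$ traversed by the path $\gamma$ supplied by the quotient argument is the device that resolves this.
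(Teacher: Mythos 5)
Your proof is correct and follows essentially the same route as the paper's: reduce to the case where the outer ball \(B_{\rho_0}(a_0)\) of the decomposition is nested inside \(B_\rho(a)\), choose a homotopy from \(g_0\) to \(f_0\) that is constant on that ball at each time (your quotient/cofibration argument makes explicit what the paper merely asserts), and thread the resulting path \(\gamma\) radially through a collar to produce \(h\). The one caveat is that your ``further preliminary homotopy arranging \(g \equiv c\) on \(\Bar{B}_{\rho_0}(a_0) \setminus \bigcup_{i} B_{\rho_i}(a_i)\)'' should be understood as part of the (intended) definition of the decomposition rather than as a reduction step, since as a genuine homotopy of \(g\) it is not achievable in general and could change the homotopy class; what can be arranged, and all your construction actually uses, is constancy of \(g\) on a collar of \(\partial B_{\rho_0}(a_0)\).
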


Since the definition of free homotopy decomposition upon a mapping (\cref{homotopy_sum_manifold}) is invariant under homotopies, the condition that the map \(f_0\) is constant on some nondegenerate topologically trivial ball can always be satisfied. 
Free decompositions upon a given mapping on a manifold are thus not more complex than a collection of homotopy classes of maps on a sphere relative to some point. 
The free homotopy decompositions into given maps upon a given map can be precisely identified and enumerated by obstruction cohomology classes with local groups \citelist{\cite{Hu_1959}*{Chapter VI}\cite{Baues_1977}*{\S 4.2}}.

\begin{proof}%
[Proof of \cref{proposition_free_decomposition_manifold_sphere}]
If the map \(f\) is homotopic to \(h\), it follows directly from the definition of free decompositions \cref{definition_homotopy_power} and \cref{homotopy_sum_manifold} that \(f\) has a free homotopy decomposition into \(f_1, \dotsc, f_k\) upon \(f_0\).

Conversely, let us assume that \(f \in \mapsset{C} (\manifold{M}, \manifold{N})\) has a free homotopy decomposition into \(f_1, \dotsc, f_k\) upon \(f_0\), and let \(g, g_0 \in \mapsset{C} (\manifold{M}, \manifold{N})\) and the balls \(B_{\rho_0} (a_0), \dotsc, B_{\rho_k} (a_k)\) be given by \cref{homotopy_sum_manifold}. Without loss of generality, we can assume that \(B_{\rho_0} (a_0) = B_{\rho/2} (a)\).
Since the maps \(f_0\) and \(g_0\) are homotopic, there exists a homotopy \(H \in \mapsset{C} (\manifold{M} \times [0, 1], \manifold{N})\) such that \(H (\cdot, 0) = g_0\)  and \(H (\cdot, 1) = f_0 \) on \(\manifold{M}\) and such that for every \(t \in [0, 1]\), \(H (\cdot, t)\) is constant on \(B_\rho (a)\). 
This implies in turn the existence of a homotopy \(\Tilde{H} \in \mapsset{C} (\manifold{M} \times [0, 1], \manifold{N})\) such that \(\Tilde{H} (\cdot, 0) = g\) on \(\manifold{M}\), \(\Tilde{H} (\cdot, 1) = f_0\) on \(\manifold{M} \setminus B_{\rho} (a)\), for every \(t \in [0, 1]\), \(\Tilde{H} (\cdot, t)\) is radial on \(B_\rho (a) \setminus B_{\rho/2} (a)\) and \(\Tilde{H} (\cdot, t)= g\) on \(B_{\rho/2} (a)\). 
We set \(h \defeq \Tilde{H} (\cdot, 1)\) and we conclude by the observation that \(h \vert_{B_\rho (a)}\) has a free homotopy decomposition into \(f_1, \dotsc, f_k\) under the identification \(\Sset^m \simeq B_\rho (a)/\partial B_\rho (a)\) and that by transitivity of homotopies \(h\) is homotopic to \(f\).
\end{proof}

As a consequence we of \cref{proposition_free_decomposition_manifold_sphere}, we prove a counterpart of \cref{proposition_finite_homotopy_classes} for free homotopy decompositions upon a mapping asserting the finiteness of homotopy classes sharing a given free homotopy decomposition.

\begin{proposition}
If \(m \ge 2\) and every orbit of the action of \(\pi_1 (\manifold{N})\) on \(\pi_m (\manifold{N})\) is finite, then for every \(f_0 \in \mapsset{C} (\manifold{M}, \manifold{N})\) and \(f_1, \dotsc, f_k \in \mapsset{C} (\Sset^m, \manifold{N})\), there exists a finite set \(\mapsset{G} \subset \mapsset{C} (\Sset^m, \manifold{N})\) such that 
every map \(f \in \mapsset{C} (\Sset^m, \manifold{N})\) that has a free homotopy decomposition into \(f_1, \dotsc, f_k\) upon \(f_0\) is homotopic to some \(g \in \mapsset{G}\).   
\end{proposition}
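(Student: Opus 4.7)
The plan is to localize the decomposition inside a ball and reduce to the sphere situation already handled in \cref{proposition_finite_homotopy_classes}. First, by \cref{proposition_free_decomposition_manifold_sphere}, after a preliminary homotopy making $f_0$ constant equal to $c \defeq f_0(a)$ on $\Bar{B}_\rho(a)$, any $f \in \mapsset{C}(\manifold{M}, \manifold{N})$ having a free homotopy decomposition into $f_1, \dotsc, f_k$ upon $f_0$ is homotopic to some $h \in \mapsset{C}(\manifold{M}, \manifold{N})$ such that $h = f_0$ on $\manifold{M} \setminus B_\rho(a)$ and the restriction $h\vert_{\Bar{B}_\rho(a)}$ has a free homotopy decomposition into $f_1, \dotsc, f_k$ under the identification $\Sset^m \simeq \Bar{B}_\rho(a)/\partial B_\rho(a)$. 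Since $h$ takes the constant value $c$ on $\partial B_\rho(a)$, its restriction $h\vert_{\Bar{B}_\rho(a)}$ defines a specific element $\gamma_h \in \pi_m(\manifold{N}, c)$.

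Next, fix paths identifying each $f_i$ with an element $\gamma_i \in \pi_m(\manifold{N}, c)$. By \cref{free_decomposition_pi_m} applied to $h\vert_{\Bar{B}_\rho(a)}$, one has $\gamma_h = \beta_1 \cdot \gamma_1 + \dotsb + \beta_k \cdot \gamma_k$ for some $\beta_1, \dotsc, \beta_k \in \pi_1(\manifold{N}, c)$. Since by hypothesis every orbit of the action of $\pi_1(\manifold{N})$ on $\pi_m(\manifold{N})$ is finite, the set
\[
    \Gamma
  \defeq
    \{
      \beta_1 \cdot \gamma_1 + \dotsb + \beta_k \cdot \gamma_k
    \st
      \beta_1, \dotsc, \beta_k \in \pi_1(\manifold{N}, c)
    \}
  \subset
    \pi_m(\manifold{N}, c)
\]
is finite. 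For each $\gamma \in \Gamma$, pick a representative $g_\gamma \in \mapsset{C}(\Sset^m, \manifold{N})$ sending the basepoint to $c$, and by post-composing with a map provided by \cref{lemma_constant_disk} applied at $c$, arrange that $g_\gamma$ is constant equal to $c$ in a neighbourhood of the basepoint. Define $\Tilde{g}_\gamma \in \mapsset{C}(\manifold{M}, \manifold{N})$ by gluing $g_\gamma$ on $\Bar{B}_\rho(a)$ (under the quotient identification) with $f_0$ on $\manifold{M} \setminus B_\rho(a)$; the two pieces match on $\partial B_\rho(a)$ since both take the value $c$. The candidate finite set is then $\mapsset{G} \defeq \{\Tilde{g}_\gamma \st \gamma \in \Gamma\}$.

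To conclude, I would show that $f$ is homotopic to $\Tilde{g}_{\gamma_h}$ on $\manifold{M}$. By construction, both $h\vert_{\Bar{B}_\rho(a)}$ and $g_{\gamma_h}$ represent the same based class $\gamma_h \in \pi_m(\manifold{N}, c)$, so they are homotopic through maps $\Bar{B}_\rho(a) \to \manifold{N}$ taking the constant value $c$ on $\partial B_\rho(a)$. Extending this homotopy by the constant homotopy of $f_0$ on $\manifold{M} \setminus B_\rho(a)$ yields a homotopy from $h$ to $\Tilde{g}_{\gamma_h}$, and hence a homotopy from $f$ to $\Tilde{g}_{\gamma_h} \in \mapsset{G}$.

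The main subtle point I anticipate is the distinction between free and based sphere homotopy classes: \cref{proposition_finite_homotopy_classes} yields a finite set of free sphere classes, but to glue consistently with a fixed $f_0$ one must work at the level of based classes in $\pi_m(\manifold{N}, c)$, since a free sphere homotopy generally traces out a loop in $\manifold{N}$ at the basepoint that need not be realisable as the track of a deformation of $f_0$ on $\manifold{M} \setminus B_\rho(a)$. The finite-orbit hypothesis on the $\pi_1$-action is precisely what upgrades finiteness at the free level to finiteness at the based level, ensuring that the set $\Gamma$ above is finite.
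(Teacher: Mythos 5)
Your proposal is correct and follows essentially the same route as the paper's proof: reduce via \cref{proposition_free_decomposition_manifold_sphere} to a map that agrees with \(f_0\) outside a trivial ball, identify the restriction to that ball with a based class in \(\pi_m(\manifold{N})\), invoke \cref{free_decomposition_pi_m} to place that class in the set \(\Gamma\) of sums \(\beta_1\cdot\gamma_1+\dotsb+\beta_k\cdot\gamma_k\), and use the finite-orbit hypothesis to conclude that \(\Gamma\) is finite and realisable by a finite set of glued maps. Your extra care about the distinction between free and based classes, and the explicit final rel-boundary homotopy, only make explicit what the paper leaves implicit.
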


\begin{proof}
We assume up to a homotopy and without loss of generality that \(f_0 = b \in \manifold{N}\) on a trivial ball \(B_\rho (a) \subset \manifold{M}\).
We consider \(\gamma_1, \dotsc, \gamma_k \in \pi_m (\manifold{N}, b)\) respectively homotopic to \(f_1, \dotsc, f_k\) and we set 
\[
    \Gamma 
  \defeq 
    \bigl\{
          \beta_1 \cdot \gamma_1 
        + 
          \dotsb 
        + 
          \beta_k \cdot \gamma_k 
      \st 
          \beta_1, \dotsc, \beta_k 
        \in 
          \pi_1 (\manifold{N}, b)
    \bigr\}
 \eofs.
\]
Since by assumption for every \(i \in \{1, \dotsc, k\}\) the set \(\{\beta_i \cdot \gamma_i \st \beta_i \in \pi_1 (\manifold{N}, b)\}\) is finite, the set \(\Gamma\) is finite and we can construct \(\Tilde{\mapsset{G}} \subset \mapsset{C} (B_\rho (a), \manifold{N})\) as a finite set of mappings taking the constant value \(b\) on \(\partial B_\rho (a)\) and such that under the identification \(\Sset^m \simeq  B_\rho (a)/\partial B_\rho (a)\), every element of \(\Gamma\) is homotopic to some map in \(\Tilde{\mapsset{G}}\). We define now 
\[
  \mapsset{G} 
 \defeq
  \bigl\{ 
    g \in \mapsset{C} (\manifold{M}, \manifold{N})
  \st
    g = f_0 \text{ in \(\manifold{M} \setminus B_\rho (a)\) and }
    g \vert_{B_{\rho} (a)} \in \Tilde{\mapsset{G}}
    \,
    \bigr\}.
\]
By  \cref{proposition_free_decomposition_manifold_sphere}, and  \cref{free_decomposition_pi_m} any map \(f \in \mapsset{C} (\manifold{M}, \manifold{N})\) that has a free homotopy decomposition into \(f_1, \dotsc, f_k\) upon \(f_0\) is homotopic to some \(g \in \mapsset{G}\).   
\end{proof}

\subsection{Estimates of free homotopy decompositions}
The counterpart of \cref{theorem_bounded_finite_bubbles} when the domain is a general compact Riemannian manifold manifold \(\manifold{M}\) is the following

\begin{theorem}
[Free decomposition estimate by a gap potential]
\label{theorem_bounded_finite_bubbles_manifold}
Let \(\manifold{M}\) and \(\manifold{N}\) be connected compact Riemannian manifolds.
If \(\varepsilon > 0\) is small enough, then there is a constant \(C > 0\) such that for every \(\lambda > 0\), there exist finite sets \(\mapsset{F}^\lambda \subset \mapsset{C} (\Sset^m, \manifold{N})\) and \(\mathcal{F}_0^\lambda \subset \mapsset{C} (\manifold{M}, \manifold{N})\) such that any \(f \in \mapsset{C} (\manifold{M}, \manifold{N})\) satisfying
\begin{equation*}
          \iint
            \limits_{
              \substack{
                (x, y) \in \manifold{M} \times \manifold{M}\\                 
                d_{\manifold{N}} (f (y), f (x)) > \varepsilon}} 
              \frac{1}{d_{\manifold{M}} (y, x)^{2 m}} 
            \diff x 
            \diff y 
        \le 
          \lambda
          \eofs,
\end{equation*}
has a free homotopy decomposition into \(f_1, \dotsc, f_k \in \mapsset{F}^\lambda\) upon \(f_0 \in\mapsset{F}_0^\lambda\), with \(k \le C \lambda\).
\end{theorem}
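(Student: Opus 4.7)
The proof parallels that of \cref{theorem_bounded_finite_bubbles_linear}. Embedding \(\manifold{N}\) isometrically into \(\Rset^\nu\) with a smooth retraction \(\Pi : \manifold{N}_{\delta_*} \to \manifold{N}\) on a tubular neighborhood, one constructs an extension of \(f\) to a domain playing the role of \(\Hset^{m+1}\). A natural choice is the cylinder \(\manifold{M} \times (0, t_0]\) equipped with a metric of the form \((g_{\manifold{M}} + \diff t^2)/t^2\), on which one defines \(F (x, t) \defeq \fint_{B^{\manifold{M}}_t (x)} f \diff \mu_{\manifold{M}}\) (equivalently, one may use the heat semigroup \((e^{t \Delta_{\manifold{M}}} f)_{t > 0}\)). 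The intrinsic analogue of \cref{lemmaExtensionHyperbolic} is that \(F\) extends continuously to the boundary \(\manifold{M} \times \{0\}\) with trace \(f\), admits a uniform Lipschitz bound \(\abs{DF}_{\mathrm{cyl}} \le C \diam \manifold{N}\), and satisfies the crucial measure estimate
\begin{equation*}
    \mu_{\mathrm{cyl}} \bigl( \{ (x, t) \st \dist (F (x, t), f (\manifold{M})) \ge \delta \} \bigr)
  \le
    \frac{C}{\delta - \varepsilon}
    \iint_{\manifold{M} \times \manifold{M}}
      \frac{(d_{\manifold{N}} (f (y), f (x)) - \varepsilon)_+}{d_{\manifold{M}} (y, x)^{2 m}}
      \diff y \diff x
\eofs,
\end{equation*}
which follows from the same Chebyshev-type argument used in the sphere case, via the pointwise bound \(\dist (F (x, t), f (\manifold{M})) - \varepsilon \le C \fint_{B^{\manifold{M}}_t (x)} \bigl( \abs{f (y) - f (x)} - \varepsilon \bigr)_+ \diff \mu_{\manifold{M}} (y)\) together with the comparability of \(d_{\manifold{M}} (y, x)^{2m}\) and \(t^{2m}\) when \(y \in B^{\manifold{M}}_t (x)\).

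With this extension in hand, one replicates the proof of \cref{theorem_bounded_finite_bubbles_linear}. A maximal \(2 \rho\)-separated packing of the bad set \(A_{\delta_*} \defeq \{ (x, t) \st \dist (F (x, t), \manifold{N}) \ge \delta_* \}\) has cardinality at most \(C \lambda\), since \(\rho\)-balls in the cylinder metric have uniform volume by local homogeneity. \Cref{lemmaDisjointCovering} then merges the corresponding \(2 \rho\)-covering into \(k \le C \lambda\) disjoint balls \(B_1, \dotsc, B_k \subset \manifold{M} \times (0, t_0]\), and an analogue of \cref{lemma_HyperbolicSphere} shows that each \(\partial B_i\) is isometric to a Euclidean \(m\)-sphere of controlled radius. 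On the complement of \(\bigcup_i B_i\) the composition \(\Pi \compose F\) takes values in \(\manifold{N}\), and its restrictions to the \(\partial B_i\) are Lipschitz with constants bounded by \(C \sinh (C' \lambda)\), hence by Ascoli homotopic to maps in a finite set \(\mapsset{F}^\lambda\), producing the bubbles \(f_1, \dotsc, f_k\). By a preliminary homotopy exploiting connectedness of \(\manifold{M}\) one may assume that all the \(B_i\) are contained in a single set of the form \(B_{\rho_0} (a_0) \times (0, t_0]\) with \(B_{\rho_0} (a_0) \subset \manifold{M}\) topologically trivial. Taking the restriction of \(\Pi \compose F\) to \((\manifold{M} \setminus B_{\rho_0} (a_0)) \times \{0\}\), which coincides (up to homotopy) with \(f\) there, and extending by the constant value attained on \(\partial B_{\rho_0} (a_0)\), yields a base map \(f_0 \in \mapsset{C} (\manifold{M}, \manifold{N})\) that is constant on \(\Bar{B}_{\rho_0} (a_0)\); its Lipschitz constant is again bounded by \(C \sinh (C' \lambda)\), so by Ascoli \(f_0\) lies in a finite set \(\mapsset{F}_0^\lambda\). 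This is precisely a free homotopy decomposition in the sense of \cref{homotopy_sum_manifold}.

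The main obstacle is the first step, namely the construction of \(F\) and the proof of its Lipschitz and measure estimates. The sphere case relied crucially on conformal invariance of the hyperharmonic extension under the Möbius group, which reduced all pointwise estimates to a single computation at the origin. For a general compact Riemannian manifold no such symmetry is available, and one must argue by direct computation, using local comparisons with Euclidean balls together with the compactness of \(\manifold{M}\) to absorb geometric distortions into universal constants; alternatively the heat-semigroup version provides analogous estimates via classical gradient bounds on \(e^{t \Delta_{\manifold{M}}}\). A secondary technical point is the concentration of the bubbles into a single topologically trivial ball and the simultaneous extension of \(f_0\) as a constant there, which requires a homotopy extension argument that preserves the Lipschitz bound used for the final Ascoli compactness step.
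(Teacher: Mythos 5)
Your overall architecture matches the paper's: the paper also works on \(\manifold{M}^\star = \manifold{M} \times (0, +\infty)\) with the metric \((g + \diff t^2)/t^2\), extends \(f\) by averaging against a kernel supported on balls of radius \(\sim t\), proves the same Lipschitz and Chebyshev-type measure estimates, and then packs, merges and applies Ascoli. However, your treatment of the base map \(f_0\) and of the top of the cylinder contains a genuine gap. You define \(f_0\) from the restriction of \(\Pi \compose F\) to \(\manifold{M} \times \{0\}\); but there \(F = f\) exactly, so this map is merely continuous, has no Lipschitz bound whatsoever, and cannot be fed into the Ascoli argument — and "the constant value attained on \(\partial B_{\rho_0}(a_0)\)" is undefined since \(f\) is not constant there. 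The correct construction takes \(f_0 = \Pi \compose F\vert_{\manifold{M} \times \{T'\}}\) for a suitable \emph{positive} height \(T'\): there the gradient bound on \(F\) gives a Lipschitz constant controlled by \(e^{C\lambda}\) (the slice metric is \(\manifold{M}\) rescaled by \(1/T'\), with \(\ln(1/T')\lesssim\lambda\)), and the slab \(\manifold{M}\times[0,T']\) minus the merged balls provides the homotopy between \(f\) and the decomposition upon \(f_0\). This is also why your appeal to plain \cref{lemmaDisjointCovering} on a truncated cylinder does not suffice: the bad set \(A_{\delta_*}\) can extend to arbitrarily large \(t\) (the average of \(f\) over all of \(\manifold{M}\) need not be near \(\manifold{N}\)), the volume lower bound for balls in \(\manifold{M}^\star\) degenerates as \(t\to\infty\) so the packing count only controls the portion with \(t\) bounded, and the merging process can push balls upward past any fixed slice. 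The paper resolves this by a merging lemma that incorporates a \emph{horoball} \(\manifold{M}\times(T',+\infty)\) alongside the ordinary balls and outputs the height \(T'\) together with the disjoint balls lying strictly below it; without some such device your slice has no reason to avoid the bad set. Your proposed "preliminary homotopy" concentrating all bubbles over a single trivial ball of \(\manifold{M}\) is neither justified nor needed once the slice construction is in place.

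A smaller inaccuracy: the spheres \(\partial B_{\rho_i}(a_i)\subset\manifold{M}^\star\) are \emph{not} isometric to Euclidean spheres (the analogue of \cref{lemma_HyperbolicSphere} fails since \(\manifold{M}^\star\) is not a space form); one must instead control the injectivity radius and the \(\mapsset{C}^1\) bounds on the exponential map of \(\manifold{M}^\star\) at small heights, which yields uniformly controlled (bi-Lipschitz) geometry of these spheres — enough for the Ascoli step, but it is a statement that has to be proved, not an isometry one can invoke.
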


In view of \cref{remark_manifold_sphere}, \cref{theorem_bounded_finite_bubbles} corresponds to the particular case \(\manifold{M} = \Sset^m\) in \cref{theorem_bounded_finite_bubbles_manifold}.

In order to follow in the proof of \cref{theorem_bounded_finite_bubbles_manifold} the same strategy as in the proof of \cref{theorem_bounded_finite_bubbles}, we construct a Riemmanian manifold that is the counterpart of the hyperbolic space \(\Hset^{m + 1}\) for \(\Sset^{m}\). 
We define the manifold \(\manifold{M}^\star \defeq \manifold{M} \times (0, + \infty) 
\) and we endow it with a metric \(g^{\manifold{M}^\star}\) defined as a quadratic form for each point \((x, t) \in \manifold{M}^\star\) and each tangent vector \((v, w) \in T_{(x, t)} \manifold{M}^\star \simeq T_{x} \manifold{M} \times \Rset\) by 
\begin{equation}
\label{def_g_star}
    g^{\manifold{M}^\star}_{(x, t)} (v, w)
  \defeq
    \frac{g_x (v) + w^2}{t^2}, 
\end{equation}
where \(g\) is the metric of the original manifold \(\manifold{M}\).
When \(\manifold{M} = \Rset^m\), the manifold \(\manifold{M}^\star\) is the \emph{Poincaré half-space model} of the hyperbolic space. 
The formula \eqref{def_g_star} shows that the manifold \(\manifold{M}^\star\) is conformally equivalent to the Riemannian cartesian product \(\manifold{M} \times (0, + \infty)\).

\begin{remark}
The manifold \(\manifold{M}^\star\) is in fact a \emph{warped product}: if \(\manifold{M}^{\diamond} \defeq \manifold{M} \times \Rset\) is endowed with the metric 
\(g^{\manifold{M}^\diamond}\) defined as a quadratic form for each \((x, t) \in \manifold{M}^\star\) and \((v, w) \in T_{(x, t)} \manifold{M}^\star \simeq T_{x} \manifold{M} \times \Rset\) by \(g^{\manifold{M}^\diamond}_{(x, t)} (v, w) \defeq e^{-2s} g_x (v) + w^2\), the mapping \((x, s) \in \manifold{M}^\diamond \defeq \manifold{M} \times \Rset \mapsto (x, e^{s}) \in \manifold{M}^\star\) is an isometry. (In fact \(\manifold{M}^\star\) is the conformal representation of the warped product \(\manifold{M}^\diamond\) \cite{Petersen_2016}*{\S 4.3}.)
\end{remark}

\subsubsection{Extension}

The first tool that we need is a controlled extension to \(\manifold{M}\) corresponding to \cref{lemmaExtensionHyperbolic}.

\begin{proposition}
[Extension to \(\manifold{M}^\star\)]
\label{proposition_extension_manifold}
Let \(\manifold{M}\) be compact Riemannian manifold.
There exists a constant \(C > 0\) such that for every \(f \in \mapsset{C} (\Sset^m, \Rset^\nu)\),
there exists a function \(F \in \mapsset{C} (\manifold{M} \times [0, +\infty), \Rset^\nu) \cap \mapsset{C}^\infty (\manifold{M} \times (0, +\infty), \Rset^\nu)\) such that 
\begin{enumerate}[(i)]
 \item 
  \label{itExtensionManifoldHyperbolicTrace} 
  \(F (\cdot, 0) = f\) on \(\manifold{M}\),
 \item \label{itExtensionManifoldHyperbolicLipschitz} for every \(z \in \manifold{M}^\star \simeq \manifold{M} \times (0, + \infty)\), 
  \begin{equation*}
      \abs{D F (z)}_{\manifold{M}^\star} 
    \le 
      C 
      \osc_{\Sset^m} f
  \eofs,
  \end{equation*}
 \item \label{itExtensionManifoldHyperbolicMeasure} if \(\delta > \varepsilon\), then
 \begin{equation*}
   \mu_{\manifold{M}^\star} 
     \bigl(\bigl\{ x \in \manifold{M}^\star
                   \st \dist \bigl(F (x), f (\Sset^m)\bigr) \ge \delta 
     \bigr\}\bigr)
   \le 
    \frac
      {C}
      {\delta - \varepsilon}  
      \iint \limits_{\manifold{M} \times \manifold{M}}
        \frac
          {\bigl(\abs{f (y) - f (x)}-\varepsilon)_+}
          {d_{\manifold{M}} (y, x)^{2 m}} 
        \diff y 
        \diff x
  \eofs .
 \end{equation*}
\end{enumerate}
\end{proposition}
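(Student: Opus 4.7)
The plan is to take $F$ to be the heat-semigroup extension of $f$, that is,
\begin{equation*}
    F (x, t)
  \defeq
    \bigl(e^{t^2 \Delta_\manifold{M}} f\bigr) (x)
  =
    \int_\manifold{M} p_{t^2} (x, y) \, f (y) \diff y \eofs,
\end{equation*}
where $p_s$ denotes the heat kernel of the compact Riemannian manifold $\manifold{M}$ at time $s$. This plays on $\manifold{M}^\star$ the role of the hyperharmonic extension in the proof of \cref{lemmaExtensionHyperbolic}: the map $F$ is smooth on $\manifold{M} \times (0, + \infty)$, continuous on $\manifold{M} \times [0, +\infty)$ with $F (\cdot, 0) = f$, which gives \eqref{itExtensionManifoldHyperbolicTrace}, and it takes values in the convex hull of $f (\manifold{M})$. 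For the Lipschitz bound \eqref{itExtensionManifoldHyperbolicLipschitz}, I would invoke the classical heat-semigroup smoothing estimates $\norm{\nabla e^{s \Delta} h}_{L^\infty (\manifold{M})} \le C s^{-1/2} \norm{h}_{L^\infty (\manifold{M})}$ and $\norm{\Delta e^{s \Delta} h}_{L^\infty (\manifold{M})} \le C s^{-1} \norm{h}_{L^\infty (\manifold{M})}$, valid for every $s > 0$ with a constant $C$ depending only on $\manifold{M}$, apply them with $s = t^2$ to $h \defeq f - f (x_0)$ (for any fixed $x_0 \in \manifold{M}$, exploiting that constants lie in $\ker \Delta$), and observe that $\partial_t F (x, t) = 2 t \, \Delta e^{t^2 \Delta} h (x)$. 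This yields $\abs{\nabla_x F (x, t)}, \abs{\partial_t F (x, t)} \le C t^{-1} \osc_\manifold{M} f$, and by \eqref{def_g_star} this amounts to $\abs{D F (x, t)}_{\manifold{M}^\star} = t \bigl(\abs{\nabla_x F}^2 + \abs{\partial_t F}^2\bigr)^{1/2} \le C \osc_\manifold{M} f$.

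For the measure estimate \eqref{itExtensionManifoldHyperbolicMeasure}, the main input is the Gaussian upper heat-kernel bound on $\manifold{M}$: there exist $t_0 > 0$ and constants $C_1, c > 0$ such that $p_{t^2} (x, y) \le C_1 \, t^{-m} \exp(-c \, d_\manifold{M} (x, y)^2/t^2)$ whenever $0 < t \le t_0$ and $x, y \in \manifold{M}$. Combined with the elementary inequality $e^{- c u^2} \le C_2 u^{-2m}$ for all $u > 0$, this yields $p_{t^2} (x, y) \le C t^m / d_\manifold{M} (x, y)^{2m}$ for $t \in (0, t_0]$. Following the strategy of \cref{lemmaExtensionHyperbolic}, I would then estimate
\begin{equation*}
    \abs{F (x, t) - f (x)}
  \le
    \int_\manifold{M} p_{t^2} (x, y) \abs{f (y) - f (x)} \diff y
  \le
    \varepsilon + C t^m g (x) \eofs,
\end{equation*}
valid for $t \in (0, t_0]$, where $g (x) \defeq \int_\manifold{M} (\abs{f (y) - f (x)} - \varepsilon)_+ / d_\manifold{M} (x, y)^{2m} \diff y$; for $t > t_0$, the crude bound $\norm{p_{t^2}}_{L^\infty} \le C$ combined with $d_\manifold{M} (x, y) \le \diam \manifold{M}$ gives $\abs{F (x, t) - f (x)} \le \varepsilon + C g (x)$. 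Setting $A_\delta \defeq \{ (x, t) \in \manifold{M}^\star \st \dist (F (x, t), f (\manifold{M})) \ge \delta\}$ and using that $f (x) \in f (\manifold{M})$, I deduce that if $(x, t) \in A_\delta$ then either $t \le t_0$ and $t^m \ge (\delta - \varepsilon)/(C g (x))$, or $t > t_0$ and $g (x) \ge (\delta - \varepsilon)/C$. Integrating against the $\manifold{M}^\star$-volume form $\diff x\, \diff t / t^{m+1}$ and applying Chebyshev's inequality to the large-$t$ regime yields
\begin{equation*}
    \mu_{\manifold{M}^\star} (A_\delta)
  \le
    \frac{C}{\delta - \varepsilon}
    \iint\limits_{\manifold{M} \times \manifold{M}}
      \frac{\bigl(\abs{f (y) - f (x)} - \varepsilon\bigr)_+}
            {d_\manifold{M} (x, y)^{2 m}}
    \diff y \diff x \eofs,
\end{equation*}
which is \eqref{itExtensionManifoldHyperbolicMeasure}.

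The main obstacle is the large-$t$ behavior of the extension: unlike the hyperharmonic extension on the Poincaré ball, the heat semigroup on a general compact manifold converges as $t \to + \infty$ to the mean $\fint_\manifold{M} f$, which need not belong to $f (\manifold{M})$, and the Gaussian decay of the heat kernel degenerates past $t_0$, so that the sharp bound $|F - f| \le \varepsilon + C t^m g (x)$ is only available for $t \le t_0$. The saving observation is that the $\manifold{M}^\star$-volume of $\{ (x, t) \st t > t_0\}$ equals $\mu_\manifold{M} (\manifold{M})/(m t_0^m)$ independently of $f$, so the contribution of that region to $\mu_{\manifold{M}^\star} (A_\delta)$ is dominated by $\mu_\manifold{M} \bigl(\bigl\{g \ge (\delta - \varepsilon)/C\bigr\}\bigr)/(m t_0^m)$, and a Chebyshev estimate on the level set of $g$ restores the sharp linear scaling in $(\delta - \varepsilon)^{-1}$.
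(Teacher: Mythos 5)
Your proposal is correct and follows the same overall architecture as the paper's proof: define $F$ by averaging $f$ against a kernel on $\manifold{M}\times(0,+\infty)$, then derive the trace, the $\manifold{M}^\star$-Lipschitz bound, and the measure estimate from three kernel properties (normalization, the pointwise bound $\le C\,t^m/d_{\manifold{M}}(x,y)^{2m}$, and a uniform bound on the hyperbolic-type gradient). The difference is the choice of kernel. The paper builds $\Phi$ by hand in \cref{lemma_manifold_kernel} as a normalized bump $\varphi(d_{\manifold{M}}(x,y)^2/t^2)$ supported in $\{d_{\manifold{M}}(x,y)<t\}$, blended with the constant kernel for large $t$; all three properties are then elementary (the derivative bound comes from a compactness/scaling argument, and the pointwise bound holds for \emph{all} $t>0$ because for large $t$ the right-hand side is bounded below while the kernel is constant). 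You instead take the heat kernel at time $t^2$, which buys you smoothness and positivity for free but forces you to import Gaussian upper bounds and the semigroup smoothing estimates $\norm{\nabla e^{s\Delta}h}_{L^\infty}\le Cs^{-1/2}\norm{h}_{L^\infty}$, $\norm{\Delta e^{s\Delta}h}_{L^\infty}\le Cs^{-1}\norm{h}_{L^\infty}$; for these to hold uniformly for all $s>0$ (not just $s\le 1$) you should note that $\nabla e^{s\Delta}h=\nabla e^{s\Delta}(h-\fint_{\manifold{M}}h)$ and use the spectral gap, which is routine on a compact manifold. Your two-regime treatment of the measure estimate is valid, but it is actually unnecessary: exactly as for the paper's kernel, the bound $p_{t^2}(x,y)\le C\,t^m/d_{\manifold{M}}(x,y)^{2m}$ persists for $t>t_0$ (the right-hand side is bounded below by $t_0^m/(\diam\manifold{M})^{2m}>0$ there while $\sup_{x,y}p_{t^2}(x,y)$ is nonincreasing in $t$), so a single integration over $t\ge\tau_\delta(x)$, as in the paper, already gives the sharp $(\delta-\varepsilon)^{-1}$ scaling.
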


The proof of \cref{lemmaExtensionHyperbolic} relied on the hyperharmonic extension defined in \eqref{eq_def_hyperharmonic}. 
In order to define a similar extension, we introduce a suitable integral kernel.

\begin{lemma}
[Approximation of the identity on \(\manifold{M}\) along \(\manifold{M}^\star\)]
\label{lemma_manifold_kernel}
Let \(\manifold{M}\) be an \(m\)--dimensional compact Riemannian manifold.
There exists a function \(\Phi \in \mapsset{C}^\infty (\manifold{M}^\star \times \manifold{M}, [0, + \infty))\) and a constant \(C > 0\) such that 
\begin{enumerate}[(i)]
\item
\label{it_manifold_kernel_1}
for every \((x, t) \in \manifold{M}^\star = \manifold{M} \times (0, +\infty)\),
 \begin{equation*}
  \int_{\manifold{M}} \Phi (x, t, \cdot) = 1
  \eofs,
 \end{equation*}
\item %
\label{it_manifold_kernel_bound}%
for every \((x, t, y) \in \manifold{M}^\star \times \manifold{M}  = \manifold{M} \times (0, +\infty) \times \manifold{M}\),
\begin{equation*}
    \Phi (x, t, y) 
  \le 
    \frac
      {C \,t^m}
      {d_{\manifold{M}} (y, x)^{2 m}}
  \eofs.
\end{equation*}
\item 
 \label{it_manifold_kernel_derivative} if we set for every \((x, t, y) \in \manifold{M}^\star \times \manifold{M}  = \manifold{M} \times (0, +\infty) \times \manifold{M}\), \(\Phi_y (x, t) = \Phi (x, t, y)\), then for every \((x, t) \in \manifold{M}^\star = \manifold{M} \times (0, +\infty)\),
\begin{equation*}
    \int_{\manifold{M}} 
      \abs{D \Phi_y (x, t)}_{\manifold{M}^\star} 
      \diff y
  \le 
    C
\eofs.
\end{equation*}
\end{enumerate}
\end{lemma}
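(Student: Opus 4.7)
My plan is to construct $\Phi$ explicitly as a normalized Poisson-type kernel built from the ambient Euclidean distance. Fix a smooth embedding $\iota : \manifold{M} \hookrightarrow \Rset^{\nu}$; by compactness of $\manifold{M}$ there are constants $0 < c_1 \le C_1$ such that $c_1 \, d_\manifold{M}(y, x) \le |\iota(y) - \iota(x)| \le C_1 \, d_\manifold{M}(y, x)$ for all $x, y \in \manifold{M}$. Then set
\begin{equation*}
    \Psi(x, t, y) \defeq \frac{t^m}{\bigl(t^2 + |\iota(y) - \iota(x)|^2\bigr)^m},
    \qquad
    Z(x, t) \defeq \int_\manifold{M} \Psi(x, t, y) \diff y,
    \qquad
    \Phi \defeq \Psi / Z.
\end{equation*}
Since the denominator of $\Psi$ is smooth and bounded below by $t^2 > 0$, and $Z$ is smooth and strictly positive by differentiation under the integral, $\Phi$ is smooth and positive on $\manifold{M}^\star \times \manifold{M}$; property (i) is then immediate from the choice of normalization.

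For property (ii) the main step is two-sided control of the normalization $Z$. For small $t$, normal coordinates at $x$ and the scaling $u = (\iota(y) - \iota(x))/t$ give $Z(x, t) \to \int_{\Rset^m} (1 + |u|^2)^{-m} \diff u > 0$ uniformly in $x$; for large $t$ one has $Z(x, t) = \mathrm{vol}(\manifold{M})\, t^{-m}(1 + O(t^{-2}))$ by Taylor expansion. Combining this with the elementary pointwise bounds $\Psi \le t^m/|\iota(y) - \iota(x)|^{2m} \le c_1^{-2m} t^m/d_\manifold{M}(y, x)^{2m}$ and $\Psi \le t^{-m}$, and distinguishing the cases $t \le 1$ and $t \ge 1$ (using $d_\manifold{M}(y, x) \le \diam \manifold{M}$ to absorb the constant part in the latter), yields property (ii) after adjusting the constant.

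For property (iii), the key observation is the conformal scaling of the $\manifold{M}^\star$-metric, which yields
\begin{equation*}
    |D \Phi_y(x, t)|_{\manifold{M}^\star}
  = t\, \bigl(|\nabla_x^\manifold{M} \Phi_y(x, t)|^2 + (\partial_t \Phi_y(x, t))^2\bigr)^{1/2}.
\end{equation*}
A direct computation with $r = |\iota(y) - \iota(x)|$ gives $\partial_t \log \Psi = m(r^2 - t^2)/(t(t^2 + r^2))$ and $|\nabla_x^\manifold{M} \log \Psi| = 2m r\,|\nabla_x^\manifold{M} r|/(t^2 + r^2)$, each bounded by $C/t$ by elementary AM--GM on $r/(t^2 + r^2) \le 1/(2t)$; hence the pointwise estimate $|\partial_t \Psi| + |\nabla_x^\manifold{M} \Psi| \le C\,\Psi/t$. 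Transferring this to $Z$ by differentiation under the integral yields $|D Z(x, t)| \le C\,Z(x, t)/t$. Expanding $D \Phi = D\Psi/Z - \Psi \, DZ/Z^2$ and integrating in $y$ gives $\int_\manifold{M} |D \Phi_y(x, t)|_{\manifold{M} \times \Rset} \diff y \le C/t$; multiplying by the $t$-prefactor from the $\manifold{M}^\star$-metric produces the desired uniform bound.

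The main delicacy is to make the bounds uniform across the whole range $t \in (0, +\infty)$, in particular through the transition between the small-$t$ and large-$t$ regimes. In the large-$t$ regime the crude bound $|D\Psi| \le C\,\Psi/t$ is wasteful but still sufficient once the volume of $\manifold{M}$ is fixed; the real care is needed in verifying the lower bound $Z(x, t) \gtrsim \min\{t^m, 1\}$ uniformly in $x$. This requires fixing a radius $r_0$ below the injectivity radius of $\manifold{M}$ and estimating the dominant contribution to $Z$ in normal coordinates on $B_{r_0}^{\manifold{M}}(x)$, then exploiting compactness to absorb the remainder into a single constant.
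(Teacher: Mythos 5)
Your proposal is correct, and it takes a genuinely different route from the paper. The paper builds the kernel from a compactly supported bump, \(\Tilde{\Phi}(x,t,y) = (1-\eta(t))\,\varphi\bigl(d_{\manifold{M}}(y,x)^2/t^2\bigr) + \eta(t)\), using a cutoff \(\eta\) in \(t\) to switch to the uniform kernel once \(t\) exceeds a fraction of the radius on which \(d_{\manifold{M}}^2\) is smooth; property \eqref{it_manifold_kernel_bound} then follows almost for free from the support condition \(d_{\manifold{M}}(y,x) \le t\) together with the lower bound \(\int_{\manifold{M}}\Tilde{\Phi} \gtrsim t^m\), and property \eqref{it_manifold_kernel_derivative} is obtained by a soft argument (continuity of \((x,t)\mapsto \int_{\manifold{M}}\abs{D\Phi_y(x,t)}_{\manifold{M}^\star}\), vanishing for \(t\ge\delta\), finite limit as \(t\to 0\), then the extreme value theorem). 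Your Poisson-type kernel through the embedding is globally smooth with no cutoff, automatically degenerates to the uniform kernel for large \(t\), and your proof of \eqref{it_manifold_kernel_derivative} via the pointwise logarithmic-derivative bound \(\abs{D\Psi}\le C\Psi/t\) (transferred to \(Z\) and hence to \(\Phi=\Psi/Z\)) is fully quantitative rather than relying on compactness; the price is that all the work concentrates in the two-sided uniform control of the normalization \(Z(x,t)\simeq\min(1,t^{-m})\), which you correctly identify as the delicate point and which does follow from a uniform lower bound on \(\mu_{\manifold{M}}(B_t(x))\) via compactness. Both arguments are sound; yours yields explicit constants, the paper's requires less computation.
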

\begin{proof}%
\resetconstant
We choose a function \(\varphi : \mapsset{C}^\infty (\Rset, [0, + \infty))\) such that \(\varphi (0) > 0\), and \(\varphi  = 0\) on \(\Rset \setminus (-1, 1)\), and 
\begin{equation*}
    \int_{\Rset^m} 
      \varphi (\abs{z}^2) 
      \diff z 
  = 
    1
 \eofs.
\end{equation*}
Since the manifold \(\manifold{M}\) is compact, there exists \(\delta > 0\) such that if \(E \defeq \{(x,y) \in \manifold{M} \times \manifold{M}\st d_{\manifold{M}} (x, y) < \delta\}\) the function \((x, y) \in E \mapsto d_{\manifold{M}} (x, y)^2\) is smooth. We fix a function \(\eta \in \mapsset{C}^\infty ((0,+ \infty), \Rset)\) in such a way that \(\eta (t) = 0\) when \(t \le \delta/3\) and \(\eta (t) = 1\) when \(t \ge 2\delta/3 \).

 We define successively the functions \(\Tilde{\Phi} : \manifold{M}^\star \times \manifold{M} \to \Rset\) and  \(\Phi : \manifold{M}^\star \times \manifold{M} \to \Rset\) for every \((x, t, y) \in \manifold{M}^\star \times \manifold{M}  = \manifold{M} \times (0, +\infty) \times \manifold{M}\) by 
\begin{equation*}
  \Tilde{\Phi} (x, t, y) 
 \defeq
    \bigl(1 - \eta (t)\bigr)
    \, 
    \varphi 
      \biggl(
        \frac{d_{\manifold{M}} (y, x)^2}{t^2} 
      \biggr) 
  + 
    \eta (t),
\end{equation*}
and 
\begin{equation*}
    \Phi (x, t, y) 
  = 
    \frac{\Tilde{\Phi} (x, t, y)}{\int_{\manifold{M}} \Tilde{\Phi} (x, t, y) \diff y}.
\end{equation*}
We verify immediately that \eqref{it_manifold_kernel_1} holds by construction.

The second assertion \eqref{it_manifold_kernel_bound} follows from the observations that the function \(\Tilde{\Phi}\) is bounded, that \(\Tilde{\Phi} (x, t, y) = 0\) if \(d_{\manifold{M}} (x, y) \ge t\) and that for some constant \(\Cl{cst_raeph5ueGh} > 0\), for every \(x, t \in \manifold{M}^\star = \manifold{M} \times (0, +\infty)\),
\[
    \int_{\manifold{M}} 
      \Tilde{\Phi} (x, t, y) 
      \diff y
  \ge
    \Cr{cst_raeph5ueGh}\,
    t^{m}
    \eofs.
\]

For the last assertion \eqref{it_manifold_kernel_derivative} we observe that the map 
\begin{equation*}
 (x, t) \in \manifold{M}^\star \mapsto \int_{\manifold{M}} \abs{D \Phi_y (x, t)}_{\manifold{M}^\star}
\end{equation*}
is continuous, that if \(t \ge \delta\), 
\begin{equation*}
    \int_{\manifold{M}} 
      \abs{D \Phi_y (x, t)}_{\manifold{M}^\star} 
  = 
    0
\end{equation*}
and that if \((x_j)_{j \in \Nset}\) is any sequence in \(\manifold{M}\) and if \((t_j)_{j \Nset}\) is a sequence in \((0, + \infty)\) converging to \(0\), then 
\begin{multline*}
    \lim_{j \to \infty} 
      \int_{\manifold{M}} 
        \abs{D \Phi_y (x_j, t_j)}_{\manifold{M}^\star}\\
  = 
    \int_{\Rset^m} 
      \sqrt{\bigabs{2 \varphi' (\abs{z}^2) z}^2 + \bigabs{2 \varphi' (\abs{z}^2) \abs{z}^2 + m \varphi (\abs{z}^2)}} 
      \diff z < + \infty
      \eofs;
\end{multline*}
hence \eqref{it_manifold_kernel_derivative} follows from the classical extreme value theorem for continuous functions.
\end{proof}

\begin{proof}[Proof of \cref{proposition_extension_manifold}]%
\resetconstant
We define the function \(F : \manifold{M}^\star \to \Rset^\nu\) by setting for every \((x, t) \in \manifold{M}^\star = \manifold{M} \times (0, + \infty)\),
\begin{equation*}
    F (x, t) 
  \defeq 
    \int_{\manifold{M}} 
      \Phi (x, t, y) 
      f (y) 
      \diff y
 \eofs,
\end{equation*}
where the function \(\Phi \in \mapsset{C}^\infty (\manifold{M}^\star \times \manifold{M})\) is given by \cref{lemma_manifold_kernel}.

We first observe that for every \(x \in \manifold{M}\), \(\lim_{(y, t) \to (0, 0)} F (y, t) = f (x)\) and thus assertion \eqref{itExtensionManifoldHyperbolicTrace} holds.

For \eqref{itExtensionManifoldHyperbolicLipschitz}, we note that by \cref{lemma_manifold_kernel} \eqref{it_manifold_kernel_1} we have for every \((z, s) \in \manifold{M}^\star = \manifold{M} \times (0, + \infty)\) and  every \(x \in \manifold{M}\), 
\begin{equation*}
 F (z, s) - f (x)
  = 
    \int_{\manifold{M}} 
      \Phi (z, s, y)
      \,
      \bigl(f (y) - f (x)\bigr) 
      \diff y
      \eofs, 
\end{equation*}
and thus by differentiating with respect to \((z, s)\) at the point \((x, t)\), we obtain
\begin{equation*}
    D F (x, t) 
  = 
    \int_{\manifold{M}} 
      D \Phi_y (x, t)
      \,
      \bigl(f (y) - f (x)\bigr) 
      \diff y
\eofs,
\end{equation*}
and thus we deduce from \cref{lemma_manifold_kernel} \eqref{it_manifold_kernel_derivative} that 
\begin{equation*}
    \abs{D F (x, t)}_{\manifold{M}^\star}
  \le 
    \osc_{\manifold{M}} f 
    \int_{\manifold{M}}
      \abs{D \Phi_y (x, t)}_{\manifold{M}^\star}
      \diff y 
  \le 
    \C 
    \osc_{\manifold{M}} f
  \eofs.
\end{equation*}

For the last part \eqref{itExtensionManifoldHyperbolicMeasure}, we first observe that for each \((x, t) \in \manifold{M}^\star \simeq \manifold{M} \times (0, +\infty)\), we have 
\begin{equation*}
    \dist \bigl(F (x, t), f (\manifold{M})\bigr)
  \le 
    \abs{F (x, t) - f (x)}
  \le 
    \int_{\manifold{M}} 
      {\Phi (x, t, y)} 
      \,
      \abs{f (y) - f (x)} 
    \diff y \eofs .
\end{equation*}
Hence we infer from \cref{lemma_manifold_kernel} \eqref{it_manifold_kernel_bound},
\begin{equation}
\label{ineq_distance_manifold_F_f}
\begin{split}
  \dist \bigl(F (x, t), f (\manifold{M})\bigr)
 &\le 
      \varepsilon 
      \int_{\manifold{M}} 
        {\Phi (x, t, y)} 
        \diff y
    + 
      \int_{\manifold{M}} 
        {\Phi (x, t, y)}
        \,
        \bigl(\abs{f (y) - f (x)} - \varepsilon\bigr)_+
        \diff y\\
 &\le 
      \varepsilon 
    + 
      \Cl{cst_ineq_distance_manifold_F_f}
      \,
      t^m 
      \int_{\manifold{M}}  
        \frac
          {\bigl(\abs{f (y) - f (x)} - \varepsilon\bigr)_+}
          {d_{\manifold{M}} (y, x)^{2 m} } 
        \diff y \eofs .
\end{split}
\end{equation}
We define now the set 
\begin{equation*}
    A_\delta 
  \defeq
    \bigl\{
        (x, t) \in \manifold{M}^\star
      \st 
        \dist \bigl(F (x, t), f (\manifold{M})\bigr) \ge \delta
    \bigr\}
\end{equation*}
and, for each \(x \in \manifold{M}\), the quantity 
\begin{equation*}
    \tau_\delta (x) 
  \defeq
    \inf\,
      \bigl\{
          t \in (0, +\infty) 
        \st 
          (x, t) \in A_\delta
      \bigr\}
\eofs,
\end{equation*}
and we compute 
\begin{equation}
\label{ineq_manifold_mu_A_delta}
    \mu_{\manifold{M}^\star} (A_\delta)
  =
    \iint_{(x, t) \in A_\delta} 
      \frac{1}{t^{m + 1}} 
      \diff x 
      \diff t
  \le 
    \int_{\manifold{M}} 
      \int_{\tau_\delta (x)}^{+\infty}\;
        \frac{1}{t^{m + 1}} 
      \diff t
    \diff x\\
  \le
    \int_{\manifold{M}} \,
      \frac{1}{m \, \tau_\delta (x)^{m}} 
    \diff x\eofs.
\end{equation}
By \eqref{ineq_distance_manifold_F_f}, we have 
\begin{equation*}
      \delta 
    - 
      \varepsilon
  = 
      \dist \bigl(F (x, \tau_\delta (x)), f (\manifold{M})\bigr)
    -
      \varepsilon
  \le 
    \Cr{cst_ineq_distance_manifold_F_f}
    \,
    \tau_\delta (x)^m
    \int_{\manifold{M}}
      \frac
        {\bigl(d_{\manifold{N}} (f (y), f (x)) - \varepsilon\bigr)_+}
        {d_{\manifold{M}} (y, x)^{2 m} } 
      \diff y
\eofs,
\end{equation*}
and thus by \eqref{ineq_manifold_mu_A_delta}, we conclude that 
\begin{equation*}
  \mu_{\manifold{M}} (A_\delta)
 \le
  \frac
    {\Cr{cst_ineq_distance_manifold_F_f}}
    {m\, (\delta - \varepsilon)} 
  \iint
    \limits_{\manifold{M} \times \manifold{M}}
    \frac{\bigl(d_{\manifold{N}} (f (y), f (x)) - \varepsilon\bigr)_+}{d_{\manifold{M}} (y, x)^{2 m} } 
  \diff y \diff x
\eofs .
\qedhere
\end{equation*}
\end{proof}

\subsubsection{Merging balls}
We will use a generalization of \cref{lemmaDisjointCovering}, that incorporates also a \emph{horoball}, that is, a ball centered at the point at infinity of \(\manifold{M}^\star\). 

\begin{lemma}%
[Merging balls and a horoball on \(\manifold{M}^\star\)]%
\label{lemma_merging_Mstar}
Given an nonnegative integer \(\ell \in \Nset_*\), some 
points \(a_1, a_2, \dotsc, a_\ell \in \manifold{M}^\star\), radii \(r_1, r_2, \dotsc, r_\ell \in (0, + \infty)\) and \(T \in (0, +\infty)\), 
there exists
\(\ell' \in \{0, 1, \dotsc, \ell\}\), points \(a_1', \dotsc, a_\ell' \in \manifold{M}^\star\), radii \(r_1', \dotsc, r_\ell'\) and \(T' \in (0, T)\) such that 
\begin{enumerate}[(i)]
\item
  \(
    \displaystyle
        \bigcup_{i = 1}^\ell 
        B^{\manifold{M}^\star}_{r_i} (a_i) 
      \cup \manifold{M} \times (T, + \infty)
    \subseteq 
      \bigcup_{i = 1}^{\ell'} 
        B^{\manifold{M}^\star}_{r_i'} (a_i') 
      \cup 
        \manifold{M} \times (T', + \infty)
  \),
\item
  \(
    \displaystyle
        \frac{1}{2}
        \ln \frac{1}{T'} 
      + 
        \sum_{i = 1}^{\ell'} r_i' 
    \le 
        \frac{1}{2}
        \ln \frac{1}{T} 
      + 
        \sum_{i = 1}^\ell r_i
  \),
\item 
for every \(i, j \in \{1, 2, \dotsc, \ell'\}\) such that \(i \ne j\), one has 
\(
      \Bar{B}^{\manifold{M}^\star}_{r_i'} (a_i') 
    \cap 
      \Bar{B}^{\manifold{M}^\star}_{r_j'} (a_j') 
  = 
    \emptyset
\),
\item 
for every \(i \in \{1, 2, \dotsc, \ell'\}\), 
one has 
  \(
      \Bar{B}^{\manifold{M}^\star}_{r_i} (a_i') 
    \subseteq 
      \manifold{M} \times (0, T')
  \).
\end{enumerate}
\end{lemma}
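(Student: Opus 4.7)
The plan is to prove the lemma by induction on $\ell$, in direct analogy with \cref{lemmaDisjointCovering}, the only new ingredient being a mechanism to merge a hyperbolic ball into the horoball $\manifold{M} \times (T, +\infty)$. The base case $\ell = 0$ is immediate: take $\ell' = 0$ and $T' < T$ arbitrary. For the inductive step at size $\ell \ge 1$, if the closed balls $\Bar{B}^{\manifold{M}^\star}_{r_i}(a_i)$ are pairwise disjoint and each contained in $\manifold{M} \times (0, T)$, one keeps them unchanged; otherwise either two balls meet or one ball meets the closed horoball $\manifold{M} \times [T, +\infty)$.

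If two balls meet, I would repeat verbatim the geodesic-midpoint argument of \cref{lemmaDisjointCovering} to produce a single ball whose radius is at most the sum of the original two radii. This requires $\manifold{M}^\star$ to be a geodesic metric space, which follows from Hopf--Rinow applied to $\manifold{M}^\star$ as a complete Riemannian manifold: completeness itself holds because vertical rays $t \to 0$ and $t \to +\infty$ have infinite hyperbolic length, so any Cauchy sequence is trapped in some compact slab $\manifold{M} \times [\varepsilon, T_0]$ where the hyperbolic metric is equivalent to the Euclidean product. Applying the induction hypothesis then closes this case.

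The genuinely new case is when a ball, say $\Bar{B}^{\manifold{M}^\star}_{r_\ell}(a_\ell)$ with $a_\ell = (x_\ell, t_\ell)$, meets the closed horoball. The key geometric observation is that along any Lipschitz curve $\gamma(s) = (x(s), t(s))$ in $\manifold{M}^\star$ one has $\abs{(\ln t(s))'} \le \abs{\gamma'(s)}_{\manifold{M}^\star}$, so any curve of hyperbolic length at most $r$ starting at $(x_0, t_0)$ stays in $\manifold{M} \times [t_0 e^{-r}, +\infty)$. Applying this first to a minimising geodesic from $a_\ell$ into the horoball yields $t_\ell \ge T e^{-r_\ell}$, and then to arbitrary geodesics leaving $a_\ell$ gives
\[
    \Bar{B}^{\manifold{M}^\star}_{r_\ell}(a_\ell)
  \subseteq
    \manifold{M} \times [t_\ell e^{-r_\ell}, +\infty)
  \subseteq
    \manifold{M} \times [T e^{-2 r_\ell}, +\infty).
\]
Setting $\tilde{T} \defeq T e^{-2 r_\ell}$ therefore produces a single horoball absorbing both the ball and the old horoball, and the identity $\tfrac{1}{2} \ln(1/\tilde{T}) = \tfrac{1}{2}\ln(1/T) + r_\ell$ shows that the cost $\tfrac{1}{2}\ln(1/T) + \sum r_i$ is preserved exactly. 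Applying the induction hypothesis to the $\ell-1$ remaining balls with horoball $\manifold{M} \times (\tilde{T}, +\infty)$ completes the argument.

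The main obstacle to anticipate is recognising why the statement carries the half-coefficient in $\tfrac{1}{2}\ln(1/T)$: it is precisely calibrated to the vertical-geodesic estimate above, since the merged ball may have centre at height as low as $T e^{-r_\ell}$ and can still extend downward by another factor $e^{-r_\ell}$, so the horoball must drop by $e^{-2 r_\ell}$. Any larger coefficient would leave slack, any smaller one would make the merge unaffordable.
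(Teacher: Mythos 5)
Your proof is correct and follows essentially the same route as the paper's: induction on \(\ell\), the geodesic-midpoint merge of \cref{lemmaDisjointCovering} for intersecting balls (justified via completeness of \(\manifold{M}^\star\) and Hopf--Rinow), and absorption of a ball touching the horoball by lowering \(T\) to \(T e^{-2 r_\ell}\), using the bound \(d_{\manifold{M}^\star}((x,t),(y,s)) \ge \abs{\ln(t/s)}\) — which you derive directly from the metric, exactly as the paper notes can be done in lieu of its explicit distance formula. The exact bookkeeping \(\tfrac{1}{2}\ln(1/\Tilde{T}) = \tfrac{1}{2}\ln(1/T) + r_\ell\) matches the paper's accounting for assertion (ii).
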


In contrast to \cref{lemmaDisjointCovering}, we must allow in the conclusion \(\ell' = 0\) if \(T\) was too small at the beginning.

In order to prove \cref{lemma_merging_Mstar} we need to have good estimates on the distances between points. 
It turns out that this distance can be computed exactly in terms of the distance on \(\manifold{M}\).

\begin{lemma}[Distance formula on \(\manifold{M}^\star\)]
\label{distancesStar}
For every \((x, t), (y, s) \in \manifold{M}^\star = \manifold{M} \times (0, +\infty)\), we have 
\begin{equation*}
\begin{split}
    d_{\manifold{M}^\star} \bigl((x, t), (y, s)\bigr)
  &= 
    2 
    \sinh^{-1} 
      \left(
        \frac
          {\sqrt{\vphantom{\big\vert}d_{\manifold{M}} (x, y)^2 + (t - s)^2\,}}
          {2\, \sqrt{ts\,}}
        \,
      \right)\\
  &=
   \cosh^{-1} 
      \Biggl(
        \frac
          {d_{\manifold{M}} (x, y)^2 +  t^2 + s^2}
          {2\, t s} \Biggr)
   \eofs.
\end{split}
\end{equation*}
\end{lemma}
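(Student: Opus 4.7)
The plan is to reduce the computation to the well-known distance formula in the two-dimensional hyperbolic upper half-plane, exploiting the fact that the metric \(g^{\manifold{M}^\star}\) agrees with a hyperbolic factor along the fibers \(\{x_0\} \times (0, +\infty)\) and with a scaled copy of \(g\) horizontally. Write \(d \defeq d_{\manifold{M}}(x, y)\) and, using compactness of \(\manifold{M}\), fix a minimizing unit-speed geodesic \(\gamma : [0, d] \to \manifold{M}\) with \(\gamma(0) = x\) and \(\gamma(d) = y\).

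For the upper bound, consider the map \(\Psi : [0, d] \times (0, +\infty) \to \manifold{M}^\star\) defined by \(\Psi(u, \tau) \defeq (\gamma(u), \tau)\), where the source carries the upper half-plane metric \((du^2 + d\tau^2)/\tau^2\). Since \(g_{\gamma(u)}(\gamma'(u)) = 1\) and the metric on \(\manifold{M}^\star\) decomposes orthogonally as in \eqref{def_g_star}, the map \(\Psi\) is a Riemannian isometry onto its image. Hence
\begin{equation*}
  d_{\manifold{M}^\star}\bigl((x, t), (y, s)\bigr)
  \le
  d_{\Hset^2}\bigl((0, t), (d, s)\bigr).
\end{equation*}

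For the matching lower bound, let \(\alpha : [0, 1] \to \manifold{M}^\star\), \(\alpha(u) = (\beta(u), \tau(u))\), be any piecewise smooth curve joining \((x, t)\) to \((y, s)\). Set \(\ell(u) \defeq \int_0^u \sqrt{g_{\beta(v)}(\beta'(v))}\, dv\) and \(L \defeq \ell(1)\); note \(L \ge d\). The curve \(\tilde \alpha(u) \defeq (\ell(u), \tau(u))\) in \(\Hset^2\) joins \((0, t)\) to \((L, s)\), and its hyperbolic length equals the \(\manifold{M}^\star\)-length of \(\alpha\) by construction. Therefore
\begin{equation*}
  \operatorname{length}_{\manifold{M}^\star}(\alpha)
  = \operatorname{length}_{\Hset^2}(\tilde \alpha)
  \ge d_{\Hset^2}\bigl((0, t), (L, s)\bigr)
  \ge d_{\Hset^2}\bigl((0, t), (d, s)\bigr),
\end{equation*}
where the last inequality follows from the monotonicity of \(L \mapsto d_{\Hset^2}((0, t), (L, s))\), itself immediate from the explicit formula. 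Taking the infimum over \(\alpha\) gives the reverse inequality.

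It remains to substitute the classical formula for the hyperbolic distance in the upper half-plane,
\begin{equation*}
  d_{\Hset^2}\bigl((0, t), (d, s)\bigr)
  = 2 \sinh^{-1}\!\left(\frac{\sqrt{d^2 + (t - s)^2}}{2 \sqrt{ts}}\right)
  = \cosh^{-1}\!\left(\frac{d^2 + t^2 + s^2}{2 t s}\right),
\end{equation*}
the equivalence of the two expressions being the standard identity \(\cosh(2 \sinh^{-1}\xi) = 1 + 2\xi^2\). The only delicate step is the isometric embedding \(\Psi\), since it relies on the existence of a minimizing geodesic in \(\manifold{M}\); compactness handles this. Everything else is bookkeeping with arc length and a one-variable monotonicity check.
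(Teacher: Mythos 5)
Your proof is correct and follows essentially the same route as the paper's: isometrically immerse the hyperbolic strip over a minimizing geodesic of \(\manifold{M}\) for the upper bound, and project arc length onto \(\Hset^2\) combined with monotonicity of the hyperbolic distance in the horizontal displacement for the lower bound. The only point worth a word in the write-up is that the \(\Hset^2\)-geodesic joining \((0,t)\) to \((d,s)\) does stay in the strip \([0,d]\times(0,+\infty)\) (its horizontal coordinate is monotone along the arc), so it can indeed be pushed forward by \(\Psi\); the paper sidesteps this by defining the immersion over a complete geodesic line \(\gamma:\Rset\to\manifold{M}\).
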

\begin{proof}
We consider a geodesic \(\gamma : \Rset \to \manifold{M}\) such that \(\gamma (0) = x\) and \(\gamma (d_{\manifold{M}} (x, y)) = y\). 
If \(\Hset^2\) denotes the Poincaré half-space model of the hyperbolic plane
and if we define \(\Phi : \Hset^2 \to \manifold{M}^\star\) for \((z, r)\in \Hset^2 \simeq \Rset \times (0, + \infty)\) by \(\Phi (z, r) \defeq (\gamma(z), r)\), we observe that since \(\gamma\) is a geodesic, the map \(\Phi\) is a local isometry and thus is globally nonexpansive from the hyperbolic plane \(\Hset^2\) to \(\manifold{M}^\star\).
Therefore, we have, by a classical computation of the hyperbolic distance \cite{Fenchel_1989}*{III.4 (1) and (2)}, for every \(s, t \in (0, + \infty)\),
\begin{equation*}
\begin{split}
  d_{\manifold{M}^\star} \bigl((x, t), (y, s)\bigr) 
  &= d_{\manifold{M}^\star} \bigl(\Phi (0, t), \Phi(d_{\manifold{M}} (x, y), s)\bigr)\\
  &
  \le 
    d^{\Hset^2} \bigl((0, t), (d_{\manifold{M}} (x,y), s)\bigr)\\
  &=    
    2
    \sinh^{-1} 
      \left(
        \frac
          {\sqrt{\vphantom{\big\vert}d_{\manifold{M}} (x, y)^2 + (t - s)^2\,}}
          {2 \sqrt{ts\,}}
        \,
      \right)\\
  &
  =
    \cosh^{-1} 
      \Biggl(
        \frac
          {d_{\manifold{M}} (x, y)^2 +  t^2 + s^2}
          {2\, t s} \Biggr)
      \eofs.
\end{split}
\end{equation*}

For the converse, given \((x, t), (y, s) \in \manifold{M}^\star = \manifold{M} \times (0, +\infty)\), we consider a path \(\gamma^{\star} = (\gamma, \tau) \in \mapsset{C}^1 ([0, 1], \manifold{M}^\star)= \mapsset{C}^1 ([0, 1], \manifold{M} \times (0, + \infty))\) such that \(\gamma^{\star} (0) = (\gamma (0), \tau (0)) = (x, t)\) and  \(\gamma^{\star} (1) = (\gamma (1), \tau (1)) = (y, s)\).
We define the function \(\xi : [0, 1] \to \Rset\) for each \(r \in [0, 1]\) by 
\begin{equation*}
  \xi (r)
  \defeq 
    \int_0^r 
      \sqrt{g (\gamma' (\rho))\,} 
      \diff \rho
  \eofs.
\end{equation*}
We observe than that 
\begin{equation*}
\begin{split}
    \int_0^1 
      \frac{g (\gamma' (r)) + \abs{\tau' (r)}^2}
      {\tau (r)^2} 
      \diff r
  &
  =
    \int_0^1
      \frac{\abs{\xi' (r)}^2 + \abs{\tau' (r)}^2}
      {\tau (r)^2} 
      \diff r\\    
  &
    \ge d^{\Hset^2} \bigl((0, t), (\xi (1), s)\bigr)\\
  &
    =  2 \sinh^{-1} \left(\frac{\sqrt{\vphantom{\big\vert} \xi(1)^2 + (t - s)^2}}{2 \sqrt{ts\,}}\right)
\eofs ,
\end{split}
\end{equation*}
and thus since \(\xi (1) \ge d_{\manifold{M}} (x, y)\) by definition of geodesic distance
\begin{equation*}
    \int_0^1 
      \frac{g (\gamma' (r)) + \abs{\tau' (r)}^2}
      {\tau (r)} 
      \diff r    
  \ge 2 \sinh^{-1} \left(\frac{\sqrt{\vphantom{\big\vert} d_{\manifold{M}} (x, y)^2 + (t - s)^2}}{2 \sqrt{ts\,}}\right)
  \eofs.
  \qedhere
\end{equation*}
\end{proof}

As a corollary of \cref{distancesStar}, we have the estimate.
\begin{equation}
\label{eq_dist_ln_upperbound}
  d_{\manifold{M}^\star} \bigl((x, t), (y, s)\bigr)
 \ge 2 \sinh^{-1} \Biggl(\frac{\abs{ t - s}}{2 \,\sqrt{ts\,}}\Biggr)
 = 2 \ln \frac{\abs{t - s} + \abs{t + s}}{2 \sqrt{ts\,}} 
 = \biggabs{\ln \frac{t}{s}}
 \eofs.
\end{equation}
(This estimate can also be proved directly by a crude lower bound on the metric on \(\manifold{M}^\star\).)

Another consequence of \cref{distancesStar} is the completeness of the manifold \(\manifold{M}^\star\).

\begin{lemma}
\label{lemma_star_complete}
If the manifold \(\manifold{M}\) is complete, then the manifold \(\manifold{M}^\star\) is complete.
\end{lemma}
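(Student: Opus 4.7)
The plan is to take an arbitrary Cauchy sequence $((x_n, t_n))_{n \in \Nset}$ in $\manifold{M}^\star = \manifold{M} \times (0, + \infty)$ and extract convergence of the two coordinates separately, using the two forms of the distance on $\manifold{M}^\star$ supplied by \cref{distancesStar} and its consequence \eqref{eq_dist_ln_upperbound}.

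First I would use the one-sided estimate \eqref{eq_dist_ln_upperbound}, namely
\begin{equation*}
  d_{\manifold{M}^\star}\bigl((x_n, t_n),(x_m, t_m)\bigr)
  \ge
  \bigabs{\ln (t_n/t_m)}
  \eofs,
\end{equation*}
to deduce that $(\ln t_n)_{n \in \Nset}$ is a Cauchy sequence in $\Rset$. By completeness of $\Rset$, this sequence converges to some $L \in \Rset$, and therefore $t_n \to t^\star \defeq e^L \in (0, +\infty)$. In particular the limit $t^\star$ is a genuine interior point and not a boundary value.

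Next I would exploit the exact distance formula from \cref{distancesStar} rewritten as
\begin{equation*}
  d_{\manifold{M}} (x_n, x_m)^2
  = 2\, t_n\, t_m \, \cosh\bigl(d_{\manifold{M}^\star}((x_n, t_n),(x_m, t_m))\bigr) - t_n^2 - t_m^2
  \eofs.
\end{equation*}
Since $d_{\manifold{M}^\star}((x_n, t_n),(x_m, t_m)) \to 0$ as $n, m \to \infty$ and $t_n, t_m \to t^\star > 0$, the right-hand side tends to $2 (t^\star)^2 - 2 (t^\star)^2 = 0$. Hence $(x_n)_{n \in \Nset}$ is a Cauchy sequence in $\manifold{M}$, so by the completeness assumption on $\manifold{M}$ it converges to some $x^\star \in \manifold{M}$.

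It remains to check that $(x_n, t_n) \to (x^\star, t^\star)$ in the topology of $\manifold{M}^\star$. Applying once more the distance formula with $(y, s) = (x^\star, t^\star)$, the numerator $d_{\manifold{M}} (x_n, x^\star)^2 + t_n^2 + (t^\star)^2$ tends to $2 (t^\star)^2$ while the denominator $2 t_n t^\star$ tends to $2 (t^\star)^2$, so the argument of $\cosh^{-1}$ tends to $1$ and therefore $d_{\manifold{M}^\star}((x_n, t_n), (x^\star, t^\star)) \to 0$. There is no real obstacle here: the only thing to keep track of is that the fibre coordinate stays bounded away from $0$ and $+\infty$, which is exactly what the logarithmic bound \eqref{eq_dist_ln_upperbound} guarantees, and once that is secured the distance formula of \cref{distancesStar} converts Cauchyness in $\manifold{M}^\star$ into Cauchyness in $\manifold{M}$ in an essentially algebraic manner.
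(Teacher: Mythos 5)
Your proof is correct and follows essentially the same route as the paper: first use the logarithmic lower bound \eqref{eq_dist_ln_upperbound} to get convergence of the fibre coordinate to a limit in $(0,+\infty)$, then invert the exact distance formula of \cref{distancesStar} to transfer Cauchyness to the base coordinate. Your final paragraph verifying $(x_n,t_n)\to(x^\star,t^\star)$ in $\manifold{M}^\star$ is a small extra step the paper leaves implicit, but the argument is the same.
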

\begin{proof}
Assume that \((x_j, t_j)_{j \in \Nset}\) is a Cauchy sequence in \(\manifold{M}^\star = \manifold{M} \times (0, + \infty)\). By \eqref{eq_dist_ln_upperbound}, the sequence \((\ln t_j)_{j \in \Nset}\) is a Cauchy sequence in \(\Rset\). 
By completeness of \(\Rset\), \((\ln t_j)_{j \in \Nset}\) converges thus to some \(\tau_* \in \Rset\). In particular, the sequence \((t_j)_{j \in \Nset}\) converges to \(t_* = e^{\tau_*}\) and is bounded. It follows then from \cref{distancesStar}, that the sequence \((x_j)_{j \in \Nset}\) is a Cauchy sequence in the manifold \(\manifold{M}\) and thus converges to some \(x_*\) by completeness of \(\manifold{M}\).
\end{proof}

We are now in position to prove our ball-merging lemma.

\begin{proof}[Proof of \cref{lemma_merging_Mstar}]
We proceed by induction on \(\ell \in \Nset\). In the case \(\ell = 0\), the conclusion holds trivially. We assume now that \(\ell \ge 1\) and that the conclusion holds for \(\ell - 1\).
If the balls are all disjoint and contained in \(\manifold{M} \times (0, T)\), then the conclusion holds by setting \(\ell' = \ell\), for each \(i \in \{1, \dotsc, \ell'\}\), \(a_i' = a_i\) and \(r_i' = r_i\), and \(T' = T\).

If the balls are not all disjoint, then up to reordering the balls we can assume that \(\Bar{B}^{\manifold{M}^*}_{r_{\ell - 1}} (a_{\ell - 1}) \cap \Bar{B}^{\manifold{M}^*}_{r_\ell} (a_\ell) \ne \emptyset\). 
By the triangle inequality, this implies that \(d_{\manifold{M}^\star} (a_{\ell - 1}, a_{\ell}) \le r_{\ell - 1} + r_{\ell}\). 
Since by \cref{lemma_star_complete} the manifold \(\manifold{M}^*\) is complete and since \(d_{\manifold{M}^\star}\) is a geodesic distance, there exists a point \(\Tilde{a}_{\ell - 1} \in \manifold{M}\) such that 
\begin{align*}
    d_{\manifold{M}} (\Tilde{a}_{\ell - 1}, a_{\ell - 1}) 
  &= 
    \tfrac{1}{2}\bigl(d_{\manifold{M}^\star}(a_{\ell - 1}, a_\ell) + r_{\ell} - r_{\ell - 1}\bigr)
    \eofs,
    \\
\intertext{and}
  d_{\manifold{M}} (\Tilde{a}_{\ell - 1}, a_{\ell}) 
  &= 
    \tfrac{1}{2}
    \bigl(d_{\manifold{M}^\star}(a_{\ell - 1}, a_\ell) + r_{\ell-1} - r_{\ell}\bigr)
    \eofs.
\end{align*}
If we define \(\Tilde{r}_{\ell - 1} \defeq \frac{1}{2}\bigl(d_{\manifold{M}^\star} (a_{\ell - 1}, a_\ell) + r_{\ell-1} + r_{\ell}\bigr)\) we have 
\begin{align*}
\Tilde{r}_{\ell - 1} &\le r_{\ell - 1} + r_\ell &
&\text{and}&
      B_{r_{\ell - 1}}^{\manifold{M}^\star} (a_{\ell - 1}) 
    \cup 
      B_{r_{\ell}}^{\manifold{M}^\star} (a_{\ell}) & 
  \subseteq 
    B_{\Tilde{r}_{\ell - 1}}^{\manifold{M}^\star} (\Tilde{a}_{\ell - 1})\eofs.
\end{align*}
We set, for \(i \in \{1, 2, \dotsc, \ell - 2\}\), \(\Tilde{a}_i = a_i\) and \(\Tilde{r}_i = r_i\) and \(\Tilde{\ell} = \ell - 1\). We conclude by applying our induction hypothesis to \(\Tilde{\ell}\), \(\Tilde{a}_1, \Tilde{a}_2, \dotsc, \Tilde{a}_{\Tilde{\ell}}\) and  \(\Tilde{r}_1, \Tilde{r}_2, \dotsc, \Tilde{r}_{\Tilde{\ell}}\).

If all the balls are disjoint, it can still happen that they do not all lie in \(\manifold{M} \times (0, T)\).
We can assume, by a permutation of the indices, that \(\Bar{B}_{r_\ell}^{\manifold{M}^\star} (a_\ell) \cap \manifold{M} \times [T, +\infty) \ne \emptyset\). We define now 
\(
    \Tilde{T}
  \defeq 
    T e^{-2 r_\ell}
\).
By definition of the metric on \(\manifold{M}_*\), in view of \cref{distancesStar} and its consequence \eqref{eq_dist_ln_upperbound}, this implies that 
\begin{equation*}
      B^{\manifold{M}^\star}_{r_\ell} (a_\ell)
    \subset
      \manifold{M} \times (\Tilde{T}, +\infty).
\end{equation*}
We conclude by setting for each \(\Tilde{\ell} = \ell - 1\), for every \(i \in \{1, 2, \dotsc, \ell - 1\}\), \(\Tilde{a}_i \defeq a_i\) and \(\Tilde{r}_i \defeq r_i\) and applying our induction hypothesis to \(\Tilde{\ell}\), \(\Tilde{a}_1, \Tilde{a}_2, \dotsc, \Tilde{a}_{\Tilde{\ell}}\), \(\Tilde{r}_1, \Tilde{r}_2, \dotsc, \Tilde{r}_{\Tilde{\ell}}\) and \(\Tilde{T}\).
\end{proof}

\subsubsection{Proof of the theorem}

In order to prove \cref{theorem_bounded_finite_bubbles_manifold} we need a lower bound on the measure of balls, which will be an ingredient in the control of the covering. In the proof of \cref{theorem_bounded_finite_bubbles_linear} this bound followed directly from the transitivity of isometries of the hyperbolic space.

\begin{lemma}[Lower bound on the volume of balls on \(\manifold{M}^\star\)]
\label{lemma_lower_bound_volume}
There exists a constant \(C > 0\) such that for every \((x, t) \in \manifold{M}^* = \manifold{M} \times (0, + \infty)\),
\begin{equation*}
  \mu_{\manifold{M}^\star} 
      \bigl(B^{\manifold{M}^\star}_{\rho}(x, t)\bigr)
  \ge 
    C \frac{(\tanh \rho)^{m + 1}}{1 + (t \sinh \rho)^m}
\end{equation*}
\end{lemma}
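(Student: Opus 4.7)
The plan is to reduce the claim to an elementary one-dimensional integral using the explicit distance formula of \cref{distancesStar}. Completing the square in $\cosh d_{\manifold{M}^\star}((x,t),(y,s)) \le \cosh \rho$ rewrites the ball condition as $d_{\manifold{M}}(x,y)^2 + (s - t\cosh\rho)^2 \le t^2\sinh^2\rho$, so $B^{\manifold{M}^\star}_\rho(x,t)$ is fibred over $s \in [te^{-\rho}, te^\rho]$ by $\manifold{M}$--balls at $x$ of radius $\sqrt{t^2\sinh^2\rho - (s - t\cosh\rho)^2}$. Since the Riemannian volume element reads $\diff \mu_{\manifold{M}^\star}(y,s) = \diff \mu_{\manifold{M}}(y)\,\diff s/s^{m+1}$, Fubini yields
\[
 \mu_{\manifold{M}^\star}\bigl(B^{\manifold{M}^\star}_\rho(x,t)\bigr)
 =
 \int_{te^{-\rho}}^{te^\rho}
   \frac{\mu_{\manifold{M}}\bigl(B_{\sqrt{t^2\sinh^2\rho - (s-t\cosh\rho)^2}}(x)\bigr)}{s^{m+1}}
 \diff s.
\]

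I would next plug in the standard lower bound $\mu_{\manifold{M}}(B_r(x)) \ge c\,\min(r^m,1)$ for some $c > 0$ depending only on $\manifold{M}$ (a consequence of the positivity of the injectivity radius and a two-sided bound on sectional curvatures on the compact manifold $\manifold{M}$), and change variable via $s = t(\cosh\rho + v\sinh\rho)$, $v \in [-1,1]$. This turns the square root into $t\sinh\rho\,\sqrt{1-v^2}$ and produces an extra factor $t\sinh\rho$ from $\diff s$; combined with the crude bound $(\cosh\rho + v\sinh\rho)^{m+1} \le e^{(m+1)\rho} \le (2\cosh\rho)^{m+1}$ and pulling out the prefactor $(\sinh\rho)^{m+1}/\cosh^{m+1}\rho = (\tanh\rho)^{m+1}$, I obtain
\[
  \mu_{\manifold{M}^\star}\bigl(B^{\manifold{M}^\star}_\rho(x,t)\bigr)
  \ge
  \frac{c\,(\tanh \rho)^{m+1}}{2^{m+1}\,(t\sinh \rho)^m}
  \int_{-1}^1 \min\bigl((t\sinh\rho)^m (1-v^2)^{m/2},\,1\bigr) \diff v.
\]

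The only step with any real bite --- and hence the main obstacle of the proof --- is the elementary scalar inequality
\[
  \int_{-1}^1 \min\bigl(a^m(1-v^2)^{m/2},\,1\bigr) \diff v
  \ge
  c'\,\frac{a^m}{1+a^m}
  \qquad \text{for every } a \in (0,+\infty),
\]
for some $c' > 0$. I would establish this by a soft continuity argument: the left-hand side is continuous and positive on $(0,+\infty)$, equivalent to $a^m \int_{-1}^1(1-v^2)^{m/2}\diff v$ as $a \to 0^+$ and tending to $2$ as $a \to +\infty$, so its ratio to $a^m/(1+a^m)$ admits strictly positive limits at both endpoints and is therefore bounded away from zero on $(0,+\infty)$. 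Inserting this estimate with $a = t\sinh\rho$ gives exactly the announced lower bound $C\,(\tanh\rho)^{m+1}/(1 + (t\sinh\rho)^m)$.
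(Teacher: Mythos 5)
Your argument is correct and follows essentially the same route as the paper's proof: both use the exact distance formula of \cref{distancesStar} to locate a product-type region inside $B^{\manifold{M}^\star}_\rho(x,t)$, invoke the volume lower bound $\mu_{\manifold{M}}(B_r(x))\ge c\min(r^m,1)$ available on the compact $\manifold{M}$, and integrate the density $s^{-(m+1)}$ over the slab $s\in[te^{-\rho},te^{\rho}]$. The only (harmless) difference is bookkeeping: the paper simply inscribes the rectangle $B^{\manifold{M}}_{\frac{t}{\sqrt 2}\sinh\rho}(x)\times[t\cosh\rho-\frac{t}{\sqrt2}\sinh\rho,\,t\cosh\rho+\frac{t}{\sqrt2}\sinh\rho]$ in the ball and uses convexity of $s\mapsto s^{-(m+1)}$, whereas you keep the exact fibration and close with the scalar inequality $\int_{-1}^1\min(a^m(1-v^2)^{m/2},1)\diff v\ge c'a^m/(1+a^m)$, whose soft continuity proof is fine.
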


In particular, if \(t \in (0, + \infty)\) and \(\rho \in (0, + \infty)\) remain in a bounded region, we have by \cref{lemma_lower_bound_volume}
\[
   \mu_{\manifold{M}^\star} 
      \bigl(B^{\manifold{M}^\star}_{\rho}(x, t)\bigr)
    \ge c' \rho^{m + 1}.
\]

\begin{proof}%
[Proof of \cref{lemma_lower_bound_volume}]
\resetconstant
We observe that if \(s,t \in (0, + \infty)\), \(\rho \in (0, + \infty)\) and 
\[
    (s - t \cosh \rho)^2 
  \le 
     \frac{(t\sinh \rho)^2}{2}
    \eofs,
\]
then 
\[
    s^2 + t^2
  \le 
      2 \, st \cosh \rho 
    -
      \frac{(t \sinh \rho)^2}{2} 
\eofs .    
\]
If moreover, \(x, y \in \manifold{M}\) satisfy 
\[
 d_{\manifold{M}} (x, y) \le \frac{(t \sinh \rho)^2}{2}
\]
then by the distance formula of \cref{distancesStar},  
\[
    d_{\manifold{M}^\star} \bigl((y, s), (x, t)\bigr)
  =
         \cosh^{-1} 
      \Biggl(
        \frac
          {d_{\manifold{M}} (x, y)^2 +  t^2 + s^2}
          {2\, t s} \Biggr)
  \le 
    \rho\eofs.
\]
We have thus for every \((x, t) \in \manifold{M}^\star = \manifold{M} \times (0, +\infty)\) and \(\rho \in (0, +\infty)\),
\[
      B^\mathcal{M}_{\frac{t}{\sqrt{2}} \sinh \rho} (x) 
    \times 
      \bigl[t \cosh \rho - \tfrac{t}{\sqrt{2}} \sinh \rho, t \cosh \rho + \tfrac{t}{\sqrt{2}} \sinh \rho\bigr]
  \subset
    B^{\manifold{M}^\star}_{\rho} (x, t)
    \eofs.
\]

We infer then from the monotonicity property of measures that
\begin{equation}
\label{eq_woojaco1Mu}
\begin{split}
 \mu_{\manifold{M}^\star} \bigl(B^{\mathcal{M}^\star}_\rho (x, t)\bigr)
 &\ge 
  \int
    _{t \cosh \rho - \tfrac{t}{\sqrt{2}} \sinh \rho}
    ^{t \cosh \rho + \tfrac{t}{\sqrt{2}} \sinh \rho}
    \frac
      {\mu_\manifold{M} \bigl(B_{t/\sqrt{2} \sinh \rho} (x)\bigr)}
      {s^{m + 1}} 
    \diff s\\
 &\ge 
  \frac
    {
      \sqrt{2}
      \, 
      \mu_{\manifold{M}} 
      \bigl(B_{\frac{t}{\sqrt{2}} \sinh \rho} (x)\bigr)
    }
    {
      t^m
      \,
      (\cosh \rho)^{m + 1}
    }
  \sinh \rho
  \eofs,
\end{split}
\end{equation}
by convexity of the function \(s \in (0, + \infty) \mapsto s^{-(m + 1)}\).
Since the manifold \(\manifold{M}\) is compact, there exists a constant \(\Cl{cst_aeChaipiz3}\) such that for every \(x \in \manifold{M}\) and \(\sigma > 0\) we have
\begin{equation}
\label{eq_AhW4uh7Aw7}
    \mu_{\manifold{M}} \bigl(B_{\sigma} (x)\bigr)
  \ge 
    \Cr{cst_aeChaipiz3} \min (\sigma^m, 1)
\eofs.
\end{equation}
It follows then from \eqref{eq_woojaco1Mu} and \eqref{eq_AhW4uh7Aw7} that
\begin{equation*}
    \mu_{\manifold{M}^\star} \bigl(B^{\mathcal{M}^\star}_\rho (x, t)\bigr) 
  \ge 
    \Cr{cst_aeChaipiz3} 
    \min
      \,
      \biggl(
        \frac{(\tanh \rho)^{m + 1}}{2^{m/2}},
        \frac{\tanh \rho}{ (t \cosh \rho)^m}
      \biggr)
\eofs .
\qedhere
\end{equation*}
\end{proof}

The free homotopy decomposition will be made through Lipschitz-continuous maps on spheres in \(\manifold{M}^\star\). The next lemma ensures that the shape of these small spheres is controlled and will serve as a substitute to \cref{lemma_HyperbolicSphere}.

\begin{lemma}%
[Lower bound on the injectivity radius on \(\manifold{M}^\star\)]
\label{lemma_M_star_injectivity_radius}
For every \(\eta > 0\), there exists \(T > 0\) such that if 
\((x, t) \in \manifold{M} \times (0, T]\), then 
\begin{equation*}  
  \operatorname{inj}_{\manifold{M}_*} (x, t)
 =
  \sinh^{-1} \biggl(\frac{\operatorname{inj}_{\manifold{M}} (x)}{t}\biggr)
\end{equation*}
and for every \(\eta \in (0, 1)\), the exponential map, its inverse are controlled uniformly in \(\mapsset{C}^1\) 
on ball of radius \(\eta \operatorname{inj}_{\manifold{M}_*} (x, t)\) around \((x, t) \in \manifold{M}\).
\end{lemma}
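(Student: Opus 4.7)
The plan is to derive an explicit factorization of $\exp^{\manifold{M}^\star}_{(x,t)}$ through the hyperbolic exponential $\exp^{\Hset^{m+1}}_{(0,t)}$ and the Riemannian exponential $\exp^{\manifold{M}}_x$, and then read both assertions off this factorization. First, a direct computation of the Christoffel symbols of $g^{\manifold{M}^\star}=(g+dr^2)/r^2$ via the conformal rescaling formula applied to the product metric $g+dr^2$ yields that, along any $\manifold{M}^\star$--geodesic $\tau\mapsto(x(\tau),r(\tau))$, the horizontal velocity satisfies
\begin{equation*}
    \frac{D\dot x}{d\tau}
  =
    \frac{2\dot r}{r}\,\dot x
    \eofs,
\end{equation*}
so that $\dot x(\tau)$ stays pointwise parallel to itself; equivalently, the horizontal projection $x(\tau)$ traces, with variable speed, a single geodesic of $\manifold{M}$ emanating from $x$. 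This means precisely that for every unit vector $e\in T_x\manifold{M}$, the slice $\Phi_e(\Hset^2)$ introduced in the proof of \cref{distancesStar} via $\Phi_e(s,r)=(\gamma_e(s),r)$ is totally geodesic in $\manifold{M}^\star$; combined with its isometric embedding property (already established there), hyperbolic geodesics are sent to $\manifold{M}^\star$--geodesics. Choosing an orthonormal frame on $T_x\manifold{M}$ to identify $T_{(x,t)}\manifold{M}^\star$ with $T_{(0,t)}\Hset^{m+1}=\Rset^m\oplus\Rset$ and invoking the rotational symmetry of $\Hset^{m+1}$ around the vertical through $(0,t)$, I thus obtain
\begin{equation*}
    \exp^{\manifold{M}^\star}_{(x,t)}(v,w)
  =
    \Bigl(
      \exp^{\manifold{M}}_x\bigl(\mathrm{pr}_1\exp^{\Hset^{m+1}}_{(0,t)}(v,w)\bigr),\,
      \mathrm{pr}_2\exp^{\Hset^{m+1}}_{(0,t)}(v,w)
    \Bigr)
    \eofs,
\end{equation*}
where $\mathrm{pr}_1$, $\mathrm{pr}_2$ denote the projections of $\Rset^m\times(0,+\infty)$ onto its two factors.

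From this factorization the injectivity radius computation becomes immediate. A classical computation in the half-space model shows that $B^{\Hset^{m+1}}_\rho((0,t))$ coincides with the Euclidean ball of centre $(0,t\cosh\rho)$ and radius $t\sinh\rho$, so its horizontal section has radius $t\sinh\rho$ and the hyperbolic distance from $(0,t)$ to the cylinder $\{|y|=R\}$ equals $\sinh^{-1}(R/t)$. Setting $R=\operatorname{inj}_{\manifold{M}}(x)$, the hyperbolic ball of radius $\sinh^{-1}(R/t)$ is therefore contained in the slab on which $\exp^{\manifold{M}}_x$ is a diffeomorphism, so the factorization gives $\operatorname{inj}_{\manifold{M}^\star}(x,t)\ge\sinh^{-1}(R/t)$. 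For the matching upper bound, the obstruction to $\operatorname{inj}_{\manifold{M}}(x)$---either a conjugate point of $\exp^{\manifold{M}}_x$ on the sphere $\{|y|=R\}$ or a pair of distinct length-$R$ geodesics from $x$ meeting at a common endpoint---is transported by the same factorization to a critical point, respectively a non-injectivity, of $\exp^{\manifold{M}^\star}_{(x,t)}$ on the sphere of radius $\sinh^{-1}(R/t)$; compactness of $\manifold{M}$ yields $\operatorname{inj}_{\manifold{M}}\ge i_0>0$, and the role of the smallness of $t$ is only to ensure that $\sinh^{-1}(i_0/t)$ is as large as needed.

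Finally, the uniform $\mapsset{C}^1$ control on the ball of radius $\eta\,\operatorname{inj}_{\manifold{M}^\star}(x,t)=\eta\sinh^{-1}(R/t)$ follows by combining factor-by-factor bounds. The hyperbolic exponential and its inverse admit explicit $\mapsset{C}^1$ estimates on sub-balls of radius $\eta\rho$ that, after the natural rescaling by $\sinh\rho$, depend only on $\eta$, either by the Rauch comparison theorem or directly from the half-space formulas; the horizontal projection lands in $B_{t\sinh(\eta\rho)}(0)\subset T_x\manifold{M}$, which is strictly contained in $B_R(0)$ with margin governed by $\sinh(\eta\rho)/\sinh\rho$, bounded away from $1$ uniformly in $\rho\ge 0$ for $\eta<1$; and on the resulting sub-ball of normal coordinates on $\manifold{M}$, compactness of $\manifold{M}$ produces $\mapsset{C}^1$ estimates on $\exp^{\manifold{M}}_x$ and its inverse that are uniform in $x$. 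The main subtlety I anticipate is the correct formulation of ``uniform $\mapsset{C}^1$'' as $t\to 0$, since the hyperbolic ball then grows without bound: the statement should be understood after pulling everything back by the linear isometry $T_{(x,t)}\manifold{M}^\star\to T_{(0,t)}\Hset^{m+1}$, so that the uniformity is measured against the hyperbolic geometry, and the deviation of $\exp^{\manifold{M}^\star}_{(x,t)}$ from $\exp^{\Hset^{m+1}}_{(0,t)}$ is controlled solely by the $\mapsset{C}^1$--deviation of $\exp^{\manifold{M}}_x$ from a linear isometry on the radius-$(1-\eta')R$ ball in normal coordinates.
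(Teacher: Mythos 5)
Your proof is correct, but it follows a genuinely different route from the paper's. The paper deduces the identity $\operatorname{inj}_{\manifold{M}^\star}(x,t)=\sinh^{-1}(\operatorname{inj}_{\manifold{M}}(x)/t)$ directly from the distance formula of \cref{distancesStar}, via the rearrangement $d_{\manifold{M}}(x,y)^2=t^2\sinh^2 d_{\manifold{M}^\star}((x,t),(y,s))-(s-t\cosh d_{\manifold{M}^\star}((x,t),(y,s)))^2\le t^2\sinh^2 d_{\manifold{M}^\star}((x,t),(y,s))$, and then settles the $\mapsset{C}^1$ bounds with a one-line appeal to compactness and homogeneity. You instead compute the Christoffel symbols of the conformally rescaled product metric, show that the vertical slices over unit-speed geodesics of $\manifold{M}$ are totally geodesic, and obtain the explicit factorization of $\exp^{\manifold{M}^\star}_{(x,t)}$ through $\exp^{\Hset^{m+1}}_{(0,t)}$ and $\exp^{\manifold{M}}_x$. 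This costs more computation up front, but it buys quite a lot: the paper's ``it follows then that'' conceals precisely the geodesic-level information (absence of conjugate points below the threshold, transport of the cut-locus obstruction of $\manifold{M}$ to radius $\sinh^{-1}(R/t)$) that your factorization supplies, since a distance inequality alone does not show that $\exp^{\manifold{M}^\star}_{(x,t)}$ is a diffeomorphism up to that radius. Your version also gives a precise meaning to the ``uniform $\mapsset{C}^1$ control'' clause — measuring the deviation of $\exp^{\manifold{M}^\star}_{(x,t)}$ from $\exp^{\Hset^{m+1}}_{(0,t)}$ through the normal coordinates of $\manifold{M}$ at $x$ — which is a sounder formulation than the paper's literal ``homogeneity of the metric on $\manifold{M}^\star$'' (the vertical dilation $(x,t)\mapsto(x,\lambda t)$ is not an exact isometry of $\manifold{M}^\star$, only an asymptotic one as $t\to0$ after rescaling normal coordinates, which is exactly what your last paragraph captures).

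Two small points worth tightening if you write this up: the phrase ``$\dot x(\tau)$ stays pointwise parallel to itself'' should be stated as ``$x(\tau)$ is a pregeodesic of $\manifold{M}$,'' since the equation $\tfrac{D\dot x}{d\tau}=\tfrac{2\dot r}{r}\dot x$ only determines $\dot x$ up to a scalar multiple of a parallel field along $x$; and in the upper-bound step you should note that the non-injectivity or degeneracy occurs on the \emph{closed} ball of radius $\sinh^{-1}(R/t)$, which combined with your lower bound on open balls yields the stated equality.
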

\begin{proof}
We observe by \cref{distancesStar}, that for every \((x,t), (y, s) \in \manifold{M}^\star = \manifold{M} \times (0, +\infty)\),
\[
\begin{split}
    d_{\manifold{M}} (x, y)^2
  &=
      2 
      \,
      st 
      \cosh d_{\manifold{M}^\star} ((x, t), (y, s))
    - 
      t^2
    - 
      s^2
  \\
  &= 
      t^2
      \,
      \bigl(\sinh d_{\manifold{M}^\star} ((x, t), (y, s)) \bigr)^2
   -
      \bigl(s - t \cosh d_{\manifold{M}^\star} ((x, t), (y, s))\bigr)^2
  \\
  &
  \le 
    t^2
    \,
    \bigl(\sinh d_{\manifold{M}^\star} ((x, t), (y, s))\bigr)^2
    \eofs.
\end{split}
\]
It follows then that 
\[
  \operatorname{inj}_{\manifold{M}_*} (x, t)
 =
  \sinh^{-1} \biggl(\frac{\operatorname{inj}_{\manifold{M}} (x)}{t}\biggr).
\]
The bounds follow then from the compactness of \(\manifold{M}\) and the homogeneity of the metric on \(\manifold{M}^\star\).
\end{proof}

\Cref{theorem_bounded_finite_bubbles_manifold} will follow from the following statement.

\begin{theorem}%
[Estimate of free homotopy decompositions by a truncated fractional energy]
\label{theorem_bounded_finite_bubbles_manifold_bis}
Let \(\manifold{M}\) and \(\manifold{N}\) be compact Riemannian manifolds.
If \(\varepsilon > 0\) is small enough, then there is a constant \(C > 0\) such that for every \(\lambda > 0\), there exists finite sets \(\mapsset{F}^\lambda \subset \mapsset{C} (\Sset^m, \manifold{N})\) and \(\mathcal{F}_0^\lambda \subset \mapsset{C} (\manifold{M}, \manifold{N})\) such that any map  \(f \in \mapsset{C} (\manifold{M}, \manifold{N})\) satisfying
\begin{equation*}
\iint\limits_{\substack{\Sset^m \times \Sset^m}} \frac{\bigl(d_{\manifold{N}} (f (y), f (x)) - \varepsilon\bigr)_+}{d_{\manifold{M}} (y, x)^{2 m}} \diff x \diff y
        \le 
          \lambda,
\end{equation*}
has  a free homotopy decomposition into \(f_1, \dotsc, f_k \in \mapsset{F}^\lambda\) upon \(f_0 \in \mapsset{F}_0^\lambda\) with \(k \le C \lambda\).
\end{theorem}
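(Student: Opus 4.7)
The plan is to adapt the proof of \cref{theorem_bounded_finite_bubbles_linear}, replacing the hyperbolic space \(\Hset^{m + 1}\) by the manifold \(\manifold{M}^\star\) defined by \eqref{def_g_star} and using \cref{proposition_extension_manifold}, \cref{lemma_merging_Mstar}, \cref{lemma_lower_bound_volume} and \cref{lemma_M_star_injectivity_radius} as substitutes for \cref{lemmaExtensionHyperbolic}, \cref{lemmaDisjointCovering} and the homogeneity of \(\Hset^{m + 1}\). The new feature dictated by \cref{lemma_merging_Mstar} is that the covering now involves a horoball \(\manifold{M} \times (T', +\infty)\) whose bounding horosphere \(\manifold{M} \times \{T'\}\) will carry the map \(f_0\) that plays the role of the background in the resulting free homotopy decomposition.

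I first apply \cref{proposition_extension_manifold} to obtain an extension \(F\) of \(f\) to \(\manifold{M}^\star\), with \(\abs{DF}_{\manifold{M}^\star} \le C \diam \manifold{N}\) and controlled measure of \(A_\delta \defeq \bigl\{z \in \manifold{M}^\star \st \dist (F (z), \manifold{N}) \ge \delta\bigr\}\). Fixing \(\delta_* > 0\) such that a Lipschitz retraction \(\Pi \colon \manifold{N}_{\delta_*} \to \manifold{N}\) exists, the Lipschitz bound yields \(\Bar{B}^{\manifold{M}^\star}_\rho (a) \subseteq A_{\delta_*/2}\) for every \(a \in A_{\delta_*}\), with \(\rho > 0\) depending only on \(\manifold{N}\). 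From the construction in \cref{lemma_manifold_kernel}, the kernel \(\Phi (x, t, y)\) is independent of \((x, y)\) for \(t\) greater than some fixed \(T_0 > 0\), and the same holds for \(F\); in particular, \(A_{\delta_*} \cap (\manifold{M} \times [T_0, +\infty))\) is either empty or all of this horoball. A maximal subset \(A \subseteq A_{\delta_*} \cap (\manifold{M} \times (0, T_0))\) of points at pairwise \(\manifold{M}^\star\)--distance at least \(2\rho\) gives disjoint balls \(B_\rho^{\manifold{M}^\star} (a)\) contained in \(A_{\delta_*/2} \cap (\manifold{M} \times (0, T_0 + \rho))\), where \cref{lemma_lower_bound_volume} provides a uniform lower bound on their volumes, so \(\# A \le C \lambda\). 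Applying \cref{lemma_merging_Mstar} to these balls together with the horoball \(\manifold{M} \times (T_0, +\infty)\) produces \(k \le C\lambda\) disjoint closed balls \(\Bar{B}^{\manifold{M}^\star}_{\rho_i} (a_i) \subseteq \manifold{M} \times (0, T')\) and a horoball \(\manifold{M} \times (T', +\infty)\), with \(T' \ge T_0 e^{-C \lambda}\) and \(\rho_i \le C \lambda \rho\), jointly covering \(A_{\delta_*}\).

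Setting
\[
    U
  \defeq
    \manifold{M}^\star
    \setminus
    \Biggl(
      \bigcup_{i = 1}^k \Bar{B}^{\manifold{M}^\star}_{\rho_i} (a_i)
      \cup (\manifold{M} \times (T', +\infty))
    \Biggr)
\]
and \(\Tilde{F} \defeq \Pi \compose F \vert_U\), the map \(\Tilde{F}\) is continuous from \(\Bar{U}\) to \(\manifold{N}\) with trace \(f\) on \(\manifold{M} \times \{0\}\). By \cref{lemma_M_star_injectivity_radius}, each sphere \(\partial B^{\manifold{M}^\star}_{\rho_i}(a_i)\) admits a bi-Lipschitz identification with \(\Sset^m\), while the horosphere \(\manifold{M} \times \{T'\}\) is isometric to \(\manifold{M}\) equipped with the rescaled metric \(g/(T')^2\); the Lipschitz constants of \(\Tilde{F}\) on these hypersurfaces are thus bounded respectively by \(C \sinh (C' \lambda)\) and \(C/T' \le C e^{C' \lambda}\). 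The Ascoli compactness theorem then yields finite sets \(\mapsset{F}^\lambda \subset \mapsset{C} (\Sset^m, \manifold{N})\) and \(\mapsset{F}_0^\lambda \subset \mapsset{C} (\manifold{M}, \manifold{N})\) such that \(\Tilde{F}\vert_{\partial B^{\manifold{M}^\star}_{\rho_i}(a_i)}\) is homotopic on \(\Sset^m\) to some \(f_i \in \mapsset{F}^\lambda\) and \(\Tilde{F}\vert_{\manifold{M} \times \{T'\}}\) is homotopic on \(\manifold{M}\) to some \(f_0 \in \mapsset{F}_0^\lambda\).

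Finally, since \(\dim \manifold{M}^\star = m + 1 \ge 3\) and \(\manifold{M} \times (0, T')\) is connected, an ambient isotopy in \(\manifold{M}^\star\) moves the balls \(\Bar{B}^{\manifold{M}^\star}_{\rho_i} (a_i)\) to pairwise disjoint balls contained in \(B_0 \times (0, T')\) for some nondegenerate topologically trivial ball \(B_0 \subset \manifold{M}\). Interpreting the resulting cobordism as a subset of \(\manifold{M} \times [0, T']\) and pushing \(\Tilde{F}\) through it by the homotopy extension property, I obtain \(h \in \mapsset{C} (\manifold{M}, \manifold{N})\) homotopic to \(f\), coinciding with \(f_0\) on \(\manifold{M} \setminus B_0\), and whose restriction to \(\Bar{B}_0\) admits a free homotopy decomposition into \(f_1, \dotsc, f_k\) in the sense of \cref{definition_homotopy_power}. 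After a preliminary homotopy making \(f_0\) constant on \(B_0\) (possible since \(B_0\) is contractible), \cref{proposition_free_decomposition_manifold_sphere} yields the required free homotopy decomposition of \(f\) into \(f_1, \dotsc, f_k\) upon \(f_0\). The main obstacle is precisely this final topological reconstruction: converting the cobordism data \((U, \Tilde{F})\) with its \(k\) boundary spheres and its horosphere into a decomposition that matches \cref{homotopy_sum_manifold}, which relies crucially on \(\dim \manifold{M}^\star \ge 3\) for the ambient isotopy collecting the balls into a single topologically trivial region and on repeated use of the homotopy extension property.
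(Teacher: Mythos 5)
Your proposal is correct and follows essentially the same route as the paper's proof: extension to \(\manifold{M}^\star\) via \cref{proposition_extension_manifold}, a volume-counting argument with \cref{lemma_lower_bound_volume} on a maximal separated set, merging into disjoint balls plus a horoball via \cref{lemma_merging_Mstar}, and Ascoli applied to the Lipschitz restrictions of \(\Pi \compose F\) to the boundary spheres and to the horosphere \(\manifold{M} \times \{T'\}\). The only differences are cosmetic (you cut the separated set at a level \(T_0\) where \(F\) becomes constant rather than at \(t = 1\), and you spell out the final topological reconstruction via an ambient isotopy and \cref{proposition_free_decomposition_manifold_sphere} in more detail than the paper, which essentially asserts that step).
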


\begin{proof}%
\resetconstant%
Let \(F \in \mapsset{C}^\infty (\manifold{M} \times [0, + \infty])\) be the function given by \cref{proposition_extension_manifold}, let \(\manifold{N}_\delta = \{ \xi \in \Rset^\nu \st \dist (y, \manifold{N}) < \delta\}\) and \(A_\delta = F^{-1} (\Rset^\nu \setminus \manifold{N}_\delta)\).
Since the manifold \(\manifold{N}\) is embedded smoothly into \(\Rset^\nu\) there exists \(\delta_* > 0\) and a Lipschitz-continuous retraction \(\Pi : \manifold{N}_{\delta_*} \to \manifold{N}\).

By the estimate given on \(F\) by \cref{proposition_extension_manifold} \eqref{itExtensionManifoldHyperbolicLipschitz}, there exists \(\rho > 0\), independent of \(F\), such that if \(x \in A_{\delta_*}\), then \(B_\rho (x) \subset A_{\delta_*/2}\). We consider a maximal set \(A \subset A_{\delta_*}\) such that if for every \(a, b \in A\) such that \(a \ne b\), one has \(d^{\manifold{M}^\star} (a, b) \ge 2 \rho\). This implies immediately that 
\begin{equation*}
    A_{\delta_*} 
  \subseteq
    \bigcup_{a \in A} 
      B^{\manifold{M}^\star}_{2 \rho} (a)
      \eofs.
\end{equation*}
Since the balls \(\bigl(B^{\manifold{M}^\star}_\rho (a)\bigr)_{a \in A}\) are disjoint, we have by \cref{lemma_lower_bound_volume} and \cref{proposition_extension_manifold} \eqref{itExtensionManifoldHyperbolicMeasure}
\begin{equation*}
\begin{split}
    \# \bigl(A \cap \manifold{M} \times (0, 1]\bigr)
  &\le
    \Cl{cst_ungeV3Xeiv}
    \sum_{a \in A \cap \manifold{M} \times (0, 1]} 
    \mu_{\manifold{M}^\star} 
      (B^{\manifold{M}^\star}_{\rho}(a))
      \\
  &\le 
      \Cr{cst_ungeV3Xeiv}
      \,
      \mu_{\manifold{M}^\star} \bigl(A_{\delta_*/2}\bigr)\\
  &\le 
      \Cl{cst_tdt}
      \iint 
        \limits_{\manifold{M} \times \manifold{M}}  
        \frac
          {\bigl(d_{\manifold{N}} (f (y), f (x)) - \varepsilon\bigr)_+}
          {d_{\manifold{M}} (y, x)^{2 m} } 
        \diff y 
        \diff x
      \le \Cr{cst_tdt} \lambda 
\eofs.
\end{split}
\end{equation*}
We have then
\begin{equation}
\label{eq_thowoo2Chi}
    A_\delta 
  \subseteq 
      \bigcup_{a \in A \cap \manifold{M} \times (0, 1]} 
        B_{2 \rho} (a)
    \cup 
      (e^{-2\rho}, + \infty)
 \eofs.
\end{equation}

Thanks to \cref{lemma_M_star_injectivity_radius}, there exists \(T \in (0, e^{-2\rho})\) such that if 
\(
    \sigma 
  \le 
    \Cr{cst_tdt}
    \,
    2\rho
     \lambda
    \)
and \((x, t) \in \manifold{M} \times (0, T]\), then the exponential map, its inverse and their derivatives are controlled on \(B_\sigma (x, t)\).

By \cref{lemma_merging_Mstar}, there exists balls \((B^{\manifold{M}^\star}_{\rho_i} (a_i))_{1 \le i \le \ell}\) and \(T' \in (0, T)\) such that in view of \eqref{eq_thowoo2Chi}
\[
  A_\delta \subseteq \bigcup_{i = 1}^\ell B^{\manifold{M}^\star}_{\rho_i} (a_i) \cup \manifold{M} \times (T', +\infty)\eofs,
\]
with the estimate
\[
  \frac{1}{2} \ln \frac{T}{T'} + \sum_{i = 1}^\ell \rho_i
 \le 2 \rho \, \#\bigl(A \cap \manifold{M} \times [0, T_2]\bigr)
 \le  
    2 
    \rho
    \,
    \Cr{cst_tdt}
    \lambda
\eofs.
\]
This implies in particular, since \(T' \le T\) and \(\rho_i \ge \rho\) that 
\begin{align}
\label{eq_zeeHao8ueY}
    \ln \frac{T}{T'} 
  &\le 
    2 
    \rho
    \,
    \Cr{cst_tdt} 
      \lambda 
&
   \ell 
  &\le
    2
    \,
    \Cr{cst_tdt} 
      \lambda 
      &
&\text{and}
&
\rho_i 
  &\le 
    2 
    \rho
    \, 
    \Cr{cst_tdt} 
    \lambda
\eofs.
\end{align}

Since \(\Pi \compose F\) is Lipschitz continuous on \(\manifold{M}^\star \setminus A_\delta\), it follows then that the map \(f\) has a free homotopy decomposition into \(\Pi  \compose F\vert_{\partial_{B_{\rho_1}} (a_1)}, \dotsc, \Pi  \compose F\vert_{\partial_{B_{\rho_\ell}} (a_\ell)}\) upon \(\Pi \compose F\vert_{\manifold{M} \times \{T'\}}\).
Since by \cref{lemma_M_star_injectivity_radius} the exponenial map is controlled in \(\manifold{M} \times (0, T]\) by a bound depending on \eqref{eq_zeeHao8ueY}, the Lipschitz constants of the maps \(\Pi  \compose F\vert_{\partial_{B_{\rho_1}} (a_1)}, \dotsc, \Pi  \compose F\vert_{\partial_{B_{\rho_\ell}} (a_\ell)}\) upon \(\Pi \compose F\vert_{\manifold{M} \times \{T'\}}\) are bounded independently of \(f\) and the geometry of their domains are controlled by quantities depending only on \(\lambda\), and thus by Ascoli's compactness theorem the maps \(\Pi  \compose F\vert_{\partial_{B_{\rho_1}} (a_1)}, \dotsc, \Pi  \compose F\vert_{\partial_{B_{\rho_\ell}} (a_\ell)}\) and \(\Pi \compose F\vert_{\manifold{M} \times \{T'\}}\) are respectively homotopic to maps taken in some finite sets \(\mapsset{F}^\lambda \subset \mapsset{C} (\Sset^m, \manifold{N})\) and \(\mapsset{F}_0^\lambda \subset \mapsset{C} (\manifold{M}, \manifold{N})\).
\end{proof}

\subsection{Estimates of free homotopy decompositions by a scaled gap potential}
We obtain a version of \cref{theorem_bounded_finite_bubbles_manifold} that scales optimally with respect to \(\varepsilon\), which generalizes \cref{theorem_bounded_finite_bubbles_linear_scaling} to a general domain \(\manifold{M}\).

\begin{theorem}%
[Estimate of free homotopy decompositions by a scaled gap potential]
\label{theorem_bounded_finite_bubbles_manifold_scaling}
Let \(\manifold{M}\) and \(\manifold{N}\) be compact Riemannian manifolds.
If \(\dim \manifold{M} = m \ge 2\), then there are constants \(C > 0\) and \(\varepsilon_0 > 0\) such that for every \(\lambda > 0\), there exists finite sets \(\mapsset{F}^\lambda \subset \mapsset{C} (\Sset^m, \manifold{N})\) and \(\mathcal{F}_0^\lambda \subset \mapsset{C} (\manifold{M}, \manifold{N})\) such that if \(\varepsilon \in (0, \varepsilon_0)\), then any \(f \in \mapsset{C} (\manifold{M}, \manifold{N})\) satisfying 
\begin{equation*}
          \iint
            \limits_{
              \substack{
                (x, y) \in \manifold{M} \times \manifold{M}\\                 
                d_{\manifold{N}} (f (y), f (x)) > \varepsilon}} 
              \frac{\varepsilon^m}{d_{\manifold{M}} (y, x)^{2 m}} 
            \diff x 
            \diff y 
        \le 
          \lambda
\eofs,
\end{equation*}
has a free homotopy decomposition into \(f_1, \dotsc, f_k \in \mapsset{F}^\lambda\) upon \(f_0 \in \mapsset{F}_0^\lambda\) with \(k \le C \lambda\).
\end{theorem}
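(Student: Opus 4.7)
The plan is to reduce the theorem to \cref{theorem_bounded_finite_bubbles_manifold_bis}, mirroring the way \cref{theorem_bounded_finite_bubbles_scaling} is deduced from \cref{theorem_bounded_finite_bubbles_linear_scaling} in the spherical case. The main work is to establish on the compact Riemannian manifold \(\manifold{M}\) counterparts of the scaling estimate (\cref{proposition_fract_gap_integral_growth_sphere}) and the comparison estimate (\cref{proposition_p_q_scaling_Sphere}); the final assembly then mirrors the sphere argument essentially verbatim.

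I would first establish a local version of \cref{lemma_fract_gap_integral_growth} on every geodesic ball \(B \subset \manifold{M}\) whose radius \(r_0\) is chosen strictly less than both the convexity radius and the injectivity radius of \(\manifold{M}\). The proof of \cref{lemma_fract_gap_integral_growth} uses only the triangle inequality for \(d_\manifold{N}\) and the Euclidean change of variables \(y = 2z - x\) around the midpoint \(\tfrac{x+y}{2}\); on \(B\) I replace the Euclidean midpoint by the midpoint of the minimizing geodesic between \(x\) and \(y\). In normal coordinates based at \(x\), the Jacobian of the map \(z \mapsto y\) (defined by the requirement that \(z\) is the geodesic midpoint of \(x\) and \(y\)) equals \(2^m\) times a factor \(1 + O(r_0^2)\), and the distance ratio \(d_\manifold{M}(y, x)/d_\manifold{M}(z, x)\) equals \(2\) times a factor \(1 + O(r_0^2)\), both uniformly in \(x, y \in B\). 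Choosing \(r_0\) small enough that these distortions do not absorb the contraction factor \(2^{(p-1)_+ - (m-1)}\) that drives the iteration in the proof of \cref{lemma_fract_gap_integral_growth}, the local counterpart of that lemma holds on every such ball \(B\).

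To obtain the manifold counterparts of \cref{proposition_fract_gap_integral_growth_sphere} and \cref{proposition_p_q_scaling_Sphere}, I would cover \(\manifold{M}\) by finitely many geodesic balls \(B_{r_0/2}(z_1), \dotsc, B_{r_0/2}(z_L)\) chosen so that any pair \((x, y)\) with \(d_\manifold{M}(x, y) < r_0/2\) is entirely contained in a single \(B_{r_0}(z_j)\); summing the local estimate over \(j\) handles these short-range pairs, while the complementary long-range pairs, where \(d_\manifold{M}(y, x) \ge r_0/2\), contribute only a bounded tail that is absorbed into the main term once \(\varepsilon\) is smaller than a threshold depending only on \(\manifold{M}\) and \(\manifold{N}\). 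The elementary \cref{lemma_ineq_p_q_scaling} carries over unchanged, so the proofs of \cref{proposition_p_q_scaling_Omega} and \cref{proposition_p_q_scaling_Sphere} adapt to yield the manifold versions.

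With these two manifold estimates in hand, the manifold scaling estimate, applied with \(p = 1\), promotes any bound of the form \(\Tilde{\varepsilon}^{m-1} \int (d_\manifold{N}(f(y),f(x)) - \Tilde{\varepsilon})_+ / d_\manifold{M}(y, x)^{2m} \diff y \diff x \le \lambda\) at small threshold \(\Tilde{\varepsilon}\) to a bound at the fixed small threshold required by \cref{theorem_bounded_finite_bubbles_manifold_bis}, producing a manifold analogue of \cref{theorem_bounded_finite_bubbles_linear_scaling}. The manifold comparison estimate with \(p = 1\), \(q = 0\) and a rescaling \(\Tilde{\varepsilon} = \varepsilon/\eta\) with \(\eta \in (0, 1)\) fixed then converts the hypothesis \(\varepsilon^m \int_{d_\manifold{N}(f(y),f(x))>\varepsilon} d_\manifold{M}(y,x)^{-2m} \le \lambda\) into \(\Tilde{\varepsilon}^{m-1} \int (d_\manifold{N}(f(y),f(x)) - \Tilde{\varepsilon})_+ d_\manifold{M}(y,x)^{-2m} \le C \lambda\), and the conclusion follows. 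The main obstacle is the first step: ensuring that the curvature-induced multiplicative distortion in the geodesic change of variable stays below a value that, combined with the contraction factor \(2^{(p-1)_+ - (m-1)}\), still yields a product strictly less than \(1\); this is secured by taking \(r_0\) sufficiently small, a choice that depends only on the bounded geometry of the compact manifold \(\manifold{M}\).
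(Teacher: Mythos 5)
Your overall reduction --- establish manifold counterparts of the scaling estimate (\cref{proposition_fract_gap_integral_growth_sphere}) and of the comparison estimate (\cref{proposition_p_q_scaling_Sphere}), then feed them into \cref{theorem_bounded_finite_bubbles_manifold_bis} exactly as in the spherical case --- is the right strategy, and your final assembly with \(p=1\), \(q=0\) is correct. The gap lies in how you propose to obtain the two manifold estimates. First, redoing the midpoint iteration intrinsically with geodesic midpoints costs a factor \(1+O(r_0^2)\) \emph{per iteration step}; since the number of steps is \(\ell \sim \log_2(\varepsilon/\delta)\), these factors compound into \((\varepsilon/\delta)^{\log_2(1+Cr_0^2)}\), degrading the exponent \(m-1-(p-1)_+\) to \(m-1-(p-1)_+-\sigma(r_0)\) with \(\sigma(r_0)>0\) for any fixed \(r_0\). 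In the final assembly this yields \(\varepsilon^{m-\sigma}\) in place of \(\varepsilon^{m}\), hence a bound \(k \le C\varepsilon^{-\sigma}\lambda\) rather than \(k \le C\lambda\): the optimal scaling asserted by the theorem is lost. Second, the pairs with \(d_{\manifold{M}}(y,x)\ge r_0/2\) are not actually handled: their contribution is indeed bounded by a constant, but the right-hand sides of the scaling and comparison estimates are not bounded below --- they vanish when \(f\) has oscillation below the threshold and can be made arbitrarily small --- so a ``bounded tail'' cannot be absorbed into them uniformly in \(f\); for instance, in the comparison estimate with \(p=1\), \(q=0\), the crude bound on the long-range part misses the required factor \(\varepsilon\) by \(\diam(\manifold{N})/\varepsilon\).

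Both difficulties are avoided by the device the paper uses, namely \cref{lemma_manifold_diagonal_covering}: cover \(\manifold{M}\times\manifold{M}\) by finitely many products \(A_i\times A_i\), where each \(\Bar{A}_i\) is a (large) topological ball containing \emph{both} points of any pair assigned to it, so that no long-range pairs are left over, and transfer \cref{lemma_fract_gap_integral_growth} and \cref{proposition_p_q_scaling_Omega} to each \(A_i\) through the fixed diffeomorphism onto a convex Euclidean set. The midpoint iteration is then performed entirely in the Euclidean model, where it is exact, and the bi-Lipschitz constants of the finitely many charts enter only the multiplicative constant, never the exponent. Your argument becomes correct once the intrinsic iteration on small geodesic balls is replaced by this chart transfer.
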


\Cref{theorem_bounded_finite_bubbles_manifold_scaling} will be deduced from \cref{theorem_bounded_finite_bubbles_manifold} thanks to \cref{proposition_fract_gap_integral_growth_sphere}, \cref{proposition_p_q_scaling_Omega} and the counterpart of \cref{lemma_pair_covering}.

\begin{lemma}
[Covering pairs of a compact manifold]
\label{lemma_manifold_diagonal_covering}
If \(\manifold{M}\) is a connected compact manifold of dimension \(m\), then there exists open sets \(A_1, \dotsc, A_\ell \subset \manifold{M}\) such that for each \(i \in \{1, \dotsc, \ell\}\), the set \(\Bar{A}_i\) is diffeomorphic to the closed ball \(\mathbb{B}_1 \subset \Rset^m\) and such that 
\[
    \manifold{M} \times \manifold{M} 
  \subseteq 
    \bigcup_{i = 1}^\ell 
      A_i \times A_i 
\eofs.
\]
\end{lemma}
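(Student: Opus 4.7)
The plan is to reduce the statement to a local \emph{disk lemma} for connected manifolds and then extract a finite cover by compactness of \(\manifold{M}\times\manifold{M}\). The disk lemma I will use asserts that any finite collection of points in a connected smooth manifold is contained in an open set whose closure is a smoothly embedded closed ball.

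First I would establish this disk lemma: given pairwise distinct points \(x_1,\dotsc,x_k\in\manifold{M}\), since \(\manifold{M}\) is connected and smooth, a standard transversality and general-position argument produces a smoothly embedded arc \(\gamma\in\mapsset{C}^\infty([0,1],\manifold{M})\) passing through the points \(x_1,\dotsc,x_k\) at distinct parameter values. Choosing a sufficiently thin closed tubular neighborhood \(T\) of \(\gamma([0,1])\), the pair \((T,\gamma([0,1]))\) is diffeomorphic to \(([0,1]\times\Bar{\Bset}^{m-1}_\delta, [0,1]\times\{0\})\); after smoothing the two corner spheres \(\{0,1\}\times\partial\Bset^{m-1}_\delta\), the resulting closed set is diffeomorphic to the closed ball \(\mathbb{B}_1\subset\Rset^m\). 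Taking \(A\) to be the interior of this smoothed neighborhood yields an open set containing \(\{x_1,\dotsc,x_k\}\) with \(\Bar{A}\) diffeomorphic to \(\mathbb{B}_1\).

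Second, for each pair \((x,y)\in\manifold{M}\times\manifold{M}\), applying the disk lemma to the set \(\{x,y\}\) (or \(\{x\}\) if \(x=y\)) produces an open set \(A_{(x,y)}\) with \(x,y\in A_{(x,y)}\) and \(\Bar{A}_{(x,y)}\) diffeomorphic to \(\mathbb{B}_1\). Then \(A_{(x,y)}\times A_{(x,y)}\) is an open neighborhood of \((x,y)\) in \(\manifold{M}\times\manifold{M}\), and the family \(\{A_{(x,y)}\times A_{(x,y)}\st (x,y)\in\manifold{M}\times\manifold{M}\}\) is an open cover of the compact space \(\manifold{M}\times\manifold{M}\). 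Extracting a finite subcover gives the required sets \(A_1,\dotsc,A_\ell\).

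The main obstacle is the disk lemma itself --- specifically, constructing the embedded arc through the prescribed points and verifying that its tubular neighborhood, once its corners are smoothed, is diffeomorphic to \(\mathbb{B}_1\subset\Rset^m\). This is a classical result in differential topology, closely related to \familyname{Palais}' theorem on uniqueness up to ambient isotopy of smooth ball embeddings into connected manifolds, and requires no input from the Sobolev or gap-potential machinery developed earlier in the paper.
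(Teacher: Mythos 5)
Your proposal is correct and follows essentially the same route as the paper: for each pair \((x,y)\) produce an open set \(A\) containing both points with \(\Bar{A}\) diffeomorphic to a closed ball, and then extract a finite subcover by compactness of \(\manifold{M} \times \manifold{M}\). The paper simply asserts the existence of such an \(A\) from connectedness, whereas you supply the standard embedded-arc and tubular-neighbourhood construction behind it; this is a legitimate filling-in of a detail the paper leaves implicit, not a different argument.
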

\begin{proof}
Since \(\manifold{M}\) is connected, for every \(x, y \in \manifold{M}\), there exists an open set \(A \subset \manifold{M}\) such that \(\Bar{A}\) is diffeomorphic to the closed ball \(\Bar{\Bset}^{m} \subset \Rset^m\) and such that \(\{x, y\} \subset A\) and therefore \((x, y) \in A \times A\). 
The conclusion then follows by compactness of \(\manifold{M} \times \manifold{M}\).
\end{proof}

\begin{proof}%
[Proof of \cref{theorem_bounded_finite_bubbles_manifold_scaling}]
This follows from \cref{theorem_bounded_finite_bubbles_manifold},
\cref{proposition_p_q_scaling_Omega} and \cref{lemma_manifold_diagonal_covering}.
\end{proof}

\subsection{Estimates on the induced cohomology homorphism}
As for maps from the sphere, it is possible to obtain \emph{linear bounds} for \emph{cohomology invariants} of mappings \(f : \manifold{M} \to \manifold{N}\).

  If \(f : \manifold{M} \to \manifold{N}\) is a smooth map, then its pullback \(f_\sharp\) induces a homomorphism \(f^* : H^*_{\mathrm{dR}} (\manifold{N}) \to H^*_{\mathrm{dR}} (\manifold{N})\) on the de Rham cohomology \cite{Lee_2009}{\S 10}. Indeed, if \(\omega \in \mapsset{C}^\infty (\bigwedge^{\ell} \manifold{N})\) and \(d \omega = 0\), then \(d (f_\sharp \omega) = 0\) and moreover if \(\theta \in \mapsset{C}^{\infty} (\bigwedge^{\ell - 1} \manifold{N})\), then \(f_\sharp (\omega + d \theta) = f_\sharp \omega + d (f_\sharp \theta)\).
This induced homomorphism is invariant under homotopies. 
Cohomology induced homomorphism are linear operator on the finite dimensional de Rham cohomology groups; this allows one to define a norm \(\norm{f}_*\) on cohomology induced homomorphims \(f_*\).

If \(\manifold{M} = \Sset^m\), then \(H^\ell_{\mathrm{dR}} (\Sset^m) \ne \{0\}\) if and only if \(m = \ell\); the induced cohomology homomorphism \(f^*\) is then completely described by the Hurewicz homomorphism.

The following theorem generalizes the estimate for the Hurewicz homomorphism \cref{proposition_Hurewicz_nonlocal} to the cohomology homomorphism.

\begin{theorem}
[Estimate of the cohomology induced homorphism by a scaled gap potential]
Let \(\manifold{M}\) and \(\manifold{N}\) be a compact Riemannian manifolds.
If \(\dim \manifold{M} \ge 2\) and if \(\varepsilon > 0\) is small enough, then for every \(f \in \mapsset{C}^1 (\manifold{M}, \manifold{N})\), one has 
\[
 \norm{f^*}
 \le 
   C \iint\limits_{\substack{(x, y) \in \Sset^m \times \Sset^m \\ d_{\manifold{N}} (f (y), f (x)) > \varepsilon}} \frac{\varepsilon^m}{d_{\manifold{M}} (y, x)^{2 m}} \diff x \diff y \eofs .
\]
\end{theorem}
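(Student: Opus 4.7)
The plan is to reduce the norm bound on $f^*$ to the Hurewicz-style argument of \cref{proposition_Hurewicz_nonlocal}, applied after pairing with closed forms on $\manifold{M}$ of complementary degree via Poincar\'e duality. Since both $H^*_{\mathrm{dR}} (\manifold{M})$ and $H^*_{\mathrm{dR}} (\manifold{N})$ are finite-dimensional, $\norm{f^*}$ is controlled, up to a constant depending only on $\manifold{M}$ and $\manifold{N}$, by the pairings $\int_{\manifold{M}} f_\sharp \omega_j \wedge \beta_i$, with $\omega_j$ in a fixed basis of closed forms on $\manifold{N}$ and $\beta_i$ in a fixed basis of closed forms on $\manifold{M}$ of complementary degree. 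It therefore suffices to prove, for every closed $\ell$-form $\omega$ on $\manifold{N}$ and every closed $(m - \ell)$-form $\beta$ on $\manifold{M}$ with $1 \le \ell \le m$, the estimate
\[
    \biggabs{\int_{\manifold{M}} f_\sharp \omega \wedge \beta}
  \le
    C
    \norm{\omega}_{L^\infty}
    \norm{\beta}_{L^\infty}
    \iint\limits_{
      \substack{
        (x, y) \in \manifold{M} \times \manifold{M} \\
        d_{\manifold{N}} (f (y), f (x)) > \varepsilon
      }}
      \frac{\varepsilon^m}{d_{\manifold{M}} (y, x)^{2m}}
      \diff y \diff x
      \eofs .
\]

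To establish this bound I would follow the Stokes--Cartan strategy of \cref{proposition_Hurewicz_nonlocal}, transposed to the hyperbolic-type half-space $\manifold{M}^\star = \manifold{M} \times (0, +\infty)$. Fix a tubular neighbourhood $U \subset \Rset^\nu$ of $\manifold{N}$ with smooth retraction $\Pi : U \to \manifold{N}$, a cutoff $\eta \in \mapsset{C}^\infty_c (U, [0, 1])$ equal to $1$ on $\manifold{N}_{\delta_*}$ for some $\delta_* > 0$, and set $\tilde{\omega} \defeq \eta \wedge \Pi_\sharp \omega$, so that $d\tilde{\omega} = d\eta \wedge \Pi_\sharp \omega$ vanishes on $\manifold{N}_{\delta_*}$. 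Let $F$ be the extension provided by \cref{proposition_extension_manifold} and $\pi : \manifold{M}^\star \to \manifold{M}$ the first projection. Since $\beta$ is closed, the $m$-form $F_\sharp \tilde{\omega} \wedge \pi_\sharp \beta$ on $\manifold{M}^\star$ satisfies
\[
    d \bigl(F_\sharp \tilde{\omega} \wedge \pi_\sharp \beta\bigr)
  =
    F_\sharp (d\eta \wedge \Pi_\sharp \omega) \wedge \pi_\sharp \beta \eofs ,
\]
and the Stokes--Cartan formula on $\manifold{M} \times [0, T]$ followed by the limit $T \to +\infty$ gives
\[
    \int_{\manifold{M}} f_\sharp \omega \wedge \beta
  =
    - \int_{\manifold{M}^\star}
        F_\sharp (d\eta \wedge \Pi_\sharp \omega) \wedge \pi_\sharp \beta
    \eofs ,
\]
the boundary contribution at $t = T$ vanishing in the limit because the scaling relation $\norm{DF (\cdot, T)}_g \le \norm{DF}_{\manifold{M}^\star}/T$ and \cref{proposition_extension_manifold}~\eqref{itExtensionManifoldHyperbolicLipschitz} yield $\norm{F (\cdot, T)_\sharp \tilde{\omega}}_{L^\infty} \le C \, T^{-\ell}$.

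The right-hand side of the Stokes identity is supported in $A_{\delta_*} \defeq F^{-1} (\Rset^\nu \setminus \manifold{N}_{\delta_*})$, since $d\eta$ vanishes on $\manifold{N}_{\delta_*}$. Bounding the $\manifold{M}^\star$-comass of the integrand by $C \norm{\omega}_{L^\infty} \norm{\beta}_{L^\infty} t^{m - \ell}$---the factor $t^{m - \ell}$ records the $\manifold{M}^\star$-comass of $\pi_\sharp \beta$ on horizontal vectors---and combining with the hyperbolic measure estimate of \cref{proposition_extension_manifold}~\eqref{itExtensionManifoldHyperbolicMeasure} yields
\[
    \biggabs{\int_{\manifold{M}} f_\sharp \omega \wedge \beta}
  \le
    C
    \norm{\omega}_{L^\infty}
    \norm{\beta}_{L^\infty}
    \iint\limits_{\manifold{M} \times \manifold{M}}
      \frac{\bigl(d_{\manifold{N}} (f (y), f (x)) - \varepsilon\bigr)_+}{d_{\manifold{M}} (y, x)^{2m}}
      \diff y \diff x
    \eofs .
\]
The optimal $\varepsilon^m$ scaling then follows from \cref{proposition_p_q_scaling_Omega} via the covering \cref{lemma_manifold_diagonal_covering}, exactly as in the deduction of \cref{theorem_bounded_finite_bubbles_manifold_scaling} from \cref{theorem_bounded_finite_bubbles_manifold}.

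The hard part will be the metric matching when $\ell < m$: the factor $t^{m - \ell}$ from the $\manifold{M}^\star$-comass of $\pi_\sharp \beta$ combines with the hyperbolic volume density $t^{-(m + 1)} \diff V_g \diff t$ to produce an integrand density in $t^{-(\ell + 1)}$, which is less singular than $t^{-(m + 1)}$ near the boundary $t = 0$ but exceeds it on $\{t \ge 1\}$. Controlling the resulting integral by $\mu_{\manifold{M}^\star} (A_{\delta_*})$ will require splitting the integration at the fixed scale $t = 1$: on $\{t \le 1\}$ the bound by the hyperbolic measure is immediate by monotonicity, while on $\{t \ge 1\}$ one must exploit the uniform decay $\norm{DF (\cdot, t)}_g \le C \osc_{\manifold{M}} f / t$ together with the comparison \eqref{ineq_diam_N} to absorb the tail contribution into the gap potential.
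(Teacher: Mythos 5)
Your proof follows essentially the same route as the paper's: reduction via Poincar\'e duality to the pairings \(\int_{\manifold{M}} f_\sharp \omega \wedge \theta\), the Stokes--Cartan identity on \(\manifold{M}^\star\) with the extension of \cref{proposition_extension_manifold}, a comass-times-measure bound on the set \(A_{\delta_*}\) where \(F\) is far from \(\manifold{N}\), and finally \cref{proposition_p_q_scaling_Omega} together with \cref{lemma_manifold_diagonal_covering} for the optimal \(\varepsilon^m\) scaling. Your explicit handling of the \(t^{m-\ell}\) factor in the \(\manifold{M}^\star\)-comass of \(\pi_\sharp \beta\) (splitting the integration at \(t = 1\) and absorbing the tail, where the gap potential is necessarily bounded below, into the constant) is in fact more careful than the paper's one-line estimate, which passes over this point silently.
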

\begin{proof}%
\resetconstant
Since \(\manifold{M}\) and \(\manifold{N}\) are compact, the dimension of the de Rham cohomology is finite and thus, in view of the Poincaré duality \cite{Lee_2009}*{\S 10.4}, it suffices to estimate for every \(\ell \in \{0, \dotsc, m\}\), for every \(\omega \in \mapsset{C}^\infty (\bigwedge^\ell \manifold{N})\) and every \(\theta \in \mapsset{C}^\infty (\bigwedge^{m - \ell} \manifold{M})\) such that \(d\omega = 0\) and \(d \theta = 0\) the quantity
\[
  \int_{\manifold{M}} 
    f_\sharp \omega \wedge \theta
    \eofs .
\]
As before, since \(\manifold{N}\) is a compact embedded submanifold of \(\Rset^\nu\), there exists an open set \(U \subset \Rset^\nu\) with \(\manifold{N} \subset U\) and a smooth retraction \(\Pi \in \mapsset{C}^\infty (U, \manifold{N})\).
We fix a function 
\(\eta \in \mapsset{C}^{\infty} (\Rset^\nu, \bigwedge^0 \Rset^\nu)\) such that \(\eta = 1\) on \(\manifold{N}\) and \(\operatorname{supp} \eta \subset U\) and 
a canonical projection map \(P : \manifold{M} \times [0, + \infty) \to \manifold{M}\) defined for each \((x, t) \in \manifold{M} \times [0, + \infty)\)
by \(P (x, t) \defeq x\).

We consider the map \(F : \manifold{M}^\star \to \Rset^\nu\) given by \cref{proposition_extension_manifold}. By the Stokes--Cartan formula we have 
\begin{equation*}
    \int_{\manifold{M}} 
      f_\sharp \omega \wedge \theta
  = 
    \int_{\manifold{M}} 
      F_\sharp (\eta \wedge \Pi_\sharp \omega) \wedge P_\sharp \theta
  = 
    \int_{\manifold{M}^\star} 
      d (F_\sharp (\eta \wedge \Pi_\sharp \omega) \wedge P_\sharp \theta)
\eofs.
\end{equation*}
We compute then, since \(d \omega = 0\) and \(d \theta = 0\),
\begin{equation*}
    d (F_\sharp (\eta \wedge \Pi_\sharp \omega) \wedge P_\sharp \theta)
  = 
    F_\sharp (d\eta \wedge \Pi_\sharp \omega)\wedge P_\sharp \theta
\eofs,
\end{equation*}
and we conclude by \cref{proposition_extension_manifold}
that 
\begin{equation*}
\begin{split}
    \biggabs{
      \int_{\manifold{M}} 
        f_\sharp \omega \wedge \theta
        \,
    }
  &\le
    \C 
    \,
    \norm{DF}_{L^\infty (\manifold{M}^\star)}^{\ell + 1}
    \,
    \norm{d\eta}_{L^\infty (\Rset^\nu)}
    \,
    \norm{\omega}_{L^\infty (\manifold{N})}
    \,
    \norm{\theta}_{L^\infty (\manifold{M})}
    \\
    &\qquad \qquad 
    \mu_{\manifold{M}^\star} 
      \bigl(
        \bigr\{ 
          (x, t) \in \manifold{M}^\star 
        \st
          \dist (F (x, t), \manifold{N}) \ge \delta\}\bigr)\\
   &\le \C \,
    \norm{\omega}_{L^\infty (\manifold{N})}
    \,
    \norm{\theta}_{L^\infty (\manifold{M})}
   \iint \limits_{\manifold{M} \times \manifold{M}}
         \frac{ \bigl(d_{\manifold{N}} (f (y), f (x))-\varepsilon)_+}{d_{\manifold{M}} (y, z)^{2 m}} \diff y \diff x
\eofs .
\end{split}
\end{equation*}
By considering all admisible \(\ell \in \{0, \dotsc, m\}\), every  \(\omega \in \mapsset{C}^\infty (\bigwedge^\ell \manifold{N})\) and every \(\theta \in \mapsset{C}^\infty (\bigwedge^{m - \ell} \manifold{M})\) such that \(d \omega = 0\) and \(d \theta = 0\), we conclude that  
\begin{equation}
    \norm{f^*}
   \le \C 
   \iint \limits_{\manifold{M} \times \manifold{M}}
         \frac{ \bigl(\abs{f (y) - f (z)}-\varepsilon)_+}{d_{\manifold{M}} (y, z)^{2 m}} \diff y \diff z\eofs . 
\end{equation}
By \cref{lemma_fract_gap_integral_growth}, \cref{proposition_p_q_scaling_Omega} and \cref{lemma_manifold_diagonal_covering}, the conclusion then follows.
\end{proof}

\section{Further problems}
\label{section_problems}

A first question that remains open at the end of the present work is whether estimates with optimal scaling can be proved when \(m = 1\).

\begin{openproblem}
\label{problem_1d_scaling}
Does \cref{theorem_bounded_finite_bubbles_scaling} hold for \(m = 1\)?
\end{openproblem}

A variant of this problem would be to obtain estimates with optimal scaling on the Hurewicz homomorphism when \(m = 1\).

\begin{openproblem}
\label{problem_1d_scaling_hurewicw}
Does \cref{proposition_Hurewicz_nonlocal_scaling} hold when \(m = 1\)?
\end{openproblem}

The problem is already open for maps for the degree of maps from the circle \(\Sset^1\) to the circle \(\Sset^1\), that is when \(\manifold{N} = \Sset^1\) (see \cite{Nguyen_2017}). It is striking that the present work and \familyname{Nguyên} Hoài-Minh followed quite different strategies of proof but encountered the same restriction that \(m > 1\).

The solution of \cref{theorem_bounded_finite_bubbles_scaling,problem_1d_scaling_hurewicw} could be connected to the following more technical question of scaling of truncated integrals.

\begin{openproblem}
\label{problem_scaling}
If \(p \in [0, 1)\) and \(m \in \Nset\), 
does there exists a constant \(C > 0\) such that 
for every convex set \(\Omega \subset \Rset^m\)
and for every map \(f : \Omega \to \Rset^\nu\),
if \(\delta < \varepsilon\), then
\begin{multline*}
    \iint
    \limits_{
      \substack{
        (x, y) \in \Omega \times \Omega\\
        d_{\manifold{N}} (f (y), f (x)) \ge \varepsilon}
      }
      \hspace{-1.5em}
      \frac
        {\bigl(d_{\manifold{N}} (f (y), f (x)) - \varepsilon\bigr)^p}
        {\abs{y - x}^{2 m}}
      \diff x
      \diff y\\[-2em]
  \le 
    C 
    \,
    \biggl(
    \frac
      {\delta}
      {\varepsilon}
    \biggr)^{m - p}
    \hspace{-1.5em}
    \iint
    \limits_{
      \substack{
        (x, y) \in \Omega \times \Omega\\
        d_{\manifold{N}} (f (y), f (x)) \ge \delta}
      }
      \hspace{-1.5em}
      \frac
        {\bigl(d_{\manifold{N}} (f (y), f (x)) - \delta\bigr)^p}
        {\abs{y - x}^{2 m}}
      \diff x
      \diff y
\eofs?
\end{multline*}
\end{openproblem}

By \cref{lemma_fract_gap_integral_growth}, the answer is positive when \(p \in [1, +\infty)\) and when \(p \in [0, 1)\), the estimate holds with an exponent \(m - 1\) instead of \(m - p\).

As we have mentioned in the introduction, for every \(\lambda > 0\), there exists a finite collection of maps \(\mapsset{F}^\lambda\) such that every \(f \in (\mapsset{C} \cap W^{1, 1}) (\Sset^1, \manifold{N})\) is homotopic to some map in \(\mapsset{F}^\lambda\). The proof is done by showing that \(f\) is homotopic to a constant speed reparametrization and reduces thus the problem to Lipschitz-continuous maps to which the Ascoli theorem applies. This raises naturally the question about \(W^{m, 1} (\Sset^m, \manifold{N})\).

\begin{openproblem}
Is it true that if for \(m \ge 2\), if \(\lambda > 0\), then the maps \(f \in (\mapsset{C} \cap W^{m, 1}) (\Sset^m, \manifold{N})\), such that 
\(\int_{\Sset^m} \abs{D^m f} \le \lambda\) belong to finitely many homotopy classes?
\end{openproblem}

Finally, for maps from \(\Sset^{2n - 1}\) to \(\Sset^{n}\), the Hopf invariant can be computed through formulas that yield Rivière's estimate \eqref{ineq_Hopf_Riviere}.
The next logical step would be to obtain corresponding estimates in fractional Sobolev spaces.

\begin{openproblem}
\label{problem_hopf_Sobolev}
If \(p > 2n - 1\), does there exists a constant such that every \(f \in (W^{s, p} \cap \mapsset{C}) (\Sset^{2 n - 1}, \Sset^n)\), with \(s = (2n - 1)/p\) satisfies
\begin{equation*}
    \abs{\deg_H (f)}
  \le
    C
    \,
    \Biggl(
    \;
    \iint 
      \limits_{\Sset^{2 n - 1} \times \Sset^{2n - 2}}
      \frac
        {d_{\manifold{N}} (f (y), f (x))^p}
        {\abs{y - x}^{2 (2 n - 1)}}  
      \diff y \diff x
     \Biggr)^{1 + \frac{1}{2n - 1}} 
    \eofs?
\end{equation*}
\end{openproblem}

A further question would be to obtain gap potential estimates:

\begin{openproblem}
\label{problem_hopf}
Does there exists a constant such that if \(f \in \mapsset{C} (\Sset^{2 n - 1}, \Sset^n)\),
\begin{equation*}
    \abs{\deg_H (f)}
  \le
    C
    \Biggl(
    \iint 
      \limits_{\substack{y, z \in \Sset^{2 n - 1} \\ \abs{f (y) - f (z)} \ge \varepsilon}}
         \frac{\diff y \diff z}{\abs{y - z}^{2 (2 n - 1)}} 
     \Biggr)^{1 + \frac{1}{2n - 1}} 
    \eofs?
\end{equation*}
\end{openproblem}

The exponent in \cref{problem_hopf_Sobolev} and \cref{problem_hopf} is justified by the following lower bound.

\begin{proposition}
If \(n \in \Nset\) is even, then there exists a sequence of maps \((f_k)_{k \in \Nset}\) in \(\mapsset{C} (\Sset^{2n - 1}, \Sset^n)\) such that \(\abs{\deg_H f_k} \to \infty\)
and 
\begin{equation*}
\iint \limits_{\substack{x, y \in \Sset^{2 n - 1} \\ \abs{f_k (y) - f_k (x)} \ge \varepsilon}}
         \frac{\diff y \diff x}{\abs{y - x}^{2 (2 n - 1)}} 
         \le C \bigl(\deg_H (f_k)\bigr)^{1 - \frac{1}{2n}}
\eofs.
\end{equation*}
\end{proposition}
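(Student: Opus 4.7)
The plan is to take $f_k \defeq p_k \compose h$, where $h : \Sset^{2n-1} \to \Sset^n$ is a fixed smooth map with $\deg_H (h) \ne 0$ and $p_k : \Sset^n \to \Sset^n$ is a Lipschitz degree-$k$ self-map whose Lipschitz constant attains the dimensionally optimal order $k^{1/n}$. Since $n$ is even, any smooth representative of the Whitehead product $[\iota_n, \iota_n] \in \pi_{2n-1} (\Sset^n)$ has Hopf invariant $\pm 2$ \cite{Bott_Tu_1982}*{proposition 17.22}, providing such an $h$.

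To build $p_k$, I would fix once and for all a smooth map $\Phi : \Bar{\Bset}^n \to \Sset^n$ of degree $1$ that equals a base point $q \in \Sset^n$ outside $\Bset^n_{1/2}$. Pack roughly $k$ pairwise disjoint geodesic balls $\Bar{B}^{\Sset^n}_{\delta_k} (y_i) \subset \Sset^n$ with $\delta_k \simeq k^{-1/n}$; this is possible since the total volume of such balls is of order $k \,\delta_k^n \simeq 1$. Then set $p_k (y) \defeq \Phi ((y - y_i)/\delta_k)$ (in normal coordinates at $y_i$) on each $\Bar{B}^{\Sset^n}_{\delta_k} (y_i)$ and $p_k \equiv q$ elsewhere. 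Because both prescriptions agree on every boundary, $p_k$ is continuous, globally Lipschitz with constant $\le C / \delta_k \le C' k^{1/n}$, and of degree $k$ since each of the $k$ bubbles contributes $\deg \Phi = 1$.

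The next step is the identity $\deg_H (p_k \compose h) = k^2 \deg_H (h)$. After a smooth homotopy of $p_k$ within its degree class, one may assume $p_k$ is smooth. Let $\omega$ denote a smooth $n$-form on $\Sset^n$ with $\int_{\Sset^n} \omega = 1$. Since $H^n_{\mathrm{dR}} (\Sset^{2n-1}) = 0$, write $h_\sharp \omega = d \eta$; since $\deg p_k = k$, write $p_{k \sharp} \omega = k \omega + d \alpha$. Then $(p_k \compose h)_\sharp \omega = d (k \eta + h_\sharp \alpha)$, so
\begin{equation*}
  \deg_H (p_k \compose h)
 =
  \int_{\Sset^{2n-1}}
    \bigl(k \,h_\sharp \omega + d(h_\sharp \alpha)\bigr) \wedge (k \eta + h_\sharp \alpha)
\eofs .
\end{equation*}
The diagonal $k^2$ term equals $k^2 \int h_\sharp \omega \wedge \eta = k^2 \deg_H (h)$. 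Each of the three cross terms is, after a single application of Stokes' formula on the closed manifold $\Sset^{2n-1}$, an integral of $h_\sharp \beta$ for some form $\beta$ on $\Sset^n$ of degree $2n - 1 > n$, hence vanishes identically. Therefore $\abs{\deg_H (f_k)} = k^2 \abs{\deg_H (h)} \to + \infty$.

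It remains to bound the gap potential via the Lipschitz constant $L_k \defeq \mathrm{Lip} (f_k) \le \mathrm{Lip} (p_k) \, \mathrm{Lip} (h) \le C k^{1/n}$. Any pair $(x, y)$ with $\abs{f_k (y) - f_k (x)} \ge \varepsilon$ satisfies $\abs{y - x} \ge \varepsilon / L_k$, and a direct computation gives $\int_{\abs{y - x} \ge \rho} \abs{y - x}^{-2 (2n - 1)} \diff y \le C_0 \rho^{-(2n - 1)}$, so that
\begin{equation*}
    \iint\limits_{
      \substack{
        (x, y) \in \Sset^{2n-1} \times \Sset^{2n-1} \\
        \abs{f_k (y) - f_k (x)} \ge \varepsilon
      }
    }
      \frac{\diff y \diff x}{\abs{y - x}^{2(2n-1)}}
  \le
    \frac{C_1 L_k^{2n-1}}{\varepsilon^{2n-1}}
  \le
    \frac{C_2}{\varepsilon^{2n-1}}
    \bigl(\deg_H (f_k)\bigr)^{1 - \frac{1}{2 n}}
    \eofs ,
\end{equation*}
since $k^{(2n-1)/n} = (k^2)^{1 - 1/(2n)}$. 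The only delicate ingredient is the quadratic composition formula $\deg_H (p_k \compose h) = (\deg p_k)^2 \,\deg_H (h)$; both the packing construction of $p_k$ with Lipschitz constant $\simeq k^{1/n}$ and the passage from the Lipschitz bound to the gap integral are direct.
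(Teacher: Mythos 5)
Your proposal is correct and follows exactly the paper's (and Rivière's) strategy: compose a fixed map $h$ of nontrivial Hopf invariant with a degree-$k$ self-map of $\Sset^n$ whose Lipschitz constant is of the optimal order $k^{1/n}$, use $\deg_H (p_k \compose h) = (\deg p_k)^2 \deg_H (h) = k^2 \deg_H(h)$, and convert the Lipschitz bound into the gap-potential bound $\lesssim (L_k/\varepsilon)^{2n-1} \simeq k^{2 - 1/n}$. You merely supply the details (bubble-packing construction of $p_k$, Whitehead-formula verification of the quadratic composition law) that the paper leaves implicit.
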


\begin{proof}
The proof follow Tristan \familyname{Rivière}'s proof \cite{Riviere_1998}*{lemma III.1}.
We construct for every \(k \in \Nset\), the map \(f_k = \varphi_k \compose \varphi\) where \(\psi : \Sset^{2n - 1} \to \Sset^{n}\) has a nontrivial Hopf degree and \(\varphi_k : \Sset^n \to \Sset^n\) has the property that \(\abs{D \varphi_k} \le k^{1/n}\) on \(\Sset^n\)
and its Brouwer degree satisfies \(\deg (\varphi_k) = k\). It follows that \(\abs{D f_k} \le k^{1/n}\) and \(\deg_H (f_k) = k^2\). Moreover we have 
\begin{equation*}
 \iint \limits_{\substack{x, y \in \Sset^{2 n - 1} \\ \abs{f_k (y) - f_k (x)} \ge \varepsilon}}
         \frac{\diff y \diff x}{\abs{y - x}^{2 (2 n - 1)}} 
         \le k^{2 - \frac{1}{n}} \simeq \bigl(\deg_H (f_k)\bigr)^{1 - \frac{1}{2n}}
\eofs.
\qedhere
\end{equation*}
\end{proof}

\resetconstant
A strategy that follows the proof of \cref{theorem_bounded_finite_bubbles_linear} constructs for a given \(f \in \mapsset{C} (\Sset^{2n - 1}, \Sset^n)\) a decomposition into \(g_i : \Sset^{2n - 1} \to \Sset^n\), with \(1 \le i \le k\), which have a Lipschitz constant controlled by \(\Cl{cst_eeha6iW4ae} \sinh \rho_i\), with \(\sum_{i = 1}^k \rho_i \le \Cl{cst_aeghayoe3C} \lambda\). It follows then by Rivière's bound \eqref{ineq_Riviere_n} and by convexity that 
\begin{equation}
 \deg_H (f) = \sum_{i = 1}^k \deg_H (g_i)
 \le \C \sum_{i = 1}^k \bigl(\Cr{cst_eeha6iW4ae} \sinh \rho_i \bigr)^{2 n}
 \le \C \sinh \C \lambda
 \eofs,
\end{equation}
which is quite far from the estimate proposed in \cref{problem_hopf}.

\begin{bibdiv}
\begin{biblist}

\bib{Ahlfors1981}{book}{
   author={Ahlfors, Lars V.},
   title={M\"obius transformations in several dimensions},
   series={Ordway Professorship Lectures in Mathematics},
   publisher={University of Minnesota School of Mathematics},
   place={Minneapolis, Minn.},
   date={1981},
   pages={ii+150},
}

\bib{Baues_1977}{book}{
   author={Baues, Hans J.},
   title={Obstruction theory on homotopy classification of maps},
   series={Lecture Notes in Mathematics}, 
   volume={628},
   publisher={Springer}, 
   address={Berlin--New York},
   date={1977},
   pages={xi+387},
   isbn={3-540-08534-3},
}

\bib{Bott_Tu_1982}{book}{
   author={Bott, Raoul},
   author={Tu, Loring W.},
   title={Differential forms in algebraic topology},
   series={Graduate Texts in Mathematics},
   volume={82},
   publisher={Springer},
   address={New York--Berlin},
   date={1982},
   pages={xiv+331},
   isbn={0-387-90613-4},
}

\bib{BourgainBrezisMironescu2005CPAM}{article}{
   author={Bourgain, Jean},
   author={Brezis, Ha\"{\i}m},
   author={Mironescu, Petru},
   title={Lifting, degree, and distributional Jacobian revisited},
   journal={Comm. Pure Appl. Math.},
   volume={58},
   date={2005},
   number={4},
   pages={529--551},
   issn={0010-3640},
   doi={10.1002/cpa.20063},
}       

\bib{BourgainBrezisNguyen2005CRAS}{article}{
   author={Bourgain, Jean},
   author={Brezis, Ha\"\i m},
   author={Nguyen, Hoai-Minh}*{inverted={yes}},
   title={A new estimate for the topological degree},
   journal={C. R. Math. Acad. Sci. Paris},
   volume={340},
   date={2005},
   number={11},
   pages={787--791},
   issn={1631-073X},
   doi={10.1016/j.crma.2005.04.007},
}

\bib{Boutet_Georgescu_Purice}{article}{
   author={Boutet de Monvel-Berthier, Anne},
   author={Georgescu, Vladimir},
   author={Purice, Radu},
   title={A boundary value problem related to the Ginzburg-Landau model},
   journal={Comm. Math. Phys.},
   volume={142},
   date={1991},
   number={1},
   pages={1--23},
   issn={0010-3616},
}

\bib{Brezis_2011}{book}{
   author={Brezis, Haim},
   title={Functional analysis, Sobolev spaces and partial differential
   equations},
   series={Universitext},
   publisher={Springer}, 
   address={New York, N.Y.},
   date={2011},
   pages={xiv+599},
   isbn={978-0-387-70913-0},
}
                
\bib{BrezisLi2000CRAS}{article}{
   author={Brezis, Ha\"\i m},
   author={Li, Yan Yan}*{inverted={yes}},
   title={Topology and Sobolev spaces},
   journal={C. R. Acad. Sci. Paris S\'er. I Math.},
   volume={331},
   date={2000},
   number={5},
   pages={365--370},
   issn={0764-4442},
   doi={10.1016/S0764-4442(00)01656-6},
}

\bib{BrezisLi2001JFA}{article}{
   author={Brezis, Ha\"\i m},
   author={Li, Yan Yan}*{inverted={yes}},
   title={Topology and Sobolev spaces},
   journal={J. Funct. Anal.},
   volume={183},
   date={2001},
   number={2},
   pages={321--369},
   issn={0022-1236},
   doi={10.1006/jfan.2000.3736},
}
\bib{Brezis_Nguyen_2011}{article}{
   author={Brezis, Ha\"{i}m},
   author={Nguyen, Hoai-Minh}*{inverted={yes}},
   title={On a new class of functions related to \(\mathrm{VMO}\)},
   journal={C. R. Math. Acad. Sci. Paris},
   volume={349},
   date={2011},
   number={3-4},
   pages={157--160},
   issn={1631-073X},
   doi={10.1016/j.crma.2010.11.026},
}
\bib{BrezisNirenberg1995}{article}{
   author={Brezis, H.},
   author={Nirenberg, L.},
   title={Degree theory and \(\mathrm{BMO}\)},
   part={I},
   subtitle={Compact manifolds without boundaries},
   journal={Selecta Math. (N.S.)},
   volume={1},
   date={1995},
   number={2},
   pages={197--263},
   doi={10.1007/BF01671566},
}

\bib{ChenLi2014}{article}{
   author={Chen, Jingyi}*{inverted={yes}},
   author={Li, Yuxiang}*{inverted={yes}},
   title={Homotopy classes of harmonic maps of the stratified 2-spheres and
   applications to geometric flows},
   journal={Adv. Math.},
   volume={263},
   date={2014},
   pages={357--388},
   issn={0001-8708},
   doi={10.1016/j.aim.2014.07.001},
}

\bib{Druet_Hebey_Robert}{book}{
   author={Druet, Olivier},
   author={Hebey, Emmanuel},
   author={Robert, Fr\'ed\'eric},
   title={Blow-up theory for elliptic PDEs in Riemannian geometry},
   series={Mathematical Notes},
   volume={45},
   publisher={Princeton University Press}, 
   address={Princeton, N.J.},
   date={2004},
   pages={viii+218},
   isbn={0-691-11953-8},
   doi={10.1007/BF01158557},
}

\bib{DuzaarKuwert1998}{article}{
   author={Duzaar, Frank},
   author={Kuwert, Ernst},
   title={Minimization of conformally invariant energies in homotopy
   classes},
   journal={Calc. Var. Partial Differential Equations},
   volume={6},
   date={1998},
   number={4},
   pages={285--313},
   issn={0944-2669},
   doi={10.1007/s005260050092},
}

\bib{Dyda2004}{article}{
   author={Dyda, Bart\l omiej},
   title={A fractional order Hardy inequality},
   journal={Illinois J. Math.},
   volume={48},
   date={2004},
   number={2},
   pages={575--588},
   issn={0019-2082},
}
\bib{Fenchel_1989}{book}{
   author={Fenchel, Werner},
   title={Elementary geometry in hyperbolic space},
   series={De Gruyter Studies in Mathematics},
   volume={11},
   publisher={de Gruyter},
   address={Berlin},
   date={1989},
   pages={xii+225},
   isbn={3-11-011734-7},
   doi={10.1515/9783110849455},
}
\bib{Fleming1966TAMS}{article}{
   author={Fleming, Wendell H.},
   title={Flat chains over a finite coefficient group},
   journal={Trans. Amer. Math. Soc.},
   volume={121},
   date={1966},
   pages={160--186},
   issn={0002-9947},
   doi={10.2307/1994337},
}

\bib{GastelNerf2013}{article}{
   author={Gastel, Andreas},
   author={Nerf, Andreas J.},
   title={Minimizing sequences for conformally invariant integrals of higher
   order},
   journal={Calc. Var. Partial Differential Equations},
   volume={47},
   date={2013},
   number={3-4},
   pages={499--521},
   issn={0944-2669},
   doi={10.1007/s00526-012-0525-0},
}

\bib{GoldsteinHajlasz2012}{article}{
   author={Goldstein, Pawe\l },
   author={Haj\l asz, Piotr},
   title={Sobolev mappings, degree, homotopy classes and rational homology
   spheres},
   journal={J. Geom. Anal.},
   volume={22},
   date={2012},
   number={2},
   pages={320--338},
   issn={1050-6926},
   doi={10.1007/s12220-010-9194-4},
}

\bib{Gromov_1999}{article}{
   author={Gromov, Mikhael},
   title={Quantitative homotopy theory},
   conference={
      title={Prospects in mathematics},
      address={Princeton, N.J.},
      date={1996},
   },
   book={
      publisher={Amer. Math. Soc.}, 
      address={Providence, R.I.},
   },
   date={1999},
   pages={45--49},
}

\bib{Gromov_1999_book}{book}{
   author={Gromov, Misha},
   title={Metric structures for Riemannian and non-Riemannian spaces},
   series={Progress in Mathematics},
   volume={152},
   contribution={
    type={appendices},
    author={Katz, M.},
    author={Pansu, P.},
    author={Semmes, S.},
    },
   translator={Bates, Sean Michael},
   publisher={Birkh\"{a}user}, 
   address={Boston, Mass.},
   date={1999},
   pages={xx+585},
   isbn={0-8176-3898-9},
}

\bib{HajlaszIwaniecMalyOnninen2008}{article}{
   author={Haj\l asz, Piotr},
   author={Iwaniec, Tadeusz},
   author={Mal\'y, Jan},
   author={Onninen, Jani},
   title={Weakly differentiable mappings between manifolds},
   journal={Mem. Amer. Math. Soc.},
   volume={192},
   date={2008},
   number={899},
   issn={0065-9266},
   doi={10.1090/memo/0899},
}

\bib{Hang_Lin_2001_MRL}{article}{
   author={Hang, Fengbo},
   author={Lin, Fanghua},
   title={Topology of Sobolev mappings},
   journal={Math. Res. Lett.},
   volume={8},
   date={2001},
   number={3},
   pages={321--330},
   issn={1073-2780},
   doi={10.4310/MRL.2001.v8.n3.a8},
}

\bib{HangLin2003Acta}{article}{
   author={Hang, Fengbo}*{inverted={yes}},
   author={Lin, Fanghua}*{inverted={yes}},
   title={Topology of Sobolev mappings},
   part={II},
   journal={Acta Math.},
   volume={191},
   date={2003},
   number={1},
   pages={55--107},
   issn={0001-5962},
   doi={10.1007/BF02392696},
}

\bib{Hang_Lin_2003_CPAM}{article}{
   author={Hang, Fengbo},
   author={Lin, Fanghua},
   title={Topology of Sobolev mappings},
   part={III},
   journal={Comm. Pure Appl. Math.},
   volume={56},
   date={2003},
   number={10},
   pages={1383--1415},
   issn={0010-3640},
   doi={10.1002/cpa.10098},
}

\bib{Hang_Lin_2005}{article}{
   author={Hang, Fengbo},
   author={Lin, Fanghua},
   title={Topology of Sobolev mappings},
   part={IV},
   journal={Discrete Contin. Dyn. Syst.},
   volume={13},
   date={2005},
   number={5},
   pages={1097--1124},
   issn={1078-0947},
   doi={10.3934/dcds.2005.13.1097},
}

\bib{Hatcher_2002}{book}{
   author={Hatcher, Allen},
   title={Algebraic topology},
   publisher={Cambridge University Press}, 
   address={Cambridge},
   date={2002},
   pages={xii+544},
   isbn={0-521-79160-X},
   isbn={0-521-79540-0},
}

\bib{Hu_1959}{book}{
   author={Hu, Sze-tsen},
   title={Homotopy theory},
   series={Pure and Applied Mathematics, Vol. VIII},
   publisher={Academic Press, New York-London},
   date={1959},
   pages={xiii+347},
   }

\bib{Jerrard_1999}{article}{
   author={Jerrard, Robert L.},
   title={Lower bounds for generalized Ginzburg-Landau functionals},
   journal={SIAM J. Math. Anal.},
   volume={30},
   date={1999},
   number={4},
   pages={721--746},
   issn={0036-1410},
   doi={10.1137/S0036141097300581},
}        
\bib{Kuwert1998}{article}{
   author={Kuwert, Ernst},
   title={A compactness result for loops with an $H^{1/2}$-bound},
   journal={J. Reine Angew. Math.},
   volume={505},
   date={1998},
   pages={1--22},
   issn={0075-4102},
   doi={10.1515/crll.1998.117},
}

\bib{Lee_2009}{book}{
   author={Lee, Jeffrey M.},
   title={Manifolds and differential geometry},
   series={Graduate Studies in Mathematics},
   volume={107},
   publisher={American Mathematical Society}, 
   address={Providence, R.I.},
   date={2009},
   pages={xiv+671},
   isbn={978-0-8218-4815-9},
   doi={10.1090/gsm/107},
}
                
\bib{Magnus_Karrass_Solitar_1966}{book}{
   author={Magnus, Wilhelm},
   author={Karrass, Abraham},
   author={Solitar, Donald},
   title={Combinatorial group theory: Presentations of groups in terms of
   generators and relations},
   publisher={Interscience Publishers (John Wiley \& Sons)},
   address={New York--London--Sydney},
   date={1966},
   pages={xii+444},
}                

\bib{Mironescu}{article}{
   author={Mironescu, Petru},
   title={Sobolev maps on manifolds: degree, approximation, lifting},
   conference={
      title={Perspectives in nonlinear partial differential equations},
   },
   book={
      series={Contemp. Math.},
      volume={446},
      publisher={Amer. Math. Soc.}, 
      address={Providence, R.I.},
   },
   date={2007},
   pages={413--436},
   doi={10.1090/conm/446/08642},
}

\bib{Muller2000}{article}{
   author={M\"uller, Thomas},
   title={Compactness for maps minimizing the $n$-energy under a free
   boundary constraint},
   journal={Manuscripta Math.},
   volume={103},
   date={2000},
   number={4},
   pages={513--540},
   issn={0025-2611},
   doi={10.1007/PL00005864},
}

\bib{Nguyen_2006}{article}{
   author={Nguyen, Hoai-Minh}*{inverted={yes}},
   title={Some new characterizations of Sobolev spaces},
   journal={J. Funct. Anal.},
   volume={237},
   date={2006},
   number={2},
   pages={689--720},
   issn={0022-1236},
   doi={10.1016/j.jfa.2006.04.001},
}

\bib{Nguyen_2007_CRAS}{article}{
   author={Nguyen, Hoai-Minh}*{inverted={yes}},
   title={$\Gamma$-convergence and Sobolev norms},
   journal={C. R. Math. Acad. Sci. Paris},
   volume={345},
   date={2007},
   number={12},
   pages={679--684},
   issn={1631-073X},
   doi={10.1016/j.crma.2007.11.005},
}

\bib{Nguyen_2007}{article}{
   author={Nguyen, Hoai-Minh}*{inverted={yes}},
   title={Optimal constant in a new estimate for the degree},
   journal={J. Anal. Math.},
   volume={101},
   date={2007},
   pages={367--395},
   issn={0021-7670},
   doi={10.1007/s11854-007-0014-0},
}

\bib{Nguyen_2008}{article}{
   author={Nguyen, Hoai-Minh}*{inverted={yes}},
   title={Further characterizations of Sobolev spaces},
   journal={J. Eur. Math. Soc. (JEMS)},
   volume={10},
   date={2008},
   number={1},
   pages={191--229},
   issn={1435-9855},
   doi={10.4171/JEMS/108},
}

\bib{Nguyen_2008_CRAS}{article}{
   author={Nguyen, Hoai-Minh},
   title={Inequalities related to liftings and applications},
   journal={C. R. Math. Acad. Sci. Paris},
   volume={346},
   date={2008},
   number={17-18},
   pages={957--962},
   issn={1631-073X},
   doi={10.1016/j.crma.2008.07.026},
}

\bib{Nguyen_2011}{article}{
   author={Nguyen, Hoai-Minh}*{inverted={yes}},
   title={$\Gamma$-convergence, Sobolev norms, and BV functions},
   journal={Duke Math. J.},
   volume={157},
   date={2011},
   number={3},
   pages={495--533},
   issn={0012-7094},
   doi={10.1215/00127094-1272921},
}
                
\bib{Nguyen_2011_CVAR}{article}{
   author={Nguyen, Hoai-Minh},
   title={Some inequalities related to Sobolev norms},
   journal={Calc. Var. Partial Differential Equations},
   volume={41},
   date={2011},
   number={3-4},
   pages={483--509},
   issn={0944-2669},
   doi={10.1007/s00526-010-0373-8},
}

\bib{Nguyen_2014}{article}{
   author={Nguyen, Hoai-Minh}*{inverted={yes}},
   title={Estimates for the topological degree and related topics},
   journal={J. Fixed Point Theory Appl.},
   volume={15},
   date={2014},
   number={1},
   pages={185--215},
   issn={1661-7738},
   doi={10.1007/s11784-014-0182-3},
}

\bib{Nguyen_2017}{article}{
   author={Nguyen, Hoai-Minh}*{inverted={yes}},
   title={A refined estimate for the topological degree},
   journal={C. R. Math. Acad. Sci. Paris},
   volume={355},
   date={2017},
   number={10},
   pages={1046--1049},
   issn={1631-073X},
   doi={10.1016/j.crma.2017.10.007},
}

\bib{Nguyen_Pinamonti_Squassina_Vecchi_2018}{article}{
   author={Nguyen, Hoai-Minh},
   author={Pinamonti, Andrea},
   author={Squassina, Marco},
   author={Vecchi, Eugenio},
   title={New characterizations of magnetic Sobolev spaces},
   journal={Adv. Nonlinear Anal.},
   volume={7},
   date={2018},
   number={2},
   pages={227--245},
   issn={2191-9496},
   doi={10.1515/anona-2017-0239},
}

\bib{Parker_2003}{article}{
  author={Parker, Thomas H.},
  title={What is… a bubble tree?},
  journal={Notices Amer. Math. Soc.},
  volume={50},
  date={2003},
  number={6},
  pages={666--667},
  issn={0002-9920},
}

\bib{Petersen_2016}{book}{
   author={Petersen, Peter},
   title={Riemannian geometry},
   series={Graduate Texts in Mathematics},
   volume={171},
   edition={3},
   publisher={Springer, Cham},
   date={2016},
   pages={xviii+499},
   isbn={978-3-319-26652-7},
   isbn={978-3-319-26654-1},
   doi={10.1007/978-3-319-26654-1},
}
                
\bib{PetracheRiviere2015}{article}{
   author={Petrache, Mircea},
   author={Rivi{\`e}re, Tristan},
   title={Global gauges and global extensions in optimal spaces},
   journal={Anal. PDE},
   volume={7},
   date={2014},
   number={8},
   pages={1851--1899},
   issn={2157-5045},
   doi={10.2140/apde.2014.7.1851},
}

\bib{PetracheVanSchaftingen2017}{article}{
  author={Petrache, M.},
  author={Van Schaftingen, Jean},
  title={Controlled singular extension of critical trace Sobolev maps 
  from spheres to compact manifolds}, 
  journal={Int. Math. Res. Not. IMRN},
  date={2017}, 
  number={12}, 
  pages={3467--3683},
  doi={10.1093/imrn/rnw109},
}

\bib{Riviere_1998}{article}{
   author={Rivi\`ere, Tristan},
   title={Minimizing fibrations and $p$-harmonic maps in homotopy classes
   from $\Sset^3$ into $\Sset^2$},
   journal={Comm. Anal. Geom.},
   volume={6},
   date={1998},
   number={3},
   pages={427--483},
   issn={1019-8385},
   doi={10.4310/CAG.1998.v6.n3.a2},
}

\bib{SacksUhlenbeck1981}{article}{
   author={Sacks, J.},
   author={Uhlenbeck, K.},
   title={The existence of minimal immersions of $2$-spheres},
   journal={Ann. of Math. (2)},
   volume={113},
   date={1981},
   number={1},
   pages={1--24},
   issn={0003-486X},
   doi={10.2307/1971131},
}

\bib{Sandier_1998}{article}{
   author={Sandier, Etienne},
   title={Lower bounds for the energy of unit vector fields and
   applications},
   journal={J. Funct. Anal.},
   volume={152},
   date={1998},
   number={2},
   pages={379--403},
   issn={0022-1236},
   doi={10.1006/jfan.1997.3170},
}
                
\bib{Sandier_Serfaty_2007}{book}{
   author={Sandier, Etienne},
   author={Serfaty, Sylvia},
   title={Vortices in the magnetic Ginzburg--Landau model},
   series={Progress in Nonlinear Differential Equations and their
   Applications},
   volume={70},
   publisher={Birkh\"auser},
   address={Boston, Mass.},
   date={2007},
   pages={xii+322},
   isbn={978-0-8176-4316-4},
   isbn={0-8176-4316-8},
}

\bib{SchoenWolfson2001}{article}{
   author={Schoen, R.},
   author={Wolfson, J.},
   title={Minimizing area among Lagrangian surfaces: the mapping problem},
   journal={J. Differential Geom.},
   volume={58},
   date={2001},
   number={1},
   pages={1--86},
   issn={0022-040X},
   doi={10.4310/jdg/1090348282},
}

\bib{Serre_1951}{article}{
   author={Serre, Jean-Pierre},
   title={Homologie singuli\`ere des espaces fibr\'es. Applications},
   journal={Ann. of Math. (2)},
   volume={54},
   date={1951},
   pages={425--505},
   doi={10.2307/1969485},
}

\bib{Toda_1962}{book}{
   author={Toda, Hirosi}*{inverted={yes}},
   title={Composition methods in homotopy groups of spheres},
   series={Annals of Mathematics Studies, No. 49},
   publisher={Princeton University Press}, 
   address={Princeton, N.J.},
   date={1962},
   pages={v+193},
}

\bib{White1986JDG}{article}{
   author={White, Brian},
   title={Infima of energy functionals in homotopy classes of mappings},
   journal={J. Differential Geom.},
   volume={23},
   date={1986},
   number={2},
   pages={127--142},
   issn={0022-040X},
   doi={10.4310/jdg/1214440023},
}

\bib{Whitehead_1947}{article}{
   author={Whitehead, J. H. C.},
   title={An expression of Hopf's invariant as an integral},
   journal={Proc. Nat. Acad. Sci. U. S. A.},
   volume={33},
   date={1947},
   pages={117--123},
   issn={0027-8424},
}
                
\bib{Willem_2013}{book}{
   author={Willem, Michel},
   title={Functional analysis},
   series={Cornerstones},
   subtitle={Fundamentals and applications},
   publisher={Birkh\"auser/Springer}, 
   address={New York, N.Y.},
   date={2013},
   pages={xiv+213},
   isbn={978-1-4614-7003-8},
   isbn={978-1-4614-7004-5},
   doi={10.1007/978-1-4614-7004-5},
}
                
\end{biblist}
 
\end{bibdiv}

\end{document}